\title[symplectic invariant Lie subalgebras
of symplectic derivation Lie algebras]
{Structure of symplectic invariant Lie subalgebras 
of symplectic derivation Lie algebras}
\author{Shigeyuki Morita}
\address{Graduate School of Mathematical Sciences, 
The University of Tokyo, 
3-8-1 Komaba, 
Meguro-ku, Tokyo, 153-8914, Japan}
\email{morita@ms.u-tokyo.ac.jp}
\author{Takuya Sakasai}
\address{Graduate School of Mathematical Sciences, 
The University of Tokyo, 
3-8-1 Komaba, 
Meguro-ku, Tokyo, 153-8914, Japan}
\email{sakasai@ms.u-tokyo.ac.jp}
\author{Masaaki Suzuki}
\address{Department of Frontier Media Science, 
Meiji University, 
4-21-1 Nakano, Nakano-ku, Tokyo, 164-8525, Japan}
\email{macky@fms.meiji.ac.jp}
\subjclass[2000]{Primary~17B40, Secondary~17B65}
\keywords{derivation Lie algebra, free Lie algebra, symplectic representation, Young diagram, Johnson homomorphism}
\newtheorem{thm}{Theorem}[section]
\newtheorem{prop}[thm]{Proposition}
\newtheorem{lem}[thm]{Lemma}
\newtheorem{cor}[thm]{Corollary}
\theoremstyle{definition}
\newtheorem{definition}[thm]{Definition}
\newtheorem{example}[thm]{Example}
\newtheorem{remark}[thm]{Remark}
\begin{document}

\newcommand{\Mg}{\mathcal{M}_g}
\newcommand{\Mgp}{\mathcal{M}_{g,\ast}}
\newcommand{\Mgb}{\mathcal{M}_{g,1}}

\newcommand{\hg}{\mathfrak{h}_{g,1}}
\newcommand{\ag}{\mathfrak{a}_g}
\newcommand{\Ln}{\mathcal{L}_n}

\newcommand{\Sg}{\Sigma_g}
\newcommand{\Sgb}{\Sigma_{g,1}}
\newcommand{\la}{\lambda}

\newcommand{\Symp}[1]{Sp(2g,\mathbb{#1})}
\newcommand{\symp}[1]{\mathfrak{sp}(2g,\mathbb{#1})}
\newcommand{\gl}[1]{\mathfrak{gl}(n,\mathbb{#1})}

\newcommand{\At}[1]{\mathcal{A}_{#1}^t (H)}
\newcommand{\Hq}{H_{\mathbb{Q}}}

\newcommand{\Ker}{\mathop{\mathrm{Ker}}\nolimits}
\newcommand{\Hom}{\mathop{\mathrm{Hom}}\nolimits}
\renewcommand{\Im}{\mathop{\mathrm{Im}}\nolimits}

\newcommand{\Der}{\mathop{\mathrm{Der}}\nolimits}
\newcommand{\Out}{\mathop{\mathrm{Out}}\nolimits}
\newcommand{\Aut}{\mathop{\mathrm{Aut}}\nolimits}
\newcommand{\Q}{\mathbb{Q}}
\newcommand{\Z}{\mathbb{Z}}
\newcommand{\R}{\mathbb{R}}

\begin{abstract}
We study the structure of the symplectic invariant part
$\mathfrak{h}_{g,1}^{\mathrm{Sp}}$ of the Lie algebra
$\mathfrak{h}_{g,1}$ consisting of symplectic derivations
of the free Lie algebra 
generated by the
rational homology group 
of a closed oriented surface $\Sigma_{g}$ of genus $g$.

First we describe the orthogonal direct sum decomposition of 
this space which is induced by the canonical metric on it and 
compute it explicitly up to degree $20$. In this framework, we give a general constraint
which is imposed on the $\mathrm{Sp}$-invariant component of
the bracket of two elements in $\mathfrak{h}_{g,1}$.
Second we clarify the relations among 
$\mathfrak{h}_{g,1}$ and the other two related Lie algebras
$\mathfrak{h}_{g,*}$ and $\mathfrak{h}_{g}$ which correspond to
the cases of a closed surface $\Sigma_g$ with and without base point $*\in\Sigma_g$.
In particular, based on a theorem of Labute, we formulate a method of determining these differences  
and describe them explicitly up to degree $20$.
Third, by giving a general method of constructing elements of $\mathfrak{h}_{g,1}^{\mathrm{Sp}}$,
we reveal a considerable difference between two particular submodules of it, one is the $\mathrm{Sp}$-invariant part 
of a certain ideal $\mathfrak{j}_{g,1}$
and the other is that
of the Johnson image.

Finally we combine these results to determine the structure of $\mathfrak{h}_{g,1}$
completely up to degree $6$ including the unstable cases
where the genus $1$ case has an independent meaning.
In particular, we see a glimpse of the Galois obstructions 
explicitly from our point of view.

\end{abstract}

\renewcommand\baselinestretch{1.1}
\setlength{\baselineskip}{16pt}

\newcounter{fig}
\setcounter{fig}{0}

\maketitle

\section{Introduction and statements of the main results}\label{sec:intro}

Let $\Sigma_{g,1}$ be a compact oriented surface of genus $g\geq 1$ 
with one boundary component and we denote its first integral homology group
$H_1(\Sigma_{g,1};\Z)$ simply by $H$ and let $H_\Q=H\otimes\Q$. 
We denote by $\mathcal{L}_{g,1}$ the 
free graded Lie algebra generated by $H_\Q$ and let $\mathfrak{h}_{g,1}$
be the graded Lie algebra consisting of {\it symplectic} derivations of $\mathcal{L}_{g,1}$.
Let $\mathfrak{h}^+_{g,1}$ be the ideal consisting of derivations 
with {\it positive} degrees.
This Lie algebra was introduced in the theory of Johnson homomorphisms 
(see \cite{morita89}) and has been investigated extensively. 
We also consider closely related Lie algebras,
denoted by $\mathfrak{h}_{g,*}$ and  $\mathfrak{h}_g$
which correspond to the cases of a closed surface $\Sigma_g$
with and without base point $*\in\Sigma_g$. 

Let $\mathrm{Sp}(2g,\Q)$ be the symplectic group which
we sometimes denote simply by $\mathrm{Sp}$. If we fix a symplectic 
basis of $H_\Q$, then it can be 
considered as the standard representation of $\mathrm{Sp}(2g,\Q)$.
Each piece $\mathfrak{h}_{g,1}(k), \mathfrak{h}_{g,*}(k),\mathfrak{h}_{g}(k)$,
of the three graded Lie algebras,
is naturally an $\mathrm{Sp}$-module so that it has an irreducible decomposition. 
Let $\mathfrak{h}^{\mathrm{Sp}}_{g,1}$ denote the Lie subalgebra of 
$\mathfrak{h}_{g,1}$ consisting of $\mathrm{Sp}$-invariant
elements. We denote by $\mathfrak{h}_{g,1}(2k)^\mathrm{Sp}$ the 
degree $2k$ part of this Lie subalgebra. We use similar notations for the
other two cases $ \mathfrak{h}_{g,*}$ and $\mathfrak{h}_{g}$.

In \cite{morita13} a canonical metric on $(H_\Q^{\otimes 2k})^{\mathrm{Sp}}$ is defined
and its application to the tautological algebra of the moduli space of curves is given.
It turns out that this metric can be described as a direct consequence of a result of Hanlon and Wales 
\cite{hw}. In Section $2$, we first recall this metric and give a quick proof by quoting their result. 

Now we can consider $\mathfrak{h}_{g,1}(2k)^{\mathrm{Sp}}$ as a 
subspace of $(H_\Q^{\otimes (2k+2)})^{\mathrm{Sp}}$
so that it has the induced metric. To formulate our results,
we use the following terminology. A Young diagram $\lambda$ is denoted by $[\lambda_1\lambda_2\cdots\lambda_h]$
$(\lambda_1\geq \lambda_2\geq\cdots\geq\lambda_h)$
and the number of boxes in $\lambda$, namely $\lambda_1+\cdots+\lambda_h$, is denoted
by $|\lambda |$. We also denote the number of rows of $\lambda$, namely $h$ in the above
notation, by $h(\lambda)$. For a given Young diagram $\lambda$ as above, the symbol
$\lambda^\delta$ denotes another Young diagram 
$[\lambda_1\lambda_1\cdots \lambda_{h(\lambda)}\lambda_{h(\lambda)}]$
which has {\it multiple double floors} while the symbol $2 \lambda$ denotes 
$[2\lambda_1\cdots 2\lambda_{h(\lambda)}]$. Also $\mathfrak{S}_k$ denotes the 
symmetric group of order $k$.

\begin{thm}

With respect to the canonical metric on $\mathfrak{h}_{g,1}(2k)^\mathrm{Sp}$, there exists an orthogonal
direct sum decomposition
$$
\mathfrak{h}_{g,1}(2k)^\mathrm{Sp}\cong \bigoplus_{|\lambda |=k+1,\ h(\lambda)\leq g}
H_{\lambda}
$$
in terms of certain subspaces $H_\lambda$ which are indexed by
Young diagrams $\lambda=[\lambda_1\cdots \lambda_{h(\lambda)}]$ with $(k+1)$ boxes.
The dimension of $H_\lambda$ is given by the
following formula
$$
\mathrm{dim}\, H_\lambda =
\frac{1}{(2k+2)!} \sum_{\gamma\in \mathfrak{S}_{2k+2}}
\chi_{2k} (\gamma)\chi_{\la^\delta}(\gamma)
$$
where $\chi_{2k}$ is the character 
\begin{align*}
&\chi_{2k}(1^{2k+2})=(2k)!, \quad \chi_{2k}(1^1a^b)=(b-1)! \,a^{b-1} \mu(a)\ (\text{$\mu$: the M\"obius function})\\
&\chi_{2k}(a^b)=-(b-1)! \,a^{b-1} \mu(a),\quad
\chi_{2k}(\text{other conjugacy class})=0
\end{align*}
defined by Kontsevich and $\chi_{\lambda^\delta}$ denotes the character of the irreducible representation of
$\mathfrak{S}_{2k+2}$ corresponding to the Young diagram 
$\lambda^\delta=[\lambda_1\lambda_1\cdots \lambda_{h(\lambda)}\lambda_{h(\lambda)}]$ which has $(2k+2)$ boxes.
\label{thm:ortho}
\end{thm}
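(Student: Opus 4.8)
The plan is to assemble three ingredients: the standard realization of $\hg(2k)$ as a kernel, the classical description of the $\mathrm{Sp}$-invariant part of a Schur module, and the orthogonal decomposition of $(H_\Q^{\otimes 2m})^{\mathrm{Sp}}$ recalled in Section~2. First I would recall that, via the symplectic form on $H_\Q$, the space $\hg(2k)$ is identified with the kernel of the Lie bracket (see \cite{morita89}), so that there is a short exact sequence of $\mathrm{GL}(H_\Q)$-modules
$$0\longrightarrow\hg(2k)\longrightarrow H_\Q\otimes\mathcal{L}_{g,1}(2k+1)\longrightarrow\mathcal{L}_{g,1}(2k+2)\longrightarrow 0,$$
which in particular realizes $\hg(2k)$ as a $\mathrm{GL}(H_\Q)$-submodule of $H_\Q^{\otimes(2k+2)}$. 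Writing $\ell_n=\frac{1}{n}\sum_{d\mid n}\mu(d)\,p_d^{\,n/d}$ for the Frobenius characteristic of the Lie representation $\mathrm{Lie}(n)$ of $\mathfrak{S}_n$, so that the multiplicity of the Schur module $S^{\nu}(H_\Q)$ in $\mathcal{L}_{g,1}(n)$ equals $\langle\ell_n,s_\nu\rangle$ for $h(\nu)\le 2g$, the exact sequence shows that the multiplicity of $S^{\nu}(H_\Q)$ in $\hg(2k)$ equals $\langle p_1\ell_{2k+1}-\ell_{2k+2},\,s_\nu\rangle$ whenever $h(\nu)\le 2g$.

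Next I would pass to $\mathrm{Sp}$-invariants. By the classical branching rule of Littlewood, $\dim S^{\nu}(H_\Q)^{\mathrm{Sp}}$ is $1$ if $\nu$ has only columns of even length and $h(\nu)\le 2g$, and is $0$ otherwise; moreover the partitions of $2k+2$ with only even columns are precisely the diagrams $\lambda^\delta$ with $|\lambda|=k+1$, and $h(\lambda^\delta)\le 2g$ exactly when $h(\lambda)\le g$. Applying the exact functor of $\mathrm{Sp}$-invariants to the $\mathrm{GL}(H_\Q)$-isotypic decomposition of the submodule $\hg(2k)\subset H_\Q^{\otimes(2k+2)}$ then produces a direct sum $\hg(2k)^{\mathrm{Sp}}=\bigoplus_{|\lambda|=k+1,\,h(\lambda)\le g}H_\lambda$ in which $H_\lambda$ is the $\mathrm{Sp}$-invariant part of the $S^{\lambda^\delta}(H_\Q)$-isotypic summand of $\hg(2k)$, so that $\dim H_\lambda=\langle p_1\ell_{2k+1}-\ell_{2k+2},\,s_{\lambda^\delta}\rangle$. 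Orthogonality I would obtain from Section~2: by Schur--Weyl duality the $S^{\lambda^\delta}(H_\Q)$-isotypic summand of $H_\Q^{\otimes(2k+2)}$ coincides with its $S^{\lambda^\delta}$-isotypic summand for the $\mathfrak{S}_{2k+2}$-action, so $H_\lambda$ sits inside the $\lambda$-cell of the canonical orthogonal decomposition of $(H_\Q^{\otimes(2k+2)})^{\mathrm{Sp}}$; since the canonical metric is $\mathfrak{S}_{2k+2}$-invariant and these cells are pairwise non-isomorphic, distinct $H_\lambda$ are mutually orthogonal.

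Finally I would identify $\dim H_\lambda$ with the stated expression. Since the Frobenius characteristic carries the induction product of symmetric group representations to multiplication of symmetric functions, $p_1\ell_{2k+1}=\mathrm{ch}\bigl(\mathrm{Ind}_{\mathfrak{S}_{2k+1}}^{\mathfrak{S}_{2k+2}}\mathrm{Lie}(2k+1)\bigr)$, and hence $p_1\ell_{2k+1}-\ell_{2k+2}$ is the Frobenius characteristic of the virtual $\mathfrak{S}_{2k+2}$-module $\mathrm{Ind}_{\mathfrak{S}_{2k+1}}^{\mathfrak{S}_{2k+2}}\mathrm{Lie}(2k+1)-\mathrm{Lie}(2k+2)$. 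I would then compute its character on a permutation $\gamma$ of cycle type $\rho$ by the induced character formula — it equals $(\text{number of fixed points of }\gamma)\cdot\chi_{\mathrm{Lie}(2k+1)}(\rho\text{ with one part }1\text{ removed})-\chi_{\mathrm{Lie}(2k+2)}(\rho)$ — together with the fact that $\chi_{\mathrm{Lie}(m)}$ vanishes off the cycle types $(d^{m/d})$, where it takes the value $\mu(d)\,d^{m/d-1}(m/d-1)!$; a short check over the cycle types $1^{2k+2}$, $1^1a^b$, $a^b$ and the remaining types shows that this character is exactly Kontsevich's $\chi_{2k}$. Therefore
$$\dim H_\lambda=\langle p_1\ell_{2k+1}-\ell_{2k+2},\,s_{\lambda^\delta}\rangle=\langle\chi_{2k},\chi_{\lambda^\delta}\rangle_{\mathfrak{S}_{2k+2}}=\frac{1}{(2k+2)!}\sum_{\gamma\in\mathfrak{S}_{2k+2}}\chi_{2k}(\gamma)\chi_{\lambda^\delta}(\gamma).$$
The step I expect to require the most care is the orthogonality, namely verifying that the $\mathrm{GL}(H_\Q)$-isotypic filtration of the subspace $\hg(2k)$ is compatible, after taking $\mathrm{Sp}$-invariants, with the $\mathfrak{S}_{2k+2}$-isotypic (hence metric-orthogonal) decomposition of the ambient $(H_\Q^{\otimes(2k+2)})^{\mathrm{Sp}}$ from Section~2; everything else is an assembly of classical facts together with the cycle-type bookkeeping that pins down $\chi_{2k}$.
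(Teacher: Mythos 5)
Your argument is correct, and your subspaces coincide with the paper's: your $H_\lambda$ is the $\mathrm{Sp}$-invariant part of the $\lambda^\delta$-isotypical component of $\mathfrak{h}_{g,1}(2k)$, i.e.\ $\widetilde{H}_{\lambda^\delta}^{\mathrm{Sp}}$, which the paper itself identifies with its $H_\lambda$ (see the proof of Theorem \ref{thm:bracket}). The shared core is Theorem \ref{thm:can}: both proofs get orthogonality from the decomposition of $(H_\Q^{\otimes(2k+2)})^{\mathrm{Sp}}$ into the cells $U_\lambda\cong(\lambda^\delta)_{\mathfrak{S}_{2k+2}}$. The mechanics differ, though. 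The paper produces $H_\lambda$ by applying the projector $S_{2k+2}\circ(1\otimes p_{2k+1})\in\Z[\mathfrak{S}_{2k+2}]$ of Proposition \ref{prop:chah} to each cell $U_\lambda$ (the operator preserves each cell since it lies in the group algebra), and then obtains the dimension formula by citing Corollary 3.2 of \cite{mss3}, because $\dim H_\lambda$ equals the multiplicity of $\lambda^\delta_{\mathrm{GL}}$ in $\mathfrak{h}_{g,1}(2k)$. You instead take the $\mathrm{GL}$-isotypical decomposition of $\mathfrak{h}_{g,1}(2k)$, pass to $\mathrm{Sp}$-invariants (exact in characteristic zero), use the conjugate-even invariant criterion together with Schur--Weyl duality to place each $\widetilde{H}_{\lambda^\delta}^{\mathrm{Sp}}$ inside $U_\lambda$, and rederive the multiplicity from the exact sequence $0\to\mathfrak{h}_{g,1}(2k)\to H_\Q\otimes\mathcal{L}_{g,1}(2k+1)\to\mathcal{L}_{g,1}(2k+2)\to 0$ by checking on cycle types that the virtual character of $\mathrm{Ind}_{\mathfrak{S}_{2k+1}}^{\mathfrak{S}_{2k+2}}\mathrm{Lie}(2k+1)-\mathrm{Lie}(2k+2)$ is exactly Kontsevich's $\chi_{2k}$; your cycle-type bookkeeping (only $1^{2k+2}$, $1^1a^b$ and $a^b$ survive) is right. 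Your route buys self-containedness: it replaces the external appeal to \cite{mss3} by a short classical character computation and defines $H_\lambda$ intrinsically rather than as the image of a projector; the paper's route costs nothing extra since the projector description is needed again anyway (Proposition \ref{prop:LCDH} in Section 6). One small sourcing caution: the statement that $\dim(\nu_{\mathrm{GL}})^{\mathrm{Sp}}=1$ exactly when all columns of $\nu$ have even length, used in the range $g<h(\nu)\le 2g$, is not the stable Littlewood rule verbatim; it is, however, precisely what Theorem \ref{thm:can} (equivalently Proposition 4.1 of \cite{mss3}) provides, so quote it in that form rather than as ``classical branching''.
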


It follows that any $\mathrm{Sp}$-invariant element $\xi\in \mathfrak{h}_{g,1}(k)^{\mathrm{Sp}}$ can be expressed
as
$$
\xi= \sum_{|\lambda |=k+1,\ h(\lambda)\leq g}
\xi_\lambda \quad (\xi_\lambda\in H_{\lambda}).
$$
We call $\xi_\lambda$ the {\it $\lambda$-coordinate} of $\xi$.

\begin{thm}
The stable range with respect to the genus $g$ of the spaces $\mathfrak{h}_{g,1}(2k)^\mathrm{Sp}$ 
is given by
\begin{align*}
&\dim\, \mathfrak{h}_{k,1}(2k)^{\mathrm{Sp}}=\dim\, \mathfrak{h}_{k+1,1}(2k)^{\mathrm{Sp}}=\cdots\quad (\text{$k$ odd})\\
&\dim\, \mathfrak{h}_{k-1,1}(2k)^{\mathrm{Sp}}=\dim\, \mathfrak{h}_{k,1}(2k)^{\mathrm{Sp}}=\cdots\quad (\text{$k$ even}).
\end{align*}
Furthermore we have
$$
\dim\, \mathfrak{h}_{k,1}(2k)^{\mathrm{Sp}}=\dim\, \mathfrak{h}_{k-1,1}(2k)^{\mathrm{Sp}}+1
$$
for any odd $k\geq 3$.
\label{thm:stable}
\end{thm}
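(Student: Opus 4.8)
The plan is to derive the whole statement from Theorem~\ref{thm:ortho}. That decomposition gives $\dim\mathfrak{h}_{g,1}(2k)^{\mathrm{Sp}}=\sum_{|\lambda|=k+1,\ h(\lambda)\le g}\dim H_\lambda$, so $g\mapsto\dim\mathfrak{h}_{g,1}(2k)^{\mathrm{Sp}}$ is non-decreasing, it is eventually constant because a Young diagram with $k+1$ boxes has at most $k+1$ rows, and the jump from genus $g-1$ to genus $g$ equals $\sum_{|\lambda|=k+1,\ h(\lambda)=g}\dim H_\lambda$. Now the only partition of $k+1$ with $k+1$ rows is $[1^{k+1}]$, the only one with $k$ rows is $[2,1^{k-1}]$, and the only ones with $k-1$ rows are $[3,1^{k-2}]$ and $[2,2,1^{k-3}]$; so the theorem reduces to computing $\dim H_{[1^{k+1}]}$ and $\dim H_{[2,1^{k-1}]}$ for every $k$, and, to locate the exact onset of stability in the even case, to showing $\dim H_{[3,1^{k-2}]}+\dim H_{[2,2,1^{k-3}]}>0$.

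I would compute these from the character formula $\dim H_\lambda=\langle\chi_{2k},\chi_{\lambda^\delta}\rangle$ of Theorem~\ref{thm:ortho}, using that $\chi_{2k}$ is supported only on the identity class, the classes $1^1a^b$ with $ab=2k+1$, and the classes $a^b$ with $ab=2k+2$, so that each inner product is a short explicit sum. Since the $\lambda$ in question are tall, I pass to the conjugate shapes via $\chi_{\lambda^\delta}=\mathrm{sgn}\cdot\chi_{(\lambda^\delta)'}$, which have very few rows. For $\lambda=[1^{k+1}]$ one has $\lambda^\delta=[1^{2k+2}]$, hence $\dim H_{[1^{k+1}]}=\langle\chi_{2k},\mathrm{sgn}\rangle$; evaluating $\mathrm{sgn}$ on the relevant classes rewrites this as $\tfrac1{(2k+1)(2k+2)}$ plus a contribution $\tfrac1n\sum_{d\mid n,\,d>1}\mu(d)(-1)^{n/d-1}$ for each of $n=2k+1$ and $n=2k+2$, and using $(-1)^{n/d-1}=-(-1)^{n/d}$, the identity $\sum_{d\mid n}\mu(d)=0$ for $n>1$, and a short parity analysis of $\sum_{d\mid n}\mu(d)(-1)^{n/d}$ for even $n$, the three terms cancel and $\dim H_{[1^{k+1}]}=0$ for all $k$. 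For $\lambda=[2,1^{k-1}]$ one has $(\lambda^\delta)'=[2k,2]$, whose character is $\chi_{[2k,2]}(\sigma)=\binom{m_1(\sigma)}{2}+m_2(\sigma)-m_1(\sigma)$ with $m_i(\sigma)$ the number of $i$-cycles of $\sigma$; substituting the identity, the classes $1^1a^b$ (for which $a$ is odd, so $\mathrm{sgn}=+1$ and the character value is $-1$), and the class $2^{k+1}$ (the only $a^b$ class on which $\chi_{[2k,2]}$ does not vanish, and which carries the sign $(-1)^{k+1}$), and simplifying the resulting rational number, one obtains $\dim H_{[2,1^{k-1}]}=\tfrac12+\tfrac12(-1)^{k+1}$, i.e. $1$ for $k$ odd and $0$ for $k$ even.

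Assembling the pieces: when $k$ is odd, diagrams with more than $k$ rows contribute nothing ($\dim H_{[1^{k+1}]}=0$) while the unique $k$-row diagram contributes $\dim H_{[2,1^{k-1}]}=1$, which is precisely the stabilization $\dim\mathfrak{h}_{k,1}(2k)^{\mathrm{Sp}}=\dim\mathfrak{h}_{k+1,1}(2k)^{\mathrm{Sp}}=\cdots$ together with the $+1$ statement. When $k$ is even, diagrams with $k$ or $k+1$ rows contribute nothing, so the dimension is already at its stable value at genus $k-1$; that $g=k-1$ and not less is the onset then follows from $\dim H_{[3,1^{k-2}]}+\dim H_{[2,2,1^{k-3}]}>0$, which I would get by the same conjugate-shape computation applied to the three-row partitions $([3,1^{k-2}]^\delta)'=[2k-2,2,2]$ and $([2,2,1^{k-3}]^\delta)'=[2k-2,4]$.

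I expect the main obstacle to be the exact-cancellation arithmetic of the two central inner products: both $\langle\chi_{2k},\mathrm{sgn}\rangle=0$ and the parity dichotomy for $[2,1^{k-1}]$ hinge on all the ingredients — centralizer orders, the values of $\chi_{2k}$, the near-column character values, and in the second case the single sign $\mathrm{sgn}(2^{k+1})=(-1)^{k+1}$ — conspiring exactly, so the bookkeeping must be handled with care. The three-row computation for the even case is routine but longer, and is the only point where a positivity statement rather than an exact evaluation is needed.
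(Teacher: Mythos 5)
Your reduction is the same as the paper's: via Theorem \ref{thm:ortho}, everything comes down to the multiplicities in $\mathfrak{h}_{g,1}(2k)$ of the tall double-floor types, i.e.\ of $[1^{2k+2}]_{\mathrm{GL}}$, $[2^21^{2k-2}]_{\mathrm{GL}}$ and, for the even-$k$ onset, $[3^21^{2k-4}]_{\mathrm{GL}}$ or $[2^41^{2k-6}]_{\mathrm{GL}}$ --- exactly the facts (i)--(iv) in the paper's proof. Where you genuinely differ is in how these multiplicities are obtained. The paper kills $[1^{2k+2}]$ by observing that $\wedge^{2k+2}H_\Q$ is not invariant under the cyclic permutation, quotes Enomoto--Satoh \cite{es} for the $[2^21^{2k-2}]$ dichotomy, and asserts the non-vanishing of $[3^21^{2k-4}]$ by an explicit computation that it omits. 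You instead evaluate the character inner products of Theorem \ref{thm:ortho} directly, and your two evaluations are correct: with $n=2k+2$ the identity, the $1^1a^b$ and the $a^b$ contributions to $\langle\chi_{2k},\mathrm{sgn}\rangle$ do cancel (the needed identity is $\sum_{d\mid n}\mu(d)(-1)^{n/d}=0$ for even $n\ge 4$), and for $[2,1^{k-1}]$ the formula $\chi_{[2k,2]}=\binom{m_1}{2}+m_2-m_1$ together with the fact that $2^{k+1}$ is the only contributing class of type $a^b$ gives exactly $\tfrac12+\tfrac12(-1)^{k+1}$. This buys a self-contained argument that does not rely on \cite{es} (indeed it reproves their multiplicity statement in even degree) nor on the cyclic-invariance trick, at the cost of more careful arithmetic.

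The one place where your proposal is not yet a proof is the even-$k$ sharpness, which you defer as ``routine but longer''; note that this is precisely the step the paper also leaves to an omitted computation (its fact (iv)). If you carry it out, the route you indicate does work, and more easily than you might fear: the character polynomials of $[2k-2,2,2]$ and $[2k-2,4]$ depend only on $m_1,\dots,m_4$ and vanish when all of these are zero, so besides the identity only the class $2^{k+1}$ and the classes divisible by $3$ (when $3\mid 2k+1$ or $3\mid 2k+2$; note $4\nmid 2k+2$ for $k$ even) contribute, and a closed formula in $k$ results. But be aware that the $2^{k+1}$ class contributes at the same quadratic order in $k$ as the identity term, so positivity is not automatic from the leading term alone and the bookkeeping must actually be done (small even $k$ can be cross-checked against Tables \ref{tab:h268}, \ref{tab:h1012}, \ref{tab:h16}, \ref{tab:h20}). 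Alternatively you could simply cite the paper's fact (iv) for this single input and keep your character-theoretic treatment of the other two.
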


By combining the above theorem with our earlier computation given in \cite{mss3},
we have determined the orthogonal decomposition of $\mathfrak{h}_{g,1}(2k)^\mathrm{Sp}$
explicitly for all $2k\leq 20$ 
(see Table \ref{tab:h268}, Table \ref{tab:h1012} in Section $2$ and Tables \ref{tab:h14}-\ref{tab:h20} in Section $9$). 

It is a very important problem to determine whether a given element
in $\mathfrak{h}_{g,1}(k)^{\mathrm{Sp}}$ can be expressed as a linear
combination of brackets of elements of $\mathfrak{h}_{g,1}$ with
lower degrees and, in case it can be, we have a further problem of 
determining whether the difference between 
various expressions give rise to non-trivial elements in $H_2(\mathfrak{h}_{g,1})$
or not. In this regard, we obtain the following result.

For each Young diagram $\lambda$ with $k$ boxes, let $V_\lambda^k$ denote
the isotypical  component of $H_\Q^{\otimes k}$, considered as a  $\mathrm{GL}$-module,
corresponding to $\lambda$, namely it is
the sum of all the $\mathrm{GL}$-submodules of $H_\Q^{\otimes k}$ which are
isomorphic to the $\mathrm{GL}$-irreducible representation $\lambda_{\mathrm{GL}}$.
Then we can write
$$
H_\Q^{\otimes k}=\bigoplus_{|\lambda|=k } V_\lambda^k.
$$
We consider $\mathfrak{h}_{g,1}(k)$ to be a subspace of $H_\Q^{\otimes (k+2)}$ and
for each Young diagram $\lambda$ with $(k+2)$ boxes, we
set $\widetilde{H}_{\lambda}=V^{k+2}_\lambda\cap \mathfrak{h}_{g,1}(k)$.
Then we can write 
$$
\mathfrak{h}_{g,1}(k)=\bigoplus_{|\lambda|=k+2} \widetilde{H}_\lambda 
$$
where $\widetilde{H}_\lambda $ is
the totality of ${\mathrm{GL}}$-irreducible summands of $\mathfrak{h}_{g,1}(k)$
which are isomorphic to $\lambda_{\mathrm{GL}}$.  Hence we can write
$$
\widetilde{H}_\lambda \cong \lambda_{\mathrm{GL}}^{\oplus m_\lambda}
$$
where $m_\lambda$ denotes the multiplicity of $\lambda_{\mathrm{GL}}$ in $\mathfrak{h}_{g,1}(k)$.
We call $\widetilde{H}_\lambda $ the {\it $\lambda$-isotypical component} of $\mathfrak{h}_{g,1}(k)$.
In relation to Theorem \ref{thm:ortho}, we have the following.
If $\lambda$ is a Young diagram with $(k+1)$ boxes, then we have
$$
H_\lambda\cong \widetilde{H}_{\lambda^\delta}^{\mathrm{Sp}}
\quad \text{and}\quad 
\widetilde{H}_{\lambda^\delta}\cong H_\lambda\otimes \lambda^\delta_{\mathrm{GL}}.
$$

\begin{thm}
Let $\widetilde{H}_\lambda\subset \mathfrak{h}_{g,1}(k)$ be the $\lambda$-isotypical component 
of $\mathfrak{h}_{g,1}(k)$ and let $\widetilde{H}_\mu\subset \mathfrak{h}_{g,1}(\ell)$ be the $\mu$-isotypical component 
of $\mathfrak{h}_{g,1}(\ell)$ where $\lambda$ and $\mu$ denote Young diagrams with $(k+2)$ boxes
and $(\ell+2)$ boxes respectively. 

$\mathrm{(i)}\ $ The bracket $[\widetilde{H}_\lambda,\widetilde{H}_\mu]$
is included in a $\mathrm{GL}$-submodule $B(\lambda,\mu)$ of $\mathfrak{h}_{g,1}(k+\ell)$
which is defined by
$$
B(\lambda,\mu)=\bigoplus_{\text{$\nu$ satisfies $(C)$}} \widetilde{H}_{\nu}.
$$
Here the condition $(C)$ is given by
$$
\text{(C): $\left(\lambda_{\mathrm{GL}}\otimes \mu_{\mathrm{GL}}\right)$ and 
$\left(\wedge^2 H_\Q\otimes \nu_{\mathrm{GL}}\right)$
have a common $\mathrm{GL}$-irreducible summand.}
$$
In particular, the height of such a $\nu$ must satisfy the following inequality
$$
\mathrm{max}\, \{h(\lambda),h(\mu)\}-2 \leq h(\nu)\leq h(\lambda)+h(\mu).
$$

$\mathrm{(ii)}\ $
The $\mathrm{Sp}$-invariant part $[\widetilde{H}_\lambda,\widetilde{H}_\mu]^{\mathrm{Sp}}$
of $[\widetilde{H}_\lambda,\widetilde{H}_\mu]$
is included in a submodule $S(\lambda,\mu)$ of $\mathfrak{h}_{g,1}(k+\ell)^{\mathrm{Sp}}$
which is defined by
$$
S(\lambda,\mu)=\bigoplus_{\text{$\nu$ satisfies $(S)$}} H_{\nu}.
$$
Here the condition $(S)$ is given by
$$
\text{(S): $\left(\lambda_{\mathrm{GL}}\otimes \mu_{\mathrm{GL}}\right)$ and 
$\left(\wedge^2 H_\Q\otimes \nu^{\delta}_{\mathrm{GL}}\right)$
have a common $\mathrm{GL}$-irreducible summand.}
$$
In particular, the
$\nu$-coordinate of any element in
$[\widetilde{H}_\lambda,\widetilde{H}_\mu]^{\mathrm{Sp}}$ vanishes for all $\nu$ such that
$2 h(\nu)> h(\lambda)+h(\mu)$ or $2 h(\nu)< \mathrm{max}\, \{h(\lambda),h(\mu)\}-2$.
\label{thm:bracket}
\end{thm}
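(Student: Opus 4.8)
The plan is to derive both statements from one structural description of the bracket, obtained after writing every module inside tensor powers of $H_\Q$. Using the symplectic form $\mu$ to realize $\mathfrak{h}_{g,1}(k)$ as $\Ker\bigl([\,\cdot\,,\,\cdot\,]\colon H_\Q\otimes\mathcal{L}_{g,1}(k+1)\to\mathcal{L}_{g,1}(k+2)\bigr)\subset H_\Q^{\otimes(k+2)}$, this subspace is a genuine $\mathrm{GL}$-submodule (the kernel of a $\mathrm{GL}$-equivariant map), so its decomposition into the $\widetilde{H}_\nu$ is its decomposition into $\mathrm{GL}$-isotypical components and is compatible with that of $H_\Q^{\otimes(k+2)}$. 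Now $\mu$ is $\mathrm{Sp}$-invariant but not $\mathrm{GL}$-invariant, so the bracket is $\mathrm{Sp}$- but not $\mathrm{GL}$-equivariant; however it enters in a very controlled way. Writing $[D_1,D_2]=D_1\circ\bar{D}_2-D_2\circ\bar{D}_1$, each Leibniz term substitutes the Lie‑word part of one derivation for a single letter of the other and contracts that letter against the input slot of the first derivation using $\mu$. Hence the bracket, viewed as a map into $H_\Q^{\otimes(k+\ell+2)}$, factors as
$$
\mathfrak{h}_{g,1}(k)\otimes\mathfrak{h}_{g,1}(\ell)\ \xrightarrow{\ \widetilde{b}\ }\ \wedge^2 H_\Q\otimes H_\Q^{\otimes(k+\ell+2)}\ \xrightarrow{\ \mu_{\wedge^2}\otimes\,\mathrm{id}\ }\ H_\Q^{\otimes(k+\ell+2)},
$$
where $\widetilde{b}$ is the ``uncontracted bracket'' which earmarks the two slots to be paired and projects them to $\wedge^2 H_\Q$ (using that $\mu$ is skew), while reshuffling the remaining factors and performing the free‑Lie‑operad substitutions, and $\mu_{\wedge^2}\colon\wedge^2 H_\Q\to\Q$ is the functional induced by $\mu$. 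Since $\widetilde{b}$ is built only from permutations of tensor factors and operad compositions of $\mathcal{L}_{g,1}$, it is $\mathrm{GL}$-equivariant.

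Granting this factorization, part (i) is $\mathrm{GL}$-representation theory. Since $\widetilde{H}_\lambda\cong\lambda_{\mathrm{GL}}^{\oplus m_\lambda}$ and $\widetilde{H}_\mu\cong\mu_{\mathrm{GL}}^{\oplus m_\mu}$, the $\mathrm{GL}$-equivariant map $\widetilde{b}$ sends $\widetilde{H}_\lambda\otimes\widetilde{H}_\mu$ into the sum of the $\sigma$-isotypical components of $\wedge^2 H_\Q\otimes H_\Q^{\otimes(k+\ell+2)}$ with $\sigma$ a summand of $\lambda_{\mathrm{GL}}\otimes\mu_{\mathrm{GL}}$. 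Decomposing $\wedge^2 H_\Q\otimes H_\Q^{\otimes(k+\ell+2)}=\bigoplus_\nu\bigl(\wedge^2 H_\Q\otimes V_\nu^{k+\ell+2}\bigr)$, the $\sigma$-component of the $\nu$-block is nonzero precisely when $\sigma$ occurs in $\wedge^2 H_\Q\otimes\nu_{\mathrm{GL}}$, i.e.\ when $\nu$ satisfies $(C)$; hence $\widetilde{b}(\widetilde{H}_\lambda\otimes\widetilde{H}_\mu)$ lies in $\bigoplus_{\nu\text{ satisfies }(C)}\bigl(\wedge^2 H_\Q\otimes V_\nu^{k+\ell+2}\bigr)$. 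Finally $\mu_{\wedge^2}\otimes\mathrm{id}$ carries each block $\wedge^2 H_\Q\otimes V_\nu^{k+\ell+2}$ into $V_\nu^{k+\ell+2}$, so it does not mix $\mathrm{GL}$-types; thus $[\widetilde{H}_\lambda,\widetilde{H}_\mu]\subset\bigoplus_{\nu\text{ satisfies }(C)}V_\nu^{k+\ell+2}$, and intersecting with the $\mathrm{GL}$-submodule $\mathfrak{h}_{g,1}(k+\ell)$ gives $[\widetilde{H}_\lambda,\widetilde{H}_\mu]\subset B(\lambda,\mu)$. The height inequality is then immediate from $(C)$: for a common summand $\sigma$ of $\lambda_{\mathrm{GL}}\otimes\mu_{\mathrm{GL}}$ and of $\wedge^2 H_\Q\otimes\nu_{\mathrm{GL}}$, the Littlewood--Richardson rule gives $\mathrm{max}\{h(\lambda),h(\mu)\}\le h(\sigma)\le h(\lambda)+h(\mu)$, and Pieri's rule (adding a vertical $2$-strip to $\nu$) gives $h(\nu)\le h(\sigma)\le h(\nu)+2$; combining the two chains yields $\mathrm{max}\{h(\lambda),h(\mu)\}-2\le h(\nu)\le h(\lambda)+h(\mu)$.

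Part (ii) follows by taking $\mathrm{Sp}$-invariants. Since this commutes with direct sums, part (i) gives $[\widetilde{H}_\lambda,\widetilde{H}_\mu]^{\mathrm{Sp}}\subset\bigoplus_{\nu\text{ satisfies }(C)}\widetilde{H}_\nu^{\mathrm{Sp}}$. By the discussion preceding the statement (equivalently, Littlewood's restriction rule together with Theorem~\ref{thm:ortho}), $\widetilde{H}_\rho^{\mathrm{Sp}}\ne 0$ only when $\rho=\nu^\delta$ for some Young diagram $\nu$, and then $\widetilde{H}_{\nu^\delta}^{\mathrm{Sp}}\cong H_\nu$. Thus the sum collapses to $\bigoplus_{\nu\,:\,\nu^\delta\text{ satisfies }(C)}H_\nu$, and the condition ``$\nu^\delta$ satisfies $(C)$'' is precisely $(S)$; this gives $[\widetilde{H}_\lambda,\widetilde{H}_\mu]^{\mathrm{Sp}}\subset S(\lambda,\mu)$. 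For the vanishing of $\nu$-coordinates, apply the height inequality of part (i) with $\nu^\delta$ in place of $\nu$: whenever $H_\nu$ can occur we get $\mathrm{max}\{h(\lambda),h(\mu)\}-2\le h(\nu^\delta)\le h(\lambda)+h(\mu)$, and since $h(\nu^\delta)=2h(\nu)$ this is exactly $\mathrm{max}\{h(\lambda),h(\mu)\}-2\le 2h(\nu)\le h(\lambda)+h(\mu)$.

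The main obstacle is the explicit factorization in the first paragraph: one must check that \emph{exactly one} symplectic contraction occurs in each Leibniz term, that skew-symmetry of $\mu$ lets it be extracted as a single $\wedge^2 H_\Q$-factor (this is precisely what makes $(C)$ and $(S)$ sharper than the naive conditions with $H_\Q^{\otimes 2}$, and is the source of the height bounds), and that the remaining operations --- reindexing of tensor factors and substitution of one Lie word for a letter of another --- are visibly $\mathrm{GL}$-equivariant. Once that is in place, the rest is Littlewood--Richardson/Pieri combinatorics together with the already-recorded identification $\widetilde{H}_{\nu^\delta}^{\mathrm{Sp}}\cong H_\nu$.
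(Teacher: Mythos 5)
Your proposal is correct and takes essentially the same route as the paper: the paper writes the extended bracket $\widetilde{B}$ as a sum of tensor-factor permutations each followed by a single contraction $K_i$, proves a lemma showing that $K_{ij}$ applied to a copy of $\lambda_{\mathrm{GL}}$ factors through the projection to $\wedge^2 H_\Q\otimes H_\Q^{\otimes k}$ (precisely your skew-symmetry observation), and then obtains the height bounds from Littlewood--Richardson and deduces (ii) from the identification $\widetilde{H}_{\nu^\delta}^{\mathrm{Sp}}=H_\nu$, exactly as you do.
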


\begin{remark}
The above result shows that the intersection matrix for the pairings 
of the bracket operation 
$$
\sum_{i+j=k} \ \mathfrak{h}_{g,1}(i)\otimes \mathfrak{h}_{g,1}(j) \rightarrow \mathfrak{h}_{g,1}(k)
$$
is, so to speak, ``lower anti-triangular" with respect to the heights of Young diagrams 
which appear in the $\mathrm{GL}$-irreducible decompositions of $\mathfrak{h}_{g,1}(k)$'s.
Furthermore the non-vanishing area is a rather restricted one near the anti-diagonal.
\end{remark}

\begin{example}
Asada-Nakamura \cite{an} proved that there exists a unique copy 
$[2k+1\, 1^2]_{\mathrm{GL}}\subset \mathfrak{h}_{g,1}(2k+1)$ for any $k\geq 1$.
On the other hand, as an $\mathrm{Sp}$-module, we have an $\mathrm{Sp}$-irreducible decomposition
$$
[2k+1\, 1^2]_{\mathrm{GL}}=[2k+1\, 1^2]_{\mathrm{Sp}}\oplus [2k,1]_{\mathrm{Sp}}\oplus [2k+1]_{\mathrm{Sp}}
$$
for all $g\geq 3$.
The second wedge product of each of these three irreducible components gives rise to an
$\mathrm{Sp}$-invariant element contained in $\mathfrak{h}_{g,1}(4k+2)^{\mathrm{Sp}}$.
The above theorem implies that the $\lambda$-coordinate of this element
vanishes for all $\lambda$ with $h(\lambda)>3$. On the other hand, explicit computation 
for the case $k=1$ shows that the $[21^2]$-coordinate does not vanish so that 
the above theorem gives the best possible result.
\end{example}

Proofs of the above theorems are given in Section $2$. 

In Section $3$,
we compare the three Lie algebras $\mathfrak{h}_{g,1},\mathfrak{h}_{g,*},\mathfrak{h}_{g}$.
These Lie algebras are the {\it rational} forms of the corresponding Lie algebras
$\mathfrak{h}^\Z_{g,1},\mathfrak{h}^\Z_{g,*},\mathfrak{h}^\Z_{g}$ which are defined over $\Z$.
In the cases of the corresponding mapping class groups denoted by $\mathcal{M}_{g,1}, \mathcal{M}_{g,*}, \mathcal{M}_{g}$,
the relations among them are described by the following two well-known extensions
\begin{align*}
0\ \rightarrow \Z\ \rightarrow\ \mathcal{M}_{g,1}\ \rightarrow \mathcal{M}_{g,*}\ \rightarrow\ 1 ,\\
1\ \rightarrow \pi_1 \Sigma_g\ \rightarrow\ \mathcal{M}_{g,*}\ \rightarrow \mathcal{M}_{g}\ \rightarrow\ 1
\end{align*}
which hold for any $g\geq 2$. In the cases of the above Lie algebras over $\Z$, the relations are
described by the following extensions
\begin{align*}
0\ \rightarrow \mathfrak{j}^\Z_{g,1}\ \rightarrow\ &\mathfrak{h}^\Z_{g,1}\ \rightarrow \mathfrak{h}^\Z_{g,*}\ \rightarrow\ 0 ,\\
0\ \rightarrow \mathcal{L}^\Z_g\ \rightarrow\ &\mathfrak{h}^\Z_{g,*}\ \rightarrow \mathfrak{h}^\Z_{g}\ \rightarrow\ 0
\end{align*}
where $\mathfrak{j}^\Z_{g,1}$ is a certain ideal of $\mathfrak{h}^\Z_{g,1}$ and $\mathcal{L}^\Z_g$ denotes the 
Malcev Lie algebra, over $\Z$, of $\pi_1\Sigma_g$ (see \cite{morita99} and Section $2$ for more details). 

In Section $4$, we formulate a method
of describing the $\mathrm{Sp}$-decompositions of  the two Lie algebras
$\mathfrak{h}_{g,*},\mathfrak{h}_{g}$ which is based  on a
theorem of Labute \cite{labute}. 
\begin{thm}
Over the rationals, we have a direct sum decomposition
$$
\mathfrak{h}_{g,1}(k)\cong \mathfrak{j}_{g,1}(k)\oplus \mathcal{L}_g(k)\oplus \mathfrak{h}_g(k).
$$
Furthermore there exists an explicit method of determining the $\mathrm{Sp}$-irreducible decompositions of 
the $\mathrm{Sp}$-modules $\mathfrak{j}_{g,1}(k)$ and $\mathcal{L}_g(k)$.
\end{thm}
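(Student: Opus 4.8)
The plan is to establish the decomposition first, then indicate the algorithm. The starting point is the short exact sequences of Lie algebras over $\Z$ recalled above, namely
$$
0\ \rightarrow \mathfrak{j}^\Z_{g,1}\ \rightarrow\ \mathfrak{h}^\Z_{g,1}\ \rightarrow \mathfrak{h}^\Z_{g,*}\ \rightarrow\ 0
\qquad\text{and}\qquad
0\ \rightarrow \mathcal{L}^\Z_g\ \rightarrow\ \mathfrak{h}^\Z_{g,*}\ \rightarrow \mathfrak{h}^\Z_{g}\ \rightarrow\ 0.
$$
Tensoring with $\Q$ gives the corresponding sequences of graded $\mathrm{Sp}$-modules, which split degreewise simply because we are in characteristic zero and $\mathrm{Sp}(2g,\Q)$ is reductive, so every short exact sequence of finite-dimensional $\mathrm{Sp}$-modules splits. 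Splicing the two split sequences yields
$$
\mathfrak{h}_{g,1}(k)\cong \mathfrak{j}_{g,1}(k)\oplus \mathcal{L}_g(k)\oplus \mathfrak{h}_g(k)
$$
as claimed. (This is a splitting of $\mathrm{Sp}$-modules only; one does not claim it respects the Lie bracket, and the statement does not require that.)

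For the second assertion, I would proceed as follows. The $\mathrm{Sp}$-irreducible decomposition of $\mathfrak{h}_{g,1}(k)$ itself is already known/computable from the classical description of $\mathfrak{h}_{g,1}(k)$ as a subspace of $H_\Q^{\otimes(k+2)}$ (this is the content of Theorem~\ref{thm:ortho} and the surrounding discussion, together with the tables cited up to degree $20$). Hence it suffices to compute any two of the three summands on the right. The Malcev Lie algebra $\mathcal{L}_g$ of $\pi_1\Sigma_g$ is the piece governed directly by Labute's theorem \cite{labute}: the one-relator presentation of $\pi_1\Sigma_g$ (with the single surface relation, whose associated graded leading term is the symplectic form $\omega\in\wedge^2 H_\Q$) determines $\mathcal{L}^\Z_g$ as the quotient of the free Lie algebra on $H_\Q$ by the ideal generated by $\omega$, and Labute gives its Hilbert series. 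I would turn this into $\mathrm{Sp}$-equivariant information: $\mathcal{L}_g(k)$ is the degree $k$ part of $\mathcal{L}_{g,1}/(\omega)$, and since $\omega$ is $\mathrm{Sp}$-invariant the ideal it generates is an $\mathrm{Sp}$-submodule, so $\mathcal{L}_g(k)$ is computed $\mathrm{Sp}$-equivariantly by the exact sequence
$$
\mathcal{L}_{g,1}(k-2)\otimes \wedge^2 H_\Q \ \longrightarrow\ \mathcal{L}_{g,1}(k)\ \longrightarrow\ \mathcal{L}_g(k)\ \longrightarrow\ 0,
$$
where the first map is $x\otimes\eta \mapsto [x,\omega]$ — or more precisely via Labute's exact sequence controlling the relations, whose kernel and image are again $\mathrm{Sp}$-modules. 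Combining with the known decomposition of the free Lie algebra $\mathcal{L}_{g,1}$ (via the classical formula for $\mathcal{L}_n$, the character $\chi_{2k}$-type computation restricted to $\mathrm{Sp}$) pins down $\mathcal{L}_g(k)$ as an explicit sum of $[\lambda]_{\mathrm{Sp}}$'s.

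With $\mathcal{L}_g(k)$ in hand, one gets $\mathfrak{h}_{g,*}(k)$ by the split sequence $\mathfrak{h}_{g,*}(k)\cong \mathfrak{h}_{g,1}(k)\ominus\mathfrak{j}_{g,1}(k)$ once $\mathfrak{j}_{g,1}(k)$ is known, and $\mathfrak{j}_{g,1}(k)$ is the remaining unknown. Here I would use the explicit description of the ideal $\mathfrak{j}_{g,1}\subset\mathfrak{h}_{g,1}$ from \cite{morita99}: it is (the rational form of) the kernel of the map induced by capping off the boundary and forgetting the base point, and it can be described as the image of a suitable $\mathrm{Sp}$-equivariant map involving $\mathcal{L}_{g,1}$, so its decomposition is again obtained from the known decomposition of the free Lie algebra by an $\mathrm{Sp}$-equivariant linear-algebra computation. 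The main obstacle is the bookkeeping in this last step: making the identification of $\mathfrak{j}_{g,1}(k)$ as an $\mathrm{Sp}$-module fully explicit and ensuring the three computed summands are genuinely complementary (no hidden overlap forced by multiplicity coincidences). That $\mathrm{Sp}$ is reductive guarantees the abstract splitting, but extracting the precise multiplicities — and checking the degreewise dimension count against the tables — is where the real work lies; this is exactly what gets carried out explicitly up to degree $20$ in Sections~4 and~9.
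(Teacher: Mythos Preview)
Your argument for the direct sum decomposition is correct and is exactly the paper's: tensor the two integral exact sequences with $\Q$ and split them as $\mathrm{Sp}$-modules by semisimplicity. Nothing more is needed there.

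The gap is in the second assertion, the ``explicit method''. Your sketch for $\mathcal{L}_g(k)$ points at Labute's theorem, which is the right ingredient, but the exact sequence you write is garbled: the map $x\otimes\eta\mapsto[x,\omega]$ ignores $\eta$, and in any case $[\mathcal{L}_{g,1}(k-2),\omega_0]$ does not exhaust the degree-$k$ part of the ideal $I_g=\langle\omega_0\rangle$ (the ideal involves iterated brackets of arbitrary length). What Labute's theorem actually supplies is a closed formula for $\mathrm{rank}\,\mathcal{L}^\Z_g(k)$; the paper's point is that this formula, read with $(2g)^{d-2i}$ replaced by the power-sum representation $P_{k/d}^{\otimes(d-2i)}$, upgrades to a virtual $\mathrm{GL}$-character $\tilde{I}(k)$ for $I_g(k)$, and hence for $\mathcal{L}_g(k)=\mathcal{L}_{g,1}(k)-\tilde{I}(k)$.

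More seriously, your treatment of $\mathfrak{j}_{g,1}(k)$ is circular: you say one gets $\mathfrak{h}_{g,*}(k)$ from $\mathfrak{h}_{g,1}(k)\ominus\mathfrak{j}_{g,1}(k)$ once $\mathfrak{j}_{g,1}(k)$ is known, then call $\mathfrak{j}_{g,1}(k)$ ``the remaining unknown'' and defer to an unspecified map from \cite{morita99}. The paper instead uses the \emph{definition}
$$
\mathfrak{j}_{g,1}(k)=\mathrm{Ker}\bigl(H\otimes I_g(k+1)\overset{[\ ,\ ]}{\longrightarrow} I_g(k+2)\bigr),
$$
and since this bracket is surjective one gets the virtual-representation identity $\mathfrak{j}_{g,1}(k)\equiv H_\Q\otimes\tilde{I}(k+1)-\tilde{I}(k+2)$ immediately. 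This is Theorem~\ref{thm:jL}, and it is the concrete content behind ``explicit method'': both $\mathfrak{j}_{g,1}(k)$ and $\mathcal{L}_g(k)$ are expressed directly in terms of $\tilde{I}$, with no bookkeeping obstacle left. Your proposal identifies Labute's theorem as the key input but stops short of the formula that does the work.
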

See Theorem \ref{thm:jL} for the precise decompositions mentioned here.
By applying this method and extending our results of \cite{mss3},
we obtain explicit $\mathrm{Sp}$-decompositions of $\mathfrak{j}_{g,1}(k)$, $\mathcal{L}_g(k)$ 
and $\mathfrak{h}_g(k)$ for all $k\leq 20$. See Section $4$ for details.

If we apply various contractions to 
any $\mathrm{GL}$-irreducible summand of $\mathrm{GL}$-irreducible
decomposition of $\mathfrak{h}_{g,1}(k)$, then we obtain
various $\mathrm{Sp}$-irreducible components. Of course non-isomorphic
$\mathrm{GL}$-irreducible summands may produce isomorphic 
$\mathrm{Sp}$-irreducible components. Keeping this fact in mind, to analyze the structure of the
Lie algebra $\mathfrak{h}_{g,1}$, we propose to take 
the process of contractions into consideration. We 
formulate this idea in Section $5$ under the names of {\it descendants} and {\it ancestors}.
See Definition \ref{def:da} and Theorem \ref{thm:imtau}.

In Section $6$  we give a general method of constructing elements of $\mathfrak{h}_{g,1}^{\mathrm{Sp}}$
and by using it, we reveal a considerable difference in property 
between the {\it $\mathrm{Sp}$-invariant parts} 
of the two Lie subalgebras of $\mathfrak{h}_{g,1}$,
one is the ideal
$\mathfrak{j}_{g,1}$ and the other is the Johnson image $\mathrm{Im}\, \tau_{g,1}$.
See Theorem \ref{thm:imtauw}.

By making use of the above results, together with the Enomoto-Satoh map given in \cite{es},
we have extended known results considerably to obtain a complete description of the structure of 
the Lie algebra $\mathfrak{h}_{g,1}$ 
up to degree $6$.  It is summarized in the following theorem and we give more detailed
structure theorem for $\mathfrak{h}_{g,1}(6)^{\mathrm{Sp}}$ in Section 7 (see Theorem \ref{thm:h6sp}).


\begin{thm}
The structure of the Lie algebra $\mathfrak{h}_{g,1}$ up to degree $6$ is as in 
Table \ref{tab:6} where the symbol with double parentheses, e.g. $[[3]]$, means that it
remains in the abelianization $H_1(\mathfrak{h}^+_{g,1})$.
\label{thm:t6}
\end{thm}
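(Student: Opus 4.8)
The plan is to build up the structure grading by grading: one starts from the $\mathrm{GL}$-irreducible decompositions of $\mathfrak{h}_{g,1}(k)$ for $k\le 6$, which are obtained in \cite{mss3} (and refined, via the decomposition $\mathfrak{h}_{g,1}(k)\cong \mathfrak{j}_{g,1}(k)\oplus\mathcal{L}_g(k)\oplus\mathfrak{h}_g(k)$ of Theorem \ref{thm:jL}, into the contributions of the three related Lie algebras), and then one decides, for each isotypical component $\widetilde{H}_\lambda$, whether it is contained in the image of the bracket $[\mathfrak{h}^+_{g,1},\mathfrak{h}^+_{g,1}]$ or survives to the abelianization $H_1(\mathfrak{h}^+_{g,1})$. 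Degrees $1$ through $3$ are essentially classical: $\mathfrak{h}_{g,1}(1)$ is a single $\mathrm{GL}$-irreducible, which must be a generator, the Johnson image in degrees $2$ and $3$ is known, and the small pieces outside it are identified directly, so the bracket structure here is read off by hand.

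For degrees $4,5,6$ the argument splits into a surjectivity half and an obstruction half. On the surjectivity side, Theorem \ref{thm:bracket} tells us that $[\widetilde{H}_\lambda,\widetilde{H}_\mu]\subset B(\lambda,\mu)$, and on $\mathrm{Sp}$-invariants that $[\widetilde{H}_\lambda,\widetilde{H}_\mu]^{\mathrm{Sp}}\subset S(\lambda,\mu)$; via the conditions $(C)$ and $(S)$ and the accompanying height inequalities this both restricts which summands of $\mathfrak{h}_{g,1}(k)$ can be reached from lower degrees and eliminates many would-be decompositions for purely combinatorial reasons. For the summands that remain candidates for being decomposable, one exhibits explicit bracket expressions realizing them from generators already found in lower degrees — for the $\mathrm{Sp}$-invariant ones, the construction of Section $6$ provides a systematic source of such expressions. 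On the obstruction side, the summands not reached this way must be shown to be genuine new generators; the Enomoto--Satoh map of \cite{es}, which is explicitly computable and vanishes on brackets (hence factors through $H_1(\mathfrak{h}^+_{g,1})$), detects most of them, and the residual cases are settled by combining it with the height constraints of Theorem \ref{thm:bracket}.

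The finer structure of the $\mathrm{Sp}$-invariant part in degree $6$ — stated separately as Theorem \ref{thm:h6sp} — feeds directly into this. One uses the orthogonal decomposition of Theorem \ref{thm:ortho} to coordinatize $\mathfrak{h}_{g,1}(6)^{\mathrm{Sp}}$ by Young diagrams $\lambda$ with $4$ boxes, computes the $\lambda$-coordinates of all brackets of lower-degree invariants (condition $(S)$ forcing most coordinates to vanish), and isolates the directions not hit; the comparison between the $\mathrm{Sp}$-invariant part of the ideal $\mathfrak{j}_{g,1}$ and that of the Johnson image $\mathrm{Im}\,\tau_{g,1}$ (Theorem \ref{thm:imtauw}) is exactly what shows that one particular class is a new generator and not a bracket, and this is the ``glimpse of the Galois obstruction'' advertised in the introduction.

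Finally, the unstable cases $g=1,2$ require separate bookkeeping: by the height constraint $h(\lambda)\le g$ in Theorem \ref{thm:ortho} (and its $\mathrm{GL}$-analogue) various summands present in the stable range disappear, so one re-runs the same surjectivity/obstruction analysis with the genus-restricted multiplicities; the genus $1$ case, where $\mathfrak{h}_{1,1}$ is small, can be done essentially by hand and carries independent interest. The main obstacle throughout is the obstruction half of the argument — establishing that a given summand is \emph{not} a sum of brackets — since surjectivity is handled fairly mechanically once Theorem \ref{thm:bracket} is available, whereas non-decomposability demands a genuine invariant, namely the Enomoto--Satoh map supplemented by the height restrictions and, in degree $6$, the ideal-versus-Johnson-image dichotomy.
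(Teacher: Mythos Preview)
Your proposal conflates two different decompositions that Table~\ref{tab:6} records. The main content of the table --- columns $\mathrm{Im}\,\tau_g(k)$ and $\mathrm{Coker}\,\tau_g(k)$ --- is the Johnson image/cokernel for the \emph{closed} surface, i.e.\ the Lie subalgebra of $\mathfrak{h}_g$ generated by degree~$1$; the double-bracket annotations $[[3]],[[5]],[[31]]$ are a separate, secondary statement about survival to $H_1(\mathfrak{h}^+_{g,1})$. You organise the whole argument around the second question (``is it a bracket or does it survive to $H_1$?'') and then invoke the Enomoto--Satoh map as the obstruction, asserting that it ``vanishes on brackets (hence factors through $H_1(\mathfrak{h}^+_{g,1})$)''. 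That assertion is false: Proposition~\ref{prop:sub} says only that $\mathrm{Ker}\,ES$ is a Lie \emph{subalgebra}, not that it contains all brackets, and the paper's own computation in the proof of Theorem~\ref{thm:h6sp}\,(iv) exhibits nonzero values $ES_7([v,\xi])$ with $v\in\mathfrak{h}_{g,1}(6)^{\mathrm{Sp}}$ and $\xi\in\mathfrak{h}_{g,1}(1)$. What $ES$ actually obstructs is membership in $\mathrm{Im}\,\tau_{g,1}$, and even there it is not sharp: Theorem~\ref{thm:h6sp}\,(iii) shows $\dim\bigl(\mathrm{Ker}\,ES_6/\mathrm{Im}\,\tau_{g,1}(6)\bigr)^{\mathrm{Sp}}=1$, so $ES$ alone does not determine the Johnson image in degree~$6$.

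The paper's actual route is: columns $\mathfrak{h}_{g,1}$, $\mathfrak{j}_{g,1}$, $\mathcal{L}_g$ come from \cite{mss3} and Theorem~\ref{thm:jL} (this part of your plan is fine); the Johnson image column is pinned down by explicitly constructing enough elements as iterated brackets of $\mathfrak{h}_{g,1}(1)$ (as in the proof of Theorem~\ref{thm:h6sp}\,(i), where two specific brackets are computed and evaluated under $\mathcal{K}_D$) and then bounding the image from above via $ES$ \emph{together with} direct computation of the bracket map into degree~$7$ on the relevant isotypical components (this is the work in Theorem~\ref{thm:h6sp}\,(iv), not Theorem~\ref{thm:imtauw}, which is a statement about bases). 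Theorem~\ref{thm:bracket} plays only a guiding role --- it narrows where to look, but cannot by itself certify that a summand is outside $\mathrm{Im}\,\tau$. The $H_1$ annotations $[[3]],[[5]]$ come from the Morita trace (which genuinely does factor through $H_1$), and $[[31]]$ from the computations of \cite{mss3} and \cite{ckv}; none of these follow from $ES$. You need to separate the two questions and supply the correct obstruction for each.
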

\begin{table}[h]
\caption{$\text{List of $\mathfrak{h}_{g,1}(k), \mathfrak{j}_{g,1}(k),
\mathcal{L}_{g}(k),\mathrm{Im}\,\tau_g(k), \mathrm{Coker}\,\tau_g(k)$}$}
\begin{center}
\begin{tabular}{|r|r|r|r|r|r|r|}
\noalign{\hrule height0.8pt}
\hfil $k$ & $\mathfrak{h}_{g,1}(k)$ & $\mathfrak{j}_{g,1}(k)$ & $\mathcal{L}_{g}(k)$  & 
$\mathrm{Im}\,\tau_g(k)$ & $\mathrm{Coker}\,\tau_g(k)$ \\
\hline
$1$ & $[1^3][1]$& $$ &  $[1]$   & $[1^3]$  & $$\\
\hline
$2$ & $[2^2][1^2][0]$ & $[0]$& $[1^2]$ & $[2^2]$ & $$\\
\hline
$3$ & $[31^2][21][3]$ & $$ &  $[21]$ & $[31^2]$ & $[[3]]$\\
\hline
$4$ & $[42][31^3][2^3]$\ & $[2]$  & $[31][21^2][2]$ 
& $[42][31^3][2^3]$   & $[21^2]$ \\
{} & $2[31]2[21^2]3[2]$ & {}  & {} & $[31][2]$ & {} \\
\hline
$5$ & $[51^2][421][3^21]$ & $[21][1^3][1]$ & $[41][32][31^2]$  & $[51^2][421][3^21]$ & $[[5]][32]$\\
{} & $[321^2][2^21^3][5]$ & $$  & $[2^21][21^3][3]$ & $[321^2][2^21^3]$ & $[2^21][1^5]$\\
{} & $2[41]3[32]2[31^2]$ & {}  & $2[21][1^3][1]$ & $[41][32][31^2]$ & $[21][1^3]$ \\
{} & $3[2^21]2[21^3][1^5]$ & {}  & $$ & $[2^21][21^3]$ & $[1]$ \\
{} & $[3]5[21]4[1^3]3[1]$ & {}  & $$ & $[21][1^3]$ & {} \\
\hline
$6$ & $[62][521][51^3]$  & $[4][31]2[2^2]$ & $[51][42]2[41^2]$ & $[62][521][51^3]$ & $2[41^2][3^2][321]$\\   
$$ & $[431][4^2]2[42^2]$ & $[21^2][1^4]$ & $[3^2]3[321][31^3]$ & $[431][4^2]2[42^2]$ & $[31^3][2^21^2]$\\
$$ & $[421^2][41^4]2[3^21^2]$ & $[2]3[1^2]2[0]$ & $2[2^21^2][21^4]$ & $[421^2][41^4]2[3^21^2]$ & $[4][[31]][31][2^2]$\\
$$ & $[32^21][321^3]$ & $$ & $4[31]4[2^2]2[4]$ & $[32^21][321^3]$ & $2[21^2][1^4]2[1^2][0]$\\
$$ & $[2^4]3[51]3[42]$ & $$ & $4[21^2]2[1^4]2[2]$ & $[2^4]2[51]2[42]$ & $$\\
$$ & $6[41^2]4[3^2]8[321]$ & $$ & $4[1^2][0]$ & $2[41^2]2[3^2]4[321]$ & $$\\
$$ & $4[31^3][2^3][2^21^4]$ & $$ & $$ & $2[31^3][2^3][2^21^4]$ & $$\\
$$ & $6[2^21^2]2[21^4][1^6]$ & $$ & $$ & $3[2^21^2][21^4][1^6]$ & $$\\
$$ & $6[4]9[31]12[2^2]$ & $$ & $$ & $2[4]2[31]5[2^2]$ & $$\\
$$ & $9[21^2]6[1^4]4[2]$ & $$ & $$ & $2[21^2]2[1^4][2]$ & $$\\
$$ & $11[1^2]5[0]$ & $$ & $$ & $2[1^2][0]$ & $$\\   
\noalign{\hrule height0.8pt}
\end{tabular}
\end{center}
\label{tab:6}
\end{table}
In Table \ref{tab:6}, the symbol $[1^3]$ denotes the $\mathrm{Sp}$-irreducible representation corresponding to the
Young diagram $[1^3]=[111]$. Also the symbol $\tau_g(k)$ denotes the Johnson homomorphism
$\tau_g(k): \mathcal{M}_g(k)\rightarrow \mathfrak{h}_g(k)$ for {\it closed} surface
where $\{\mathcal{M}_g(k)\}_k$ denotes the Johnson filtration for the mapping class group $\mathcal{M}_g$.
Details are given in Section $7$.

In  Section $8$, we study the case of genus $1$. This is motivated by the theory of 
universal mixed elliptic motives due to
Hain and Matsumoto (cf. \cite{hain14}). They show, among other things,  that 
certain Galois obstructions appear in the $\mathrm{Sp}$-invariant part $\mathfrak{h}_{1,1}^{\mathrm{Sp}}$.
We begin to study these elements from our point of view
(see Theorem \ref{thm:z5}). Here the theory of 
Satoh \cite{satoh} and Enomoto and Satoh \cite{es} play an important role.

Finally we mention two reasons why we put an emphasis on
the $\mathrm{Sp}$-invariant part $\mathfrak{h}^{\mathrm{Sp}}_{g,1}$ in our study of the whole Lie algebra $\mathfrak{h}_{g,1}$.
One is that the Galois obstructions, predicted by Oda
and proved by Nakamura \cite{nakamura} and Matsumoto \cite{matsumoto}
independently, appear in this $\mathrm{Sp}$-invariant Lie subalgebra. 
See a recent survey article \cite{matsumotopcmi} of Matsumoto.
The Galois obstructions
also appear in the genus $1$ symplectic derivation algebra 
as already mentioned above.
The precise description of these obstructions is still a mystery and it should be a very important problem
both in number theory and topology.
See also Willwacher \cite{willwacher} for a related work.

The other concerns another important problem of deciding whether the composition
$$
\mathfrak{h}^{\mathrm{Sp}}_{g,1}\subset \mathfrak{h}_{g,1}\rightarrow \lim_{g\to\infty}\ H_1(\mathfrak{h}_{g,1})
$$
is trivial. This is equivalent to the vanishing of the top homology group of
$\mathrm{Out}\, F_n$ with respect to its virtual cohomological dimension
determined by Culler and Vogtmann \cite{cuv}
(see Conjecture 1.3. and Remark 1.5. in \cite{mss3}, and as for a recent result on $H_1(\mathfrak{h}^+_{g,1})$,
see \cite{ckv}).
This is also related to one more mystery in low dimensional topology
because we can show that there exists a surjective homomorphism
$H_1(\mathcal{H}_{g,1};\Q)\rightarrow H_1(\mathfrak{h}_{g,1})$
where $\mathcal{H}_{g,1}$ denotes the group of homology cobordism classes of homology
cylinders introduced by Garoufalidis and Levine \cite{gl}. It should be a very important
problem to determine whether $H_1(\mathcal{H}_{g,1};\Q)=0$ or not.
Recall here that Cha, Friedl and Kim \cite{cfk} proved that $H_1(\mathcal{H}_{g,1};\Z)$
contains $(\Z/2)^{\infty}$ as a direct summand.

{\it Acknowledgement} The authors would like to thank Naoya Enomoto and Takao Satoh for 
enlightening discussion
about the cokernel of the Johnson homomorphisms. 
Thanks are also due to Richard Hain,
Makoto Matsumoto and Hiroaki Nakamura for helpful
information about the arithmetic mapping class groups.
The first named author would like to thank Dan Petersen for 
informing him about a paper \cite{hw} of Hanlon and Wales.
Finally we would like to thank the referee for helpful comments.

The authors were partially supported by KAKENHI (No.~24740040 and 
No.~24740035), 
Japan Society for the Promotion of Science, 
Japan.

\section{Canonical metric on $(H_\Q^{\otimes 2k})^{\mathrm{Sp}}$ and orthogonal decomposition of 
$\mathfrak{h}_{g,1}^{\mathrm{Sp}}$}\label{sec:ortho}

In this section, we first recall the canonical metric on $(H_\Q^{\otimes 2k})^{\mathrm{Sp}}$  introduced in \cite{morita13},
together with a few related facts, and then
we give proofs of Theorems \ref{thm:ortho},\ref{thm:stable} and \ref{thm:bracket}.

As already mentioned in the introduction, the above canonical metric can be described 
as a direct consequence of 
a result of Hanlon and Wales \cite{hw}. 
In order to make this paper self-contained as far as possible,
we give a quick proof of the main property of this metric 
by quoting their result in addition to our own arguments,
while we refer to \cite{morita13} for more geometric proof.

We begin by recalling a few facts from \cite{morita99}.
Let
$
\mathcal{D}^{\ell}(2k)
$
denote the set consisting of all the linear chord diagrams with $2k$ vertices.
Here a linear chord diagram with $2k$ vertices is a partition of the set
$\{1,2,\ldots,2k\}$ into $k$-tuple 
$$
C=\{\{i_1,j_1\},\ldots,\{i_k,j_k\}\}\quad 
$$
of pairs $\{i_l,j_l\}\ (l=1,\ldots,k)$ where we assume 
$$
i_1<\cdots<i_k, \ i_l<j_l \quad (l=1,\ldots,k).
$$
We may also adopt a simpler notation $C=(i_1 \, j_1)\cdots (i_k \, j_k)$ than the above.
We consider $C$ to be a graph with $2k$ vertices and $k$ edges (chords)
each of which connects $i_\ell$ with $j_\ell$ for $\ell=1,\cdots,k$.
This set  $\mathcal{D}^{\ell}(2k)$ has $(2k-1)!!$ elements and let $\Q\mathcal{D}^{\ell}(2k)$ be the vector
space over $\Q$ spanned by $\mathcal{D}^{\ell}(2k)$.
For each linear chord diagram $C\in \mathcal{D}^\ell(2k)$, define
$$
a_C\in (H_\Q^{\otimes 2k})^{\mathrm{Sp}}
$$
by permuting the elements $(\omega_0)^{\otimes 2k}$ in such a way that the $s$-th part
$(\omega_0)_s$ of this tensor product goes to $(H_\Q)_{i_s}\otimes (H_\Q)_{j_s}$, where $(H_\Q)_{i}$
denotes the $i$-th component of $H_\Q^{\otimes 2k}$, and then multiplied by the factor
$$
\mathrm{sgn}\, C=\mathrm{sgn}
\begin{pmatrix}
1 & 2 & \cdots & 2k-1 & 2k\\
i_1 & j_1 & \cdots & i_k & j_k
\end{pmatrix}
.
$$
In a dual setting, we can also define
$$
\alpha_C\in \mathrm{Hom}(H_\Q^{\otimes 2k},\Q)^{\mathrm{Sp}}
$$
by 
$$
\alpha_C(u_1\otimes\cdots\otimes u_{2k})=\mathrm{sgn}\, C
\prod_{s=1}^k u_{i_s}\cdot u_{j_s}\ (u_i\in H_\Q)
$$
where $u_{i_s}\cdot u_{j_s}$ denotes the intersection number of $u_{i_s}$ and $u_{j_s}$ as before.

Now define a linear mapping
$$
\Phi: \Q\mathcal{D}^\ell (2k) \rightarrow (H_\Q^{\otimes 2k})^{\mathrm{Sp}}
$$
by setting $\Phi(C)=a_C$. Observe that the symmetric  group $\mathfrak{S}_{2k}$
acts on both of $\Q\mathcal{D}^\ell (2k)$ and $(H_\Q^{\otimes 2k})^{\mathrm{Sp}}$
naturally.

\begin{prop}
The correspondence 
$$
\Phi: \Q\mathcal{D}^\ell (2k) \rightarrow (H_\Q^{\otimes 2k})^{\mathrm{Sp}}
$$
is surjective for any $g$ and bijective for any $g\geq k$. Furthermore this correspondence
is ``anti" $\mathfrak{S}_{2k}$-equivariant in the sense that
$$
\Phi(\gamma (C))=( \mathrm{sgn}\,\gamma) \  \gamma(\Phi(C))
$$
for any $C\in \mathcal{D}^\ell (2k)$ and $\gamma\in\mathfrak{S}_{2k}$.
\label{prop:LCD}
\end{prop}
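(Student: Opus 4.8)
The plan is to recognize that this proposition repackages the first and second fundamental theorems of invariant theory for $\mathrm{Sp}(2g,\Q)$, together with a sign bookkeeping for the $\mathfrak{S}_{2k}$-action. First I would pass to the dual picture: the intersection form identifies $H_\Q$ with $\mathrm{Hom}(H_\Q,\Q)$ as an $\mathrm{Sp}$-module, so $(H_\Q^{\otimes 2k})^{\mathrm{Sp}}\cong \mathrm{Hom}(H_\Q^{\otimes 2k},\Q)^{\mathrm{Sp}}$, and under this isomorphism $a_C$ corresponds to $\alpha_C$ while $\omega_0$ is the bivector dual to the symplectic pairing. Thus $\Phi$ becomes, up to this identification, the classical map sending a perfect matching $C=(i_1\,j_1)\cdots(i_k\,j_k)$ of $\{1,\dots,2k\}$ to the invariant multilinear form $u_1\otimes\cdots\otimes u_{2k}\mapsto\mathrm{sgn}(C)\prod_s u_{i_s}\cdot u_{j_s}$. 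Surjectivity for all $g$ is then exactly Weyl's first fundamental theorem for the symplectic group: every $\mathrm{Sp}(2g,\Q)$-invariant multilinear form on $2k$ vectors of $H_\Q$ is a linear combination of such products over matchings (in particular there are no invariants in odd tensor degree), and I would simply quote this.

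For bijectivity when $g\ge k$ I would show directly that the $(2k-1)!!$ vectors $a_C$ are linearly independent there, using the Gram-type matrix $G=\bigl(\alpha_{C'}(a_C)\bigr)_{C,C'}$: a relation $\sum_C\lambda_C a_C=0$, fed into each $\alpha_{C'}$, gives $G\lambda=0$, so nonsingularity of $G$ forces $\lambda=0$, and combined with surjectivity this yields the stated bijectivity. To evaluate $G$, overlay the matchings $C$ and $C'$ on the $2k$ vertices; their union is a disjoint union of cycles, and contracting the copies of $\omega_0$ around a cycle of length $2\ell$ produces a scalar $\pm 2g$, whence $\alpha_{C'}(a_C)=\varepsilon(C,C')\,(2g)^{c(C,C')}$ with $c(C,C')$ the number of cycles and $\varepsilon(C,C')=\pm1$. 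Up to an overall sign and a diagonal $\pm1$ change of basis this is the Gram matrix of the matching basis of the Brauer algebra at parameter $2g$, whose determinant is a product of factors $2g-j$ over finitely many integers $j$, all strictly below $2k$; hence $G$ is nonsingular once $2g\ge 2k$, i.e. $g\ge k$. This nondegeneracy is the same kind of statement that Hanlon and Wales prove \cite{hw} and that we are already quoting for the next theorem; alternatively one may cite the second fundamental theorem, whose relations among the $\alpha_C$ are Pfaffian-type identities on $2(g+1)$ indices and are vacuous for $g\ge k$. The fiddliest point here is controlling the signs $\varepsilon(C,C')$ and verifying that the exceptional integers $j$ all lie below $2k$; both are classical, but they are precisely what makes the bound $g\ge k$ sharp.

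Finally, I would prove the ``anti''-equivariance by a direct sign chase. Let $C_0=(1\,2)(3\,4)\cdots(2k-1\,2k)$ and, for any matching $C$, let $\sigma_C\in\mathfrak{S}_{2k}$ be a permutation carrying $C_0$ to $C$ (for instance the one defining $\mathrm{sgn}\,C$). Because $\omega_0$ is antisymmetric in its two tensor slots, $\omega_0^{\otimes k}$ transforms by the restriction of the sign character of $\mathfrak{S}_{2k}$ to the stabilizer $\mathfrak{S}_2\wr\mathfrak{S}_k$ of $C_0$ (within-copy transpositions act by $-1$, between-copy permutations by $+1$, exactly matching their signs in $\mathfrak{S}_{2k}$); hence $\mathrm{sgn}(\sigma_C)\,\sigma_C(\omega_0^{\otimes k})$ is independent of the choice of $\sigma_C$ and equals $a_C$. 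Since $\gamma\sigma_C$ carries $C_0$ to $\gamma(C)$ for $\gamma\in\mathfrak{S}_{2k}$, we obtain $a_{\gamma(C)}=\mathrm{sgn}(\gamma\sigma_C)\,(\gamma\sigma_C)(\omega_0^{\otimes k})=\mathrm{sgn}(\gamma)\,\mathrm{sgn}(\sigma_C)\,\gamma\bigl(\sigma_C(\omega_0^{\otimes k})\bigr)=\mathrm{sgn}(\gamma)\,\gamma(a_C)$, which is precisely the asserted identity $\Phi(\gamma(C))=(\mathrm{sgn}\,\gamma)\,\gamma(\Phi(C))$.
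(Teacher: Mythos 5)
Your proposal is correct and follows essentially the same route as the paper: surjectivity is quoted from Weyl's first fundamental theorem for the symplectic group (the paper cites this via \cite{morita99}), and the anti-equivariance is exactly the paper's ``comparison of the actions of permutations'', which you carry out explicitly by noting that $\omega_0^{\otimes k}$ transforms by the sign character under the stabilizer $\mathfrak{S}_2\wr\mathfrak{S}_k$ of the base matching, so that $a_C=\mathrm{sgn}(\sigma_C)\,\sigma_C(\omega_0^{\otimes k})$ is independent of the chosen $\sigma_C$ and the identity $a_{\gamma(C)}=(\mathrm{sgn}\,\gamma)\,\gamma(a_C)$ drops out; that sign chase is sound. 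The one place you diverge is bijectivity for $g\geq k$: the paper folds this into the citation of the classical result, whereas you derive injectivity from nondegeneracy of the Gram matrix $\bigl(\alpha_{C'}(a_C)\bigr)$, i.e.\ from the nonvanishing for $g\geq k$ of the Brauer-algebra eigenvalues $\mu_\lambda$ of Definition \ref{def:ev} (each factor $2g-2s_b+t_b\geq 2g-2(k-1)>0$), or alternatively from the vacuity of the second-fundamental-theorem Pfaffian relations, which involve at least $2g+2>2k$ indices. Both variants are legitimate and consistent with Proposition \ref{prop:lcdmu} and Theorem \ref{thm:lcdortho} (Hanlon--Wales), which the paper invokes immediately afterwards anyway; your write-up simply makes explicit what the paper delegates to the reference, at the cost of the sign bookkeeping $\varepsilon(C,C')$ you flag, which is harmless since nonsingularity only needs the eigenvalues of the symmetrized pairing.
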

\begin{proof}
The surjectivity follows from a classical result of Weyl on symplectic invariants (see \cite{morita99}).
The latter part can be shown by comparing the actions of permutations
on the symplectic invariant tensors and the linear chord diagrams.
\end{proof}


Next consider the following two symmetric bilinear pairings
\begin{align*}
\mu^{\otimes 2k}: H_\Q^{\otimes{2k}}&\otimes H_\Q^{\otimes{2k}}\rightarrow \Q\\
\langle\ \, ,\ \rangle: \Q\mathcal{D}^{\ell}(2k)&\times \Q\mathcal{D}^{\ell}(2k)\rightarrow\Q[g]
\end{align*}
which are defined by
\begin{align*}
\mu^{\otimes 2k}((u_1\otimes\cdots\otimes u_{2k})&\otimes (v_1\otimes\cdots\otimes v_{2k}))
=\prod_{i=1}^{2k}\, u_i\cdot v_i \quad (u_i, v_i\in H_\Q)\\
\langle C, C' \rangle&= (-1)^{k-r} (2g)^r\quad (C, C'\in \mathcal{D}^\ell(2k))
\end{align*}
where $r$ denotes the number of connected components of the graph $C\cup C'$
(here we consider interiors of $2k$ edges to be mutually {\it disjoint}). 
It is not difficult to check that the above two bilinear pairings correspond to each other
under the correspondence $\Phi$, namely we have the following result (see \cite{morita99}\cite{morita03}\cite{morita13}).
\begin{prop}
The following diagram is commutative
$$
\begin{CD}
\Q\mathcal{D}^\ell (2k) \otimes \Q\mathcal{D}^\ell (2k)  @>{\langle\ ,\ \rangle}>> \Q[g]\\
@V{\Phi\otimes\Phi}VV @VV{\text{evaluation}}V\\
(H_\Q^{\otimes 2k})^{\mathrm{Sp}}\otimes (H_\Q^{\otimes 2k})^{\mathrm{Sp}} @>{\mu^{\otimes 2k}}>> \Q.
\end{CD}
$$
\label{prop:lcdmu}
\end{prop}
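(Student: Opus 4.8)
The commutativity of the diagram is the assertion that, for linear chord diagrams $C,C'\in\mathcal{D}^\ell(2k)$ with pairs $(i_1\,j_1)\cdots(i_k\,j_k)$ and $(i'_1\,j'_1)\cdots(i'_k\,j'_k)$,
$$
\mu^{\otimes 2k}(a_C\otimes a_{C'})=(-1)^{k-r}\,(2g)^r ,
$$
where $r$ is the number of connected components of the graph $C\cup C'$; since every vertex of this graph has degree $2$, it is a disjoint union of $r$ cycles of even lengths. The plan is to reduce this to one elementary identity about $\omega_0\in H_\Q^{\otimes 2}$ together with a bookkeeping of those cycles. First I would check that
$$
\mu^{\otimes 2k}(w\otimes a_{C'})=\alpha_{C'}(w)\qquad(w\in H_\Q^{\otimes 2k}).
$$
Both sides are linear in $w$, so it suffices to verify this on decomposables $w=u_1\otimes\cdots\otimes u_{2k}$; writing $a_{C'}$ as $\mathrm{sgn}\,C'$ times the permutation of $(\omega_0)^{\otimes k}$ that sends the $s$-th copy of $\omega_0$ into the slots $i'_s,j'_s$, this reduces to the two-line computation $\mu^{\otimes 2}(\omega_0\otimes(u\otimes v))=u\cdot v$ in a symplectic basis. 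Taking $w=a_C$ then gives $\mu^{\otimes 2k}(a_C\otimes a_{C'})=\alpha_{C'}(a_C)$, so it remains to evaluate $\alpha_{C'}(a_C)$.

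Next I would use symmetry to reduce to a single cycle. By the ``anti''-equivariance of $\Phi$ in Proposition \ref{prop:LCD} the two factors $\mathrm{sgn}\,\gamma$ cancel, and since $\mu^{\otimes 2k}$ is invariant under simultaneously permuting the tensor slots of its two arguments, one obtains $\mu^{\otimes 2k}(a_{\gamma(C)}\otimes a_{\gamma(C')})=\mu^{\otimes 2k}(a_C\otimes a_{C'})$ for every $\gamma\in\mathfrak{S}_{2k}$; the right-hand side of the displayed identity is invariant in the same sense because $\gamma(C)\cup\gamma(C')=\gamma(C\cup C')$ has the same number of components. Hence both sides depend only on the diagonal $\mathfrak{S}_{2k}$-orbit of $(C,C')$. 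Since $\mathfrak{S}_{2k}$ acts transitively on matchings and the stabiliser of the standard matching $C_0=(1\,2)(3\,4)\cdots(2k-1\,2k)$ is the hyperoctahedral group, an elementary normal-form argument shows that this orbit is determined by the multiset of cycle lengths of $C\cup C'$. So it suffices to evaluate both sides on the representative in which $C=C_0$ and $C'$ is a disjoint union of ``necklaces'' supported on consecutive intervals $\{1,\dots,2m_1\},\{2m_1+1,\dots,2m_1+2m_2\},\dots$ with $\sum_j m_j=k$; for such a configuration the permutation realising $C'$ is block-diagonal, so $\alpha_{C'}\big((\omega_0)^{\otimes k}\big)$ factors as a product over the necklaces, exactly as $(-1)^{k-r}(2g)^r=\prod_j(-1)^{m_j-1}(2g)$ does.

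Finally, for a single necklace I would take $C=C_0$ and $C'=(1\,2m)(2\,3)(4\,5)\cdots(2m-2\,2m-1)$, so that $C\cup C'$ is one $2m$-cycle. Expanding $\omega_0=\sum_i(a_i\otimes b_i-b_i\otimes a_i)$ and reading off the alternating product of intersection numbers prescribed by $\alpha_{C'}$ around the necklace, one sees that $\alpha_{C'}\big((\omega_0)^{\otimes m}\big)$ is a trace over $H_\Q$ of a composite of copies of $\pm\mathrm{id}_{H_\Q}$; a short check identifies the relevant transfer operator as $-\mathrm{id}_{H_\Q}$, so the value is $(-1)^{m-1}\dim H_\Q=(-1)^{m-1}(2g)$, which is exactly $\langle C_0,C'\rangle=(-1)^{m-1}(2g)^1$. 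Multiplying over the necklaces of a general configuration then gives $(-1)^{k-r}(2g)^r$, completing the proof. The one genuine difficulty is the sign bookkeeping — assembling $\mathrm{sgn}\,C$, $\mathrm{sgn}\,C'$, the antisymmetries of $\omega_0$ and of the symplectic form, and the reordering of tensor factors needed to split the diagram over its connected components, into the single factor $(-1)^{k-r}$ — and the reduction to the explicit necklaces (where $\mathrm{sgn}\,C_0=1$ and the relevant permutation is block-diagonal) is precisely what makes it transparent; the remaining computations are those already recalled in \cite{morita99}, \cite{morita03}, \cite{morita13}.
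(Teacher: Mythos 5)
Your proposal is correct, and in fact it supplies more than the paper does: the paper states Proposition \ref{prop:lcdmu} without proof, remarking only that it ``is not difficult to check'' and citing \cite{morita99}, \cite{morita03}, \cite{morita13}, where the verification is of exactly the kind you give. Your three steps --- the slot-by-slot identity $\mu^{\otimes 2k}(w\otimes a_{C'})=\alpha_{C'}(w)$, the reduction via the (sign-cancelling) anti-equivariance of $\Phi$ and the hyperoctahedral double-coset normal form to block ``necklaces'', and the per-cycle trace of the transfer operator $-\mathrm{id}_{H_\Q}$ giving $(-1)^{m-1}(2g)$ --- are all sound and assemble to the stated formula $(-1)^{k-r}(2g)^r$, so this is a valid, self-contained proof along the standard lines.
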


To describe the canonical metric, we prepare one more terminology.
\begin{definition}
For each Young diagram $\lambda$,
let $\mu_\lambda$ be the number defined by the following formula
$$
\mu_\lambda=\prod_{\text{b: box of $\lambda$}} (2g-2s_b+t_b)
$$
where $s_b$ denotes the number of columns of $\lambda$ 
which are on the left of the column containing $b$ and  
$t_b$ denotes the number of rows which are above the 
row containing $b$.
When we specify the genus $g$, we write
$\mu_{\lambda'}(g)$ for $\mu_{\lambda'}$ which is a polynomial in $g$
of degree $|\lambda|$.
\label{def:ev}
\end{definition}

\begin{example}
$$
\mu_{[4]}=2g(2g-2)(2g-4)(2g-6),\quad
\mu_{[1^4]}=2g(2g+1)(2g+2)(2g+3).
$$
\end{example}

\begin{thm}
The intersection matrix of the symmetric bilinear pairing $\langle\ \, ,\ \rangle$
on $\Q\mathcal{D}^\ell (2k)$, with respect to the basis $\mathcal{D}^\ell (2k)$,
is positive semi-definite for any $g$,
so that it defines a metric on
$\Q\mathcal{D}^{\ell}(2k)$ depending on $g$.
Furthermore, there exists an orthogonal
direct sum decomposition
$$
\Q\mathcal{D}^{\ell}(2k)\cong \bigoplus_{|\lambda |=k}
E_{\lambda}
$$
in terms of eigenspaces $E_\lambda$ with eigenvalues
$\mu_\lambda$. This decomposition is
defined independently of the genus $g$.
With respect to the natural action of $\mathfrak{S}_{2k}$
on $\Q\mathcal{D}^{\ell}(2k)$, $E_\lambda$
is an irreducible $\mathfrak{S}_{2k}$-submodule isomorphic to
$
(2\lambda)_{\mathfrak{S}_{2k}}
$
so that the above orthogonal
direct sum decomposition gives also the irreducible decomposition of the
$\mathfrak{S}_{2k}$-module $\Q\mathcal{D}^{\ell}(2k)$.

\label{thm:lcdortho}
\end{thm}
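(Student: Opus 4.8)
The plan is to reduce the entire statement to the result of Hanlon and Wales \cite{hw} on the Brauer algebra $\mathfrak{B}_k(x)$ specialized to $x=2g$. First I would recall that $\Q\mathcal{D}^\ell(2k)$, as a vector space with basis the set of linear chord diagrams on $2k$ vertices, is canonically the underlying space of the Brauer algebra $\mathfrak{B}_k(x)$ (diagrams with $k$ strands, $k$ top and $k$ bottom vertices), once we reorganize the $2k$ labels $1,\dots,2k$ into two rows of $k$. Under this identification the bilinear pairing $\langle C,C'\rangle=(-1)^{k-r}(2g)^r$ is, up to the explicit global sign $(-1)^k$ and the substitution $x=2g$, the standard ``composition-trace'' form $\langle C,C'\rangle_{\mathfrak{B}}=x^{\,r(C,C')}$ that Hanlon--Wales study, where $r$ counts the loops formed by stacking $C$ on $C'$. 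Hanlon and Wales proved that, generically in $x$, this form is nondegenerate and that the Brauer algebra is semisimple with its simple modules (hence the orthogonal eigenspaces of the form) indexed by Young diagrams $\lambda$ with $|\lambda|\le k$ and $k-|\lambda|$ even; moreover they computed the Gram determinant and, in particular, the eigenvalue of the form on the $\lambda$-component as an explicit product of linear factors in $x$. Specializing $x=2g$ turns these linear factors into exactly the numbers $\mu_\lambda$ of Definition~\ref{def:ev} (with the indexing shifted so that $|\lambda|=k$ here corresponds to the "top-cell" $\mathfrak{S}_k$-component; I would double-check the bookkeeping of which $\lambda$ survive, but this is the standard Brauer-algebra dictionary). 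This gives simultaneously: positive semidefiniteness for every $g$ (the eigenvalues $\mu_\lambda$ are products of factors $2g-2s_b+t_b$, and one checks each such factor is $\ge 0$ on the relevant eigenspace, or more precisely that the eigenspaces with a negative eigenvalue are zero for geometric reasons coming from $\Phi$), the orthogonal decomposition $\Q\mathcal{D}^\ell(2k)\cong\bigoplus_{|\lambda|=k}E_\lambda$, and the fact that the decomposition is $g$-independent since the idempotents of $\mathfrak{B}_k$ projecting onto isotypical pieces are defined over $\Q$ independently of $x$ away from the finitely many degenerate specializations.

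Second, for the $\mathfrak{S}_{2k}$-module structure I would argue as follows. The group $\mathfrak{S}_{2k}$ acts on $\Q\mathcal{D}^\ell(2k)$ by permuting the $2k$ vertices, and $\mathcal{D}^\ell(2k)$ is a single $\mathfrak{S}_{2k}$-orbit (the stabilizer of a fixed chord diagram is the hyperoctahedral group $\mathfrak{S}_2\wr\mathfrak{S}_k$ of order $2^k k!$, which is why $|\mathcal{D}^\ell(2k)|=(2k-1)!!$). Hence $\Q\mathcal{D}^\ell(2k)\cong\Q[\mathfrak{S}_{2k}/(\mathfrak{S}_2\wr\mathfrak{S}_k)]$ as an $\mathfrak{S}_{2k}$-module, and this permutation module has a classical decomposition: by a theorem going back to work on Gelfand pairs / zonal polynomials, $\mathrm{Ind}_{\mathfrak{S}_2\wr\mathfrak{S}_k}^{\mathfrak{S}_{2k}}\mathbf{1}\cong\bigoplus_{|\lambda|=k}(2\lambda)_{\mathfrak{S}_{2k}}$, a multiplicity-free sum over the even Young diagrams $2\lambda$ with $2k$ boxes. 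Since this decomposition is multiplicity-free, the orthogonal eigenspace decomposition from the first step, being an $\mathfrak{S}_{2k}$-stable decomposition of the same module, must coincide with the isotypical decomposition; matching the eigenvalues $\mu_\lambda$ to the components then forces $E_\lambda\cong(2\lambda)_{\mathfrak{S}_{2k}}$. (Multiplicity-freeness is exactly what makes the two decompositions agree without further work, and it also re-proves that the eigenspaces are irreducible.)

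Third, I would nail down the compatibility: the $\mathfrak{S}_{2k}$-action and the Brauer-algebra multiplication commute in the appropriate sense (this is the "anti-equivariance" of $\Phi$ recorded in Proposition~\ref{prop:LCD}), so the centralizer picture is consistent and the eigenspace projections are $\mathfrak{S}_{2k}$-equivariant; this is what legitimizes comparing the two decompositions in step two. The genus-independence claim then follows because the projectors are polynomial in $1/(\text{linear factors})$ but can be rewritten — using the multiplicity-free structure — as honest $\mathfrak{S}_{2k}$-equivariant projectors with rational, $g$-free coefficients (the Young symmetrizers for $2\lambda$).

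The main obstacle I expect is the precise bookkeeping in the first step: matching the Hanlon--Wales eigenvalue formula (a product of factors indexed by boxes of a partition, with their conventions for rows/columns and their normalization of $x$) to the definition of $\mu_\lambda$ given here, including the global sign $(-1)^k$ and the shift in how $\lambda$ with $|\lambda|=k$ indexes the "full-rank" cell of the Brauer algebra $\mathfrak{B}_k(2g)$. Verifying that every linear factor $2g-2s_b+t_b$ occurring in $\mu_\lambda$ is the correct Brauer eigenvalue, and that the resulting form is genuinely positive \emph{semi}definite for all $g$ (not merely generic $g$) — i.e.\ that whenever a factor would be negative the corresponding eigenspace is forced to vanish (which is guaranteed because $\Phi$ lands in $H_\Q^{\otimes 2k}$ of a genuine symplectic space, and that image carries an honest positive-definite metric $\mu^{\otimes 2k}$ via Proposition~\ref{prop:lcdmu}) — is where the real care is needed. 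Everything else is either a citation or the standard permutation-module computation.
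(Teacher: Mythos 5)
Your overall route is the same as the paper's: identify $\Q\mathcal{D}^\ell(2k)$ with the span of $1$-factors underlying Brauer's centralizer algebra and quote Hanlon--Wales \cite{hw} for the eigenvalues, the eigenspaces, and their labelling by partitions. What you add that the paper does not spell out is the derivation of the $\mathfrak{S}_{2k}$-structure from the Gelfand pair $(\mathfrak{S}_{2k},\mathfrak{S}_2\wr\mathfrak{S}_k)$: since $\Q\mathcal{D}^\ell(2k)\cong\Q[\mathfrak{S}_{2k}/(\mathfrak{S}_2\wr\mathfrak{S}_k)]$ is multiplicity-free with summands $(2\lambda)_{\mathfrak{S}_{2k}}$, any $\mathfrak{S}_{2k}$-stable splitting must be the isotypic one, and Schur's lemma then gives both the orthogonality of distinct $E_\lambda$'s under the (invariant) pairing and the $g$-independence of the decomposition, since the isotypic projectors lie in $\Q[\mathfrak{S}_{2k}]$. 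That is a clean alternative to the paper's orthogonality argument, which instead uses the injectivity of $\lambda\mapsto\mu_\lambda$ as polynomials in $g$ (quoted from \cite{morita13}).

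Two concrete points need repair. First, the parameter matching: since $(-1)^{k-r}(2g)^r=(-1)^k(-2g)^r$, the correct dictionary is the specialization $x=-2g$ followed by multiplication by the global factor $(-1)^k$ (this is what the paper does); substituting $x=2g$ with only a global sign, as you wrote, does not reproduce the pairing. You flagged this as bookkeeping, but as stated it is wrong, and the sign matters because it is exactly what turns the Hanlon--Wales eigenvalue formula into $\mu_\lambda=\prod_b(2g-2s_b+t_b)$ of Definition~\ref{def:ev}.

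Second, and more seriously, your argument for positive semidefiniteness does not work as written. It is not true that every factor $2g-2s_b+t_b$ is nonnegative: in an even-numbered row the factors are odd integers and can skip $0$ and become negative. And the fallback you propose is circular inside this paper: Proposition~\ref{prop:lcdmu} only says the square commutes, while the positive definiteness of $\mu^{\otimes 2k}$ on $(H_\Q^{\otimes 2k})^{\mathrm{Sp}}$ is precisely Theorem~\ref{thm:can}, which the paper deduces \emph{from} the present theorem (it is proved independently only in \cite{morita13}). Also ``the eigenspaces with a negative eigenvalue are zero'' is not the right statement: the $E_\lambda$ are $g$-independent and nonzero; what can vanish at small $g$ is the eigenvalue (equivalently $\Phi(E_\lambda)$), not $E_\lambda$. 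The correct elementary fix, which is what the paper's phrase ``semi-positivity of the adapted eigenvalues'' amounts to, is: if some box in row $i$, column $j$ has $2g-2(j-1)+(i-1)<0$, then $j-1>g$, hence $\lambda_1\geq j>g+1$ and the first-row box in column $g+1$ contributes the factor $2g-2g=0$; therefore $\mu_\lambda\geq 0$ (indeed $\mu_\lambda=0$ in that case) for every integer $g$, and the form, being diagonalizable with eigenvalues $\mu_\lambda(g)\geq 0$, is positive semidefinite. With these two corrections your proposal becomes a complete proof along essentially the paper's lines.
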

\begin{proof}
The space $\Q\mathcal{D}^{\ell}(2k)$ as well as the bilinear pairing on it,
but with a different sign convention, appeared in the paper
\cite{hw} by Hanlon and Wales in a broader context of Brauer's centralizer algebras.
More precisely, they call the linear chord diagram in this paper {\it $1$-factor} and
they work over the reals $\R\mathcal{D}^{\ell}(2k)$. Also the bilinear pairings are indexed by
a real parameter $x$.  They determined the eigenspaces and eigenvalues of the intersection
matrix and also the irreducible decomposition of $\R\mathcal{D}^{\ell}(2k)$ as an $\mathfrak{S}_{2k}$-module.
In order to adapt their result to our context, we have to put the real parameter to be $x=-2g$ and then
take multiplication by the factor $(-1)^k$. Then their formula for the eigenvalues becomes the one
given in the above Definition \ref{def:ev}.

Thus, to prove the theorem, it remains to show the semi-positivity and orthogonality.
The former property follows from the semi-positivity of the adapted eigenvalues as in
Definition \ref{def:ev}. The latter property of orthogonality follows from the fact that the correspondence
$\lambda\mapsto\mu_\lambda$ from the set of Young diagrams to the set of eigenvalues,
considered as polynomials in $g$, is {\it injective}. This is because all the eigenvalues are then
different each other. This last claim is proved in \cite{morita13}.
\end{proof}

We now state the canonical metric on $(H_\Q^{\otimes 2k})^{\mathrm{Sp}}$.

\begin{thm}
For any $g$, the symmetric bilinear form $\mu^{\otimes 2k}$ on $(H_\Q^{\otimes 2k})^{\mathrm{Sp}}$
is positive definite so that it defines a metric on this space. 
Furthermore, there exists an orthogonal
direct sum decomposition
$$
(H_\Q^{\otimes 2k})^{\mathrm{Sp}}\cong \bigoplus_{|\lambda |=k,\ h(\lambda)\leq g}
U_{\lambda}
$$
in terms of eigenspaces $U_\lambda$ with eigenvalues
$\mu_{\lambda'}$.
With respect to the natural action of $\mathfrak{S}_{2k}$
on $(H_\Q^{\otimes 2k})^{\mathrm{Sp}}$, $U_\lambda$
is an irreducible $\mathfrak{S}_{2k}$-submodule isomorphic to
$
(\lambda^\delta)_{\mathfrak{S}_{2k}}
$
so that the above orthogonal
direct sum decomposition gives also the irreducible decomposition of the
$\mathfrak{S}_{2k}$-module $(H_\Q^{\otimes 2k})^{\mathrm{Sp}}$.
\label{thm:can}
\end{thm}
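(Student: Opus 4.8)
The plan is to carry the whole statement across the surjection $\Phi\colon\Q\mathcal{D}^\ell(2k)\to(H_\Q^{\otimes 2k})^{\mathrm{Sp}}$ of Proposition~\ref{prop:LCD}, using that it intertwines $\langle\ ,\ \rangle$ with $\mu^{\otimes 2k}$ (Proposition~\ref{prop:lcdmu}) and that the source already carries the explicit orthogonal/irreducible decomposition $\bigoplus_{|\lambda|=k}E_\lambda$ of Theorem~\ref{thm:lcdortho}. Everything reduces to the single assertion that $\Ker\Phi$ coincides with the radical $R_g$ of $\langle\ ,\ \rangle$ at genus $g$. Indeed, granting this, write $\Q\mathcal{D}^\ell(2k)=R_g\oplus P_g$ with $P_g=\bigoplus_{\mu_\lambda(g)\neq 0}E_\lambda$; since $\langle\ ,\ \rangle$ is positive semidefinite, it is positive definite on $P_g$, and $\Phi|_{P_g}$ is injective with $\Phi(P_g)=\Phi(\Q\mathcal{D}^\ell(2k))=(H_\Q^{\otimes 2k})^{\mathrm{Sp}}$, hence an isometry onto $\bigl((H_\Q^{\otimes 2k})^{\mathrm{Sp}},\mu^{\otimes 2k}\bigr)$. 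This makes $\mu^{\otimes 2k}$ positive definite and transports the decomposition of $P_g$ to the desired one.

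I would first identify $R_g$. By Theorem~\ref{thm:lcdortho} the form $\langle\ ,\ \rangle$ acts on $E_\lambda$ by the scalar $\mu_\lambda(g)$, so $R_g=\bigoplus_{\mu_\lambda(g)=0}E_\lambda$. From Definition~\ref{def:ev}, the factor of $\mu_\lambda$ attached to the box in row $r$ and column $c$ equals $2g-2(c-1)+(r-1)$; this can vanish only if $c\geq g+1$ (which forces $r=2c-2g-1\geq 1$), while conversely the box $(1,g+1)$ already contributes a zero factor whenever $\lambda_1\geq g+1$. Hence $\mu_\lambda(g)=0$ precisely when $\lambda_1>g$, so $R_g=\bigoplus_{\lambda_1>g}E_\lambda$ and $P_g=\bigoplus_{\lambda_1\leq g}E_\lambda$.

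The inclusion $\Ker\Phi\subseteq R_g$ is immediate from Proposition~\ref{prop:lcdmu}: if $\Phi(C)=0$ then $\langle C,C'\rangle=\mu^{\otimes 2k}(0,\Phi(C'))=0$ for every $C'$. The reverse inclusion is the one genuinely non-formal step, and I would get it from the anti-equivariance of $\Phi$ together with Schur--Weyl duality for $\mathrm{GL}\times\mathfrak{S}_{2k}$ acting on $H_\Q^{\otimes 2k}$. Since $E_\lambda\cong(2\lambda)_{\mathfrak{S}_{2k}}$ is irreducible, $\Phi(E_\lambda)$ is either $0$ or, by the anti-equivariance, an $\mathfrak{S}_{2k}$-submodule of $H_\Q^{\otimes 2k}$ isomorphic to $(2\lambda)_{\mathfrak{S}_{2k}}\otimes\mathrm{sgn}=\bigl((2\lambda)'\bigr)_{\mathfrak{S}_{2k}}$, where $'$ denotes the conjugate diagram; one checks the elementary identity $(2\lambda)'=(\lambda')^\delta$, so this diagram has $2\lambda_1$ rows. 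But $H_\Q^{\otimes 2k}$ contains no $\mathfrak{S}_{2k}$-irreducible whose Young diagram has more than $\dim H_\Q=2g$ rows; hence $\lambda_1>g$ forces $\Phi(E_\lambda)=0$, i.e.\ $E_\lambda\subseteq\Ker\Phi$. Combined with the previous inclusion this yields $\Ker\Phi=R_g$. (Alternatively one could quote the classical Brauer/Weyl description of $(H_\Q^{\otimes 2k})^{\mathrm{Sp}}$ as an $\mathfrak{S}_{2k}$-module and compare dimensions, but the argument above is self-contained given Theorem~\ref{thm:lcdortho}.)

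It remains to assemble the decomposition. For each diagram $\lambda$ with $|\lambda|=k$, put $U_\lambda:=\Phi(E_{\lambda'})$. Because $\Phi|_{P_g}$ is an isometric isomorphism, $(H_\Q^{\otimes 2k})^{\mathrm{Sp}}=\bigoplus_{|\lambda|=k}U_\lambda$ orthogonally, with $U_\lambda\neq 0$ exactly when $E_{\lambda'}\subseteq P_g$, i.e.\ when $(\lambda')_1=h(\lambda)\leq g$; this gives the stated index set. Transporting Theorem~\ref{thm:lcdortho} along $\Phi$, $U_\lambda$ is the eigenspace of eigenvalue $\mu_{\lambda'}(g)$, and as an $\mathfrak{S}_{2k}$-module $U_\lambda\cong(2\lambda')_{\mathfrak{S}_{2k}}\otimes\mathrm{sgn}=\bigl((2\lambda')'\bigr)_{\mathfrak{S}_{2k}}=(\lambda^\delta)_{\mathfrak{S}_{2k}}$, again using $(2\lambda')'=\lambda^\delta$. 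Since the diagrams $\lambda^\delta$ are pairwise distinct, this orthogonal decomposition is at the same time the $\mathfrak{S}_{2k}$-irreducible decomposition, as asserted. As flagged, the crux is the equality $\Ker\Phi=R_g$; beyond that, the only care needed is the bookkeeping with conjugate diagrams and with the operation $\lambda\mapsto\lambda^\delta$.
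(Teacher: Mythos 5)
Your argument is correct and follows essentially the same route as the paper: transport the Hanlon--Wales eigenspace decomposition of $\Q\mathcal{D}^\ell(2k)$ (Theorem \ref{thm:lcdortho}) through $\Phi$ using Propositions \ref{prop:LCD} and \ref{prop:lcdmu}, after identifying $\Ker\Phi$ with the sum of the zero-eigenvalue eigenspaces. The only difference is that you actually prove this kernel identification (via the Cauchy--Schwarz/radical inclusion, anti-equivariance, the identity $(2\lambda)'=(\lambda')^\delta$, and Schur--Weyl's bound of $2g$ on the number of rows), a step the paper merely declares ``easy to see''; your bookkeeping with conjugate diagrams and $\lambda\mapsto\lambda^\delta$ checks out.
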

\begin{proof}
Proposition \ref{prop:lcdmu} shows that the homomorphism $\Phi$ converts all the
relevant 
structures of $\Q\mathcal{D}^{\ell}(2k)$ into those of $(H_\Q^{\otimes 2k})^{\mathrm{Sp}}$
{\it anti} $\mathfrak{S}_{2k}$-equivariantly. Also it is easy to see that the kernel of 
$\Phi$, for each specific value of $g$, is precisely the direct sum of
eigenspaces whose eigenvalues are zero. Hence the claim follows 
from Theorem \ref{thm:lcdortho} by applying Proposition \ref{prop:lcdmu}.
\end{proof}

Now we are ready to prove Theorems \ref{thm:ortho},\ref{thm:stable} and \ref{thm:bracket}.

\begin{proof}[Proof of Theorem $\ref{thm:ortho}$]
As already mentioned, the space $\mathfrak{h}_{g,1}(2k)^{\mathrm{Sp}}$ can be considered to be a subspace of
$\left(H_\Q^{\otimes (2k+2)}\right)^{\mathrm{Sp}}$.
By Theorem \ref{thm:can},
we have an orthogonal
direct sum decomposition
$$
\left(H_\Q^{\otimes (2k+2)}\right)^{\mathrm{Sp}}\cong \bigoplus_{|\lambda |=k+1,\ h(\lambda)\leq g}
U_{\lambda}
$$
and as an $\mathfrak{S}_{2k+2}$-submodule of 
$\left(H_\Q^{\otimes (2k+2)}\right)^{\mathrm{Sp}}$, $U_\lambda$
is an irreducible $\mathfrak{S}_{(2k+2)}$-module corresponding to the Young diagram
$
\lambda^\delta.
$
More precisely, 
$\mathrm{GL}$-irreducible representation $\lambda^\delta_{\mathrm{GL}}$ appears 
in the $\mathrm{GL}$-irreducible decomposition of $H_\Q^{\otimes (2k+2)}$ if and only
if $h(\lambda)\leq g$ and in this case its 
multiplicity is equal to the dimension of $\lambda^\delta_{\mathfrak{S}_{2k+2}}$.
On the other hand, each copy of $\lambda^\delta_{\mathrm{GL}}$ has a unique
$\mathrm{Sp}$-invariant element (up to scalars) and the totality of these 
$\mathrm{Sp}$-invariant elements makes a basis of the subspace $U_\lambda$.

Now it was shown in \cite{morita99} (see also Proposition \ref{prop:chah} below) that 
the space $\mathfrak{h}_{g,1}(2k)$ is the image of certain projecting operator acting on
$H_\Q^{\otimes (2k+2)}$ which is an element of $\Z[\mathfrak{S}_{2k+2}]$.
More precisely, we have
$$
S_{2k+2}\circ (1\otimes p_{2k+1}) (H_\Q^{\otimes (2k+2)})=\mathfrak{h}_{g,1}(2k).
$$
By restricting to the $\mathrm{Sp}$-invariant subspaces, we have
$$
S_{2k+2}\circ (1\otimes p_{2k+1}) \left(H_\Q^{\otimes (2k+2)}\right)^{\mathrm{Sp}}=\mathfrak{h}_{g,1}(2k)^{\mathrm{Sp}}.
$$
Since this operator is described in terms of the action of the symmetric group, 
it sends each subspace $U_\lambda$ to itself. 
Hence if we define
$$
H_\lambda=S_{2k+2}\circ (1\otimes p_{2k+1})(U_\lambda)
$$
we obtain the required orthogonal decomposition. Finally, the formula for the dimension of $H_\lambda$ follows
by applying Corollary 3.2 of \cite{mss3} because $\dim\, H_\lambda$ is equal to
the multiplicity of $\lambda^\delta_{\mathrm{GL}}$ in $\mathfrak{h}_{g,1}(2k)$. This completes the proof.
\end{proof}

As was mentioned in our paper \cite{mss3}, we have determined the $\mathrm{Sp}$-irreducible decompositions of
$\mathfrak{h}_{g,1}(k)$ for all $k\leq 20$. By combining this with Theorem \ref{thm:ortho}, we have determined
the orthogonal decompositions of $\mathfrak{h}_{g,1}(2k)^{\mathrm{Sp}}$ for all $2k\leq 20$.
The results are given in Tables \ref{tab:h268} and \ref{tab:h1012} below and further Tables \ref{tab:h14} - \ref{tab:h20} are given
in Section \ref{sec:tables}. Here the symbol $2[31]^\delta$ in Table \ref{tab:h268}, for example, means that  there are two copies of
the representation $[31]^\delta_{\mathrm{GL}}=[3^21^2]_{\mathrm{GL}}$ appear in $\mathfrak{h}_{g,1}(6)$
and the unique $\mathrm{Sp}$-invariant element in each copy contributes to
$\mathfrak{h}_{g,1}(6)^{\mathrm{Sp}}$.


\vspace{2cm}

\begin{table}[h]
\caption{$\text{Orthogonal decompositions of $\mathfrak{h}_{g,1}(2)^{\mathrm{Sp}}, \mathfrak{h}_{g,1}(6)^{\mathrm{Sp}}, \mathfrak{h}_{g,1}(8)^{\mathrm{Sp}}$}$}
\begin{center}
\begin{tabular}{|r|r|r|r|r|r|r|}
\noalign{\hrule height0.8pt}
\hfil {} & {} & $\mathfrak{h}_{g,1}(2)^{\mathrm{Sp}}$& {} & $\mathfrak{h}_{g,1}(6)^{\mathrm{Sp}}$ & 
{} & $\mathfrak{h}_{g,1}(8)^{\mathrm{Sp}}$   \\
\hline
{} & $\mathrm{dim}$& $\text{eigenspace}$ & $\mathrm{dim}$& $\text{eigenspaces}$ & 
$\mathrm{dim}$ & $\text{eigenspaces}$    \\
\hline
$g=1$ & $1$ & $[2]^\delta$ & $1$ & $[4]^\delta$ & $0$ & {} \\
\hline
$g=2$ & {} & {} & $4$ & $2[31]^\delta [2^2]^\delta$ & $2$ & $[41]^\delta [32]^\delta$\\
\hline
$g\geq 3$ & {} & {} & $5$ & $[21^2]^\delta$ &
$3$ & $[31^2]^\delta$  \\                                                
\noalign{\hrule height0.8pt}
\end{tabular}
\end{center}
\label{tab:h268}
\end{table}


\begin{table}[h]
\caption{$\text{Orthogonal decompositions of $\mathfrak{h}_{g,1}(10)^{\mathrm{Sp}}, \mathfrak{h}_{g,1}(12)^{\mathrm{Sp}}$}$}
\begin{center}
\begin{tabular}{|r|r|r|r|r|}
\noalign{\hrule height0.8pt}
\hfil {} & {} & $\mathfrak{h}_{g,1}(10)^{\mathrm{Sp}}\hspace{7mm}$ & 
{} & $\mathfrak{h}_{g,1}(12)^{\mathrm{Sp}}\hspace{2cm}$   \\
\hline
{} & $\mathrm{dim}$& $\text{eigenspaces}\hspace{7mm}$ & 
$\mathrm{dim}$ & $\text{eigenspaces}\hspace{2cm}$    \\
\hline
$g=1$ & $3$ & $3[6]^\delta$ & $0$ & {} \\
\hline
$g=2$ & $51$ & $15[51]^\delta 26[42]^\delta 7[3^2]^\delta$ & $190$ & $31[61]^\delta 103[52]^\delta 56[43]^\delta$\\
\hline
$g=3$ & $97$ & $19[41^2]^\delta 24[321]^\delta 3[2^3]^\delta$ &
$97$ & $68[51^2]^\delta 216[421]^\delta 42[3^21]^\delta 28[32^2]^\delta$  \\
\hline
$g=4$ & $107$ & $7[31^3]^\delta 3[2^21^2]^\delta$  & $643$ & $36[41^3]^\delta 60[321^2]^\delta 3[2^31]^\delta$\\
\hline
$g\geq 5$ & $108$ & $[21^4]^\delta$ & $650$ & $5[31^4]^\delta 2[2^21^3]^\delta$ \\                                                  
\noalign{\hrule height0.8pt}
\end{tabular}
\end{center}
\label{tab:h1012}
\end{table}

\begin{proof}[Proof of Theorem $\ref{thm:stable}$]
It is easy to see that the stable range of $(H_\Q^{\otimes 2k})^{\mathrm{Sp}}$ is $g\geq k$.
On the other hand, $\mathfrak{h}_{g,1}(2k)^{\mathrm{Sp}}$
is a submodule of $(H_\Q^{\otimes (2k+2)})^{\mathrm{Sp}}$ so that the stable range of 
$\mathfrak{h}_{g,1}(2k)^{\mathrm{Sp}}$ is smaller than or equal to $g\geq k+1$.
Now, as was mentioned in \cite{mss3}, Proposition 4.1, for any Young diagram 
$\lambda$
$$
\dim\, (\lambda_{\mathrm{GL}})^{\mathrm{Sp}}=
\begin{cases}
&1\quad (\text{$\lambda$: multiple double floors, namely $\lambda=\mu^\delta$ for some $\mu$})\\
&0\quad (\text{otherwise}).
\end{cases}
$$
Let $\lambda=[\lambda_1,\cdots,\lambda_h]$ be a Young diagram with $(2k+2)$ boxes 
and $h$ rows and assume that it is with multiple double floors. Then it is easy to see the following:
\begin{align*}
& h=2k+2\ \Rightarrow\ \lambda=[1^{2k+2}],\\
& h=2k\ \Rightarrow\ \lambda=[2^2 1^{2k-2}],\\
& h=2k-2\ \Rightarrow\  \lambda=[3^2 1^{2k-4}]\ \text{or}\ [2^4 1^{2k-6}].
\end{align*}
Hence, to prove the result it is enough to show the following four facts:
\begin{align*}
\mathrm{(i)}\ &\text{$[1^{2k+2}]_{\mathrm{GL}}$ does not appear in the $\mathrm{GL}$-irreducible decomposition of $\mathfrak{h}_{g,1}(2k)$},\\
\mathrm{(ii)}\ &\text{$[2^2 1^{2k-2}]_{\mathrm{GL}}$ appears in the $\mathrm{GL}$-irreducible decomposition of $\mathfrak{h}_{g,1}(2k)$}\\
& \hspace{2mm} \text{with multiplicity $1$ for any odd $k\geq 1$},\\
\mathrm{(iii)}\ &\text{$[2^2 1^{2k-2}]_{\mathrm{GL}}$ does not appear in the $\mathrm{GL}$-irreducible decomposition of $\mathfrak{h}_{g,1}(2k)$}\\
& \hspace{2mm} \text{for any even $k$},\\
\mathrm{(iv)}\ &\text{$[3^2 1^{2k-4}]_{\mathrm{GL}}$ appears in the $\mathrm{GL}$-irreducible decomposition of $\mathfrak{h}_{g,1}(2k)$}\\
& \hspace{2mm} \text{for any even $k\geq 4$ with non-zero multiplicity}.
\end{align*}
$\mathrm{(i)}$ follows easily. Indeed $[1^{2k+2}]_{\mathrm{GL}}$ is nothing other than the alternating product
$\wedge^{2k+2} H_\Q$ which is {\it not} invariant under the $\Z/(2k+2)$-cyclic permutation of 
$H_\Q^{\otimes (2k+2)}$ while any summand of $\mathfrak{h}_{g,1}(2k)$ must be so.
$\mathrm{(ii)}$ and $\mathrm{(iii)}$ were proved by Enomoto and Satoh in \cite{es}. Indeed,
they proved that $[2^2 1^{k-2}]_{\mathrm{GL}}$ appears in $\mathfrak{h}_{g,1}(k)$ with multiplicity
$1$ for any $k$ such that $k\equiv 1\ \text{or}\ 2\ \mathrm{mod}\, 4$ and does not appear otherwise.
Finally $\mathrm{(iv)}$ follows from an explicit computation using the method of \cite{mss3} but here
we omit it.
\end{proof}


To prove Theorem \ref{thm:bracket}, we prepare the following.

\begin{lem}
Let
$$
K_{ij}: H_\Q^{\otimes (k+2)}\rightarrow H_\Q^{\otimes k}\quad (1\leq i<j\leq k+2)
$$
be a linear mapping defined by
$$
K_{ij}(u_1\otimes u_2\otimes\cdots\otimes u_{k+2})=(u_i\cdot u_j)\, u_1\otimes\cdots\otimes
\hat{u}_i \otimes\cdots\otimes\hat{u}_j \otimes\cdots\otimes u_{k+2}
\quad (u_i\in H_\Q)
$$
where $u_i \cdot u_j$ denotes the intersection number of $u_i$ and $u_j$, 
and the symbol $\hat{u}_i$ means that we delete $u_i$.
Let $V\subset H_\Q^{\otimes (k+2)}$ be an irreducible $\mathrm{GL}$-submodule which
is isomorphic to $\lambda_{\mathrm{GL}}$ where $\lambda$ is a Young diagram with
$(k+2)$ boxes. Consider the following condition $(C)$ posed on Young diagrams $\nu$
with $|\nu|=k$
$$
\text{(C): $\left(\wedge^2 H_\Q\otimes \nu_{\mathrm{GL}}\right)$
has a direct summand isomorphic to $\lambda_{\mathrm{GL}}$}.
$$
Then we have 
$$
K_{ij}(V)\ \subset \bigoplus_{\text{$\nu$ satisfies (C)}} V^k_{\nu}.
$$
\label{lem:K}
\end{lem}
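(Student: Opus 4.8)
The plan is to reduce the statement to a purely representation-theoretic fact about the behavior of the contraction maps $K_{ij}$ under the general linear group, and then to identify which irreducible pieces can survive. First I would observe that each $K_{ij}\colon H_\Q^{\otimes(k+2)}\to H_\Q^{\otimes k}$ is a $\mathrm{GL}(H_\Q)$-equivariant map: the intersection form $\mu\colon H_\Q\otimes H_\Q\to\Q$ is an element of $(\wedge^2 H_\Q^{*})^{\mathrm{GL}}$ (it is $\mathrm{Sp}$-invariant but only $\mathrm{GL}$-equivariant when viewed with the $\mathrm{GL}$-action on $H_\Q^{*}$), so contracting the $i$-th and $j$-th tensor factors with $\mu$ is a $\mathrm{GL}$-map. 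Hence $K_{ij}(V)$ is a $\mathrm{GL}$-submodule of $H_\Q^{\otimes k}$, and by Schur's lemma it is a sum of those isotypical components $V^k_\nu$ for which $\Hom_{\mathrm{GL}}(\lambda_{\mathrm{GL}},\ H_\Q^{\otimes k})$ receives a nonzero image, i.e. for which $\lambda_{\mathrm{GL}}$ and $\nu_{\mathrm{GL}}$ are ``linked'' through the contraction.

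Next I would make the linkage condition precise. Since $K_{ij}$ factors, up to the obvious reordering of tensor factors, as $\mu\otimes \mathrm{id}_{H_\Q^{\otimes k}}$ followed by the inclusion of $H_\Q^{\otimes k}$, the nonvanishing of $K_{ij}$ on a copy of $\lambda_{\mathrm{GL}}$ with image meeting $V^k_\nu$ is equivalent, by adjunction (or by iterating Pieri's rule / the Littlewood--Richardson rule for tensoring with $\wedge^2 H_\Q=[1^2]_{\mathrm{GL}}$), to the requirement that $\lambda_{\mathrm{GL}}$ occur as a summand of $\wedge^2 H_\Q\otimes\nu_{\mathrm{GL}}$. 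Indeed $\Hom_{\mathrm{GL}}(\lambda_{\mathrm{GL}},\,\wedge^2 H_\Q\otimes\nu_{\mathrm{GL}})\cong \Hom_{\mathrm{GL}}\bigl((\wedge^2 H_\Q)^{*}\otimes\lambda_{\mathrm{GL}},\,\nu_{\mathrm{GL}}\bigr)$, and the contraction $K_{ij}$ restricted to one embedded copy of $\lambda_{\mathrm{GL}}$ is precisely the evaluation coming from such a homomorphism (the position $(i,j)$ only permutes which copies are hit, not the list of $\nu$'s that can occur). This gives exactly condition $(C)$, so $K_{ij}(V)\subset\bigoplus_{\nu\text{ satisfies }(C)}V^k_\nu$ as claimed.

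The one point requiring a little care — the step I expect to be the main obstacle — is the bookkeeping identifying the single ``abstract'' contraction map on $\lambda_{\mathrm{GL}}\hookrightarrow H_\Q^{\otimes(k+2)}$ with the concretely defined $K_{ij}$ for a fixed pair $(i,j)$, and checking that restricting to a multiplicity space does not artificially kill $\lambda_{\mathrm{GL}}$. Here I would use that $H_\Q^{\otimes(k+2)}$ as a $\mathrm{GL}$-module, together with all the $K_{ij}$, is governed by the Brauer algebra / the $\mathrm{Sp}$-invariant theory already invoked via Proposition \ref{prop:LCD} and Weyl's theorem, so that the space of $\mathrm{GL}$-maps $H_\Q^{\otimes(k+2)}\to H_\Q^{\otimes k}$ is spanned by the $K_{ij}$'s composed with permutations; in particular the union over all $(i,j)$ of the possible target $\nu$'s is contained in $(C)$, and since $(C)$ is symmetric in the choice of contracted pair (contracting any two factors of a tensor power contributes the same set of Young diagrams, the $K_{ij}$ being conjugate under $\mathfrak{S}_{k+2}$), the bound holds for each individual $K_{ij}$. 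This is essentially a restatement of the classical decomposition $H_\Q^{\otimes(k+2)}\cong\bigoplus(\text{contraction of }H_\Q^{\otimes k})\oplus(\text{harmonic part})$, and no genuinely new computation is needed beyond invoking it.
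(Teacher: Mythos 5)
There is a genuine gap, and it sits exactly at the step you single out as the crux. Your argument rests on the claim that $K_{ij}$ is a $\mathrm{GL}(H_\Q)$-equivariant map because the intersection form lies in $(\wedge^2 H_\Q^*)^{\mathrm{GL}}$. This is false: the intersection (symplectic) form spans a line in $\wedge^2 H_\Q^*$ that is $\mathrm{Sp}$-invariant but not $\mathrm{GL}$-invariant (indeed $(\wedge^2 H_\Q^*)^{\mathrm{GL}}=0$), so $K_{ij}$ is only an $\mathrm{Sp}$-map. In fact there are \emph{no} nonzero $\mathrm{GL}$-homomorphisms $H_\Q^{\otimes(k+2)}\to H_\Q^{\otimes k}$ at all, since the center of $\mathrm{GL}$ acts by different characters on the two sides; this also invalidates the assertion in your last paragraph that ``the space of $\mathrm{GL}$-maps $H_\Q^{\otimes(k+2)}\to H_\Q^{\otimes k}$ is spanned by the $K_{ij}$'s composed with permutations'' (that is Weyl's theorem for $\mathrm{Sp}$-equivariant maps, not $\mathrm{GL}$-maps). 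Consequently $K_{ij}(V)$ need not be a $\mathrm{GL}$-submodule, Schur's lemma cannot be applied as you do, and the adjunction step identifying the restriction of $K_{ij}$ to $V$ with an element of $\Hom_{\mathrm{GL}}(\lambda_{\mathrm{GL}},\wedge^2 H_\Q\otimes\nu_{\mathrm{GL}})$ has no basis. The paper itself warns of precisely this trap in the proof of Theorem \ref{thm:bracket}: the bracket (built from the $K_i$'s) is \emph{not} a morphism of $\mathrm{GL}$-modules.

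The statement is nevertheless true, and the repair is to isolate the non-equivariance in a harmless last step, which is what the paper does. Using the $\mathfrak{S}_{k+2}$-action one reduces to $(i,j)=(1,2)$; then one splits $H_\Q^{\otimes(k+2)}=\bigl(S^2H_\Q\otimes H_\Q^{\otimes k}\bigr)\oplus\bigl(\wedge^2H_\Q\otimes H_\Q^{\otimes k}\bigr)$ and notes $K_{12}$ kills the first summand, so $K_{12}=K_{12}\circ q$ with $q$ the $\mathrm{GL}$-equivariant projection onto the second. Since $q$ \emph{is} equivariant, $q(V)$ is either $0$ or a copy of $\lambda_{\mathrm{GL}}$ inside $\wedge^2H_\Q\otimes H_\Q^{\otimes k}=\bigoplus_\nu \wedge^2H_\Q\otimes V^k_\nu$, hence lies in $\bigoplus_{\nu\ \text{satisfying}\ (C)}\wedge^2H_\Q\otimes V^k_\nu$ --- this is where condition $(C)$ and your Littlewood--Richardson considerations legitimately enter. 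Finally, on $\wedge^2H_\Q\otimes H_\Q^{\otimes k}$ the map $K_{12}$ is a linear functional on $\wedge^2H_\Q$ tensored with the identity of $H_\Q^{\otimes k}$, so it carries $\wedge^2H_\Q\otimes V^k_\nu$ into $V^k_\nu$ for the trivial reason that the second factor is untouched; no equivariance is needed there. Your draft has the right combinatorial ingredients (tensoring with $[1^2]_{\mathrm{GL}}$ and condition $(C)$), but without this factorization the equivariance claim on which everything hangs is simply wrong.
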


\begin{proof}
Since the natural action of the symmetric group $\mathfrak{S}_{k+2}$ on $H_\Q^{\otimes (k+2)}$
preserves the structure of a $\mathrm{GL}$-module on it, it is enough to prove the case  $i=1, j=2$.
Now consider the following direct sum decomposition
$$
H_\Q^{\otimes (k+2)}=\left(S^2H_\Q\otimes H_\Q^{\otimes k}\right)\oplus
\left(\wedge^2H_\Q\otimes H_\Q^{\otimes k}\right).
$$
Since the mapping $K_{12}$ is trivial on the former summand, if we denote by 
$q: H_\Q^{\otimes (k+2)}\rightarrow \wedge^2H_\Q\otimes H_\Q^{\otimes k}$
the natural projection onto the latter summand, then we have
$$
K_{12}=K_{12}\circ q.
$$
It follows that $K_{12}(V)=K_{12}(q(V))$. Since $q(V)$ is a $\mathrm{GL}$-submodule of 
$\wedge^2H_\Q\otimes H_\Q^{\otimes k}$ isomorphic to $\lambda_{\mathrm{GL}}$, 
in view of the definition of the condition (C), we 
can conclude that
$$
q(V)\ \subset \wedge^2H_\Q\otimes\left(\bigoplus_{\text{$\nu$ satisfies (C)}} V^k_{\nu}\right).
$$ 
Now the mapping $K_{12}$ on $\wedge^2H_\Q\otimes H_\Q^{\otimes k}$ is 
the contraction $\wedge^2H_\Q\rightarrow\Q$ times the identity of $H_\Q^{\otimes k}$.
We can now conclude that
$$
K_{12}(V)=K_{12}(q(V))\ \subset \bigoplus_{\text{$\nu$ satisfies (C)}} V^k_{\nu}
$$ as required.
\end{proof}

\vspace{3mm}

\begin{proof}[Proof of Theorem $\ref{thm:bracket}$]
The bracket operation
$$
B: \mathfrak{h}_{g,1}(k)\otimes \mathfrak{h}_{g,1}(l)\ \overset{[\ ,\ ]}{\longrightarrow}\ \mathfrak{h}_{g,1}(k+l)
$$
is {\it not} a morphism of $\mathrm{GL}$-modules so that we have to be careful here.
First we observe that the above bracket can be extended to 
$$
\widetilde{B}: H_\Q^{\otimes (k+2)} \otimes  H_\Q^{\otimes (l+2)}\ \overset{[\ ,\ ]}{\longrightarrow}\  H_\Q^{\otimes (k+l+2)}
$$
which can be described as follows. Here we understand $\mathfrak{h}_{g,1}(k)$ as a submodule of
$H_\Q^{\otimes (k+2)} $ for any $k$.
We identify the domain of $\widetilde{B}$ with
$H_\Q^{\otimes (k+l+4)} $ and define linear isomorphisms
\begin{align*}
f_i^{(k,l)}&:H_\Q^{\otimes (k+l+4)} \rightarrow H_\Q^{\otimes (k+l+4)} \quad (i=2,3,\ldots,l+2)\\
b_i^{(k,l)}&:H_\Q^{\otimes (k+l+4)} \rightarrow H_\Q^{\otimes (k+l+4)} \quad (i=2,3,\ldots,k+2)
\end{align*}
by setting
\begin{align*}
f_i^{(k,l)}&(u_1\otimes u_2\otimes\cdots\otimes u_{k+2}\otimes v_1\otimes v_2\otimes\cdots\otimes v_{l+2})\\
&=(v_1\otimes v_2\otimes\cdots\otimes v_i\otimes u_1\otimes u_2\otimes\cdots\otimes u_{k+2}\otimes v_{i+1}\otimes
\cdots\otimes v_{l+2})\\
b_i^{(k,l)}&(u_1\otimes u_2\otimes\cdots\otimes u_{k+2}\otimes v_1\otimes v_2\otimes\cdots\otimes v_{l+2})\\
&=(u_1\otimes u_2\otimes\cdots\otimes u_i\otimes v_1\otimes v_2\otimes\cdots\otimes v_{l+2}\otimes u_{i+1}\otimes
\cdots\otimes u_{k+2}).
\end{align*}
Observe here that any of these linear isomorphisms corresponds to a certain element
in the symmetric group $\mathfrak{S}_{k+l+4}$ which acts on $H_\Q^{\otimes (k+l+4)}$ 
naturally.
Also define a linear mapping
$$
K_{i}: H_\Q^{\otimes (k+l+4)} \rightarrow H_\Q^{\otimes (k+l+2)} \quad (i=1,2,\ldots, k+l+3)
$$
by setting
$$
K_i(u_1\otimes \cdots\otimes u_{k+l+4})=(u_i\cdot u_{i+1})\, u_1\otimes \cdots\otimes u_{i-1}\otimes 
u_{i+2}\otimes \cdots\otimes u_{k+l+4}.
$$
Then we can write
$$
\widetilde{B}=\sum_{i=2}^{k+2}  K_i\circ b^{k,l}_i - \sum_{i=2}^{\ell+2}  K_i\circ f^{k,l}_i.
$$
It is easy to see that the restriction of $\widetilde{B}$ to $\mathfrak{h}_{g,1}(k)\otimes \mathfrak{h}_{g,1}(l)$
is precisely the bracket operation described in \cite{morita93}.

Now we prove claim $\mathrm{(i)}$. 
Since $\tilde{B}$ is equal to the bracket operation, for any two elements
$\xi\in \widetilde{H}_\lambda$ and $\eta\in \widetilde{H}_\mu$, we have
$$
[\xi,\eta]=\tilde{B}(\xi\otimes\eta).
$$
Since any of the mappings $b^{k,\ell}_i, f^{k,\ell}_i$ corresponds to a certain element
in the symmetric group $\mathfrak{S}_{k+\ell+4}$ as already mentioned above, the image
of $\xi\otimes\eta$ by it is again contained in a $\mathrm{GL}$-submodule
of $H_\Q^{\otimes (k+\ell+4)}$ isomorphic to $\lambda_{\mathrm{GL}}\otimes\mu_{\mathrm{GL}}$.
Now we have to consider various $K_i$ acting on these $\mathrm{GL}$-submodules. Here we 
apply Lemma \ref{lem:K} with replacing $H_\Q^{\otimes (k+2)}$ and $\lambda_{\mathrm{GL}}$ by
$H_\Q^{\otimes (k+\ell+4)}$ and any of the $\mathrm{GL}$-irreducible
component of $\lambda_{\mathrm{GL}}\otimes \mu_{\mathrm{GL}}$, respectively.
Then we can conclude that
$$
\tilde{B}(\xi\otimes\eta)\ \in B(\lambda,\mu)
$$
as required.

Next we prove the condition on $h(\nu)$.
The irreducible decomposition of the
tensor product $\lambda_{\mathrm{GL}}\otimes\mu_{\mathrm{GL}}$ is given by the Littlewood-Richardson rule
(see \cite{fh}) and we see that 
any of the irreducible summands in this decomposition is represented by a
Young diagram whose number of rows, denoted by $h$, satisfies the inequality
$$
\mathrm{max}\, \{h(\lambda),h(\mu)\}\leq h \leq h(\lambda)+h(\mu).
$$
Since the height of the Young diagram $[1^2]$ corresponding to $\wedge^2H_\Q$ is $2$,
we obtain the condition 
$$
\mathrm{max}\, \{h(\lambda),h(\mu)\}-2 \leq h(\nu) \leq h(\lambda)+h(\mu)
$$
on $h(\nu)$ as  required.

The claim $\mathrm{(ii)}$ follows from $\mathrm{(i)}$ because we have the equality
$$
\widetilde{H}_{\lambda^\delta}^{\mathrm{Sp}}=H_{\lambda}.
$$
\end{proof}

\vspace{5mm}

\section{Comparison among $\mathfrak{h}_{g,1}, \mathfrak{h}_{g,*},\mathfrak{h}_{g}$
and a decomposition of $\mathfrak{h}_{g,1}$}\label{sec:comp}

We recall the definitions of three kinds of symplectic derivation algebras, denoted by
$\mathfrak{h}^\Z_{g,1},  \mathfrak{h}^\Z_{g,*}, \mathfrak{h}^\Z_{g}$ from \cite{morita99}.
They correspond to
three kinds of mapping class groups $\mathcal{M}_{g,1}, \mathcal{M}_{g,*}, \mathcal{M}_g$
of $\Sigma_g$, relative to an embedded disk $D^2\subset \Sigma_g$,
relative to a base point $*\in\Sigma_g$ and without any decoration, respectively.
They are Lie algebras defined over $\Z$ and to indicate this fact, we put the superscript
$\Z$ on their symbols. The Lie algebras $\mathfrak{h}_{g,1}, \mathfrak{h}_{g,*},\mathfrak{h}_{g}$
in the subtitle denote the rational forms of them and they are Lie algebras over $\Q$.

Let $\mathcal{L}^\Z_{g,1}=\oplus_{k=1}^\infty \mathcal{L}^\Z_{g,1}(k)$ be the free graded Lie algebra, over $\Z$,
generated by $H$. Then  the degree $k$ part of $\mathfrak{h}^\Z_{g,1}$ can be written as
$$
\mathfrak{h}^\Z_{g,1}(k)=\mathrm{Ker}\, \left(H\otimes \mathcal{L}^\Z_{g,1}(k+1)
\overset{[\ ,\ ]}{\longrightarrow} \mathcal{L}^\Z_{g,1}(k+2)\right).
$$

Next let $\mathcal{L}^\Z_g$ be the graded Lie algebra, over $\Z$, associated to the lower central series of
$\pi_1\Sigma_g$. Then the degree $k$ part of $\mathfrak{h}^\Z_{g,*}$ can be written as
$$
\mathfrak{h}^\Z_{g,*}(k)=\mathrm{Ker}\, \left(H\otimes \mathcal{L}^\Z_{g}(k+1)
\overset{[\ ,\ ]}{\longrightarrow} \mathcal{L}^\Z_{g}(k+2)\right).
$$
Let $\omega_0\in \mathcal{L}_{g,1}(2)$ be the symplectic element and let $I_g=\oplus_{k=2}^\infty I_g(k)$
be the ideal of $\mathcal{L}_{g,1}$ generated by $\omega_0$. Then
a theorem of Labute (see Theorem \ref{thm:Labute} below) says that  
there is a natural isomorphism  $\mathcal{L}_g\cong\mathcal{L}_{g,1}/I_g$.
Keeping this in mind, define an ideal $\mathfrak{j}^\Z_{g,1}$ of $\mathfrak{h}^\Z_{g,1}$
by setting
$$
\mathfrak{j}^\Z_{g,1}(k)=\mathrm{Ker}\, \left(H\otimes I_g(k+1)
\overset{[\ ,\ ]}{\longrightarrow} I_g(k+2)\right).
$$
Then it is easy to see that there is an isomorphism
$$
\mathfrak{h}^\Z_{g,*}\cong\mathfrak{h}^\Z_{g,1}/\mathfrak{j}^\Z_{g,1}.
$$
Finally a result of Asada and Kaneko \cite{ak} implies that 
the injective homomorphism $\pi_1\Sigma_g\rightarrow \mathcal{M}_{g,*}$
induces an injection
$$
\mathcal{L}^\Z_g\subset \mathfrak{h}^\Z_{g,*}
$$
of graded Lie algebras and the former is an ideal of the latter.
Then we define $\mathfrak{h}^\Z_g$ by setting
$$
\mathfrak{h}^\Z_{g}=\mathfrak{h}^\Z_{g,*}/\mathcal{L}^\Z_g.
$$


If we specify the genus $g$, we write $H_g$ instead of $H$. Also fix a symplectic basis
$x_1,y_1,\ldots,x_g,y_g$ for $H_g$ so that we have an injection
$i: H_g\subset H_{g+1}$ and a projection $p: H_{g+1}\rightarrow H_g$ which is induced by
setting $p(x_{g+1})=p(y_{g+1})=0$.
\begin{lem}
Under the natural inclusion $i: H_g^{\otimes (k+2)}\subset H_{g+1}^{\otimes (k+2)}$, we have
$$
i(\mathfrak{h}^\Z_{g,1}(k))\subset \mathfrak{h}^\Z_{g+1,1}(k).
$$
\end{lem}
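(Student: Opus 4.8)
The plan is to deduce the inclusion from the functoriality of the free graded Lie algebra construction applied to the split inclusion $i\colon H_g\hookrightarrow H_{g+1}$, which sends the chosen symplectic basis $x_1,y_1,\dots,x_g,y_g$ of $H_g$ to part of that of $H_{g+1}$. Recall that by definition $\mathfrak{h}^\Z_{g,1}(k)=\Ker\bigl(H_g\otimes\mathcal{L}^\Z_{g,1}(k+1)\to\mathcal{L}^\Z_{g,1}(k+2)\bigr)$, that $H_g\otimes\mathcal{L}^\Z_{g,1}(k+1)$ is viewed inside $H_g^{\otimes(k+2)}$ via the canonical embedding $\mathcal{L}^\Z_{g,1}(k+1)\subset H_g^{\otimes(k+1)}$ of the free Lie algebra over $\Z$ into the tensor algebra, and that the same conventions are used with $g+1$ in place of $g$. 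Thus everything takes place inside tensor powers, and the map in the statement is the one induced by $i$ on $(k+2)$-fold tensor powers.

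\emph{Step 1: functoriality.} I would first verify that $i$ carries $\mathcal{L}^\Z_{g,1}$ into $\mathcal{L}^\Z_{g+1,1}$ and restricts there to a homomorphism of graded Lie algebras which is compatible with the commutator brackets of the two tensor algebras. Since $H_g$ is a free abelian direct summand of $H_{g+1}$, the Lie subring of $\mathcal{L}^\Z_{g+1,1}$ generated by $i(H_g)$ is the free graded Lie algebra over $\Z$ on $H_g$, hence is canonically identified with $\mathcal{L}^\Z_{g,1}$, and this identification is compatible with the inclusions into the tensor algebras; there is no torsion subtlety because the functors involved (free Lie algebra over $\Z$, tensor algebra) send split monomorphisms of free abelian groups to monomorphisms. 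In particular one obtains a commutative square whose horizontal arrows are the bracket maps $H_g\otimes\mathcal{L}^\Z_{g,1}(k+1)\to\mathcal{L}^\Z_{g,1}(k+2)$ and $H_{g+1}\otimes\mathcal{L}^\Z_{g+1,1}(k+1)\to\mathcal{L}^\Z_{g+1,1}(k+2)$ and whose vertical arrows are induced by $i$.

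\emph{Step 2: conclusion.} With this square the statement is immediate: if $\xi=\sum_j a_j\otimes\xi_j\in\mathfrak{h}^\Z_{g,1}(k)$, so that $\sum_j[a_j,\xi_j]=0$ in $\mathcal{L}^\Z_{g,1}(k+2)$, then $i(\xi)=\sum_j i(a_j)\otimes i(\xi_j)$ lies in $H_{g+1}\otimes\mathcal{L}^\Z_{g+1,1}(k+1)$ and $\sum_j[i(a_j),i(\xi_j)]=i\bigl(\sum_j[a_j,\xi_j]\bigr)=0$, whence $i(\xi)\in\mathfrak{h}^\Z_{g+1,1}(k)$. As a conceptual check that the property of being a \emph{symplectic} derivation is genuinely preserved, one may translate to derivations: under the correspondence between $\mathfrak{h}^\Z_{g,1}(k)$ and degree $k$ symplectic derivations, the element $i(\xi)$ corresponds to the derivation $D'$ of $\mathcal{L}^\Z_{g+1,1}$ that extends the derivation $D$ attached to $\xi$ by setting $D'(x_{g+1})=D'(y_{g+1})=0$ (this is forced because each $a_j\in H_g$ is symplectically orthogonal to $x_{g+1}$ and $y_{g+1}$, and the symplectic forms agree on $H_g$). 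Such a $D'$ visibly annihilates $\omega_0=\sum_{i=1}^{g}[x_i,y_i]+[x_{g+1},y_{g+1}]\in\mathcal{L}^\Z_{g+1,1}(2)$, so it is again symplectic.

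I expect the only point requiring real care, and hence the main potential pitfall, to be Step 1: establishing cleanly that $i$ induces an \emph{injective} homomorphism of graded Lie algebras $\mathcal{L}^\Z_{g,1}\hookrightarrow\mathcal{L}^\Z_{g+1,1}$ which is the restriction of the obvious inclusion of tensor algebras, i.e. that over $\Z$ the free Lie algebra on a direct summand sits as a Lie subring with no $p$-torsion intrusions. Once this is in place, Step 2 is purely formal, and the identical argument --- replacing the target $\mathcal{L}^\Z_{g+1,1}(k+2)$ of the bracket by $I_{g+1}(k+2)$ or by $\mathcal{L}^\Z_{g+1}(k+2)$ --- would yield the analogous monotonicity statements for $\mathfrak{j}^\Z_{g,1}$ and $\mathfrak{h}^\Z_{g,*}$.
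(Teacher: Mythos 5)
Your proof is correct and takes essentially the same route as the paper, which simply states that the inclusion is clear from the definition; your Steps 1--2 just spell out the functoriality and kernel-preservation details that the authors leave implicit.
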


\begin{proof}
This is clear from the definition.
\end{proof}

\begin{lem}
Under the natural projection $p: H_{g+1}^{\otimes (k+2)}\rightarrow H_{g}^{\otimes (k+2)}$, we have
$$
p(\mathfrak{h}^\Z_{g+1,1}(k))\subset \mathfrak{h}^\Z_{g,1}(k).
$$
\end{lem}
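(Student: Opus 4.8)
The plan is to deduce the inclusion from the observation that, although $p\colon H_{g+1}\to H_{g}$ does not preserve the symplectic form, it does induce a graded homomorphism of free Lie algebras which is compatible with the bracket characterisation of the spaces $\mathfrak{h}^\Z_{g,1}(k)$ and $\mathfrak{h}^\Z_{g+1,1}(k)$. Regarding $p$ as a linear map from $H_{g+1}=\mathcal{L}^\Z_{g+1,1}(1)$ into the Lie algebra $\mathcal{L}^\Z_{g,1}$, the universal property of the free Lie algebra produces a unique Lie algebra homomorphism $\phi\colon\mathcal{L}^\Z_{g+1,1}\to\mathcal{L}^\Z_{g,1}$ extending $p$. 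First I would check that $\phi$ is graded and that its degree $n$ component is $p^{\otimes n}$ restricted to $\mathcal{L}^\Z_{g+1,1}(n)$. This holds because $p^{\otimes n}$ is the degree $n$ part of the tensor algebra homomorphism $T(p)\colon T(H_{g+1})\to T(H_{g})$, which is an algebra, hence a Lie algebra, homomorphism agreeing with $p$ on the generators; uniqueness in the universal property then forces $\phi=T(p)|_{\mathcal{L}^\Z_{g+1,1}}$. In particular $p^{\otimes n}\bigl(\mathcal{L}^\Z_{g+1,1}(n)\bigr)\subset\mathcal{L}^\Z_{g,1}(n)$.

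Next I would record the relevant commutative square. Since $\mathfrak{h}^\Z_{g+1,1}(k)$ sits inside $H_{g+1}\otimes\mathcal{L}^\Z_{g+1,1}(k+1)$ and the map in its definition is just the Lie bracket $\mathcal{L}^\Z_{g+1,1}(1)\otimes\mathcal{L}^\Z_{g+1,1}(k+1)\to\mathcal{L}^\Z_{g+1,1}(k+2)$, the identity $\phi([u,w])=[\phi(u),\phi(w)]$ together with the identification of $\phi$ in each degree with the corresponding $p^{\otimes n}$ yields the commutative diagram
$$
\begin{CD}
H_{g+1}\otimes\mathcal{L}^\Z_{g+1,1}(k+1) @>{[\ ,\ ]}>> \mathcal{L}^\Z_{g+1,1}(k+2)\\
@V{p\otimes p^{\otimes(k+1)}}VV @VV{p^{\otimes(k+2)}}V\\
H_{g}\otimes\mathcal{L}^\Z_{g,1}(k+1) @>{[\ ,\ ]}>> \mathcal{L}^\Z_{g,1}(k+2).
\end{CD}
$$
Here the left vertical arrow is precisely the restriction of $p^{\otimes(k+2)}$ to $H_{g+1}\otimes\mathcal{L}^\Z_{g+1,1}(k+1)$, and it takes values in $H_{g}\otimes\mathcal{L}^\Z_{g,1}(k+1)$ by the previous step.

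Then the lemma follows immediately: if $D\in\mathfrak{h}^\Z_{g+1,1}(k)$, then its image under the top horizontal map is zero, so by commutativity the image of $p^{\otimes(k+2)}(D)$ under the bottom horizontal map is zero, and therefore $p^{\otimes(k+2)}(D)$ lies in the kernel of $H_{g}\otimes\mathcal{L}^\Z_{g,1}(k+1)\to\mathcal{L}^\Z_{g,1}(k+2)$, which by definition is $\mathfrak{h}^\Z_{g,1}(k)$. I do not expect any real obstacle here; the only point worth a sentence of care is that $p$ is not symplectic, so the argument should not be phrased via ``the derivation induced by $p$'' but directly in terms of the concrete realisation of $\mathfrak{h}^\Z_{g,1}(k)$ inside $H^{\otimes(k+2)}$ and the tensor map $p^{\otimes(k+2)}$, with the symplectic form entering only through the fixed identification $H\cong H^{\ast}$ used to set up that realisation and otherwise playing no active role.
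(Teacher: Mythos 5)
Your argument is correct and is essentially the proof the paper gives: the key point in both is that $p$ extends (by freeness of $\mathcal{L}^\Z_{g+1,1}$) to a graded Lie algebra homomorphism agreeing with $p^{\otimes n}$ in each degree, so the bracket maps defining $\mathfrak{h}^\Z_{g+1,1}(k)$ and $\mathfrak{h}^\Z_{g,1}(k)$ commute with the projection and kernels map to kernels. The only difference is presentational — the paper expands an element of $\mathfrak{h}^\Z_{g+1,1}(k)$ in a symplectic basis and applies $p$ term by term (the $x_{g+1},y_{g+1}$ terms dying), whereas you encode the same compatibility in a basis-free commutative square — so no further comment is needed.
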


\begin{proof}
Any element $f\in \mathfrak{h}_{g+1,1}(k)$ can be written as
$$
f=\sum_{i=1}^g x_i\otimes \xi_i+x_{g+1}\otimes \xi_{g+1}+
\sum_{i=1}^g y_i\otimes \eta_i+y_{g+1}\otimes \eta_{g+1}
$$
where $\xi_i,\xi_{g+1}, \eta_i, \eta_{g+1}\in \mathcal{L}_{g+1,1}(k+1)$ and
$$
\sum_{i=1}^g [x_i,\xi_i]+[x_{g+1},\xi_{g+1}]+\sum_{i=1}^g [y_i,\eta_i]+[y_{g+1},\eta_{g+1}]=0.
$$
If we apply the projection $p: \mathcal{L}_{g+1,1}(k+2)\rightarrow \mathcal{L}_{g,1}(k+2)$ to the above
equality, we obtain
$$
\sum_{i=1}^g [x_i,p({\xi}_i)]+\sum_{i=1}^g [y_i,p(\eta_i)]=0
$$
because $p: \mathcal{L}_{g+1}\rightarrow \mathcal{L}_{g}$ is a Lie algebra homomorphism.
On the other hand, we have
$$
p(f)=\sum_{i=1}^g x_i\otimes p(\xi_i)+
\sum_{i=1}^g y_i\otimes p(\eta_i).
$$
We can now conclude that $p(f)\in\mathfrak{h}^\Z_{g}(k)$ as required.
\end{proof}

\begin{definition}
Suppose that,  for any $g$, we are given a graded submodule 
$$
m_g=\bigoplus_{k=1}^\infty m_g(k)\subset \mathfrak{h}^\Z_{g,1}=\bigoplus_{k=1}^\infty \mathfrak{h}^\Z_{g,1}(k).
$$

$\mathrm{(i)}\ $ We call $m_g$
{\it $i$-stable} (inclusion stable) if $i(m_g(k))\subset m_{g+1}(k)$ for any $g$ and $k$.

$\mathrm{(ii)}\ $ We call $m_g$
{\it $p$-stable} (projection stable) if $p(m_{g+1}(k))\subset m_{g}(k)$ for any $g$ and $k$.

Even if the graded submodule $m_g$ is non-trivial only for some fixed $k$, namely 
$m_g(l)=0$ for any $l\not=k$ and $g$, we still say that $\{m_g(k)\}_g$ is 
$i$-stable or $p$-stable if it satisfies the above condition.

A similar definition can be applied to the rational form $\mathfrak{h}_{g,1}$ of $\mathfrak{h}^\Z_{g,1}$.
\label{def:ip}
\end{definition}

\begin{prop}

$\mathrm{(i)}\ \text{The Johnson image} \ \mathrm{Im}\, \tau^\Z_{g,1}\ \text{is $i$-stable}$.

$\mathrm{(ii)}\ \text{The ideal}\ \mathfrak{j}^\Z_{g,1}\  \text{is $p$-stable}$.

$\mathrm{(iii)}\ \text{The ${\mathrm{Sp}}$-invariant Lie subalgebra}\ \mathfrak{h}_{g,1}^{\mathrm{Sp}}\  \text{is $p$-stable}$.
\label{prop:ipZ}
\end{prop}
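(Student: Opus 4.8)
The plan is to prove each of the three statements by a direct inspection of the relevant defining data, making systematic use of the two Lemmas just established (that $i$ and $p$ carry $\mathfrak{h}^\Z_{g,1}(k)$ to $\mathfrak{h}^\Z_{g+1,1}(k)$ and to $\mathfrak{h}^\Z_{g,1}(k)$ respectively) as the common technical backbone. For $\mathrm{(i)}$, I would recall that the Johnson image $\mathrm{Im}\,\tau^\Z_{g,1}$ is, in each degree $k$, generated by the images under the Johnson homomorphism of the Dehn twists along simple closed curves (or, equivalently, by the explicit generators of the Johnson filtration quotients). The inclusion $i\colon \Sigma_{g,1}\hookrightarrow \Sigma_{g+1,1}$ coming from the chosen symplectic basis induces a map of mapping class groups $\mathcal{M}_{g,1}\to\mathcal{M}_{g+1,1}$ compatible with the Johnson filtrations, and the Johnson homomorphisms fit into a commutative square with the homomorphism $H_g^{\otimes(k+2)}\hookrightarrow H_{g+1}^{\otimes(k+2)}$. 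Hence $i(\mathrm{Im}\,\tau^\Z_{g,1}(k))$ lands in $\mathrm{Im}\,\tau^\Z_{g+1,1}(k)$; combined with the previous Lemma ensuring it lands in $\mathfrak{h}^\Z_{g+1,1}(k)$, this gives $i$-stability.

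For $\mathrm{(ii)}$, the key point is to trace the ideal $I_g\subset\mathcal{L}_{g,1}$ generated by the symplectic element $\omega_0=\sum_{i}[x_i,y_i]$ through the projection $p\colon \mathcal{L}_{g+1,1}\to\mathcal{L}_{g,1}$. Since $p$ is a Lie algebra homomorphism sending $x_{g+1},y_{g+1}$ to $0$, it sends $\omega_{0,g+1}=\sum_{i=1}^{g+1}[x_i,y_i]$ to $\omega_{0,g}=\sum_{i=1}^{g}[x_i,y_i]$, hence $p(I_{g+1})\subset I_g$. Now take $f\in\mathfrak{j}^\Z_{g+1,1}(k)$, so $f\in H_{g+1}\otimes I_{g+1}(k+1)$ with $[\,\cdot\,,\,\cdot\,]f=0$; writing $f$ in the symplectic basis as in the proof of the previous Lemma and applying $p$ kills the $x_{g+1}$- and $y_{g+1}$-components while sending each $\mathcal{L}$-coefficient into $I_g(k+1)$, and the bracket relation is preserved because $p$ is a homomorphism. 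Therefore $p(f)\in H_g\otimes I_g(k+1)$ still lies in the kernel of the bracket, i.e. $p(f)\in\mathfrak{j}^\Z_{g,1}(k)$.

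For $\mathrm{(iii)}$, I would argue that $p\colon H_{g+1}^{\otimes(k+2)}\to H_g^{\otimes(k+2)}$ is $\mathrm{Sp}(2g,\Q)$-equivariant, where $\mathrm{Sp}(2g,\Q)$ acts on $H_{g+1}$ through the embedding $\mathrm{Sp}(2g,\Q)\hookrightarrow \mathrm{Sp}(2g+2,\Q)$ fixing $x_{g+1},y_{g+1}$: indeed $p$ is the identity on the $H_g$-factor and zero on the $\langle x_{g+1},y_{g+1}\rangle$-factor, both of which are $\mathrm{Sp}(2g,\Q)$-submodules. Consequently $p$ carries $\mathrm{Sp}(2g,\Q)$-invariant tensors in $H_{g+1}^{\otimes(k+2)}$ to $\mathrm{Sp}(2g,\Q)$-invariant tensors in $H_g^{\otimes(k+2)}$; intersecting with the previous Lemma's conclusion $p(\mathfrak{h}_{g+1,1}(k))\subset\mathfrak{h}_{g,1}(k)$, and noting that every $\mathrm{Sp}(2g+2,\Q)$-invariant element of $\mathfrak{h}_{g+1,1}(k)$ is a fortiori $\mathrm{Sp}(2g,\Q)$-invariant, we get $p(\mathfrak{h}_{g+1,1}(k)^{\mathrm{Sp}})\subset\mathfrak{h}_{g,1}(k)^{\mathrm{Sp}}$.

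The main obstacle I anticipate is $\mathrm{(i)}$: one must be careful that the stabilization map on mapping class groups really is compatible with the Johnson filtration in the precise degree-preserving way needed, and that the generation statement for $\mathrm{Im}\,\tau^\Z_{g,1}$ is robust enough to push forward. Parts $\mathrm{(ii)}$ and $\mathrm{(iii)}$ are essentially formal once the behavior of $p$ on $\omega_0$ and on the symplectic splitting $H_{g+1}=H_g\oplus\langle x_{g+1},y_{g+1}\rangle$ is noted, and reuse the computation already carried out in the proof of the second Lemma above.
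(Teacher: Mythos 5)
Your proposal is correct, and for parts (i) and (ii) it is essentially the paper's argument: (i) rests on the commutativity of the square relating $\tau_{g,1}$, $\tau_{g+1,1}$ and the stabilization maps (your aside that $\mathrm{Im}\,\tau^\Z_{g,1}$ is ``generated by Dehn twists'' is both imprecise and unnecessary --- the commutative square alone pushes every $\tau_{g,1}(\varphi)$ forward, with the only substantive point being that stabilization respects the Johnson filtration, exactly as you flag); (ii) rests on $p(\omega_0(g+1))=\omega_0(g)$ and the fact that $p$ is a Lie algebra homomorphism, so $p(I_{g+1})\subset I_g$ and the bracket diagrams commute, which is precisely the paper's chain of commutative diagrams. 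The one place you go beyond the paper is (iii): the paper's proof of this proposition actually stops after (ii) and only justifies the $p$-stability of the $\mathrm{Sp}$-invariant part later (in the proof of the analogous rational statement, by citing that $(H_\Q^{\otimes(k+2)})^{\mathrm{Sp}}\subset H_\Q^{\otimes(k+2)}$ is $p$-stable). Your argument --- $p$ is $\mathrm{Sp}(2g,\Q)$-equivariant for the action through $\mathrm{Sp}(2g,\Q)\hookrightarrow\mathrm{Sp}(2g+2,\Q)$, an $\mathrm{Sp}(2g+2,\Q)$-invariant is a fortiori $\mathrm{Sp}(2g,\Q)$-invariant, and one intersects with $p(\mathfrak{h}_{g+1,1}(k))\subset\mathfrak{h}_{g,1}(k)$ --- is a correct and complete way to supply exactly this missing step, and is in effect the same mechanism the paper invokes implicitly.
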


\begin{proof}
Claim $\mathrm{(i)}$ follows from the fact that the following diagram 
$$
\begin{CD}
\mathcal{M}_{g,1}(k) @>{\tau_{g,1}} >> \mathfrak{h}^\Z_{g,1}(k)\\
@V{i}VV @VV{i}V\\
\mathcal{M}_{g+1,1}(k) @>{\tau_{g+1,1}} >> \mathfrak{h}^\Z_{g+1,1}(k)
\end{CD}
$$
is commutative.

Next we prove $\mathrm{(ii)}$. Since $p(\omega_0(g+1))=\omega_0(g)$, the diagram 
$$
\begin{CD}
I_{g+1}(k)  @>{i}>>  \mathcal{L}^\Z_{g+1}(k)\\
@V{p}VV @VV{p}V\\
I_{g}(k) @>{i}>>  \mathcal{L}^\Z_{g}(k)
\end{CD}
$$
is commutative. It follows that the diagram 
$$
\begin{CD}
H\otimes I_{g+1}(k+1)  @>{[\ ,\ ]}>> I_{g+1}(k+2)\\
@V{p}VV @VV{p}V\\
H\otimes I_{g}(k+1)  @>{[\ ,\ ]}>> I_{g}(k+2)
\end{CD}
$$
is also commutative. 
Therefore $p(\mathfrak{j}^\Z_{g+1,1}(k))=\mathfrak{j}^\Z_{g,1}(k)$
as claimed.
\end{proof}

As for the comparison between $\mathcal{M}_{g,1}(k)$ and $\mathcal{M}_{g,*}(k)$,
we have exact sequences
\begin{align*}
0\rightarrow \Z\rightarrow \mathcal{M}_{g,1}(1)&= \mathcal{I}_{g,1}\rightarrow 
\mathcal{M}_{g,*}(1)=\mathcal{I}_{g,*}\rightarrow 1,\\
0\rightarrow \Z\rightarrow \mathcal{M}_{g,1}(2)&= \mathcal{K}_{g,1}\rightarrow 
\mathcal{M}_{g,*}(2)=\mathcal{K}_{g,*}\rightarrow 1
\end{align*}
for $k=1,2$ where the central subgroup $\Z$ is generated by the Dehn twist,
which we denote by $\tau_\partial(g)$, along a simple closed curve which is parallel
to the boundary curve of $\Sigma_{g,1}$.
For $k\geq 3$, it is easy to see that the natural homomorphism
$p: \mathcal{M}_{g,1}(k)\rightarrow\mathcal{M}_{g,*}(k)$ is {\it injective}
so that we can consider $\mathcal{M}_{g,1}(k)$ as a subgroup of $\mathcal{M}_{g,*}(k)$.
Here we have an important result of Hain \cite{hain} using Hodge theory
as follows. After tensoring with $\Q$,
the graded modules associated with the following two filtrations of the Torelli group
$\mathcal{I}_{g,*}=\mathcal{M}_{g,*}(1)$ are isomorphic to each other.
One is the filtration $\{\mathcal{M}_{g,*}(k)\}_{k\geq 2}$ and the other is the
filtration $\{p(\mathcal{M}_{g,1}(k))\}_{k\geq 2}$ where $p: \mathcal{M}_{g,1}(k)\rightarrow \mathcal{M}_{g,*}(k)$
denotes the natural projection. From this result, we can deduce the following 
(this fact was already mentioned in \cite{morita96}).

\begin{prop}
$\mathrm{(i)}\
\mathcal{M}_{g,1}(k)\cong\mathcal{M}_{g,*}(k)\quad \text{for any $k\geq 3$}$.

$\mathrm{(ii)}\ \mathrm{Im}\,\tau_{g,1}(k)\cap \mathfrak{j}_{g,1}(k)=\{0\} \ \text{and}\
\Im\tau_{g,1}^\Z(k)\cong\Im\tau_{g,*}^\Z(k) \quad \text{for any $k\not= 2$}$.
\end{prop}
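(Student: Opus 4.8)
The plan is to establish $\mathrm{(i)}$ first and then to deduce $\mathrm{(ii)}$ from it together with the definition of the ideal $\mathfrak{j}^\Z_{g,1}$. For $\mathrm{(i)}$, injectivity of $p\colon\mathcal{M}_{g,1}(k)\to\mathcal{M}_{g,*}(k)$ for $k\geq 3$ has already been observed above, so only surjectivity, namely $p(\mathcal{M}_{g,1}(k))=\mathcal{M}_{g,*}(k)$, remains. I would build it from three ingredients: (a) the kernel of $\mathcal{M}_{g,1}\to\mathcal{M}_{g,*}$ is the central subgroup $\langle\tau_\partial(g)\rangle$, which lies in $\mathcal{M}_{g,1}(2)$ but not in $\mathcal{M}_{g,1}(3)$, so that $p$ is already onto in degree $2$ by the exact sequence $0\to\Z\to\mathcal{K}_{g,1}\to\mathcal{K}_{g,*}\to 1$; (b) Hain's theorem, which makes the graded pieces of the two filtrations $\{p(\mathcal{M}_{g,1}(k))\}_{k\geq 2}$ and $\{\mathcal{M}_{g,*}(k)\}_{k\geq 2}$ of $\mathcal{I}_{g,*}$ have equal ranks, so that, telescoping along the filtration from its degree $2$ term $\mathcal{K}_{g,*}$ (common to both), the subgroup $p(\mathcal{M}_{g,1}(k))$ has finite index in $\mathcal{M}_{g,*}(k)$ for every $k$; (c) the fact that each graded piece $\mathcal{M}_{g,1}(k)/\mathcal{M}_{g,1}(k+1)=\mathrm{Im}\,\tau^\Z_{g,1}(k)$ is torsion free, being a subgroup of $H^{\otimes(k+2)}$.

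Granting these, I would prove $p(\mathcal{M}_{g,1}(k))=\mathcal{M}_{g,*}(k)$ by induction on $k$. In the inductive step $k\Rightarrow k+1$ with $k\geq 3$: since $p$ is injective on $\mathcal{M}_{g,1}(k)$, the induction hypothesis identifies $\mathcal{M}_{g,*}(k)/p(\mathcal{M}_{g,1}(k+1))$ with $\mathcal{M}_{g,1}(k)/\mathcal{M}_{g,1}(k+1)$, which is torsion free by (c); but $\mathcal{M}_{g,*}(k+1)/p(\mathcal{M}_{g,1}(k+1))$ is a finite subgroup of this quotient by (b), hence trivial, giving $p(\mathcal{M}_{g,1}(k+1))=\mathcal{M}_{g,*}(k+1)$. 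Together with injectivity this proves $\mathrm{(i)}$ for all $k\geq 3$ provided the base step $k=2\Rightarrow k=3$ is in place; this step is the one genuinely delicate point, since $p$ fails to be injective in degree $2$, and I expect it to reduce to the assertion that $\tau_2(\tau_\partial(g))$ is a primitive vector of $\mathrm{Im}\,\tau^\Z_{g,1}(2)$ (equivalently, that $\mathrm{Im}\,\tau^\Z_{g,1}(2)/\Z\,\tau_2(\tau_\partial(g))$ is torsion free), which is supplied by Morita's explicit computation of $\tau_2(\tau_\partial(g))$ inside $\mathrm{Im}\,\tau^\Z_{g,1}(2)$. This base case is the main obstacle, and it is exactly why degree $2$ must be excluded in $\mathrm{(ii)}$.

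Finally I would deduce $\mathrm{(ii)}$. For $k=1$ one has $\mathfrak{j}^\Z_{g,1}(1)=0$, because the bracket $H=H\otimes I_g(2)\to I_g(3)$ sending $x$ to $[x,\omega_0]$ is injective; hence the intersection is trivially zero, and $\mathrm{Im}\,\tau^\Z_{g,1}(1)\cong\mathrm{Im}\,\tau^\Z_{g,*}(1)$ since $p$ induces an isomorphism on degree $1$ graded quotients (as $\tau_\partial(g)\in\mathcal{M}_{g,1}(2)$ and $p$ is onto in degree $2$). For $k\geq 3$, applying $\mathrm{(i)}$ in degrees $k$ and $k+1$ shows that $p$ induces an isomorphism $\mathrm{Im}\,\tau^\Z_{g,1}(k)=\mathcal{M}_{g,1}(k)/\mathcal{M}_{g,1}(k+1)\to\mathcal{M}_{g,*}(k)/\mathcal{M}_{g,*}(k+1)=\mathrm{Im}\,\tau^\Z_{g,*}(k)$, which is the second assertion of $\mathrm{(ii)}$; moreover this isomorphism is compatible with the surjection $\mathfrak{h}^\Z_{g,1}(k)\to\mathfrak{h}^\Z_{g,*}(k)$ (because the Johnson homomorphisms for $\mathcal{M}_{g,1}$ and $\mathcal{M}_{g,*}$ commute with $p$), whose kernel is $\mathfrak{j}^\Z_{g,1}(k)$ by the exact sequence $0\to\mathfrak{j}^\Z_{g,1}\to\mathfrak{h}^\Z_{g,1}\to\mathfrak{h}^\Z_{g,*}\to 0$. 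Hence this surjection is injective on $\mathrm{Im}\,\tau^\Z_{g,1}(k)$, so $\mathrm{Im}\,\tau^\Z_{g,1}(k)\cap\mathfrak{j}^\Z_{g,1}(k)=\{0\}$, and the rational statement $\mathrm{Im}\,\tau_{g,1}(k)\cap\mathfrak{j}_{g,1}(k)=\{0\}$ follows at once.
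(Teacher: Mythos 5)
Your proposal is correct and rests on the same three pillars as the paper's own argument: the injectivity of $p$ on $\mathcal{M}_{g,1}(k)$ for $k\geq 3$, Hain's comparison of the two filtrations of $\mathcal{I}_{g,*}$, and Morita's computation of $\tau_{g,1}^\Z(2)(\tau_\partial(g))$. The organization, however, is reversed. The paper first extracts from Hain the finite-index statement and uses it \emph{directly} to get $\mathrm{Im}\,\tau_{g,1}(k)\cap\mathfrak{j}_{g,1}(k)=0$ for $k\geq 3$ (a nonzero intersection would force infinite index, $\mathfrak{j}^\Z_{g,1}(k)$ being torsion free), then proves surjectivity of $p$ degree by degree by lifting elements and correcting them, the base case resting on the explicit computation that $\mathfrak{j}^\Z_{g,1}(2)\cong\Z$ is generated exactly by $\tau_{g,1}^\Z(2)(\tau_\partial(g))$. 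You instead prove $\mathrm{(i)}$ first, replacing the lifting step by the observation that $\mathcal{M}_{g,*}(k+1)/p(\mathcal{M}_{g,1}(k+1))$ is a finite subgroup of the torsion-free group $\mathcal{M}_{g,1}(k)/\mathcal{M}_{g,1}(k+1)$, and then obtain $\mathrm{(ii)}$ as a formal corollary; this is a clean and equally valid packaging, and your explicit treatment of $k=1$ (via $\mathfrak{j}^\Z_{g,1}(1)=0$) is a small bonus the paper leaves implicit. One point you should make explicit: primitivity of $\tau_{g,1}^\Z(2)(\tau_\partial(g))$ in $\mathrm{Im}\,\tau^\Z_{g,1}(2)$ does not by itself control $\mathrm{Im}\,\tau^\Z_{g,1}(2)\cap\mathfrak{j}^\Z_{g,1}(2)$; in your base case you must first use the finite-index statement in degree $3$ (and normality of $p(\mathcal{M}_{g,1}(3))$) to get $n\,\tau_{g,1}^\Z(2)(\tilde\varphi)\in\Z\,\tau_{g,1}^\Z(2)(\tau_\partial(g))$ for some $n\geq1$, and only then does primitivity let you correct the lift by a power of $\tau_\partial(g)$. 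The paper's route avoids this by computing $\mathfrak{j}^\Z_{g,1}(2)$ outright, which makes the base case independent of the finite-index input and also records the generator of $\mathfrak{j}^\Z_{g,1}(2)$, a fact reused elsewhere; your route gets away with the weaker primitivity statement (checkable from Morita's formula, which has unit coefficients in the tensor basis) at the cost of invoking Hain one degree earlier. Also note that your telescoping deduction of finite index from Hain's equal-rank statement is at the same informal level as the paper's ``easy to deduce''; if you want it airtight, observe that each quotient $\mathcal{M}_{g,*}(k)/p(\mathcal{M}_{g,1}(k+1))$ is finitely generated and virtually abelian, so the rank count can be propagated inductively.
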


\begin{proof}
First of all, it is easy to deduce from the result of Hain mentioned above
that,  for any $k\geq 3$ the subgroup
$\mathcal{M}_{g,1}(k)\subset  \mathcal{M}_{g,*}(k)$
has {\it finite} index, because otherwise the associated graded modules 
of the above two filtrations would not be isomorphic. The former part of the claim $\mathrm{(ii)}$
follows from this because if
$\mathrm{Im}\,\tau_{g,1}(k)\cap \mathfrak{j}_{g,1}(k)\not=\{0\}$
for some $k\geq 3$, then it would imply that $\mathcal{M}_{g,1}(k+1)$
will have an {\it infinite} index in $\mathcal{M}_{g,*}(k+1)$.
Note here that $\mathfrak{j}_{g,1}(k)$ is a free $\Z$-module so that it
has no torsion.

Next we prove that $\mathcal{M}_{g,1}(3)= \mathcal{M}_{g,*}(3)$.
For this, we show that $\mathfrak{j}_{g,1}^\Z(2)$ is isomorphic to $\Z$ 
and determine its generator. By definition, we have
$$
\mathfrak{j}_{g,1}^\Z(2)=\mathrm{Ker}\, \left(H\otimes I_g(3)
\overset{[\ ,\ ]}{\longrightarrow} I_g(4)\right).
$$
Since $I_g(2)\cong\Z$ generated by the symplectic class $\omega_0\in \mathcal{L}_{g,1}(2)$,
$I_g(3)=\{[u,\omega_0];u\in H\}\cong H$. It follows that $H\otimes I_g(3)\cong H\otimes H$
and the bracket operation $H\otimes I_g(3)\rightarrow I_g(4)$ is given by
$v\otimes [u,\omega_0]\mapsto [v,[u,\omega_0]]$. It can then be checked that the 
kernel of this bracket operation is isomorphic to $\Z$ generated by the
element
$$
\sum_{i=1}^g \left\{x_i\otimes [y_i,\omega_0]-y_i\otimes [x_i,\omega_0]\right\}
$$
where $x_i, y_i\ (i=1,\ldots,g)$ denotes a symplectic basis of $H$ as before.
On the other hand, a result of \cite{morita89T} (Proposition 1.1) implies that
$
\tau_{g,1}^\Z(2)(\tau_\partial(g))\in \mathfrak{h}^\Z_{g,1}(2)
$
is precisely the above element (up to signs). It follows that
$$
\tau_{g,1}^\Z(2)(\Z)=\mathfrak{j}_{g,1}^\Z(2).
$$
Now let $\varphi\in \mathcal{M}_{g,*}(3)$ be any element and choose any lift $\tilde{\varphi}\in \mathcal{M}_{g,1}(2)$ 
of $\varphi$. Since $\tau^\Z_{g,*}(2)(\varphi)=0$, we can write $\tau^\Z_{g,1}(2)(\tilde\varphi)=m\in\mathfrak{j}^\Z_{g,1}(2)\cong\Z$
for some $m$. Then we have $\tau^\Z_{g,1}(2)(\tilde\varphi\, \tau_\partial(g)^{-m})=0$ so that
$\tilde\varphi\, \tau_\partial(g)^{-m}\in\mathcal{M}_{g,1}(3)$. Since $\tilde\varphi\, \tau_\partial(g)^{-m}$ is also a lift
of $\varphi$, we can conclude that $\mathcal{M}_{g,1}(3)=\mathcal{M}_{g,*}(3)$ as required.
Next we prove $\mathcal{M}_{g,1}(4)=\mathcal{M}_{g,*}(4)$. Let $\psi\in \mathcal{M}_{g,*}(4)$ be any element 
so that $\tau^\Z_{g,*}(3)(\psi)=0$ and choose 
any lift $\tilde{\psi}\in \mathcal{M}_{g,1}(3)$ of $\psi$. Since $\mathrm{Im}\,\tau_{g,1}(3)\cap \mathfrak{j}_{g,1}(3)=\{0\}$
as already proved above, we have $\tau^\Z_{g,1}(3)(\tilde\psi)=0$. Therefore $\tilde\psi\in\mathcal{M}_{g,1}(4)$
and we can conclude $\mathcal{M}_{g,1}(4)=\mathcal{M}_{g,*}(4)$. It is now clear that an inductive argument proves
the claim $\mathrm{(i)}$. The latter part of $\mathrm{(ii)}$ follows from this completing the proof.
\end{proof}

\begin{thm}
Over the rationals, we have a direct sum decomposition
$$
\mathfrak{h}_{g,1}(k)=
\mathfrak{j}_{g,1}(k)\oplus \mathcal{L}_g(k)\oplus \mathrm{Im}\, \tau_g(k)\oplus \mathrm{Cok}\,\tau_g(k).
$$
\end{thm}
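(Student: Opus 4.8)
The plan is to realize the claimed decomposition by stringing together the three short exact sequences already recorded in the excerpt, together with the fact that over $\Q$ every short exact sequence of $\mathrm{Sp}$-modules (or simply of $\Q$-vector spaces) splits. Concretely, I would first pass to the rational forms of the two Labute-type extensions
\begin{align*}
0\ \rightarrow\ \mathfrak{j}_{g,1}(k)\ \rightarrow\ &\mathfrak{h}_{g,1}(k)\ \rightarrow\ \mathfrak{h}_{g,*}(k)\ \rightarrow\ 0,\\
0\ \rightarrow\ \mathcal{L}_{g}(k)\ \rightarrow\ &\mathfrak{h}_{g,*}(k)\ \rightarrow\ \mathfrak{h}_{g}(k)\ \rightarrow\ 0,
\end{align*}
which are obtained from the corresponding sequences over $\Z$ by tensoring with $\Q$ (exactness is preserved since the quotients are torsion-free, or simply because $\Q$ is flat). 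Splitting the second sequence gives $\mathfrak{h}_{g,*}(k)\cong \mathcal{L}_g(k)\oplus\mathfrak{h}_g(k)$, and substituting into the first gives $\mathfrak{h}_{g,1}(k)\cong \mathfrak{j}_{g,1}(k)\oplus\mathcal{L}_g(k)\oplus\mathfrak{h}_g(k)$ as $\mathrm{Sp}$-modules.

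The remaining point is to identify $\mathfrak{h}_g(k)$ itself with $\mathrm{Im}\,\tau_g(k)\oplus\mathrm{Cok}\,\tau_g(k)$. This is again a rationally split short exact sequence
\[
0\ \rightarrow\ \mathrm{Im}\,\tau_g(k)\ \rightarrow\ \mathfrak{h}_g(k)\ \rightarrow\ \mathrm{Cok}\,\tau_g(k)\ \rightarrow\ 0,
\]
which is the definition of $\mathrm{Cok}\,\tau_g(k)$ as the cokernel of the Johnson homomorphism $\tau_g(k)\colon \mathcal{M}_g(k)\otimes\Q\to\mathfrak{h}_g(k)$; splitting it over $\Q$ finishes the proof. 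I would also remark that the direct sum on the right can be taken to be an orthogonal decomposition with respect to the canonical metric of Theorem~\ref{thm:can} once we restrict to the $\mathrm{Sp}$-invariant parts, though for the bare statement only the vector-space (equivalently, $\mathrm{Sp}$-module) splitting is needed.

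I expect the only genuine subtlety — and the place to be careful — is the passage from the integral extensions to the rational ones: one must check that $\mathfrak{j}^\Z_{g,1}(k)$ and $\mathcal{L}^\Z_g(k)$ are realized as \emph{direct summands over $\Q$} of $\mathfrak{h}^\Z_{g,1}(k)\otimes\Q$ and $\mathfrak{h}^\Z_{g,*}(k)\otimes\Q$ respectively, so that the quotients appearing are the expected ones; this uses the isomorphisms $\mathfrak{h}^\Z_{g,*}\cong\mathfrak{h}^\Z_{g,1}/\mathfrak{j}^\Z_{g,1}$ and $\mathfrak{h}^\Z_g\cong\mathfrak{h}^\Z_{g,*}/\mathcal{L}^\Z_g$ recorded above, plus the fact that $\mathfrak{j}^\Z_{g,1}(k)$ is a free $\Z$-module (noted in the proof of the preceding proposition) and that $\mathcal{L}^\Z_g$ sits inside $\mathfrak{h}^\Z_{g,*}$ as an ideal with torsion-free quotient. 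Everything else is the standard fact that short exact sequences of finite-dimensional $\Q$-vector spaces split, applied three times.
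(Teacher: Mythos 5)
Your proposal is correct and follows essentially the same route as the paper: the paper's proof consists precisely of citing the three short exact sequences $0\to\mathfrak{j}_{g,1}\to\mathfrak{h}_{g,1}\to\mathfrak{h}_{g,*}\to 0$, $0\to\mathcal{L}_{g}\to\mathfrak{h}_{g,*}\to\mathfrak{h}_{g}\to 0$, and $0\to\mathrm{Im}\,\tau_g\to\mathfrak{h}_{g}\to\mathrm{Cok}\,\tau_g\to 0$ and splitting them over $\Q$. Your extra care about passing from the integral extensions to their rational forms is a fine (and harmless) elaboration that the paper leaves implicit.
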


\begin{proof}
This follows from the following three exact sequences
\begin{align*}
0\rightarrow \mathfrak{j}_{g,1}\rightarrow &\mathfrak{h}_{g,1}\rightarrow \mathfrak{h}_{g,*}\rightarrow 0,\quad
0\rightarrow \mathcal{L}_{g}\rightarrow \mathfrak{h}_{g,*}\rightarrow \mathfrak{h}_{g}\rightarrow 0,\\
&0\rightarrow \mathrm{Im}\,\tau_g\rightarrow \mathfrak{h}_{g}\rightarrow \mathrm{Cok}\,\tau_g\rightarrow 0.
\end{align*}
\end{proof}

\section{Determination of the ideal $\mathfrak{j}_{g,1}$}\label{sec:j}

In this section, we formulate a method of determining the ${\mathrm{Sp}}$-irreducible decomposition 
of the ideal $\mathfrak{j}_{g,1}$. First we recall the following classical result (see e.g. \cite{r}).

\begin{thm}
Let $F_m$ be a free group of rank $m$ and let 
$$
\mathcal{L}^\Z_{\langle m \rangle}=\bigoplus_{k=1}^\infty \mathcal{L}^\Z_{\langle m \rangle}(k)
$$
be the free graded Lie algebra generated by $H_1(F_m)\cong \Z^m$.
Then the character of $\mathcal{L}_{\langle m \rangle}(k)=\mathcal{L}^\Z_{\langle m \rangle}(k)\otimes\Q$ 
as a $\mathrm{GL}(m,\Q)$-module is given by
$$
\mathrm{ch}(\mathcal{L}_{\langle m \rangle}(k))=\frac{1}{k} \sum_{d|k} \mu(d) (x_1^d+\cdots +x_m^d)^{k/d}.
$$
In particular
$$
\dim\, \mathcal{L}_{\langle m \rangle}(k)=\frac{1}{k} \sum_{d|k} \mu(d) m^{k/d}.
$$
\label{thm:character}
\end{thm}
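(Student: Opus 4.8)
The plan is to obtain this classical statement (Witt's formula and its $\mathrm{GL}$-equivariant refinement) from the Poincar\'e--Birkhoff--Witt theorem combined with a generating-function computation with symmetric functions. Write $V=H_1(F_m)\otimes\Q$, so that $\mathcal{L}_{\langle m\rangle}=\bigoplus_{k\geq1}\mathcal{L}_{\langle m\rangle}(k)$ is the free graded Lie algebra on $V$ and its universal enveloping algebra is the tensor algebra $T(V)=\bigoplus_{k\geq0}V^{\otimes k}$. Since $V$ sits in degree $1$, everything in sight is positively graded, and the PBW symmetrization map gives an isomorphism $T(V)\cong S(\mathcal{L}_{\langle m\rangle})$ of graded $\mathrm{GL}(m,\Q)$-modules, where $S$ denotes the symmetric algebra and $\mathcal{L}_{\langle m\rangle}(k)$ is placed in degree $k$.

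Next I would pass to $\mathrm{GL}(m,\Q)$-characters, recorded as symmetric polynomials in eigenvalues $x_1,\dots,x_m$, and keep the internal grading with a formal variable $t$. Writing $p_j=x_1^j+\cdots+x_m^j$, the left-hand side gives $\sum_{k\geq0}t^k\,\mathrm{ch}(V^{\otimes k})=\sum_{k\geq0}t^kp_1^k=(1-tp_1)^{-1}$. For the right-hand side, put $\ell_k=\mathrm{ch}(\mathcal{L}_{\langle m\rangle}(k))$ and use the standard formula $\sum_j q^j\,\mathrm{ch}(S^jM)=\exp\big(\sum_{n\geq1}\tfrac1n p_n[\mathrm{ch}\,M]\big)$ for the character of the symmetric algebra of a module $M$, applied with $q=t^k$ to $M=\mathcal{L}_{\langle m\rangle}(k)$ (so that its $j$-th symmetric power lands in degree $jk$); this produces $\prod_{k\geq1}\exp\big(\sum_{n\geq1}\tfrac{t^{nk}}{n}p_n[\ell_k]\big)$. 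Taking logarithms of the PBW identity turns it into
$$
\sum_{n\geq1}\frac{t^n}{n}\,p_1^n=\sum_{k\geq1}\sum_{n\geq1}\frac{t^{nk}}{n}\,p_n[\ell_k]=\sum_{n\geq1}\frac1n\,p_n\!\left[\,\sum_{k\geq1}t^k\ell_k\,\right].
$$

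Then I would compare coefficients of $t^N$, obtaining the recursion $p_1^N=\sum_{d\mid N}d\cdot p_{N/d}[\ell_d]$, in which the $d=N$ term is $N\ell_N$; hence $\ell_N$ is determined uniquely by $\ell_1,\dots,\ell_{N-1}$. It therefore suffices to verify that $\ell_k=\tfrac1k\sum_{d\mid k}\mu(d)\,p_d^{\,k/d}$ solves the recursion, which, after expanding and using multiplicativity of plethysm and $p_a[p_b]=p_{ab}$, reduces to the classical identity $\sum_{e\mid n}\mu(e)=[\,n=1\,]$. (Equivalently, one reads the displayed identity as $-\log(1-tp_1)=\sum_n\tfrac1n p_n[\Lambda]$ with $\Lambda=\sum_k t^k\ell_k$ and inverts it by the plethystic logarithm $\Lambda=\sum_n\tfrac{\mu(n)}{n}p_n[-\log(1-tp_1)]$.) Finally, specializing $x_1=\cdots=x_m=1$ makes every $p_d$ equal to $m$, giving $\dim\mathcal{L}_{\langle m\rangle}(k)=\tfrac1k\sum_{d\mid k}\mu(d)m^{k/d}$.

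The one genuinely delicate point — the step I would be most careful about — is the bookkeeping: correctly incorporating the internal degree into the symmetric-algebra character of $\mathcal{L}_{\langle m\rangle}(k)$ and getting the plethystic substitutions $p_n[\ell_k]$ right rather than accidentally writing $p_1^n$-type expressions. One can instead bypass generating functions by invoking that bracketed Lyndon words of length $k$ form a basis of $\mathcal{L}_{\langle m\rangle}(k)$ and counting Lyndon words of each weight by M\"obius inversion, but the equivariant refinement still comes down to the same power-sum identity, so I would present the PBW route as the cleanest; in any event all of this is classical and may simply be quoted from \cite{r}, the short derivation above being included only for completeness.
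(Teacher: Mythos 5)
Your argument is correct. Note, however, that the paper offers no proof of Theorem \ref{thm:character} at all: it is stated as a classical fact and referred to Reutenauer \cite{r}, so there is no internal argument to compare against. Your derivation is the standard one for the $\mathrm{GL}$-equivariant Witt formula: the identification $U(\mathcal{L}_{\langle m\rangle})=T(V)$, the characteristic-zero PBW symmetrization giving a $\mathrm{GL}(m,\Q)$-equivariant graded isomorphism $T(V)\cong S(\mathcal{L}_{\langle m\rangle})$, the passage to characters with the plethystic exponential, and the extraction of $\ell_N$ from the recursion $p_1^N=\sum_{d\mid N}d\,p_{N/d}[\ell_d]$, verified for the proposed formula via $p_a[p_b]=p_{ab}$, multiplicativity of $p_n[-]$, and $\sum_{e\mid n}\mu(e)=[n=1]$. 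The two delicate points are exactly the ones you flag and handle correctly: the grading bookkeeping ($p_n[t^k\ell_k]=t^{nk}p_n[\ell_k]$, so that the $j$-th symmetric power of the degree-$k$ piece lands in degree $jk$) and the fact that symmetrization, not merely the associated graded of PBW, is what gives a genuine $\mathrm{GL}$-module isomorphism in characteristic zero. The dimension formula then follows by the specialization $x_1=\cdots=x_m=1$, as you say. So your proof is a complete, self-contained substitute for the citation; it is essentially the argument one finds in \cite{r}, and the alternative you mention via Lyndon bases would equally suffice for the dimension count, though the character statement is most cleanly obtained by the PBW route you chose.
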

\noindent
In our context, we have 
$\mathcal{L}^\Z_{\langle 2g \rangle} = \mathcal{L}^\Z_{g,1}$ 
and $\mathcal{L}_{\langle 2g \rangle} = \mathcal{L}_{g,1}$. 

\begin{thm}[Labute \cite{labute}]
Let $\Sigma_g$ be a closed oriented surface of genus $g$ and let 
$\Sigma_{g,1}=\Sigma_g\setminus \mathrm{Int}\, D^2$.
Let $\mathcal{L}^\Z_g$ (resp. $\mathcal{L}^\Z_{g,1}$) denote the
graded Lie algebra associated to the lower central series of
$\pi_1\Sigma_g$ (resp. $\pi_1\Sigma_{g,1}$). 
 Then 
$$
\mathcal{L}^\Z_{g}=\mathcal{L}^\Z_{g,1}/ \langle\omega_0\rangle
$$
where $\langle\omega_0\rangle$ denotes the ideal generated by the symplectic class
$\omega_0\in \mathcal{L}^\Z_{g,1}(2)$.
Furthermore the $k$-th term $\mathcal{L}^\Z_{g}(k)$
of the graded Lie algebra $\mathcal{L}^\Z_g$ is a free $\Z$-module and its
rank is given by
$$
\mathrm{rank}\, \mathcal{L}^\Z_{g}(k)=\frac{1}{k} \sum_{d|k} \mu (k/d) 
\left[ \sum_{0\leq i\leq [d/2]} (-1)^i \frac{d}{d-i} \binom{d-i}{i} (2g)^{d-2i}\right].
$$
\label{thm:Labute}
\end{thm}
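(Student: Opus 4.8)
The plan is to prove Labute's theorem in two logically independent parts: first the structural isomorphism $\mathcal{L}^\Z_g \cong \mathcal{L}^\Z_{g,1}/\langle\omega_0\rangle$, and then the rank formula for each graded piece, which will follow from a generating-function computation once the structure is known.

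\textbf{The structural isomorphism.} First I would recall that $\pi_1\Sigma_{g,1}$ is free of rank $2g$ on generators $a_1,b_1,\dots,a_g,b_g$, that $\pi_1\Sigma_g$ is the one-relator quotient by the surface relation $r=\prod_{i=1}^g[a_i,b_i]$, and that this relator lies in the commutator subgroup, so its image in $H_1$ vanishes and its leading term in the associated graded Lie algebra is exactly the symplectic element $\omega_0=\sum_i[x_i,y_i]\in\mathcal{L}^\Z_{g,1}(2)$. The content of Labute's argument is that $\mathcal{L}^\Z_g$ is the quotient of $\mathcal{L}^\Z_{g,1}$ by the Lie ideal generated by $\omega_0$ alone (no higher corrections), i.e. that the surface group is a ``mild'' or ``inert'' one-relator group in Labute's sense: the one relator is not a proper power and its initial form $\omega_0$ is not a zero divisor in $\mathcal{L}^\Z_{g,1}$, equivalently $\mathcal{L}^\Z_{g,1}/\langle\omega_0\rangle$ is torsion-free of the ``expected'' dimension in each degree. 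I would invoke Labute's general theorem on the lower central series of one-relator groups whose relator has a non-degenerate initial form, with the verification that $\omega_0$ is central-regular following from the fact that the quadratic dual of the associated graded algebra $U(\mathcal{L}^\Z_{g,1})/(\omega_0)$ is Koszul (this is the standard ``quadratic algebra'' reformulation). The upshot is the claimed presentation together with the torsion-freeness assertion.

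\textbf{The rank formula.} Granting $\mathcal{L}^\Z_g \cong \mathcal{L}^\Z_{g,1}/\langle\omega_0\rangle$ with all graded pieces free, I would pass to Hilbert/Poincar\'e series. By the Poincar\'e--Birkhoff--Witt theorem applied to the one-relator-algebra $A=U(\mathcal{L}^\Z_{g,1})/(\omega_0)$, and using that $\omega_0$ is a non-zero-divisor of degree $2$, the Hilbert series of $A$ is
$$
H_A(t)=\frac{1-t^2}{1-2g\,t+t^2},
$$
since $U(\mathcal{L}^\Z_{g,1})$ is the free associative algebra with Hilbert series $(1-2gt)^{-1}$ and the extra relation of degree $2$ contributes the factor $(1-t^2)$ in the numerator (the denominator factoring as expected because $x^2-2gx+1$ is the characteristic-polynomial-type expression appearing here). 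On the other hand PBW for the quotient Lie algebra gives $H_A(t)=\prod_{k\geq 1}(1-t^k)^{-\operatorname{rank}\mathcal{L}^\Z_g(k)}$. Taking logarithmic derivatives (or applying M\"obius inversion / the cyclotomic identity $\prod_k(1-t^k)^{-\ell_k}=\exp\!\big(\sum_d \tfrac{1}{d}\sum_{k}\ell_k\,?\big)$ in the Witt-formula style) I would extract
$$
\operatorname{rank}\mathcal{L}^\Z_g(k)=\frac{1}{k}\sum_{d\mid k}\mu(k/d)\,c_d,
$$
where $c_d$ is the coefficient extracted from $-t\,\frac{d}{dt}\log\!\big(\frac{1-t^2}{1-2gt+t^2}\big)$, i.e. the power-sum of the two reciprocal roots $\alpha,\beta$ of $1-2gt+t^2$ minus the contribution $2$ of the numerator root; writing $\alpha+\beta=2g$, $\alpha\beta=1$ and using the Chebyshev/Lucas-type recursion $\alpha^d+\beta^d$, one arrives at $c_d=\sum_{0\le i\le[d/2]}(-1)^i\frac{d}{d-i}\binom{d-i}{i}(2g)^{d-2i}$, which is exactly the bracketed expression in the statement. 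The specialization $g$ formal recovers the ordinary Witt formula when the surface relation is dropped, which is a useful consistency check (Theorem~\ref{thm:character}).

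\textbf{Main obstacle.} The genuine difficulty is entirely in the structural part: showing that no higher-order corrections to $\omega_0$ are needed, equivalently that $\omega_0$ is a regular element so that the naive Hilbert series above is correct and the graded pieces are torsion-free. This is precisely Labute's theorem and its proof requires his machinery on mild presentations / strongly free sequences (or, in modern language, the Koszulness of the associated quadratic algebra); I would not reprove it but would cite \cite{labute} for it and restrict my own argument to deriving the explicit rank formula from the structural statement via the Hilbert-series/Witt-formula bookkeeping sketched above.
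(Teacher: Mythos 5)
The paper itself offers no proof of this statement --- it is quoted directly from Labute --- so the only part of your proposal with independent mathematical content is the Hilbert-series derivation of the rank formula, and that is precisely where there is a genuine error. The Hilbert series of $A=U(\mathcal{L}^\Z_{g,1})/(\omega_0)=T(H)/(\omega_0)$ is not $\frac{1-t^2}{1-2g\,t+t^2}$ but $\frac{1}{1-2g\,t+t^2}$. Your numerator factor $(1-t^2)$ is the complete-intersection rule for a \emph{central} regular element, but $\omega_0$ is not central in the free associative algebra: the two-sided ideal it generates is much larger than $\omega_0\cdot T(H)$ in degrees $\geq 3$, and the correct statement (this is exactly what Labute's inertness/strong-freeness of $\omega_0$ gives, equivalently the exactness of $0\to A(-2)\to A(-1)^{2g}\to A\to\Z\to 0$) is $H_A(t)\,(1-2g\,t+t^2)=1$. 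A degree-$2$ check already exposes the discrepancy: $\dim A_2=4g^2-1$, since the ideal meets degree $2$ only in the line spanned by $\omega_0$, which matches $1/(1-2gt+t^2)$ and not your series, whose $t^2$-coefficient is $4g^2-2$; equivalently $\mathrm{rank}\,\mathcal{L}^\Z_g(2)=\binom{2g}{2}-1=2g^2-g-1$, whereas your series would give $2g^2-g-2$.

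This error makes your final step internally inconsistent. With the correct series, writing $1-2gt+t^2=(1-\alpha t)(1-\beta t)$ with $\alpha+\beta=2g$, $\alpha\beta=1$, one gets $t\frac{d}{dt}\log H_A(t)=\sum_{d\geq 1}(\alpha^d+\beta^d)\,t^d$, so M\"obius inversion yields $c_d=\alpha^d+\beta^d=\sum_{0\le i\le[d/2]}(-1)^i\frac{d}{d-i}\binom{d-i}{i}(2g)^{d-2i}$ exactly, with \emph{no} correction term, and this is the bracketed expression in the theorem. With your series you must subtract $1+(-1)^d$ (your ``contribution $2$ of the numerator root''), and the result then fails to equal the bracketed expression for every even $d$, so the claimed identification is false. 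The fix is simply to delete the $(1-t^2)$ and cite Labute (or Anick's strongly free sets, or Koszulness of the quadratic algebra dual to $H^*(\Sigma_g;\Q)$) for $H_A(t)=1/(1-2gt+t^2)$; then your PBW/Witt/M\"obius bookkeeping goes through verbatim and gives the stated formula. One remaining caveat: the theorem is an integral statement (each $\mathcal{L}^\Z_g(k)$ is a free $\Z$-module), and a Koszulness argument over a field does not by itself give this, so deferring the structural part to Labute --- as you do, and as the paper does --- is the right move.
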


In the above formula, the part $i=0$ corresponds to the case of $\mathcal{L}^\Z_{g,1}(k)$.

Based on the above theorem of Labute, we can deduce the following method of determining the 
$\mathrm{Sp}$-irreducible decomposition of 
$\mathfrak{j}_{g,1}(k)$. 
This is because his theorem gives not only the ranks of the relevant modules 
but the structure of them as $\mathrm{GL}$-modules. 
Define
$$
\tilde{I}(k)=- \frac{1}{k} \sum_{d|k} \mu (k/d) 
\left[ \sum_{1\leq i\leq [d/2]} (-1)^i \frac{d}{d-i} \binom{d-i}{i} P_{k/d}^{\otimes (d-2i)}\right]
$$
where $P_j$ denotes the $\mathrm{GL}$-representation corresponding to the power sum
$$
x_1^{j}+y_1^{j}+\cdots +x_g^j+y_g^j
$$
so that $P_1=H_\Q$ and if $d-2i=0$, then set $P_{k/d}^0=[0]$ the $\mathrm{Sp}$-trivial representation.
Then we have the following result.

\begin{thm}
$\mathrm{(i)}\ $
The $\mathrm{Sp}$-irreducible decomposition of $\mathfrak{j}_{g,1}(k)$ is the same as that of the
virtual $\mathrm{GL}$-representation
$
\left(H_\Q\otimes \tilde{I}(k+1)-\tilde{I}(k+2)\right).
$

$\mathrm{(ii)}\ $
The $\mathrm{Sp}$-irreducible decomposition of $\mathcal{L}_{g}(k)$ is the same as that of the
virtual $\mathrm{GL}$-representation
$
\left(\frac{1}{k} \sum_{d|k} \mu(k/d) P_{k/d}^{d}-\tilde{I}(k)\right).
$
\label{thm:jL}
\end{thm}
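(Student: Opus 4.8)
The plan is to derive both parts of Theorem \ref{thm:jL} directly from Labute's theorem (Theorem \ref{thm:Labute}) together with the character formula for free Lie algebras (Theorem \ref{thm:character}), by upgrading the rank computations there to $\mathrm{GL}$-character computations. The key observation is that Labute's rank formula for $\mathcal{L}^\Z_g(k)$ is obtained by substituting $2g$ (the rank of $H$) for a certain formal variable; since his proof actually identifies $\mathcal{L}^\Z_g(k)$ as a $\mathrm{GL}$-module (not merely its rank), we may make the substitution $(2g)^{d-2i}\rightsquigarrow P_{k/d}^{\otimes(d-2i)}$, where $P_j$ is the virtual $\mathrm{GL}$-representation with character the power sum $x_1^j+y_1^j+\cdots+x_g^j+y_g^j$. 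Indeed the factor $(2g)^{d-2i}$ in Labute's formula arises precisely as the dimension of an $(d-2i)$-fold tensor power of $H_\Q$ entering via the Witt-type/Möbius bookkeeping, so the enhancement is the natural one. This means that as a virtual $\mathrm{GL}$-character,
$$
\mathrm{ch}(\mathcal{L}_g(k))=\frac{1}{k}\sum_{d\mid k}\mu(k/d)\,P_{k/d}^{d}\;-\;\tilde{I}(k),
$$
with $P_{k/d}^{d}:=P_{k/d}^{\otimes d}$ recording the $i=0$ term (the free Lie algebra contribution $\mathcal{L}_{g,1}(k)$, by Theorem \ref{thm:character}) and $\tilde{I}(k)$ collecting exactly the correction terms $i\geq 1$.

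First I would make precise the claim that Labute's argument is $\mathrm{GL}$-equivariant. The relevant point is that $\mathcal{L}^\Z_g=\mathcal{L}^\Z_{g,1}/\langle\omega_0\rangle$ and $\omega_0$ is $\mathrm{Sp}$-invariant but only $\mathrm{GL}$-semiinvariant; nevertheless the graded pieces $\mathcal{L}^\Z_g(k)$ are genuine $\mathrm{GL}$-modules because $\langle\omega_0\rangle$ is a $\mathrm{GL}$-submodule (the ideal generated by $\wedge^2 H_\Q\subset \mathcal{L}_{g,1}(2)$ together with $S^2 H_\Q$-parts being absent — more carefully, $\langle\omega_0\rangle$ is $\mathrm{Sp}$-invariant, but Labute's presentation shows its Poincaré–Birkhoff–Witt-type basis assembles into a $\mathrm{GL}$-stable complement, so the quotient character is well-defined as a $\mathrm{GL}$-character). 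Concretely, I would observe that Labute produces the generating function for $\bigoplus_k \mathcal{L}^\Z_g(k)$ by a free resolution / Möbius inversion argument in which one can everywhere replace the scalar $\dim H_\Q=2g$ by the class $[H_\Q]$ in the representation ring $R(\mathrm{GL})$, and powers $(2g)^m$ by $[H_\Q^{\otimes m}]$, and the $d$-th Adams-type operation producing $P_{k/d}$ in place of $x_1^{k/d}+\cdots$. This is exactly parallel to how Theorem \ref{thm:character} refines the dimension formula $\frac1k\sum_{d\mid k}\mu(d)m^{k/d}$ to the character formula $\frac1k\sum_{d\mid k}\mu(d)(x_1^d+\cdots+x_m^d)^{k/d}$.

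Granting this, part (ii) is immediate: separating the $i=0$ term in the refined Labute formula gives $\mathrm{ch}(\mathcal{L}_g(k))=\frac1k\sum_{d\mid k}\mu(k/d)P_{k/d}^{d}-\tilde I(k)$, and since all multiplicities of $\mathrm{Sp}$-irreducibles in an actual $\mathrm{GL}$-module are nonnegative, the $\mathrm{Sp}$-decomposition of $\mathcal{L}_g(k)$ equals that of this virtual $\mathrm{GL}$-representation (one restricts the $\mathrm{GL}$-character to $\mathrm{Sp}$ and reads off multiplicities). For part (i), I would use the defining short exact sequence (from Section \ref{sec:comp})
$$
0\rightarrow \mathfrak{j}^\Z_{g,1}(k)\rightarrow \mathfrak{h}^\Z_{g,1}(k)\rightarrow \mathfrak{h}^\Z_{g,*}(k)\rightarrow 0
$$
together with $\mathfrak{h}^\Z_{g,*}(k)=\mathrm{Ker}(H\otimes\mathcal{L}^\Z_g(k+1)\to\mathcal{L}^\Z_g(k+2))$ and $\mathfrak{h}^\Z_{g,1}(k)=\mathrm{Ker}(H\otimes\mathcal{L}^\Z_{g,1}(k+1)\to\mathcal{L}^\Z_{g,1}(k+2))$. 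Since the bracket maps are $\mathrm{GL}$-equivariant and surjective onto the relevant image (in degrees $\geq$ something, and in general one works with virtual characters, which is all that is needed), we get in the representation ring
$$
[\mathfrak{h}_{g,1}(k)]=[H_\Q][\mathcal{L}_{g,1}(k+1)]-[\mathcal{L}_{g,1}(k+2)],\qquad
[\mathfrak{h}_{g,*}(k)]=[H_\Q][\mathcal{L}_{g}(k+1)]-[\mathcal{L}_{g}(k+2)],
$$
hence $[\mathfrak{j}_{g,1}(k)]=[H_\Q]\bigl([\mathcal{L}_{g,1}(k+1)]-[\mathcal{L}_g(k+1)]\bigr)-\bigl([\mathcal{L}_{g,1}(k+2)]-[\mathcal{L}_g(k+2)]\bigr)=[H_\Q]\,\tilde I(k+1)-\tilde I(k+2)$, using part (ii) and the definition of $\tilde I$ as precisely the difference of the refined Labute formula and its $i=0$ term, i.e. $[\mathcal{L}_{g,1}(k)]-[\mathcal{L}_g(k)]=\tilde I(k)$. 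As before, passing from this virtual $\mathrm{GL}$-identity to the actual $\mathrm{Sp}$-decomposition is legitimate because $\mathfrak{j}_{g,1}(k)$ is a genuine module, so all $\mathrm{Sp}$-multiplicities are nonnegative and agree with those computed from the virtual character.

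The main obstacle is the first step: justifying that Labute's rank formula genuinely refines to the stated $\mathrm{GL}$-character formula, i.e. that one may substitute power-sum representations $P_j$ for scalars $(2g)^{d-2i}$ in his expression. The subtlety is that $\omega_0$ is not $\mathrm{GL}$-invariant, so a priori the quotient $\mathcal{L}_{g,1}/\langle\omega_0\rangle$ might not be defined over $\mathrm{GL}$ in a way compatible with gradings and Möbius bookkeeping; one must check that the free resolution Labute uses (or an equivalent Hilbert-series argument à la the Shirshov–Witt or Hall basis method) respects the $\mathrm{GL}$-action termwise, so that each occurrence of $2g$ in his alternating sum is literally $\mathrm{tr}$ of an identity on an explicit tensor power. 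I expect this to follow by re-running his argument with formal variables $x_1,y_1,\dots,x_g,y_g$ in place of the single specialization, exactly as Theorem \ref{thm:character} is the variable-refinement of the Witt dimension formula; the bookkeeping with the parameter $\sum_i(-1)^i\frac{d}{d-i}\binom{d-i}{i}(2g)^{d-2i}$ (which is a Chebyshev-type / Lucas-sequence expression counting necklaces on the surface group relation) should lift to the same expression with $(2g)^{d-2i}$ replaced by $P_{k/d}^{\otimes(d-2i)}$, and the sign $-$ and the Möbius factors are inert under this refinement. Once that is in place, everything else is formal manipulation in the representation ring of $\mathrm{GL}$ followed by restriction to $\mathrm{Sp}$.
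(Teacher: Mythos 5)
Your overall route coincides with the paper's: the paper offers no detailed proof of Theorem \ref{thm:jL} beyond the remark that (ii) is the character-level refinement of Labute's theorem (``well known to specialists'') and that (i) is a direct consequence, and your derivation of (i) from (ii) --- computing Euler characteristics in the representation ring from the kernel descriptions of $\mathfrak{h}_{g,1}(k)$ and $\mathfrak{h}_{g,*}(k)$ together with surjectivity of the bracket maps (both Lie algebras being generated in degree $1$) --- is exactly the intended consequence; one can also get (i) in a single step from $\mathfrak{j}_{g,1}(k)=\mathrm{Ker}\bigl(H_\Q\otimes I_g(k+1)\to I_g(k+2)\bigr)$, since $I_g(k+2)=[H_\Q,I_g(k+1)]$ for $k\geq 1$, which avoids invoking the exact sequence with $\mathfrak{h}_{g,*}$.

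However, your justification of the crucial refinement step contains a genuine error. The line spanned by $\omega_0$ is \emph{not} a $\mathrm{GL}$-submodule of $\wedge^2H_\Q$ (which is $\mathrm{GL}$-irreducible), so $\langle\omega_0\rangle$ is not $\mathrm{GL}$-stable, there is no $\mathrm{GL}$-stable complement, and $\mathcal{L}_g(k)$, $I_g(k)$, $\mathfrak{j}_{g,1}(k)$ (and the bracket maps involving them) carry only an $\mathrm{Sp}$-structure; your assertion that the graded pieces $\mathcal{L}^\Z_g(k)$ are genuine $\mathrm{GL}$-modules is false, and no rerunning of Labute's argument can be made $\mathrm{GL}$-equivariant. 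What saves the statement --- and is all the theorem claims --- is that only $\mathrm{Sp}$-decompositions are asserted: since $\mathrm{Sp}$ fixes $\omega_0$, Labute's theorem (the quotient description $\mathcal{L}_g=\mathcal{L}_{g,1}/\langle\omega_0\rangle$ together with the structural input behind his rank formula, e.g.\ the relation-module/resolution statement giving the enveloping-algebra series $1/(1-2gt+t^2)$) is $\mathrm{Sp}$-equivariant, and the Witt--M\"obius bookkeeping then refines each scalar $(2g)^{d-2i}$ to the restriction to $\mathrm{Sp}$ of $P_{k/d}^{\otimes(d-2i)}$, the terms with $d-2i=0$ contributing the trivial $\mathrm{Sp}$-module --- which is precisely why the paper must decree $P_{k/d}^0=[0]$, a convention that has no $\mathrm{GL}$-equivariant meaning. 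The virtual $\mathrm{GL}$-representation in the statement is thus pure bookkeeping: decompose it into $\mathrm{GL}$-irreducibles and restrict to $\mathrm{Sp}$; the claim is that the resulting virtual $\mathrm{Sp}$-character is the actual one. With the refinement recast $\mathrm{Sp}$-equivariantly, the remainder of your argument (nonnegativity of genuine multiplicities and the representation-ring manipulations for (i)) is sound and matches the paper's intent.
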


\begin{remark}
Probably the latter statement of the above theorem has been well-known to specialists for some time
and the former statement is a direct consequence of it. We have made use of these formulae
to compute the Sp-decomposition of $\mathfrak{j}_{g,1}(k)$ mentioned below.
These computations were very important for our study because it enabled us to 
guess the size of 
the ideal $\mathfrak{j}_{g,1}$ inside the whole Lie algebra $\mathfrak{h}_{g,1}$.
We would like to mention that recently an explicit formula for the $\mathrm{Sp}$-character of 
$\mathcal{L}_g$ has been written down by Filip \cite{filip}.
\end{remark}

By using these methods, we have computed the $\mathrm{Sp}$-irreducible decompositions of 
the $\mathrm{Sp}$-modules $\mathfrak{j}_{g,1}(k)$ and $\mathcal{L}_g(k)$ explicitly for all $k\leq 20$. 
By combining with our earlier work in \cite{mss3}, we have now complete information about the
$\mathrm{Sp}$-irreducible decompositions of all the relevant $\mathrm{Sp}$-modules
up to degree $20$. Here we only 
describe the $\mathrm{Sp}$-invariant part. See Tables \ref{tab:j}, \ref{tab:j6}, \ref{tab:j10}.
The first table indicates the dimensions in the stable range whereas in the latter two tables
(the cases $k=6, 10$), 
we describe the unstable computations for later use.

\begin{table}[h]
\caption{$\text{Dimensions of $\mathfrak{h}_{g,1}(k)^{\mathrm{Sp}}, \mathfrak{j}_{g,1}(k)^{\mathrm{Sp}},
\mathfrak{h}_{g,*}(k)^{\mathrm{Sp}},\mathcal{L}_{g}(k)^{\mathrm{Sp}},\mathfrak{h}_{g}(k)^{\mathrm{Sp}}$}$}
\begin{center}
\begin{tabular}{|r|r|r|r|r|r|}
\noalign{\hrule height0.8pt}
\hfil $k$ & $\mathfrak{h}_{g,1}(k)^{\mathrm{Sp}}$ & $\mathfrak{j}_{g,1}(k)^{\mathrm{Sp}}$ & 
$\mathfrak{h}_{g,*}(k)^{\mathrm{Sp}}$ & $\mathcal{L}_{g}(k)^{\mathrm{Sp}}$  & $\mathfrak{h}_{g}(k)^{\mathrm{Sp}}$ \\
\hline
$2$ & $1$& $1$ & 
$0$ & $0$   & $0$  \\
\hline
$4$ & $0$ & $0$& $0$ & $0$ & $0$ \\
\hline
$6$ & $5$ & $2$ & $3$ & $1$ & $2$ \\
\hline
$8$ & $3$ & $1$ &
$2$ & $2$ & $0$  \\
\hline
$10$ & $108$ & $38$  & $70$ & $34$ & $36$ \\
\hline
$12$ & $650$ & $210$ & $440$ & $259$ & $181$ \\     
\hline
$14$ & $8795$ & $2831$ & $5964$ & $3215$ & $2749$ \\    
\hline
$16$ & $110610$ & $34591$ & $76019$ & $41858$ & $34161$ \\    
\hline
$18$ & $1710798$ & $530466$ & $1180332$ & $644758$ & $535574$\\    
\hline
$20$ & $29129790$ & $8980269$ & $20149521$ & $11111008$ & $9038513$ \\                                                 
\noalign{\hrule height0.8pt}
\end{tabular}
\end{center}
\label{tab:j}
\end{table}

\begin{table}[h]
\caption{$\text{Dimensions of $\mathfrak{h}_{g,1}(6)^{\mathrm{Sp}}, \mathfrak{j}_{g,1}(6)^{\mathrm{Sp}},
\mathfrak{h}_{g,*}(6)^{\mathrm{Sp}},\mathcal{L}_{g}(6)^{\mathrm{Sp}},\mathfrak{h}_{g}(6)^{\mathrm{Sp}}$}$}
\begin{center}
\begin{tabular}{|r|r|r|r|r|r|}
\noalign{\hrule height0.8pt}
\hfil $g$ & $\mathfrak{h}_{g,1}(6)^{\mathrm{Sp}}$ & $\mathfrak{j}_{g,1}(6)^{\mathrm{Sp}}$ & 
$\mathfrak{h}_{g,*}(6)^{\mathrm{Sp}}$ & $\mathcal{L}_{g}(6)^{\mathrm{Sp}}$  & $\mathfrak{h}_{g}(6)^{\mathrm{Sp}}$ \\
\hline
$1$ & $1$& $1$ & $0$ & $0$   & $0$  \\
\hline
$2$ & $4$ & $2$& $2$ & $1$ & $1$ \\
\hline
$\geq 3$ & $5$ & $2$ & $3$ & $1$ & $2$ \\                                         
\noalign{\hrule height0.8pt}
\end{tabular}
\end{center}
\label{tab:j6}
\end{table}

\begin{table}[h]
\caption{$\text{Dimensions of $\mathfrak{h}_{g,1}(10)^{\mathrm{Sp}}, \mathfrak{j}_{g,1}(10)^{\mathrm{Sp}},
\mathfrak{h}_{g,*}(10)^{\mathrm{Sp}},\mathcal{L}_{g}(10)^{\mathrm{Sp}},\mathfrak{h}_{g}(10)^{\mathrm{Sp}}$}$}
\begin{center}
\begin{tabular}{|r|r|r|r|r|r|}
\noalign{\hrule height0.8pt}
\hfil $g$ & $\mathfrak{h}_{g,1}(10)^{\mathrm{Sp}}$ & $\mathfrak{j}_{g,1}(10)^{\mathrm{Sp}}$ & 
$\mathfrak{h}_{g,*}(10)^{\mathrm{Sp}}$ & $\mathcal{L}_{g}(10)^{\mathrm{Sp}}$  & $\mathfrak{h}_{g}(10)^{\mathrm{Sp}}$ \\
\hline
$1$ & $3$& $3$ & $0$ & $0$   & $0$  \\
\hline
$2$ & $51$ & $27$& $24$ & $14$ & $10$ \\
\hline
$3$ & $97$ & $37$ & $60$ & $31$ & $29$ \\
\hline
$4$ & $107$ & $38$ & $69$ & $34$ & $35$  \\
\hline
$\geq 5$ & $108$ & $38$  & $70$ & $34$ & $36$ \\                                               
\noalign{\hrule height0.8pt}
\end{tabular}
\end{center}
\label{tab:j10}
\end{table}

\section{Descendants and ancestors}\label{sec:desc}

In this section, we propose a terminology which will hopefully be useful in
analyzing the structure of the Lie algebra $\mathfrak{h}_{g,1}$.

\begin{definition}
Let $V$ be a $\mathrm{GL}$-irreducible component which appears in
$\mathfrak{h}_{g,1}(k)$ so that we can write $V\cong \lambda_{\mathrm{GL}}$
where $\lambda$ is a Young diagram with $(k+2)$ boxes. 
As an $\mathrm{Sp}$-module, $V$ can be decomposed into
a linear combination
$$
V=\bigoplus_{i=0}^ s   V_i
$$
of $\mathrm{Sp}$-irreducible representations $V_i$. Here $V_0$ denotes the 
largest $\mathrm{Sp}$-irreducible component of $V$  so that
$V_0\cong  \lambda_{\mathrm{Sp}}$ and any of the other $V_i\ (i\geq 1)$ is obtained
from $V$ by applying various contractions successively. We say that $V_i$ is an
{\it Sp-descendant} of $V$ of {\it order} $d$ if it is obtained by applying 
contractions $d$ times. In this case $V_i\cong \lambda^{(i)}_{\mathrm{Sp}}$
where $\lambda^{(i)}$ is a Young diagram with $(k+2-2d)$ boxes.
We also call {\it Sp-descendant} of {\it order} $1$ (resp. $2$) an {\it Sp-child}
(resp. an {\it Sp-grandchild}). Conversely we call $V$ the {\it ancestor} of $V_i$
for any $i$.
\label{def:da}
\end{definition}

\begin{example}
Asada-Nakamura \cite{an} proved that there exists a unique copy 
$[2k+1\, 1^2]_{\mathrm{GL}} \subset \mathfrak{h}_{g,1}(2k+1)$ for any $k\geq 1$.
The trace component $[2k+1]_{\mathrm{Sp}} \subset \mathfrak{h}_{g,1}(2k+1)$ is an $Sp$-child of 
this unique copy. 
Enomoto-Satoh \cite{es} proved that there exists a unique copy 
$[2^21^{4k-1}]_{\mathrm{GL}}\subset \mathfrak{h}_{g,1}(4k+1)$ for any $k\geq 1$.
Their {\it anti-trace} component $[1^{4k+1}]_{\mathrm{Sp}}\subset \mathfrak{h}_{g,1}(4k+1)$ 
is an $Sp$-child of this unique copy.
\end{example}

\begin{example}
The $\mathrm{Sp}$-invariant part given in Theorem \ref{thm:ortho} is,
so to speak, the ``last descendants" of each piece $\mathfrak{h}_{g,1}(2k)$.
\end{example}

\begin{thm}
For any $k$ and any $\mathrm{GL}$-irreducible component $V$
of $\mathfrak{h}_{g,1}(k)$ isomorphic to $\lambda_{\mathrm{GL}}$, 
the corresponding  $\mathrm{Sp}$-irreducible component $V_0$ which is isomorphic to
$\lambda_{\mathrm{Sp}}$ is contained in $\mathrm{Im}\, \tau_{g,1}(k)$ in a certain stable range
(we can take $g\geq k+3$).
\label{thm:imtau}
\end{thm}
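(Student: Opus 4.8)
The statement asserts that the "top" (maximal) $\mathrm{Sp}$-irreducible piece $V_0 \cong \lambda_{\mathrm{Sp}}$ of any $\mathrm{GL}$-irreducible summand $V \cong \lambda_{\mathrm{GL}}$ of $\mathfrak{h}_{g,1}(k)$ already lies in the Johnson image $\mathrm{Im}\,\tau_{g,1}(k)$, once $g$ is large enough (say $g \ge k+3$). The natural strategy is to reduce to the known description of the \emph{stable} Johnson cokernel. Recall that Hain's theorem (quoted in Section~3) together with the results on $\mathrm{Im}\,\tau_{g,1}$ give, in the stable range, a clean description of $\mathrm{Coker}\,\tau_{g,1}(k)=\mathfrak{h}_{g,1}(k)/\mathrm{Im}\,\tau_{g,1}(k)$; in particular, work of Morita and of Enomoto--Satoh identifies the $\mathrm{Sp}$-irreducible constituents of the stable cokernel and shows they are all "small" in a precise sense — they arise only as proper $\mathrm{Sp}$-descendants (strict contractions) of the $\mathrm{GL}$-summands, never as a top component $\lambda_{\mathrm{Sp}}$ of some $\lambda_{\mathrm{GL}}\subset \mathfrak{h}_{g,1}(k)$. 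So the first step is to pin down exactly which $\mathrm{Sp}$-representations can appear in $\mathrm{Coker}\,\tau_{g,1}(k)$ stably, and to observe that $\lambda_{\mathrm{Sp}}$ (as the top constituent of $\lambda_{\mathrm{GL}} \subset \mathfrak{h}_{g,1}(k)$, with $|\lambda|=k+2$) is \emph{not} among them.

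First I would make precise the dichotomy between "top component" and "descendant" at the level of highest weights. If $\lambda$ is a Young diagram with $k+2$ boxes, the $\mathrm{Sp}$-irreducible $\lambda_{\mathrm{Sp}}$ occurs in $\mathfrak{h}_{g,1}(k)$ \emph{only} as the top constituent $V_0$ of the $\mathrm{GL}$-summand $\lambda_{\mathrm{GL}}$: any other contribution to the $\lambda_{\mathrm{Sp}}$-isotypic part of $\mathfrak{h}_{g,1}(k)$ would have to come from $\mu_{\mathrm{GL}}$ with $|\mu| = k+2$, $\mu \ne \lambda$, and $\mu_{\mathrm{Sp}} \supset \lambda_{\mathrm{Sp}}$ after contractions — but contraction strictly decreases the number of boxes, so this forces $|\mu| > k+2$, impossible. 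Hence the multiplicity of $\lambda_{\mathrm{Sp}}$ in $\mathfrak{h}_{g,1}(k)$ equals the multiplicity $m_\lambda$ of $\lambda_{\mathrm{GL}}$ in $\mathfrak{h}_{g,1}(k)$, and each copy of $\lambda_{\mathrm{Sp}}$ is the $V_0$ of a distinct copy of $\lambda_{\mathrm{GL}}$. The second step is then to invoke the structure of the stable cokernel: by the Enomoto--Satoh map \cite{es} and the surrounding theory, every $\mathrm{Sp}$-irreducible summand of the stable $\mathrm{Coker}\,\tau_{g,1}(k)$ is the image of a contraction applied to some $\mathrm{GL}$-summand of $\mathfrak{h}_{g,1}(k)$, i.e.\ a \emph{proper} $\mathrm{Sp}$-descendant, so it is of the form $\nu_{\mathrm{Sp}}$ with $|\nu| \le k$; in particular no $\lambda_{\mathrm{Sp}}$ with $|\lambda|=k+2$ occurs there. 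Combining the two steps: since all $m_\lambda$ copies of $\lambda_{\mathrm{Sp}}$ in $\mathfrak{h}_{g,1}(k)$ are top components but none of them survives in the cokernel, all of them must lie in $\mathrm{Im}\,\tau_{g,1}(k)$, which is exactly the claim for $V_0$.

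The remaining point is the passage from the stable statement to an explicit bound on the genus. Here I would use the $p$-stability and $i$-stability results of Section~3 (Proposition~\ref{prop:ipZ}): $\mathrm{Im}\,\tau^\Z_{g,1}$ is $i$-stable and $\mathfrak{h}_{g,1}^{\mathrm{Sp}}$ is $p$-stable, and similarly the relevant $\mathrm{GL}$-decompositions stabilize once $g$ is at least the number of boxes involved. The $\mathrm{GL}$-decomposition of $\mathfrak{h}_{g,1}(k)$ stabilizes for $g \ge k+2$, and Hain's theorem (hence the identification of the stable cokernel) applies for $g$ in the stable range; tracking the largest Young diagram $\lambda$ that can occur (with $|\lambda| = k+2$ rows at most $k+2$) and allowing a little slack for the contraction arguments gives the stated bound $g \ge k+3$. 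I expect the main obstacle to be precisely this last bookkeeping: one must verify that all the ingredients — stability of the $\mathrm{GL}$-decomposition, validity of Hain's comparison, and the Enomoto--Satoh description of the cokernel — hold simultaneously at $g = k+3$, and that no "unstable" copy of $\lambda_{\mathrm{Sp}}$ sneaks into the cokernel below that range. The conceptual core (top components are never in the cokernel) is straightforward; making the range effective and airtight is where the care is needed.
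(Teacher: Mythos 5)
Your argument has a genuine gap at its core: the ``known description of the stable Johnson cokernel'' that you invoke does not exist, and the specific fact you extract from it --- that every $\mathrm{Sp}$-irreducible constituent of the stable $\mathrm{Coker}\,\tau_{g,1}(k)$ is a \emph{proper} descendant, so that no $\lambda_{\mathrm{Sp}}$ with $|\lambda|=k+2$ survives in the cokernel --- is precisely equivalent to the statement of Theorem \ref{thm:imtau} you are trying to prove. Determining the stable cokernel is an open problem; what is known (Morita's traces, the Enomoto--Satoh map with $\mathrm{Im}\,\tau_{g,1}\subset\mathrm{Ker}\,ES$, Conant--Kassabov--Vogtmann, etc.) produces \emph{lower bounds} for the cokernel, i.e.\ particular classes known to lie in it, not an upper bound asserting that nothing else, and in particular no top component, appears there. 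So the second step of your plan is circular: you assume the conclusion as input. Your first step (in the stable range the multiplicity of $\lambda_{\mathrm{Sp}}$, $|\lambda|=k+2$, in $\mathfrak{h}_{g,1}(k)$ equals the multiplicity of $\lambda_{\mathrm{GL}}$, each copy being a top constituent) is essentially correct, and the final ``bookkeeping'' for the bound $g\geq k+3$ is acknowledged but not actually carried out; neither of these repairs the circular middle step, and Hain's theorem (Johnson image generated in degree $1$) by itself says nothing about which $\mathrm{Sp}$-pieces of $\mathfrak{h}_{g,1}(k)$ are hit.

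For contrast, the paper's proof is direct and constructive, not a reduction to cokernel structure. One notes that $\mathfrak{h}_{g,1}(k)$ is spanned by the elements $\ell(u_1,\ldots,u_{k+2})=S_{k+2}(u_1\otimes[u_2,[\cdots[u_{k+1},u_{k+2}]\cdots])$, and that the highest weight vector of $\lambda_{\mathrm{Sp}}\subset\lambda_{\mathrm{GL}}$ can be written as a combination of such elements with every $u_i$ among $x_1,\ldots,x_{k+2}$, because $\lambda_{\mathrm{Sp}}$ inside $\lambda_{\mathrm{GL}}$ is exactly the intersection of the kernels of all contractions. Then one argues by induction on $k$: Johnson's theorem gives $\mathrm{Im}\,\tau_{g,1}(1)=\mathfrak{h}_{g,1}(1)$, and the bracket identity
$$
[\ell(u_1,\ldots,u_{k},x_{k+3}),\ \ell(y_{k+3},u_{k+1},u_{k+2})]=\ell(u_1,\ldots,u_{k+2}),
$$
together with the fact that $\mathrm{Im}\,\tau_{g,1}$ is closed under brackets, shows each generator lies in $\mathrm{Im}\,\tau_{g,1}(k)$; the auxiliary handle $x_{k+3},y_{k+3}$ is what produces the explicit bound $g\geq k+3$. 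If you want to salvage your approach, you would have to prove, rather than cite, an upper bound on the cokernel excluding top components --- which is exactly the content of this constructive argument.
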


\begin{remark}
It is a very important problem to determine which $\mathrm{Sp}$-descendants of 
a given $\mathrm{GL}$-irreducible component $V\subset \mathfrak{h}_{g,1}(k)$
belong to $\Im \tau_{g,1}(k)$ and/or remain non-trivial in $H_1(\mathfrak{h}^+_{g,1})$.
\end{remark}

To prove the above theorem, we prepare some terminology.

Let $\sigma_i=(12\cdots i)\in \mathfrak{S}_k$ be the cyclic permutation. Define two elements
\begin{align*}
p_k&=(1-\sigma_k)(1-\sigma_{k-1})\cdots (1-\sigma_2) \\
S_k&=\sum_{j=1}^k \sigma^j_k
\end{align*}
in $\Z[\mathfrak{S}_k]$ both of which act on $H_\Q^{\otimes k}$ linearly. 
It is easy to see that $p_k(H_\Q^{\otimes k})=\mathcal{L}_{g,1}(k)$.

\begin{prop}[see \cite{morita99}]
The subspace $H_\Q\otimes\mathcal{L}_{g,1}(k+1)\subset H_\Q^{\otimes {(k+2)}}$ is invariant under the
action of $S_{k+2}$ and
$$
S_{k+2}\left(H_\Q\otimes\mathcal{L}_{g,1}(k+1)\right)=\mathfrak{h}_{g,1}(k).
$$
It follows that
$$
\mathfrak{h}_{g,1}(k)=S_{k+2}\circ (1\otimes p_{k+1})\left(H_\Q^{\otimes (k+2)}\right).
$$
\label{prop:chah}
\end{prop}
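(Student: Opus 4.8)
The plan is to realise the bracket map cutting out $\mathfrak{h}_{g,1}(k)$ as the restriction of a group‑algebra element acting on $H_\Q^{\otimes(k+2)}$, to identify a scalar multiple of $S_{k+2}$ with the projection onto the cyclically invariant part of $H_\Q^{\otimes(k+2)}$, and then to settle the one genuinely non‑formal point: that cyclic symmetrization does not carry us out of $H_\Q\otimes\mathcal{L}_{g,1}(k+1)$.

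First recall that, by definition, $\mathfrak{h}_{g,1}(k)=\Ker\bigl(b\colon H_\Q\otimes\mathcal{L}_{g,1}(k+1)\xrightarrow{[\ ,\ ]}\mathcal{L}_{g,1}(k+2)\bigr)$, regarded as a subspace of $H_\Q^{\otimes(k+2)}$ via $H_\Q\otimes\mathcal{L}_{g,1}(k+1)\subset H_\Q\otimes H_\Q^{\otimes(k+1)}$. For $u\in H_\Q$ and a Lie element $w\in\mathcal{L}_{g,1}(k+1)$ the Lie bracket equals the associative commutator in the tensor algebra, so $b(u\otimes w)=u\otimes w-w\otimes u$; since $w\otimes u$ is obtained from $u\otimes w$ by the cyclic permutation $\sigma_{k+2}^{-1}$ that moves the first tensor slot to the last, $b$ is the restriction to $H_\Q\otimes\mathcal{L}_{g,1}(k+1)$ of $1-\sigma_{k+2}^{-1}\in\Z[\mathfrak{S}_{k+2}]$. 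Over $\Q$ the relation $S_{k+2}^2=(k+2)S_{k+2}$ shows that $\tfrac1{k+2}S_{k+2}$ is idempotent, and it is the projection onto the subspace of $\langle\sigma_{k+2}\rangle$‑invariants, which equals $\Im S_{k+2}$ and also equals $\Ker(1-\sigma_{k+2}^{-1})$ (its complement being $\Im(1-\sigma_{k+2})=\Ker S_{k+2}$). Putting these together gives
$$
\mathfrak{h}_{g,1}(k)=\bigl(H_\Q\otimes\mathcal{L}_{g,1}(k+1)\bigr)\cap\Im S_{k+2}.
$$

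One inclusion of the asserted identity is now immediate: for $\zeta\in\mathfrak{h}_{g,1}(k)$ we have $\zeta=\tfrac1{k+2}S_{k+2}(\zeta)\in S_{k+2}\bigl(H_\Q\otimes\mathcal{L}_{g,1}(k+1)\bigr)$. For the opposite inclusion, and for the invariance statement itself, it is enough to prove
$$
S_{k+2}\bigl(H_\Q\otimes\mathcal{L}_{g,1}(k+1)\bigr)\subset H_\Q\otimes\mathcal{L}_{g,1}(k+1);
$$
granting this, any element $S_{k+2}(\eta)$ with $\eta\in H_\Q\otimes\mathcal{L}_{g,1}(k+1)$ lies in $H_\Q\otimes\mathcal{L}_{g,1}(k+1)$ and is fixed by $\sigma_{k+2}$, hence lies in the intersection $\mathfrak{h}_{g,1}(k)$. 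The closing formula follows at once, since $(1\otimes p_{k+1})\bigl(H_\Q^{\otimes(k+2)}\bigr)=H_\Q\otimes p_{k+1}\bigl(H_\Q^{\otimes(k+1)}\bigr)=H_\Q\otimes\mathcal{L}_{g,1}(k+1)$, so that $S_{k+2}\circ(1\otimes p_{k+1})\bigl(H_\Q^{\otimes(k+2)}\bigr)=S_{k+2}\bigl(H_\Q\otimes\mathcal{L}_{g,1}(k+1)\bigr)=\mathfrak{h}_{g,1}(k)$.

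It remains to prove the displayed inclusion, which I regard as the heart of the matter. By linearity it suffices to show $S_{k+2}(u\otimes w)\in H_\Q\otimes\mathcal{L}_{g,1}(k+1)$ for $u\in H_\Q$ and $w\in\mathcal{L}_{g,1}(k+1)$. Expanding the cyclic sum and sorting the $k+2$ summands by which portion of $w$ gets rotated in front of $u$, one obtains
$$
S_{k+2}(u\otimes w)=u\otimes w+\sum_{w=a\cdot b,\ b\neq\emptyset} b\otimes u\otimes a,
$$
where the sum runs over the deconcatenations of $w$ with non‑empty second factor (understood through the deconcatenation coproduct, since $w$ need not be a monomial). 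Two features should make this tractable. First, $S_{k+2}$ is constant on cyclic orbits, so one may freely cyclically rearrange inside its argument; for instance, writing $w=[h,w']$ with $h\in H_\Q$ and $w'\in\mathcal{L}_{g,1}(k)$, one checks $S_{k+2}(u\otimes[h,w'])=S_{k+2}([u,h]\otimes w')$, which lowers the length of the ``tail'' Lie factor. Second, every associative commutator of Lie elements is again a Lie element, by the Jacobi identity. Iterating the first reduction and regrouping the resulting terms by their leading tensor factor — using the Jacobi identity to recognise each ``tail'' as an element of $\mathcal{L}_{g,1}(k+1)$ — should yield the claim; the base case $k=0$ is trivial, since $H_\Q\otimes\mathcal{L}_{g,1}(1)=H_\Q^{\otimes 2}$. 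I expect the only real obstacle to be the combinatorial bookkeeping in this induction, namely keeping the deconcatenation sum and the repeated Jacobi rewrites under control; everything else reduces to formal identities among the projectors $\tfrac1{k+2}S_{k+2}$ and $\tfrac1{k+1}(1\otimes p_{k+1})$.
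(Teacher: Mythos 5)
Your formal skeleton is correct and is the standard one: identifying the bracket $b(u\otimes w)=u\otimes w-w\otimes u$ with the restriction of $1-\sigma_{k+2}^{\pm 1}$, noting that $\tfrac{1}{k+2}S_{k+2}$ is the projection onto cyclic invariants so that $\mathfrak{h}_{g,1}(k)=\bigl(H_\Q\otimes\mathcal{L}_{g,1}(k+1)\bigr)\cap\Im S_{k+2}$, and observing that everything then reduces to the single non-formal claim $S_{k+2}\bigl(H_\Q\otimes\mathcal{L}_{g,1}(k+1)\bigr)\subset H_\Q\otimes\mathcal{L}_{g,1}(k+1)$, together with the Dynkin--Specht--Wever fact $p_{k+1}(H_\Q^{\otimes(k+1)})=\mathcal{L}_{g,1}(k+1)$. (The paper itself gives no proof, deferring to \cite{morita99}.)

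The gap is that this invariance claim --- which is the entire content of the proposition --- is not actually proved, and the induction you sketch does not close. Your identity $S_{k+2}(u\otimes[h,w'])=S_{k+2}([u,h]\otimes w')$ is correct, but iterating it on $w=[h_1,[h_2,\cdots[h_k,h_{k+1}]\cdots]]$ transfers the whole word into the head and terminates at $S_{k+2}(V\otimes h_{k+1})=S_{k+2}(h_{k+1}\otimes V)$ with $V\in\mathcal{L}_{g,1}(k+1)$ and $h_{k+1}\in H_\Q$ --- an expression of exactly the shape you started from, so no inductive progress has been made; and the ``regrouping by leading tensor factor plus Jacobi'' step in the deconcatenation expansion is precisely the assertion to be proved, not a proof of it. The clean way to close the argument is to establish, by induction on $k$, the explicit expansion displayed in the paper immediately after the proposition, namely
$$
S_{k+2}\bigl(u_1\otimes[u_2,[\cdots[u_{k+1},u_{k+2}]\cdots]]\bigr)
=\sum_{i=1}^{k+2}u_i\otimes\bigl[\,[u_{i+1},[\cdots[u_{k+1},u_{k+2}]\cdots]]\,,\,[\cdots[u_1,u_2],\cdots,u_{i-1}]\,\bigr],
$$
each of whose summands is manifestly in $H_\Q\otimes\mathcal{L}_{g,1}(k+1)$; since right-normed brackets span $\mathcal{L}_{g,1}(k+1)$, this yields the invariance, and the rest of your argument then goes through verbatim.
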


Here we mention a relation with another method of expressing elements of $\mathfrak{h}_{g,1}(k)$,
namely {\it Lie spiders} (see e.g. \cite{levine}). More precisely the following element 
\begin{align*}
S_{k+2}&(u_1\otimes [u_2,[u_3,[\cdots [u_{k+1},u_{k+2}]\cdots ]) \\
=& u_1\otimes [u_2,[u_3,[\cdots [u_{k+1},u_{k+2}]\cdots ]+u_2\otimes [[u_3,[u_4,[\cdots [u_{k+1},u_{k+2}]\cdots ],u_1]+\\
& u_3\otimes [ [u_4,[u_5,[\cdots [u_{k+1},u_{k+2}]\cdots ],[u_1,u_2]]+\cdots +u_{k+2}\otimes 
[\cdots [u_1,u_2],\cdots,u_{k}],u_{k+1}]
\end{align*}
is represented by the Lie spider
\bigskip
\begin{center}
\includegraphics[width=.45\textwidth]{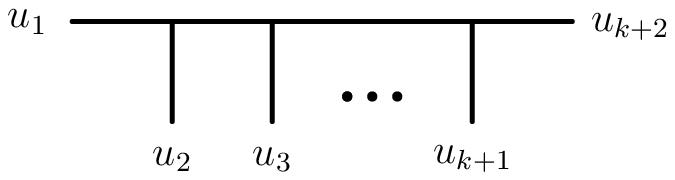}
\end{center}


\noindent
where $u_i\in H_\Q$.

\begin{proof}[Proof of Theorem \ref{thm:imtau}]
We use induction on $k$. It is a classical result of Johnson \cite{johnson}
that $\mathrm{Im}\,\tau_{g,1}(1)=\mathfrak{h}_{g,1}(1)=\wedge^3 H_\Q$.
Hence the claim holds for $k=1$.

It is easy to see that the bracket operation
$$
H_\Q\otimes \mathcal{L}_{g,1}(k)\rightarrow \mathcal{L}_{g,1}(k+1)
$$
is {\it surjective} for any $k\geq 1$. It follows that $\mathfrak{h}_{g,1}(k)$ is generated by the
elements of the form
$$
S_{k+2}(u_1\otimes [u_2,[u_3,[\cdots [u_{k+1},u_{k+2}]\cdots ]).
$$
Let us denote the above element by $\ell(u_1,\ldots,u_{k+2})$.
Then we observe that the highest weight vector for $\lambda_{\mathrm{Sp}}$
can be expressed as a linear combination of elements of the above form where
any of $u_i\ (i=1,\ldots,k+2)$ is equal to some $x_j$ with $j\leq k+2$. Here $x_1,\ldots,x_g,y_1,\ldots,y_g$ is
a symplectic basis of $H$ for $g \geq k+3$. 
This is because the module $\lambda_{\mathrm{Sp}}$, regarded as a subspace of $\lambda_{\mathrm{GL}}$,
is characterized as the intersection of the kernels of all the possible $\mathrm{Sp}$
contractions $\lambda_{\mathrm{GL}}\rightarrow \mu_{\mathrm{Sp}}$ where $\mu_{\mathrm{Sp}}$ denotes
any $\mathrm{Sp}$-module.
Hence it suffices to prove that $\ell(u_1,\ldots,u_{k+2})$
is contained in $\mathrm{Im}\,\tau_{g,1}(k)$ for such $g$ whenever any $u_i$ is equal to 
$x_j \ (j\leq k+2)$.
Now assume $k>1$ and consider the following two elements
$$
\ell(u_1,\ldots,u_{k},x_{k+3}),\quad \ell(y_{k+3},u_{k+1},u_{k+2}).
$$
Then the former element is contained in $\mathrm{Im}\,\tau_{g',1}(k-1)$ with $g' \geq k+3$ 
by the induction assumption
and the latter one belongs to $\mathfrak{h}_{g,1}(1)=\mathrm{Im}\,\tau_{g,1}(1)$ for any $g\geq k+3$.
Also it is easy to see that
$$
[\ell(u_1,\ldots,u_{k},x_{k+3}),\ell(y_{k+3},u_{k+1},u_{k+2})]=\ell(u_1,\ldots,u_{k+2}).
$$
Therefore $\ell(u_1,\ldots,u_{k+2})$ is contained in $\mathrm{Im}\,\tau_{g,1}(k)$ 
with $g \geq k+3$ as required. This completes the proof.
\end{proof}

\begin{prop}

$\mathrm{(i)}\ $ For any $\lambda$ with $|\lambda|=k+2$,
the isotypical component $\widetilde{H}_\lambda\subset \mathfrak{h}_{g,1}(k)$
as well as the corresponding $\mathrm{Sp}$-isotypical component $(\widetilde{H}_\lambda)_0$ is $i$-stable.


$\mathrm{(ii)}\ \text{The $\mathrm{Sp}$-invariant part}\ \mathfrak{h}_{g,1}^{\mathrm{Sp}}\  \text{is p-stable}$.
Furthermore each component of the orthogonal direct sum decomposition of this space given by Theorem $\ref{thm:ortho}$ is $p$-stable.


$\mathrm{(iii)}\ $ $\mathfrak{j}_{g,1}^{\mathrm{Sp}}$ is p-stable. 

$\mathrm{(iv)}\ $ The ideal $[\mathfrak{h}_{g,1},\mathfrak{h}_{g,1}]$ is $i$-stable.
\label{prop:ipQ}
\end{prop}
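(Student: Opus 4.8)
The plan is to reduce all four assertions to a short list of elementary compatibilities between the maps $i\colon H_g\hookrightarrow H_{g+1}$ and $p\colon H_{g+1}\twoheadrightarrow H_g$ and the structures that cut out the subspaces in question. First, both $i$ and $p$ act diagonally on tensor powers, hence commute with the natural $\mathfrak{S}_n$-actions on $H_g^{\otimes n}$ and $H_{g+1}^{\otimes n}$; by Schur--Weyl duality the $\mathrm{GL}$-isotypical piece $V^n_\lambda$ coincides with the $\lambda$-isotypical piece for $\mathfrak{S}_n$, so $i$ and $p$ preserve the $\mathrm{GL}$-isotypical decompositions. Second, with respect to the chosen symplectic bases the form on $H_g$ is the restriction of the form on $H_{g+1}$, so every symplectic contraction $K\colon H^{\otimes n}\to H^{\otimes(n-2)}$ satisfies $K\circ i=i\circ K$; moreover the subgroup $\mathrm{Sp}(2g,\Q)\subset\mathrm{Sp}(2g+2,\Q)$ acting trivially on the last hyperbolic summand $\langle x_{g+1},y_{g+1}\rangle$ makes both $i$ and $p$ equivariant, and any $\mathrm{Sp}(2g+2,\Q)$-invariant vector is a fortiori $\mathrm{Sp}(2g,\Q)$-invariant. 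Finally, the lemmas of Section $3$ give $i(\mathfrak{h}_{g,1}(k))\subset\mathfrak{h}_{g+1,1}(k)$, $p(\mathfrak{h}_{g+1,1}(k))\subset\mathfrak{h}_{g,1}(k)$, and, via Proposition \ref{prop:ipZ}, $p(\mathfrak{j}_{g+1,1}(k))\subset\mathfrak{j}_{g,1}(k)$.

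Granting this toolkit, $\mathrm{(i)}$, $\mathrm{(iii)}$ and the first assertion of $\mathrm{(ii)}$ are immediate. Since $\widetilde{H}_\lambda=V^{k+2}_\lambda\cap\mathfrak{h}_{g,1}(k)$ is the intersection of two $i$-stable subspaces it is $i$-stable; and $(\widetilde{H}_\lambda)_0$ is the intersection of $\widetilde{H}_\lambda$ with the kernels of all symplectic contractions $H_g^{\otimes(k+2)}\to H_g^{\otimes k}$ (the characterization of the top $\mathrm{Sp}$-component used in the proof of Theorem \ref{thm:imtau}), each of which commutes with $i$, so $(\widetilde{H}_\lambda)_0$ is $i$-stable as well. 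For $p$-stability of $\mathfrak{h}_{g,1}^{\mathrm{Sp}}$ (resp. $\mathfrak{j}_{g,1}^{\mathrm{Sp}}$) I combine $p(\mathfrak{h}_{g+1,1}(k))\subset\mathfrak{h}_{g,1}(k)$ (resp. the analogue for $\mathfrak{j}$) with the fact that $p$ carries $\mathrm{Sp}(2g+2,\Q)$-invariants to $\mathrm{Sp}(2g,\Q)$-invariants.

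For the orthogonal summands $H_\lambda$ of Theorem \ref{thm:ortho} I would use $H_\lambda=S_{2k+2}\circ(1\otimes p_{2k+1})(U_\lambda)$, where $U_\lambda\subset(H_\Q^{\otimes(2k+2)})^{\mathrm{Sp}}$ is, by Theorem \ref{thm:can}, the unique (multiplicity-one) copy of the irreducible $(\lambda^\delta)_{\mathfrak{S}_{2k+2}}$. Because $p$ is $\mathfrak{S}_{2k+2}$-equivariant and lowers $\mathrm{Sp}$-invariance as above, $p(U_\lambda^{(g+1)})$ is an $\mathfrak{S}_{2k+2}$-submodule of $(H_g^{\otimes(2k+2)})^{\mathrm{Sp}}$ that is either $0$ or isomorphic to $(\lambda^\delta)_{\mathfrak{S}_{2k+2}}$, hence lies in $U_\lambda^{(g)}$; and $U_\lambda^{(g)}=0$ exactly when $h(\lambda)>g$, which disposes of the eigenspaces newly appearing in genus $g+1$. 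Since the operator $S_{2k+2}\circ(1\otimes p_{2k+1})$ lies in $\Z[\mathfrak{S}_{2k+2}]$ it commutes with $p$, whence $p(H_\lambda^{(g+1)})\subset H_\lambda^{(g)}$. This last point — verifying that the eigenspace decomposition is genuinely intrinsic to the $\mathfrak{S}_{2k+2}$-module structure, so that the $p$-image of one summand cannot spill into another and must vanish when the summand is ``new'' — is the step I expect to require real care, and it rests entirely on the multiplicity-one statement in Theorem \ref{thm:can}.

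Finally, $\mathrm{(iv)}$ follows once I know that $i$ respects brackets on the graded pieces, i.e. $i([\xi,\eta])=[i\xi,i\eta]$ for $\xi\in\mathfrak{h}_{g,1}(k)$, $\eta\in\mathfrak{h}_{g,1}(\ell)$. Using the formula
$$
\widetilde{B}=\sum_{i=2}^{k+2}K_i\circ b^{k,\ell}_i-\sum_{i=2}^{\ell+2}K_i\circ f^{k,\ell}_i
$$
from the proof of Theorem \ref{thm:bracket}, together with the fact that the bracket on $\mathfrak{h}_{g,1}$ is the restriction of $\widetilde{B}$: the permutations $b^{k,\ell}_i$, $f^{k,\ell}_i$ commute with $i$ (they are induced by elements of $\mathfrak{S}_{k+\ell+4}$), and each contraction $K_i$ commutes with $i$, so $\widetilde{B}\circ(i\otimes i)=i\circ\widetilde{B}$. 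Hence $i$ carries $[\mathfrak{h}_{g,1},\mathfrak{h}_{g,1}]$ into $[\mathfrak{h}_{g+1,1},\mathfrak{h}_{g+1,1}]$, as required.
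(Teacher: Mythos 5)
Your proposal is correct and follows essentially the same route as the paper's (very terse) proof: $i$-/$p$-stability of the $\mathfrak{S}_n$-isotypical (hence $\mathrm{GL}$-isotypical) pieces, $i$-/$p$-stability of $\mathfrak{h}_{g,1}(k)$ and $\mathfrak{j}_{g,1}(k)$, compatibility of symplectic contractions with $i$, $p$-stability of $\mathrm{Sp}$-invariants, and $i$-equivariance of the bracket. You merely spell out details the paper leaves implicit, notably the multiplicity-one argument showing $p(U_\lambda^{(g+1)})\subset U_\lambda^{(g)}$ for the orthogonal summands $H_\lambda$.
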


\begin{proof}
$\mathrm{(i)}$ follows from that the following three facts. The first is that the isotypical component 
$V_\lambda^{k+2}\subset (H_\Q^{\otimes (k+2)})^{\mathrm{Sp}}$ is $i$-stable which follows from the classical 
construction of this component. The second is that the submodule $\mathfrak{h}_{g,1}(k)\subset H_\Q^{\otimes (k+2)}$ 
is $i$-stable. The third is that taking the contractions to any given ``direction" is an $i$-stable operation.
The rest of the assertions follow similarly using the facts that 
$(H_\Q^{\otimes (k+2)})^{\mathrm{Sp}}\subset H_\Q^{\otimes (k+2)}$ is $p$-stable and the bracket operation of
$\mathfrak{h}_{g,1}$ is $i$-stable in an obvious sense. 
\end{proof}

\begin{remark}
In general, 
the $\mathrm{Sp}$-invariant part of the Johnson image $(\mathrm{Im}\, \tau_{g,1})^{\mathrm{Sp}}$ 
is neither $i$-stable nor $p$-stable, 
and $\mathfrak{j}_{g,1}^{\mathrm{Sp}}$ is not $i$-stable. 
These can be checked by direct computation. 
This is one of the reasons why the problem of determining 
(the $\mathrm{Sp}$-invariant part of) the image of the Johnson homomorphism 
as well as the ideal $\mathfrak{j}_{g,1}$ is difficult. 
In a trial to overcome this difficulty, in Section \ref{sec:hsp} 
we introduce two kinds of bases for $\mathfrak{h}_{g,1}(2k)^{\mathrm{Sp}}$. 
\end{remark}

\section{Two kinds of bases for $\mathfrak{h}_{g,1}(2k)^{\mathrm{Sp}}$}\label{sec:hsp}

In this section, we describe a general method of constructing elements 
of $\mathfrak{h}_{g,1}(2k)^{\mathrm{Sp}}$
and by using it we introduce two kinds of bases for it.

Let $\sigma_i=(12\cdots i)\in \mathfrak{S}_{k}$ be the cyclic permutation as before. Define two elements
\begin{align*}
p'_k&=\left(1-(-1)^{k-1}\sigma_k\right)(1-(-1)^{k-2} \sigma_{k-1})\cdots (1+\sigma_2) , \\
S'_k&=\sum_{j=1}^k (-1)^{j(k-1)} \sigma^j_k
\end{align*}
in $\Z[\mathfrak{S}_k]$. Then by combining Proposition \ref{prop:chah} with Proposition \ref{prop:LCD},
we obtain the following result. 

\begin{prop}
$$
\mathfrak{h}_{g,1}(2k)^{\mathrm{Sp}}=
\Phi\left(S'_{2k+2} \circ \sigma_{2k+2} \circ p'_{2k+1} \circ \sigma_{2k+2}^{-1}(\Q\mathcal{D}^\ell (2k+2))\right).
$$
\label{prop:LCDH}
\end{prop}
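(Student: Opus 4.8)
The statement to prove is Proposition~\ref{prop:LCDH}, which identifies $\mathfrak{h}_{g,1}(2k)^{\mathrm{Sp}}$ with the image under $\Phi$ of an explicit operator acting on $\Q\mathcal{D}^\ell(2k+2)$. The strategy is to translate the already-established description of $\mathfrak{h}_{g,1}(2k)$ inside $H_\Q^{\otimes(2k+2)}$ (Proposition~\ref{prop:chah}) through the anti-$\mathfrak{S}_{2k+2}$-equivariant surjection $\Phi$ of Proposition~\ref{prop:LCD}, taking careful account of how the sign character $\mathrm{sgn}$ interacts with each cyclic permutation $\sigma_j$. The point of introducing the ``primed'' operators $p'_k$ and $S'_k$ is precisely to absorb these signs, so the whole proof is essentially a sign-bookkeeping computation.

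First I would restrict Proposition~\ref{prop:chah} to $\mathrm{Sp}$-invariants. Since $S_{2k+2}$ and $1\otimes p_{2k+1}$ lie in $\Z[\mathfrak{S}_{2k+2}]$, they preserve the $\mathrm{Sp}$-invariant subspace, and we get
$$
\mathfrak{h}_{g,1}(2k)^{\mathrm{Sp}}=S_{2k+2}\circ(1\otimes p_{2k+1})\left(\left(H_\Q^{\otimes(2k+2)}\right)^{\mathrm{Sp}}\right).
$$
By Proposition~\ref{prop:LCD}, $\Phi:\Q\mathcal{D}^\ell(2k+2)\to\left(H_\Q^{\otimes(2k+2)}\right)^{\mathrm{Sp}}$ is surjective, so the right-hand side equals
$$
S_{2k+2}\circ(1\otimes p_{2k+1})\circ\Phi\left(\Q\mathcal{D}^\ell(2k+2)\right).
$$
Now I would push $\Phi$ to the left using anti-equivariance: for $\gamma\in\mathfrak{S}_{2k+2}$, $\gamma\circ\Phi=(\mathrm{sgn}\,\gamma)\,\Phi\circ\gamma$. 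Applying this to each transposition-free generator $(1-\sigma_j)$ appearing in $1\otimes p_{2k+1}$ — where the relevant $\sigma_j$ fixes the first tensor slot and cyclically permutes slots $2,\dots,j+1$, hence as an element of $\mathfrak{S}_{2k+2}$ has sign $(-1)^{j-1}$ — the factor $(1-\sigma_j)$ turns into $\Phi\circ(1-(-1)^{j-1}\sigma_j)$ on the chord-diagram side. Conjugating by $\sigma_{2k+2}$ shifts the index set so that the cyclic permutations act on the ``correct'' slots, accounting for the $\sigma_{2k+2}\circ(-)\circ\sigma_{2k+2}^{-1}$ in the statement; the net effect is that $1\otimes p_{2k+1}$ becomes $\Phi\circ\sigma_{2k+2}\circ p'_{2k+1}\circ\sigma_{2k+2}^{-1}$ after matching the sign conventions in the definition of $p'_k$. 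Similarly, $S_{2k+2}=\sum_j\sigma_{2k+2}^j$ becomes $\Phi\circ S'_{2k+2}$, since $\sigma_{2k+2}^j$ has sign $(-1)^{j(2k+1)}=(-1)^{j(2k+2-1)}$, which is exactly the sign twist built into $S'_{2k+2}$. Stringing these together gives the asserted formula.

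\textbf{Main obstacle.} The only real difficulty is getting every sign and every index shift exactly right: one must verify that the cyclic permutation denoted $\sigma_j$ in the definition of $p_{k+1}$ (acting inside $\mathcal{L}_{g,1}(k+1)$, i.e.\ on slots $2,\dots,k+2$ of $H_\Q^{\otimes(k+2)}$) corresponds, as a genuine element of $\mathfrak{S}_{2k+2}$, to a cycle whose length and support produce precisely the sign $(-1)^{j-1}$ used above, and that after conjugation by $\sigma_{2k+2}$ the supports line up with the convention used in $p'_k$ and $S'_k$. I would handle this by writing $\sigma_{2k+2}\circ\sigma_j\circ\sigma_{2k+2}^{-1}$ explicitly as a cycle, checking its sign, and confirming it matches the factor $1-(-1)^{?}\sigma_?$ in $p'_{2k+1}$ term by term; the parity of $2k+2$ (even) is what makes the bookkeeping clean. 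Everything else is a direct consequence of Propositions~\ref{prop:chah}, \ref{prop:LCD} and \ref{prop:lcdmu} already available in the text.
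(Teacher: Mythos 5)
Your proposal is correct and follows essentially the same route as the paper, which obtains the proposition simply by combining Proposition \ref{prop:chah} with the anti-$\mathfrak{S}_{2k+2}$-equivariant surjection $\Phi$ of Proposition \ref{prop:LCD}; your sign and conjugation bookkeeping (a $j$-cycle has sign $(-1)^{j-1}$, $\mathrm{sgn}(\sigma_{2k+2}^j)=(-1)^{j(2k+1)}$, and $1\otimes p_{2k+1}=\sigma_{2k+2}\,p_{2k+1}\,\sigma_{2k+2}^{-1}$ inside $\mathfrak{S}_{2k+2}$) is exactly what the definitions of $p'_{2k+1}$ and $S'_{2k+2}$ are designed to absorb, since the twist $\gamma\mapsto(\mathrm{sgn}\,\gamma)\gamma$ is an algebra automorphism of $\Q[\mathfrak{S}_{2k+2}]$. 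The only step you state a bit loosely is the equality $S_{2k+2}\circ(1\otimes p_{2k+1})\bigl((H_\Q^{\otimes(2k+2)})^{\mathrm{Sp}}\bigr)=\mathfrak{h}_{g,1}(2k)^{\mathrm{Sp}}$, which requires $\mathrm{Sp}$-equivariance of the operator together with complete reducibility (averaging/projection onto invariants), not merely that the operator preserves the invariant subspace --- but this is the same fact the paper itself invokes in the proof of Theorem \ref{thm:ortho}, so it is not a genuine gap.
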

Thus we obtain a method of constructing elements of $\mathfrak{h}_{g,1}(2k)^{\mathrm{Sp}}$.
We remark here that computation by a computer is much easier in this context of
$\Q\mathcal{D}^\ell (2k+2)$ rather than that of $(H_\Q^{\otimes (2k+2)})^{\mathrm{Sp}}$
because in the latter case the dimension gets very large quickly as the genus grows
while in the former context the computation is independent of the genus.
In particular, the explicit orthogonal decomposition of  $(H_\Q^{\otimes (2k+2)})^{\mathrm{Sp}}$
can be obtained by applying various Young symmetrizers on $\Q\mathcal{D}^\ell (2k+2)$
to obtain the corresponding decomposition of this space and then converting it to the space
$(H_\Q^{\otimes (2k+2)})^{\mathrm{Sp}}$
by applying Proposition \ref{prop:LCD}. To obtain the orthogonal decomposition of 
$\mathfrak{h}_{g,1}(2k)^{\mathrm{Sp}}$, it is enough to apply further the
operator $
S'_{2k+2} \circ \sigma_{2k+2} \circ p'_{2k+1} \circ \sigma_{2k+2}^{-1}
$ to the above decomposition of
$\Q\mathcal{D}^\ell (2k+2)$ and then apply the homomorphism $\Phi$.

More precisely, the explicit procedure goes as follows.
By Theorem \ref{thm:lcdortho}, 
there exists an orthogonal
direct sum decomposition
$$
\Q\mathcal{D}^\ell(2k+2)\cong \bigoplus_{|\lambda |=k+1}
E_{\lambda}
$$
in terms of certain subspaces $E_\lambda$.
As an $\mathfrak{S}_{2k+2}$-submodule of 
$\Q\mathcal{D}^\ell(2k+2)$, $E_\lambda$
is an irreducible $\mathfrak{S}_{2k+2}$-module corresponding to the Young diagram
$
2\lambda.
$
Since the operator 
$S'_{2k+2} \circ \sigma_{2k+2} \circ p'_{2k+1} \circ \sigma_{2k+2}^{-1}$ 
belongs to $\Z[\mathfrak{S}_{2k+2}]$,
if we define
$$
F_\lambda=
(S'_{2k+2} \circ \sigma_{2k+2} \circ p'_{2k+1} \circ \sigma_{2k+2}^{-1})
(E_\lambda),
$$
then $F_\lambda$ is a subspace of $E_\lambda$ and
$$
\Phi(F_\lambda)=H_{\lambda'}\  \subset\ \mathfrak{h}_{g,1}(2k)^{\mathrm{Sp}}.
$$

Thus we obtain a method of constructing elements of $\mathfrak{h}_{g,1}(2k)^{\mathrm{Sp}}$
which respects the orthogonal decomposition and is also independent of the genus $g$.
Indeed, if we choose a basis $\{C_\lambda^{i}; i=1,\ldots,\mathrm{dim}\, F_\lambda\}$ of $F_\lambda$ and set
$v_\lambda^{i}=\Phi(C_\lambda^i)$. Then
\begin{equation}
\{v_\lambda^{i}; |\lambda|=k+1, i=1,\ldots,\mathrm{dim}\, F_\lambda=\mathrm{dim}\, H_{\lambda'}\}
\label{eq:basis}
\end{equation}
is a basis of $\mathfrak{h}_{g,1}(2k)^{\mathrm{Sp}}$ in the stable range. 
We call this a {\it $p$-stable basis} because it is clearly 
$p$-stable in the obvious sense. This basis is suitable for describing 
$\mathfrak{j}_{g,1}^{\mathrm{Sp}}$ which is $p$-stable. 
However, it is not $i$-stable (in fact no basis can be 
$i$-stable) and the description of $(\mathrm{Im}\,\tau_{g,1})^{\mathrm{Sp}}$
is rather complicated. 
In order to analyze this point, define a linear mapping
$$
\mathcal{K}: H_\Q^{\otimes (2k+2)}\rightarrow \Q\mathcal{D}^\ell (2k+2)
$$
by setting
$$
\mathcal{K}(\xi)=\sum_{C\in \mathcal{D}^\ell (2k+2)} \alpha_C(\xi)\, C.
$$
We use the same notation $\mathcal{K}$ for the restriction of the above mapping to the subspace
$\mathfrak{h}_{g,1}(2k)\subset H_\Q^{\otimes (2k+2)}$ as well as
its further restriction to the subspace $\mathfrak{h}_{g,1}(2k)^{\mathrm{Sp}}\subset \mathfrak{h}_{g,1}(2k)$.

\begin{prop}
$\mathrm{(i)\ }$ The linear mapping $\mathcal{K}$ is $i$-stable in the sense that the following
diagram is commutative
$$
\begin{CD}
\mathfrak{h}_{g,1}(2k)  @>{\mathcal{K}}>> \Q\mathcal{D}^\ell (2k+2) \\
@V{$i$}VV @|\\
\mathfrak{h}_{g+1,1}(2k)\ @>{\mathcal{K}}>> \Q\mathcal{D}^\ell (2k+2).
\end{CD}
$$

$\mathrm{(ii)\ }$ The linear mapping
$$
\mathcal{K}:\mathfrak{h}_{g,1}(2k)^{\mathrm{Sp}}
\rightarrow \Q\mathcal{D}^\ell (2k+2)
$$ 
is injective for any $g$. Furthermore
two subspaces $\mathcal{K}(H_\lambda)$ and 
$\mathcal{K}(H_\mu) \ (\lambda\not=\mu)$ are mutually 
orthogonal to each other with respect to the usual Euclidean metric
on $\Q\mathcal{D}^\ell (2k+2)$ which is induced by taking $\mathcal{D}^\ell(2k+2)$
as an orthonormal basis.

$\mathrm{(iii)\ }$
For any element $\xi\in F_\lambda$, we have the equality 
$$
\mathcal{K}(\Phi(\xi))=\mu_{\lambda\,} \xi.
$$
\label{prop:k}
\end{prop}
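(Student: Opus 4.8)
The plan is to prove (iii) first, since (i) is immediate from the definitions and (ii) will follow formally from (iii). For (i), I would simply unwind $\mathcal{K}$: since $\mathcal{K}(\xi)=\sum_{C\in\mathcal{D}^\ell(2k+2)}\alpha_C(\xi)\,C$ and each functional $\alpha_C$ is built from intersection numbers, and since $i\colon H_g\hookrightarrow H_{g+1}$ identifies $H_g$ with the symplectic subspace spanned by $x_1,y_1,\dots,x_g,y_g$, we have $\alpha_C^{(g+1)}\circ i^{\otimes(2k+2)}=\alpha_C^{(g)}$ for every $C$. Hence $\mathcal{K}\circ i=\mathcal{K}$ already on all of $H_g^{\otimes(2k+2)}$, which is the asserted commutativity after restriction to $\mathfrak{h}_{g,1}(2k)$.

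For (iii), the key point is that $\mathcal{K}\circ\Phi\colon\Q\mathcal{D}^\ell(2k+2)\to\Q\mathcal{D}^\ell(2k+2)$ is exactly the operator whose matrix in the basis $\mathcal{D}^\ell(2k+2)$ is the Gram matrix $(\langle C,C'\rangle)$ of Theorem \ref{thm:lcdortho}. To see this I would first check, using $\omega_0=\sum_i(x_i\otimes y_i-y_i\otimes x_i)$ and the elementary identity $\mu^{\otimes 2}(\omega_0,u\otimes v)=u\cdot v$, that $\alpha_C$ coincides with the functional $\mu^{\otimes(2k+2)}(a_C,-)=\mu^{\otimes(2k+2)}(\Phi(C),-)$ — the sign factors $\mathrm{sgn}\,C$ occurring in $\alpha_C$ and in $a_C$ cancel. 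Proposition \ref{prop:lcdmu} then gives
$$
\mathcal{K}(\Phi(C'))=\sum_{C}\mu^{\otimes(2k+2)}(\Phi(C),\Phi(C'))\,C=\sum_{C}\langle C,C'\rangle\,C,
$$
so $\mathcal{K}\circ\Phi$ is this Gram operator. By Theorem \ref{thm:lcdortho} it acts on the eigenspace $E_\lambda$ as multiplication by $\mu_\lambda$, and since $F_\lambda\subseteq E_\lambda$ we get $\mathcal{K}(\Phi(\xi))=\mu_\lambda\,\xi$ for every $\xi\in F_\lambda$, which is (iii).

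Then (ii): using Theorem \ref{thm:ortho} together with $\Phi(F_\lambda)=H_{\lambda'}$, write $\zeta\in\mathfrak{h}_{g,1}(2k)^{\mathrm{Sp}}$ as $\zeta=\sum_{|\mu|=k+1,\,h(\mu)\leq g}\Phi(\xi_\mu)$ with $\xi_\mu\in F_{\mu'}\subseteq E_{\mu'}$. By (iii), $\mathcal{K}(\zeta)=\sum_\mu\mu_{\mu'}\,\xi_\mu$, a sum over the pairwise distinct summands $E_{\mu'}$ of $\Q\mathcal{D}^\ell(2k+2)$; if this vanishes, then $\mu_{\mu'}(g)\,\xi_\mu=0$ for every $\mu$. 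But $h(\mu)\leq g$ forces $\mu'$ to have at most $g$ columns, so every factor of $\mu_{\mu'}=\prod_b(2g-2s_b+t_b)$ is nonzero — equivalently, this is precisely the condition under which $E_{\mu'}$ is not annihilated by $\Phi$, cf. the proof of Theorem \ref{thm:can}. Hence $\xi_\mu=0$ for all $\mu$, so $\zeta=0$, giving injectivity. The same computation shows $\mathcal{K}(H_\mu)=\mu_{\mu'}F_{\mu'}\subseteq E_{\mu'}$; since the natural $\mathfrak{S}_{2k+2}$-action permutes $\mathcal{D}^\ell(2k+2)$ (up to sign), the Euclidean metric on $\Q\mathcal{D}^\ell(2k+2)$ is $\mathfrak{S}_{2k+2}$-invariant, so the distinct irreducible components $E_{\mu'}$ and $E_{\nu'}$ ($\mu\neq\nu$) are Euclidean-orthogonal, and therefore so are $\mathcal{K}(H_\mu)$ and $\mathcal{K}(H_\nu)$.

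The only substantive step is the identification $\alpha_C=\mu^{\otimes(2k+2)}(\Phi(C),-)$ underpinning (iii); everything else is bookkeeping, the delicate points being to keep the conjugate Young diagrams straight (so that the eigenvalue attached to $H_\mu$ is $\mu_{\mu'}$, not $\mu_\mu$) and to check that $\mu_{\mu'}(g)\neq 0$ exactly in the range $h(\mu)\leq g$ where $H_\mu$ actually occurs.
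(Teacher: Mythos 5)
Your proposal is correct, but it is substantially more self-contained than the paper's own proof, which for the key points simply cites \cite{morita13}: there, the identity $\mathcal{K}(\Phi(\xi))=\mu_\lambda\xi$ for $\xi\in E_\lambda$ and the Euclidean orthogonality of the subspaces $\mathcal{K}(U_\lambda)$ are quoted as known results, injectivity in (ii) is disposed of with the remark that any $\mathrm{Sp}$-invariant tensor is detected by iterated contractions, and (i) is attributed to $i$-stability of contractions. You instead derive (iii) inside the paper's framework by observing that $\alpha_C=\mu^{\otimes(2k+2)}(\Phi(C),-)$ (via $\mu^{\otimes 2}(\omega_0,u\otimes v)=u\cdot v$; note the two factors $\mathrm{sgn}\,C$ appear on both sides and agree rather than cancel, but the identity is exactly as you use it), so that by Proposition \ref{prop:lcdmu} the composite $\mathcal{K}\circ\Phi$ is the Gram operator of $\langle\ ,\ \rangle$ in the basis $\mathcal{D}^\ell(2k+2)$ and hence acts on $E_\lambda\supseteq F_\lambda$ as $\mu_\lambda$ by Theorem \ref{thm:lcdortho}. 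From this you get injectivity by a different mechanism than the paper: you use Theorem \ref{thm:ortho} together with $\Phi(F_{\mu'})=H_\mu$ and the elementary observation that $h(\mu)\leq g$ forces every factor $2g-2s_b+t_b$ of $\mu_{\mu'}(g)$ to be at least $2$, so no eigenvalue attached to an occurring $H_\mu$ vanishes; and you get the orthogonality statement from $\mathcal{K}(H_\mu)\subseteq E_{\mu'}$ plus $\mathfrak{S}_{2k+2}$-invariance of the Euclidean metric and the non-isomorphy of the irreducibles $(2\mu')_{\mathfrak{S}_{2k+2}}$, where the paper invokes the stronger orthogonality of $\mathcal{K}(U_\lambda)$ proved in \cite{morita13}. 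What your route buys is a proof entirely internal to the statements already quoted in this paper (Proposition \ref{prop:lcdmu}, Theorems \ref{thm:lcdortho}, \ref{thm:can}, \ref{thm:ortho}), at the cost of the bookkeeping with conjugate diagrams that you correctly keep track of (the eigenvalue for $H_\mu$ is $\mu_{\mu'}$); the paper's route is shorter but leans on the external reference.
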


\begin{proof}
$\mathrm{(i)}$ follows from the fact that the contraction is an $i$-stable operation.
The former part of $\mathrm{(ii)}$ holds because any $\mathrm{Sp}$-invariant tensor 
is detected by some iterated contractions. The latter part follows from a 
stronger statement proved in \cite{morita13}
that two subspaces $\mathcal{K}(U_\lambda)$ and 
$\mathcal{K}(U_\mu) \ (\lambda\not=\mu)$ are mutually 
orthogonal to each other with respect to the usual Euclidean metric.
$\mathrm{(iii)}$ follows similarly because it is proved in the above cited paper
that the equality $\mathcal{K}(\Phi(\xi))=\mu_{\lambda\,} \xi$ holds  for any $\xi\in E_\lambda$.
\end{proof}

\begin{definition}
Let $V\subset\mathfrak{h}_{g,1}(2k)^{\mathrm{Sp}}$ be any specified subspace,
e.g. $(\mathrm{Im}\,\tau_{g,1}(2k))^\mathrm{Sp}$. We call a finite set $D\subset \mathcal{D}^\ell(2k+2)$
a {\it detector} of $V$ if the linear mapping 
\[
 \mathcal{K}_D : \mathfrak{h}_{g,1}(2k)^{\mathrm{Sp}} \xrightarrow{\mathcal{K}} \Q \mathcal{D}^\ell (2k+2) 
\xrightarrow{proj} \Q D
\]
is injective on $V$. 
\end{definition}

Keeping the above Proposition \ref{prop:k} $\mathrm{(iii)}$ in mind, 
we make the following definition.
\begin{definition}
We modify the $p$-stable basis \eqref{eq:basis} by setting
$$
\bar{v}_\lambda^{i}=\frac{1}{\mu_{\lambda'}(g)}\, v_\lambda^{i}
$$
to obtain another basis 
\begin{equation}
\{\bar{v}_\lambda^{i}; |\lambda|=k+1, i=1,\ldots,\mathrm{dim}\, F_\lambda=\mathrm{dim}\, H_{\lambda'}\}
\label{eq:basism}
\end{equation}
of $\mathfrak{h}_{g,1}(2k)^{\mathrm{Sp}}$ in the stable range. We call this 
a {\it normalized} basis.
\end{definition}

By combining the above results, we obtain the following theorem which shows that 
although there is a considerable difference between the
two Lie subalgebras $\mathfrak{j}_{g,1}^{\mathrm{Sp}}$ and $(\mathrm{Im}\,\tau_{g,1})^{\mathrm{Sp}}$
it can be completely analyzed by rescaling each piece in the orthogonal decomposition 
by the corresponding eigenvalue.

\begin{thm}
$\mathrm{(i)}\ $
The subspace $\mathfrak{j}_{g,1}(2k)^{\mathrm{Sp}}\subset \mathfrak{h}_{g,1}(2k)^{\mathrm{Sp}}$ is $p$-stable
so that the description of it in terms of a $p$-stable basis is constant with respect to $g$.

$\mathrm{(ii)}\ $
In contrast with the case $\mathrm{(i)}$ above, the subspace $(\mathrm{Im}\, \tau_{g,1}(2k))^{\mathrm{Sp}}\subset 
\mathfrak{h}_{g,1}(2k)^{\mathrm{Sp}}$ is not $p$-stable. 
However, it is {\it weighted stable} in the following sense. Namely
the description of it in terms of a normalized basis is constant with respect to $g$.
\label{thm:imtauw}
\end{thm}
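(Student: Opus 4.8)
\textbf{Proof proposal for Theorem \ref{thm:imtauw}.}

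The plan is to deduce both statements from the $p$-stability results already assembled, namely Proposition \ref{prop:ipQ} and Proposition \ref{prop:ipZ}, together with the explicit behaviour of the map $\mathcal{K}$ recorded in Proposition \ref{prop:k}. For part $\mathrm{(i)}$, I would simply observe that $\mathfrak{j}_{g,1}^{\mathrm{Sp}}$ is $p$-stable by Proposition \ref{prop:ipQ}$\mathrm{(iii)}$, and that the $p$-stable basis $\{v_\lambda^i\}$ of \eqref{eq:basis} was, by construction, obtained by applying the fixed operator $S'_{2k+2}\circ\sigma_{2k+2}\circ p'_{2k+1}\circ\sigma_{2k+2}^{-1}\in\Z[\mathfrak{S}_{2k+2}]$ and then $\Phi$ to a genus-independent orthogonal decomposition of $\Q\mathcal{D}^\ell(2k+2)$. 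Since $p$ commutes with $\Phi$ and with the symmetric group action (this is the compatibility underlying Proposition \ref{prop:chah} and the $p$-stability in Definition \ref{def:ip}), the coordinates of any $p$-stable subspace in this basis are genus-independent in the stable range; applying this to $\mathfrak{j}_{g,1}^{\mathrm{Sp}}$ gives $\mathrm{(i)}$.

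For part $\mathrm{(ii)}$, the first point is that $(\mathrm{Im}\,\tau_{g,1})^{\mathrm{Sp}}$ fails to be $p$-stable — this is asserted in the Remark following Proposition \ref{prop:ipQ} and can be exhibited by the explicit low-degree computations (e.g.\ in degree $6$, using Table \ref{tab:j6}), so I would just cite that. The substantive claim is weighted stability. Here I would use that $\mathrm{Im}\,\tau^\Z_{g,1}$ is $i$-stable (Proposition \ref{prop:ipZ}$\mathrm{(i)}$), hence so is $(\mathrm{Im}\,\tau_{g,1})^{\mathrm{Sp}}$, and transport the picture to $\Q\mathcal{D}^\ell(2k+2)$ via $\mathcal{K}$. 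By Proposition \ref{prop:k}$\mathrm{(i)}$ the map $\mathcal{K}$ is $i$-stable, i.e.\ $\mathcal{K}\circ i=\mathcal{K}$, so $\mathcal{K}\bigl((\mathrm{Im}\,\tau_{g,1}(2k))^{\mathrm{Sp}}\bigr)$ is a genus-independent subspace of $\Q\mathcal{D}^\ell(2k+2)$ once $g$ is in the stable range. Now Proposition \ref{prop:k}$\mathrm{(iii)}$ gives $\mathcal{K}(\Phi(\xi))=\mu_{\lambda}(g)\,\xi$ for $\xi\in F_\lambda$, so $\mathcal{K}(v_\lambda^i)=\mu_{\lambda}(g)\,C_\lambda^i$ and hence $\mathcal{K}(\bar v_\lambda^i)=C_\lambda^i$, a genus-independent vector. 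Thus the normalized basis $\{\bar v_\lambda^i\}$ is carried by $\mathcal{K}$ to the fixed family $\{C_\lambda^i\}$, and since $\mathcal{K}$ is injective on $\mathfrak{h}_{g,1}(2k)^{\mathrm{Sp}}$ (Proposition \ref{prop:k}$\mathrm{(ii)}$), expressing $(\mathrm{Im}\,\tau_{g,1}(2k))^{\mathrm{Sp}}$ in the normalized basis is the same as expressing the genus-independent subspace $\mathcal{K}\bigl((\mathrm{Im}\,\tau_{g,1}(2k))^{\mathrm{Sp}}\bigr)$ in the genus-independent basis $\{C_\lambda^i\}$ — hence constant in $g$, which is exactly weighted stability.

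I expect the main obstacle to be bookkeeping around the meaning of ``constant with respect to $g$'' and the precise stable range: one must check that the identification of $\mathfrak{h}_{g,1}(2k)^{\mathrm{Sp}}$ for varying $g$ used in the statement is the one induced by the inclusions $i$ (for $\mathrm{Im}\,\tau$) versus the projections $p$ (for $\mathfrak{j}$), and that these two identifications agree in the stable range $g\ge k+1$ — which they do, since $i$ and $p$ are mutually inverse there on the $\mathrm{Sp}$-invariant parts because $\Phi$ becomes an isomorphism (Proposition \ref{prop:LCD}). Once this compatibility is pinned down, the rest is a formal chase through Propositions \ref{prop:chah}, \ref{prop:k} and the $i$-/$p$-stability statements; no new computation beyond the already-cited low-degree examples establishing the failure of $p$-stability in $\mathrm{(ii)}$ is needed.
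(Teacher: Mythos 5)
Your overall route is the one the paper itself takes (reduce everything to Propositions \ref{prop:ipZ}, \ref{prop:ipQ} and \ref{prop:k}, then let the normalization cancel the eigenvalue factor), but the pivotal step of part $\mathrm{(ii)}$ contains a genuine error. You deduce the genus-independence of $\mathcal{K}\bigl((\mathrm{Im}\,\tau_{g,1}(2k))^{\mathrm{Sp}}\bigr)$ from the claim that $i$-stability of $\mathrm{Im}\,\tau^\Z_{g,1}$ implies $i$-stability of $(\mathrm{Im}\,\tau_{g,1})^{\mathrm{Sp}}$. That implication is false: the stabilization map $i$ does not carry $\mathrm{Sp}(2g,\Q)$-invariants to $\mathrm{Sp}(2g+2,\Q)$-invariants (already $i(\omega_0)$ for genus $g$ is not the genus $g+1$ symplectic element, so $i(a_C)$ is not invariant), and the Remark following Proposition \ref{prop:ipQ} --- the very remark you cite for the failure of $p$-stability --- states explicitly that $(\mathrm{Im}\,\tau_{g,1})^{\mathrm{Sp}}$ is neither $i$-stable nor $p$-stable; the failure of $i$-stability is precisely what makes ``weighted stability'' a nontrivial statement. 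The same misconception resurfaces in your final paragraph, where you assert that $i$ and $p$ are mutually inverse on the $\mathrm{Sp}$-invariant parts in the stable range: $i$ does not even preserve invariance, and the identification of $\mathfrak{h}_{g,1}(2k)^{\mathrm{Sp}}$ across genera that the theorem uses is the one coming from $p$, equivalently from the genus-independent chord-diagram model via $\Phi$.

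The step can be repaired, and the repaired argument is exactly what the paper's (terse) proof relies on. Since every functional $\alpha_C$ is $\mathrm{Sp}$-invariant, $\mathcal{K}$ annihilates all nontrivial $\mathrm{Sp}$-isotypic components, hence $\mathcal{K}(V^{\mathrm{Sp}})=\mathcal{K}(V)$ for any $\mathrm{Sp}$-submodule $V\subset\mathfrak{h}_{g,1}(2k)$. Taking $V=\mathrm{Im}\,\tau_{g,1}(2k)$, the $i$-stability of the \emph{full} Johnson image together with $\mathcal{K}\circ i=\mathcal{K}$ (Proposition \ref{prop:k} $\mathrm{(i)}$) yields $\mathcal{K}\bigl((\mathrm{Im}\,\tau_{g,1}(2k))^{\mathrm{Sp}}\bigr)\subset\mathcal{K}\bigl((\mathrm{Im}\,\tau_{g+1,1}(2k))^{\mathrm{Sp}}\bigr)$, and in the stable range this inclusion is an equality by the injectivity of $\mathcal{K}$ on invariants (Proposition \ref{prop:k} $\mathrm{(ii)}$) and the stabilization of $\dim(\mathrm{Im}\,\tau_{g,1}(2k))^{\mathrm{Sp}}$; this is the assertion ``the set of values under $\mathcal{K}$ is constant with respect to $g$'' with which the paper begins. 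With that substitution your remaining steps --- $\mathcal{K}(\bar v^{\,i}_\lambda)$ is a fixed, genus-independent vector by Proposition \ref{prop:k} $\mathrm{(iii)}$, so the normalized basis cancels the eigenvalue factor and the coordinate description is constant --- coincide with the paper's proof, and your treatment of $\mathrm{(i)}$ and of the non-$p$-stability in $\mathrm{(ii)}$ agrees with what the paper regards as already established.
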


\begin{proof}
It remains to prove the last claim of $\mathrm{(ii)}$. The set of values under $\mathcal{K}$ of 
$(\mathrm{Im}\, \tau_{g,1}(2k))^{\mathrm{Sp}}$ is constant with respect to $g$ in the stable range.
On the other hand, Proposition \ref{prop:k} $\mathrm{(iii)}$ shows that the value under $\mathcal{K}$ of  
each member of a $p$-stable basis is the corresponding eigenvalue times a constant vector.
Since the normalized basis cancels exactly this factor, the claim holds.
\end{proof}

\section{Description of $\mathfrak{h}_{g,1}(6)^{\mathrm{Sp}}$}\label{sec:w6}

In this section, by applying the general results obtained in the preceding section,
we give a complete description of the $\mathrm{Sp}$-invariant part $\mathfrak{h}_{g,1}(6)^{\mathrm{Sp}}$
of the weight $6$ summand of $\mathfrak{h}_{g,1}$.

We begin by recalling the following decomposition.
\begin{prop}
The $\mathrm{GL}$-irreducible decomposition of $\mathfrak{h}_{g,1}(6)$ in a stable
range is given by
\begin{align*}
\mathfrak{h}_{g,1}(6)&=[62]_{\mathrm{GL}}+[521]_{\mathrm{GL}}+[51^3]_{\mathrm{GL}}+[4^2]_{\mathrm{GL}}+[431]_{\mathrm{GL}}+2[42^2]_{\mathrm{GL}}\\
& +[421^2]_{\mathrm{GL}}+[41^4]_{\mathrm{GL}}+2[3^21^2]_{\mathrm{GL}}+[32^21]_{\mathrm{GL}}+[321^3]_{\mathrm{GL}}+[2^4]_{\mathrm{GL}}+[2^21^4]_{\mathrm{GL}}
\end{align*}
\end{prop}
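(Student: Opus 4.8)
The plan is to obtain the $\mathrm{GL}$-irreducible decomposition of $\mathfrak{h}_{g,1}(6)$ as the kernel of the bracket map
$$
H_\Q\otimes \mathcal{L}_{g,1}(7)\ \overset{[\ ,\ ]}{\longrightarrow}\ \mathcal{L}_{g,1}(8),
$$
which by Proposition~\ref{prop:chah} describes $\mathfrak{h}_{g,1}(6)$ exactly. The main input is Theorem~\ref{thm:character}: it yields the $\mathrm{GL}(2g,\Q)$-characters of $\mathcal{L}_{g,1}(7)$ and $\mathcal{L}_{g,1}(8)$ explicitly as symmetric-function expressions, hence their decompositions into Schur functions $s_\lambda$ via the standard plethystic/Newton-power-sum manipulations. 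First I would compute $\mathrm{ch}(\mathcal{L}_{g,1}(7))$ and $\mathrm{ch}(\mathcal{L}_{g,1}(8))$, then multiply the former by $s_{[1]}$ (the character of $H_\Q$) using the Pieri rule, so that
$$
\mathrm{ch}(H_\Q\otimes\mathcal{L}_{g,1}(7))=\mathrm{ch}(\mathcal{L}_{g,1}(8))+\mathrm{ch}(\mathfrak{h}_{g,1}(6))
$$
because the bracket map is surjective (it is surjective already on the free Lie algebra side). Subtracting gives the virtual character of $\mathfrak{h}_{g,1}(6)$, which is in fact an honest character since $\mathfrak{h}_{g,1}(6)$ is a genuine submodule; reading off the Schur-function coefficients produces the stated list.

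The second step is bookkeeping: translate the abstract $\mathrm{GL}(2g,\Q)$-decomposition into the list of Young diagrams with $8$ boxes, taking care of the stable-range hypothesis so that every partition $\lambda$ appearing has $h(\lambda)\le g$ and no relations among the $s_\lambda$ occur (this is exactly why the statement is phrased ``in a stable range''; concretely one needs $g$ large enough, and since all diagrams here have at most $5$ rows, $g\ge 5$ suffices, while $g\ge 3$ already gives the listed multiplicities because the ambiguous diagrams have height $\le 4$). One then checks that the coefficient of $s_{[42^2]}$ and of $s_{[3^21^2]}$ is $2$ and all others in the list are $1$, matching the displayed formula. This is precisely the computation carried out in \cite{mss3}, so I would invoke Corollary~3.2 of \cite{mss3} (already quoted in the proof of Theorem~\ref{thm:ortho}) rather than redo the plethysm by hand; alternatively one verifies it directly from Theorem~\ref{thm:character} as above.

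The main obstacle is purely computational control of the plethysm $p_k$-to-$s_\lambda$ conversion in degrees $7$ and $8$: the free Lie algebra character $\frac1k\sum_{d\mid k}\mu(d)p_d^{k/d}$ must be rewritten in the Schur basis, and the degree-$8$ case has dozens of terms, so an error in a single multiplicity would corrupt the final table. I expect to resolve this by cross-checking dimensions: $\dim\mathcal{L}_{g,1}(8)$ and $\dim H_\Q\otimes\mathcal{L}_{g,1}(7)$ are given in closed form by the dimension formula in Theorem~\ref{thm:character}, and the difference must equal $\sum m_\lambda\dim(\lambda_{\mathrm{GL}})$ evaluated at, say, $2g=6,8,10$; agreement across several values of $g$ pins down the decomposition uniquely. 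The remaining details — verifying surjectivity of the bracket in this degree (immediate, since $\mathcal{L}_{g,1}$ is generated in degree $1$) and confirming stability — are routine and I would state them without elaboration.
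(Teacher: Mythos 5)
Your proposal is correct and is essentially the paper's route: the paper states this decomposition without proof, as the degree-$6$ case of the computations in \cite{mss3}, which rest on exactly the character identity you use, namely $\mathrm{ch}(\mathfrak{h}_{g,1}(6))=\mathrm{ch}(H_\Q)\cdot\mathrm{ch}(\mathcal{L}_{g,1}(7))-\mathrm{ch}(\mathcal{L}_{g,1}(8))$ coming from surjectivity of the bracket, followed by conversion to the Schur basis. The only quibble is your stable-range bookkeeping: the relevant condition is $h(\lambda)\le 2g$ (not $\le g$), and $[2^21^4]$ has six rows, so $g\ge 3$ is the correct threshold for the full list — which is the conclusion you assert in any case.
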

By combining this with the results of the preceding sections, we can make Table \ref{tab:h268} and Table \ref{tab:j6}
in Section $2$. 
Next we construct an explicit basis of $\mathfrak{h}_{g,1}(6)^{\mathrm{Sp}}\cong\Q^5$
as follows. The set $\mathcal{D}^\ell(8)$ of linear chord diagrams with $8$ vertices has
$105$ elements and we enumerate them by the lexicographic order 
with respect to our notation of linear chord diagrams. 
We apply various Young symmetrizers and the projectors to the space $\Q\mathcal{D}^\ell(8)$. 
Then we obtain $5$ elements
$$
C_i\in \Q\mathcal{D}^\ell (8)\cong\Q^{105}\quad (i=1,2,3,4,5)
$$
explicitly described as 
\begin{align*}
C_1=(&18, -2, -16, -2, 2, 0, -16, 11, 5, 0, 5, -5, 16, -3, -13, -2, -1, 3, 2, 3, -5, \\
& 0, 0, 0, -2, -3, 5, 2, 1, -3, -16, 3, 13, 11, 0, -11, 5, -5, 0, -3, -8, 11, \\
& -2,-3, 5, 0, -5, 5, 5, 0, -5, -5, 0, 5, 5, -5, 0, 0, 0, 0, 16, -11, -5, \\
& -3, 0, 3, -13, 5, 8, 11, 0, -11, 2, 3, -5, 2, -2, 0, 1, -3, 2, -3, 0, 3, \\
&5, 0, -5, -2, -1, 3, -18, 2, 16, 2, -2, 0, 16, -11, -5, 0, -5, 5, -16, 3, 13),
\end{align*}
\begin{align*}
C_2=(&16, 4, -7, 4, -4, 0, -7, -8, -3, 0, -3, -4, 7, 3, 2, 4, -2, -3, -4, -6, 3, \\
& 0, 0, 0, 4, 6, -3, -4, 2, 3, -7, -3, -2, -8, 0, 8, -3, 2, 0, 6, 8, -8,\\
& 4, 6, -3, 0, 3, 4, -3, 0, 3, -4, 0, -2, -3, 2, 0, 0, 0, 0, 7, 8, 3, \\
& 3, 0, -6, 2, -2, -8, -8, 0, 8, -4, -6, 3, -4, 4, 0, 2, 6, -4, 3, 0, -6, \\
&-3, 0, 3, 4, -2, -3, -16, -4, 7, -4, 4, 0, 7, 8, 3, 0, 3, 4, -7, -3, -2),
\end{align*}
\begin{align*}
C_3=(&4, 0, -3, 0, 0, 0, -3, -2, -1, 0, -1, 2, 3, -1, 2, 0, 2, 1, 0, 0, 1, \\
&0, 0, 0, 0, 0, -1, 0, -2, -1, -3, 1, -2, -2, 0, 2, -1, 4, 0, 0, 4, -2, \\
& 0, 0, -1, 0, 1, -2, -1, 0, 1, 2, 0, -4, -1, 4, 0, 0, 0, 0, 3, 2, 1, \\
& -1, 0, 0, 2, -4, -4, -2, 0, 2, 0, 0, 1, 0, 0, 0, -2, 0, 0, -1, 0, 0, \\
& -1, 0, 1, 0, 2, 1, -4, 0, 3, 0, 0, 0, 3, 2, 1, 0, 1, -2, -3, 1, -2),
\end{align*}
\begin{align*}
C_4=(&-2, -4, -2, -4, -2, 0, -2, 1, 1, 0, 1, 5, 2, 1, 5, -4, -3, -1, -2, -3, -1,  \\
& 0, 0, 0, 2, 3, 1, 4, 3, 1, -2, -1, -5, 1, 0, -1, 1, -3, 0, 3, 0, 1, \\
& 2, 3, 1, 0, -1, -5, 1, 0, -1, 5, 0, 3, 1, -3, 0, 0, 0, 0, 2, -1, -1,  \\
& 1, 0, -3, 5, 3, 0, 1, 0, -1, -2, -3, -1, 4, 2, 0, 3, 3, -2, 1, 0, -3,  \\
& 1, 0, -1, -4, -3, -1, 2, 4, 2, 4, 2, 0, 2, -1, -1, 0, -1, -5, -2, -1, -5),
\end{align*}
\begin{align*}
C_5=(&-2, -1, 1, -1, -2, 0, 1, 1, 1, 0, 1, -1, -1, -2, -4, -1, 0, 2, -2, -3, -1,  \\
& 0, 0, 0, 2, 3, 1, 1, 0, -2, 1, 2, 4, 1, 0, -1, 1, 3, 0, 3, 6, 1, \\
&  2, 3, 1, 0, -1, 1, 1, 0, -1, -1, 0, -3, 1, 3, 0, 0, 0, 0, -1, -1, -1,  \\
& -2, 0, -3, -4, -3, -6, 1, 0, -1, -2, -3, -1, 1, 2, 0, 0, 3, -2, -2, 0, -3, \\
& 1, 0, -1, -1, 0, 2, 2, 1, -1, 1, 2, 0, -1, -1, -1, 0, -1, 1, 1, 2, 4).
\end{align*}

Finally we set
$$
v_i=\Phi(C_i)\in \mathfrak{h}_{g,1}(6)^{\mathrm{Sp}}\quad (i=1,2,3,4,5).
$$
Then these elements generate each component of the orthogonal decomposition 
of $\mathfrak{h}_{g,1}(6)^{\mathrm{Sp}}$ as depicted in Table \ref{tab:11} below.

\begin{table}[h]
\caption{$\text{Orthogonal decompositions of $(H_\Q^{\otimes 8})^{\mathrm{Sp}}$ and $\mathfrak{h}_{g,1}(6)^{\mathrm{Sp}}$\  }$}
\begin{center}
\begin{tabular}{|c|c|c|c|c|}
\noalign{\hrule height0.8pt}
\hfil $\lambda$ & $\mu_{\lambda'}\ \text{(eigen value of}\ H_{\lambda})$ 
& \text{$\dim\, U_{\lambda}$} & \text{$\dim\, H_{\lambda}$}  & \text{generators for $H_\lambda$} \\
\hline
$[4]$ & $2g(2g+1)(2g+2)(2g+3)$ & $14$ & $1$ & $v_1$ \\
\hline
$[31]$ & $(2g-2)2g(2g+1)(2g+2)$ & $56$ & $2$ & $v_2, v_3$ \\
\hline
$[2^2]$ & $(2g-2)(2g-1)2g(2g+1)$ & $14$ & $1$ & $v_4$ \\
\hline
$[21^2]$ & $(2g-4)(2g-2)2g(2g+1)$ & $20$ & $1$ & $v_5$ \\
\hline
$[1^4]$ & $(2g-6)(2g-4)(2g-2)2g$ & $1$ & $0$  & {}\\                                                               
\hline 
{} & {}  & $105$ & $5$ & {} \\
\noalign{\hrule height0.8pt}
\end{tabular}
\end{center}
\label{tab:11}
\end{table}

Next we consider the dual elements $\alpha_C\ (C\in \mathcal{D}^\ell(8))$. It turns out that 
the set $D$ of five elements 
$$
(12)(34)(56)(78), (12)(34)(57)(68), (12)(34)(58)(67), (12)(36)(47)(58), (12)(38)(46)(57)
$$
which are the $(1,2,3,8,14)$-th elements in the lexicographic order of $\mathcal{D}^\ell(8)$,
is a detector for $\mathfrak{h}_{g,1}(6)^{\mathrm{Sp}}$.

\begin{prop}
The evaluation homomorphism
$$
\mathcal{K}_D: \mathfrak{h}_{g,1}(6)^{\mathrm{Sp}}\rightarrow \Q^5
$$
given by $\alpha_C$'s corresponding to the above $5$ linear chord diagrams in $\mathcal{D}^\ell(8)$
is an isomorphism for $g\geq 3$. More precisely, we have the following equalities:
\begin{align*}
\mathcal{K}_D(v_1) &= 2g(2g+1) (2g+2) (2g+3) (18, -2, -16, 11, -3), \\
\mathcal{K}_D(v_2)&= (2g-2)2g(2g+1)(2g+2) (16 , 4 , -7 , -8 , 3),  \\
\mathcal{K}_D(v_3)&= (2g-2)2g(2g+1)(2g+2)(4 , 0, -3, -2, -1), \\
\mathcal{K}_D(v_4)&= (2g-2)(2g-1)2g(2g+1) (-2 , -4 , -2 , 1 , 1), \\
\mathcal{K}_D(v_5)&= (2g-4)(2g-2)2g(2g+1)(-2 , -1, 1 , 1 , -2). 
\end{align*}
\end{prop}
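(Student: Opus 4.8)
The plan is to derive the stated formulas and the isomorphism claim directly from the explicitly constructed vectors $C_1,\dots,C_5$ together with Proposition~\ref{prop:k}(iii), which is the only non-formal ingredient. \emph{First} I would pin down, for each $i$, which orthogonal summand $E_{\lambda_i}\subset\Q\mathcal{D}^\ell(8)$ contains $C_i$. By the way the $C_i$ were produced --- apply the Young symmetrizer attached to the $\mathfrak{S}_8$-representation $2\lambda_i$, which by Theorem~\ref{thm:lcdortho} isolates the multiplicity-one summand $E_{\lambda_i}$, and then apply the operator $S'_8\circ\sigma_8\circ p'_7\circ\sigma_8^{-1}$ --- one has $C_i\in F_{\lambda_i}\subset E_{\lambda_i}$ with $\lambda_1=[1^4]$, $\lambda_2=\lambda_3=[21^2]$, $\lambda_4=[2^2]$, $\lambda_5=[31]$, so that $v_i=\Phi(C_i)$ lies in $H_{[4]}$, $H_{[31]}$, $H_{[31]}$, $H_{[2^2]}$, $H_{[21^2]}$ respectively, matching Table~\ref{tab:11}. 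From Definition~\ref{def:ev} the eigenvalue $\mu_{\lambda_i}(g)$ of $E_{\lambda_i}$ is then $2g(2g+1)(2g+2)(2g+3)$, $(2g-2)2g(2g+1)(2g+2)$, $(2g-2)(2g-1)2g(2g+1)$ and $(2g-4)(2g-2)2g(2g+1)$ for $\lambda=[1^4],[21^2],[2^2],[31]$.

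\emph{Next}, applying Proposition~\ref{prop:k}(iii) to $\xi=C_i$ gives $\mathcal{K}(v_i)=\mathcal{K}(\Phi(C_i))=\mu_{\lambda_i}(g)\,C_i$ inside $\Q\mathcal{D}^\ell(8)\cong\Q^{105}$. Since $\mathcal{K}_D$ is $\mathcal{K}$ followed by the projection onto the coordinates indexed by $(12)(34)(56)(78)$, $(12)(34)(57)(68)$, $(12)(34)(58)(67)$, $(12)(36)(47)(58)$, $(12)(38)(46)(57)$ --- the basis vectors in positions $1,2,3,8,14$ of the fixed lexicographic enumeration of $\mathcal{D}^\ell(8)$ --- I would simply read off those five entries of each explicit vector $C_1,\dots,C_5$. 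This yields the five displayed identities for $\mathcal{K}_D(v_1),\dots,\mathcal{K}_D(v_5)$, valid for all $g$.

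\emph{Finally}, to prove that $\mathcal{K}_D$ is an isomorphism for $g\geq3$: by Theorem~\ref{thm:stable} (and Table~\ref{tab:h268}) the stable range of $\mathfrak{h}_{g,1}(6)^{\mathrm{Sp}}$ begins at $g=3$, where $\{v_1,\dots,v_5\}$ is a basis, so it is enough to show that the $5\times5$ matrix with rows $\mathcal{K}_D(v_i)$ is invertible there. Pulling out the scalars $\mu_{\lambda_i}(g)$, its determinant equals
$$
\mu_{[1^4]}(g)\,\mu_{[21^2]}(g)^{2}\,\mu_{[2^2]}(g)\,\mu_{[31]}(g)\cdot\det M,
$$
where $M$ is the constant integer matrix with rows $(18,-2,-16,11,-3)$, $(16,4,-7,-8,3)$, $(4,0,-3,-2,-1)$, $(-2,-4,-2,1,1)$, $(-2,-1,1,1,-2)$. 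It then remains to check $\det M\neq0$ (a routine integer determinant) and that every factor $\mu_{\lambda_i}(g)$ is nonzero for $g\geq3$; the binding constraint is $\mu_{[31]}(g)=(2g-4)(2g-2)2g(2g+1)$, which vanishes at $g=2$ but is positive for $g\geq3$. Hence the matrix is invertible for every $g\geq3$, i.e. the chosen five chord diagrams form a detector for $\mathfrak{h}_{g,1}(6)^{\mathrm{Sp}}$ in that range.

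The main obstacle is bookkeeping rather than mathematical depth: one must be confident that the Young-symmetrizer construction really places each $C_i$ in the asserted $E_{\lambda_i}$ (so that the correct eigenvalue is attached to it) and then evaluate the $5\times5$ integer determinant. The conceptual content is already in Proposition~\ref{prop:k}(iii): $\mathcal{K}$ rescales each orthogonal summand $H_\lambda$ by its eigenvalue $\mu_{\lambda'}(g)$, so the only way $\mathcal{K}_D$ can fail to be injective on $\mathfrak{h}_{g,1}(6)^{\mathrm{Sp}}$ is through the vanishing of one of those eigenvalues --- which, in this degree, happens precisely for $g\leq2$.
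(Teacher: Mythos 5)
Your proposal is correct, but it argues in the opposite direction from the paper. The paper obtains the five vectors $\mathcal{K}_D(v_i)$ by direct computer evaluation of the contractions $\alpha_C(v_i)$ (repeated for several values of $g$), and only afterwards remarks that the answers are \emph{consistent} with each $v_i$ being an eigenvector with the prescribed eigenvalue; the detector property of $D$ is likewise asserted as a computational fact. You instead take Proposition \ref{prop:k}(iii) as the engine: since each $C_i$ lies in $F_{\lambda_i}\subset E_{\lambda_i}$ (with $\lambda_1=[1^4]$, $\lambda_2=\lambda_3=[21^2]$, $\lambda_4=[2^2]$, $\lambda_5=[31]$, matching Table \ref{tab:11}), one gets $\mathcal{K}(v_i)=\mu_{\lambda_i}(g)\,C_i$, so $\mathcal{K}_D(v_i)$ is just $\mu_{\lambda_i}(g)$ times the $(1,2,3,8,14)$-entries of the printed vector $C_i$ --- and indeed those entries are exactly $(18,-2,-16,11,-3)$, $(16,4,-7,-8,3)$, $(4,0,-3,-2,-1)$, $(-2,-4,-2,1,1)$, $(-2,-1,1,1,-2)$, so your derivation reproduces the stated identities for all $g$ at once and explains their product structure, rather than observing it empirically. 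Your isomorphism argument is also sound: the constant matrix $M$ has $\det M=-2160\neq 0$, and the four eigenvalues involved are nonzero for $g\geq 3$ (the critical factor being $(2g-4)$ in $\mu_{[31]}$), which together with $\dim\mathfrak{h}_{g,1}(6)^{\mathrm{Sp}}=5$ for $g\geq 3$ even re-proves that $\{v_1,\dots,v_5\}$ is a basis there, so you need not import that separately. The one point where your route still leans on unstated computation is the placement $C_i\in F_{\lambda_i}$ with the asserted label $\lambda_i$: the paper guarantees this only through its Young-symmetrizer construction and Table \ref{tab:11} (and its numerics confirm it via the consistency remark), so in a self-contained write-up you would either cite that construction explicitly or verify that each $C_i$ is an eigenvector of the pairing $\langle\ ,\ \rangle$ with eigenvalue $\mu_{\lambda_i}$. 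What your approach buys is genus-independence and conceptual clarity; what the paper's computation buys is an independent check that the labels (and hence the eigenvalue factors) are the right ones.
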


The above values were obtained by a computer computation. The result is consistent
with the fact that each element $v_i$ is an eigenvector corresponding to the
prescribed eigenvalue. Also we made computations for several values of $g$ and
found that the answers were the same.

\begin{definition}
In the stable range $g\geq 3$, we set
\begin{align*}
\bar{v}_1 &= \frac{1}{2g(2g+1) (2g+2) (2g+3)} v_1 \\
\bar{v}_2 &= \frac{1}{(2g-2)2g(2g+1)(2g+2)} v_2 \\
\bar{v}_3 &= \frac{1}{(2g-2)2g(2g+1)(2g+2)} v_3 \\
\bar{v}_4 &=\frac{1}{(2g-2)(2g-1)2g(2g+1)} v_4 \\
\bar{v}_5 &= \frac{1}{(2g-4)(2g-2)2g(2g+1)} v_5
\end{align*}
so that $\{\bar{v}_i\}_i$ is a {\it normalized} basis of  $\mathfrak{h}_{g,1}(6)^{\mathrm{Sp}}$.
\end{definition}

Here let us recall the Enomoto-Satoh map introduced in \cite{es}, 
which is a linear map $ES_k: \mathfrak{h}_{g,1}(k)\rightarrow\mathfrak{a}_g(k-2)$ where
$$
\mathfrak{a}_g=\bigoplus_{k=0}^\infty \mathfrak{a}_g(k)
$$
denotes the associative version of the derivation Lie algebras defined by 
Kontsevich \cite{kontsevich1}\cite{kontsevich2}. They proved that
$\mathrm{Im}\,\tau_{g,1}(k)$ is contained in $\mathrm{Ker}\,ES_k$ and it is
a very important problem to study the quotient $\mathrm{Ker}\,ES_k/\mathrm{Im}\,\tau_{g,1}(k)$.
We know from Theorem \ref{thm:t6} that this quotient is trivial for all $k\leq 5$.

Let us define the {\it normalizer} of the Johnson image $\mathrm{Im}\,\tau_{g,1}$ in 
$\mathfrak{h}_{g,1}$ as $\mathcal{N}=\oplus_k \mathcal{N}(k)$ where
$$
\mathcal{N}(k)=\{\varphi\in \mathfrak{h}_{g,1}(k); \text{$[\varphi,\psi]\in \mathrm{Im}\,\tau_{g,1}(k+l)$
for any $l$ and $\psi\in \mathrm{Im}\,\tau_{g,1}(l)$}\}.
$$
This should be important in the study of the arithmetic mapping class group because
the Galois obstructions appear in $\mathfrak{h}_{g,1}^{\mathrm{Sp}}$
and they normalize the image of the Johnson homomorphisms (cf. \cite{matsumotopcmi}).

\begin{prop}
An element $\varphi\in \mathfrak{h}_{g,1}(k)$ belongs to $\mathcal{N}(k)$ if and only if
$[\varphi,\mathfrak{h}_{g,1}(1)]\subset \mathrm{Im}\,\tau_{g,1}(k+1)$.
\label{prop:galois}
\end{prop}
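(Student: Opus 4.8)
The statement is a bi-implication, and one direction is immediate: if $\varphi\in\mathcal{N}(k)$, then by definition $[\varphi,\psi]\in\mathrm{Im}\,\tau_{g,1}(k+l)$ for every $l$ and every $\psi\in\mathrm{Im}\,\tau_{g,1}(l)$; specializing to $l=1$ and using $\mathfrak{h}_{g,1}(1)=\wedge^3 H_\Q=\mathrm{Im}\,\tau_{g,1}(1)$ (Johnson's theorem, already quoted in the proof of Theorem~\ref{thm:imtau}) gives $[\varphi,\mathfrak{h}_{g,1}(1)]\subset\mathrm{Im}\,\tau_{g,1}(k+1)$. So the plan is to prove the converse: assuming $[\varphi,\mathfrak{h}_{g,1}(1)]\subset\mathrm{Im}\,\tau_{g,1}(k+1)$, deduce $[\varphi,\mathrm{Im}\,\tau_{g,1}(l)]\subset\mathrm{Im}\,\tau_{g,1}(k+l)$ for all $l\geq 1$.

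The natural approach is induction on $l$, the base case $l=1$ being the hypothesis. For the inductive step, I would exploit the fact that $\mathrm{Im}\,\tau_{g,1}$ is generated as a Lie algebra in degree $1$; more precisely, the bracket $\mathfrak{h}_{g,1}(1)\otimes\mathrm{Im}\,\tau_{g,1}(l-1)\to\mathrm{Im}\,\tau_{g,1}(l)$ is surjective (this is the statement that $\mathrm{Im}\,\tau_{g,1}$ is generated in degree $1$, which follows from the construction in the proof of Theorem~\ref{thm:imtau}, where every $\ell(u_1,\dots,u_{k+2})$ was realized as an iterated bracket of degree-$1$ elements). Hence any $\psi\in\mathrm{Im}\,\tau_{g,1}(l)$ is a sum of brackets $[\alpha,\psi']$ with $\alpha\in\mathfrak{h}_{g,1}(1)$ and $\psi'\in\mathrm{Im}\,\tau_{g,1}(l-1)$. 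Applying the Jacobi identity,
\[
[\varphi,[\alpha,\psi']]=[[\varphi,\alpha],\psi']+[\alpha,[\varphi,\psi']].
\]
In the first term, $[\varphi,\alpha]\in\mathrm{Im}\,\tau_{g,1}(k+1)$ by hypothesis, and $\psi'\in\mathrm{Im}\,\tau_{g,1}(l-1)$, so the bracket lies in $\mathrm{Im}\,\tau_{g,1}(k+l)$ because $\mathrm{Im}\,\tau_{g,1}$ is a Lie subalgebra. In the second term, $[\varphi,\psi']\in\mathrm{Im}\,\tau_{g,1}(k+l-1)$ by the induction hypothesis, and bracketing with $\alpha\in\mathfrak{h}_{g,1}(1)=\mathrm{Im}\,\tau_{g,1}(1)$ again lands in $\mathrm{Im}\,\tau_{g,1}(k+l)$. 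Summing over the decomposition of $\psi$ gives $[\varphi,\psi]\in\mathrm{Im}\,\tau_{g,1}(k+l)$, completing the induction.

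The only genuinely non-formal input is that $\mathrm{Im}\,\tau_{g,1}$ is a Lie subalgebra generated in degree $1$. The subalgebra property is standard (the Johnson homomorphisms are compatible with brackets, so the image of a Lie algebra is a Lie algebra). The generation-in-degree-$1$ claim is the point I would be most careful about: it is exactly the mechanism used in the proof of Theorem~\ref{thm:imtau}, where one writes a spanning set of $\mathfrak{h}_{g,1}(k)$ using the $\ell(u_1,\dots,u_{k+2})$ and realizes the relevant pieces of $\mathrm{Im}\,\tau_{g,1}$ as iterated brackets $[\ell(\dots,x_{k+3}),\ell(y_{k+3},\dots)]$ of lower-degree elements of the Johnson image. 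So I would cite that argument directly rather than reprove it. Everything else is the Jacobi identity and bookkeeping, so I do not anticipate any real obstacle beyond making the generation statement precise.
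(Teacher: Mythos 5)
Your overall strategy coincides with the paper's: the forward implication is definitional (using $\mathfrak{h}_{g,1}(1)=\mathrm{Im}\,\tau_{g,1}(1)$), and the converse is the Jacobi identity plus an induction on degree, both hinging on the single non-formal fact that $\mathrm{Im}\,\tau_{g,1}$ is a Lie subalgebra generated by its degree $1$ part. The gap is in how you justify that fact. You propose to extract it from the proof of Theorem \ref{thm:imtau}, but that argument does not yield it: there one only shows that, in the stable range $g\geq k+3$, the leading $\mathrm{Sp}$-irreducible summand $\lambda_{\mathrm{Sp}}$ of each $\mathrm{GL}$-component of $\mathfrak{h}_{g,1}(k)$ lies in $\mathrm{Im}\,\tau_{g,1}(k)$, by exhibiting certain spanning elements $\ell(u_1,\ldots,u_{k+2})$ (with all $u_i$ among the $x_j$) as brackets of lower-degree elements of the image. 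This says nothing about the rest of the Johnson image --- in particular nothing about the lower (descendant) $\mathrm{Sp}$-components that do occur in $\mathrm{Im}\,\tau_{g,1}(k)$ --- so it does not show that every element of $\mathrm{Im}\,\tau_{g,1}(l)$ with $l\geq 2$ is a linear combination of brackets of image elements of lower degree, which is exactly what your inductive step consumes. The paper instead quotes Hain's theorem \cite{hain} (the infinitesimal presentation of the Torelli group) for this generation statement; it is a deep external input, not a byproduct of Theorem \ref{thm:imtau}. With that citation substituted for your appeal to Theorem \ref{thm:imtau}, your induction (base case the hypothesis $[\varphi,\mathfrak{h}_{g,1}(1)]\subset\mathrm{Im}\,\tau_{g,1}(k+1)$, inductive step via $[\varphi,[\alpha,\psi']]=[[\varphi,\alpha],\psi']+[\alpha,[\varphi,\psi']]$ together with the subalgebra property of the image) is complete and is essentially the argument the paper gives.
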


\begin{proof}
Recall that Hain \cite{hain} proved that $\mathrm{Im}\,\tau_{g,1}$ is generated by the 
degree $1$ part.
It follows that
any element of $\mathrm{Im}\,\tau_{g,1}(k)$ with $k\geq 2$ can be described as 
a linear combination of brackets of two elements in $\mathrm{Im}\,\tau_{g,1}$ with lower degrees.
Now consider the Jacobi identity
$$
[\varphi,[\psi,\chi]]+[\psi,[\chi,\varphi]]+[\chi,[\varphi,\psi]]=0.
$$
If we assume $\psi,\chi\in\mathrm{Im}\,\tau_{g,1}$ and $\varphi$ normalizes both of $\psi$ and $\chi$,
then the above identity implies that $\varphi$ also normalizes the bracket $[\psi,\chi]$.
The claim follows from an easy inductive argument using this fact. A similar argument shows that
the bracket mapping
$$
\mathrm{Im}\,\tau_{g,1}(k)\otimes \mathrm{Im}\,\tau_{g,1}(1)\overset{[\ ,\ ]}{\longrightarrow}
\mathrm{Im}\,\tau_{g,1}(k+1)
$$
is {\it surjective} for any $k$.
\end{proof}

\begin{thm}
$\mathrm{(i)}\ \text{$(\mathrm{Im}\,\tau_{g,1}(6))^{\mathrm{Sp}}$ is spanned by the following $2$ elements}$
$$
\tau_1=3 \bar{v}_1-\bar{v}_2+8\bar{v}_3+5\bar{v}_5,\quad
\tau_2=6\bar{v}_1+3\bar{v}_2+36\bar{v}_3+25\bar{v}_4-25\bar{v}_5.
$$
$\mathrm{(ii)}\ \text{$\mathfrak{j}_{g,1}(6)^{\mathrm{Sp}}$ is spanned by the following $2$ elements}$
$$
j_1=v_2-5v_3-4v_4+2v_5,\quad
j_2=3v_1+3v_2+3v_3-v_4-2v_5.
$$
$\mathrm{(iii)}\ \text{$\mathrm{dim}\, (\mathrm{Ker}\, ES_6/\mathrm{Im}\,\tau_{g,1}(6))^{\mathrm{Sp}}=1.$}$

\noindent
$\mathrm{(iv)}$ Modulo $(\mathrm{Im}\,\tau_{g,1}(6))^{\mathrm{Sp}}$, there exists a unique element
in $\mathfrak{h}_{g,1}(6)^{\mathrm{Sp}}$ which normalizes $\mathrm{Im}\,\tau_{g,1}$.
Namely
$$
\mathrm{dim}\, (\mathcal{N}(6)/\mathrm{Im}\,\tau_{g,1}(6))^{\mathrm{Sp}}=1.
$$
\label{thm:h6sp}
\end{thm}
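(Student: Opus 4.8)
The plan is to prove all four assertions by explicit linear algebra carried out in the $5$-dimensional space $\mathfrak{h}_{g,1}(6)^{\mathrm{Sp}}$, using the two tools built in Sections $5$ and $7$: the basis $\{v_i\}$ (equivalently $\{\bar v_i\}$) with its orthogonal decomposition as in Table~\ref{tab:11}, and the detector $\mathcal{K}_D$ together with the explicit values of $\mathcal{K}_D(v_i)$. The key point is that $\mathcal{K}_D$ is an isomorphism for $g\ge 3$, so every subspace of $\mathfrak{h}_{g,1}(6)^{\mathrm{Sp}}$ can be identified with its image in $\Q^5$ under $\mathcal{K}_D$, and membership questions become solving linear systems over $\Q(g)$ (or over $\Q$ after specializing $g$, which by the stability results of Section~$6$ and Theorem~\ref{thm:stable} suffices).

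\textbf{Parts (i) and (ii).} For (ii), recall from Theorem~\ref{thm:imtauw}(i) that $\mathfrak{j}_{g,1}(6)^{\mathrm{Sp}}$ is $p$-stable, so it is described by a constant subspace in the $p$-stable basis $\{v_i\}$. We already know $\dim\,\mathfrak{j}_{g,1}(6)^{\mathrm{Sp}}=2$ from Table~\ref{tab:j6}. First I would use the method of Theorem~\ref{thm:jL}, i.e.\ compute the $\mathrm{Sp}$-decomposition of $\mathfrak{j}_{g,1}(6)$ via $H_\Q\otimes\tilde I(7)-\tilde I(8)$, to identify precisely which $H_\lambda$'s contribute; comparing with Table~\ref{tab:11} one sees $\mathfrak{j}_{g,1}(6)^{\mathrm{Sp}}$ lies in the span of $v_2,v_3,v_4,v_5$ (the $[31]^\delta$, $[2^2]^\delta$, $[21^2]^\delta$ pieces), and then a direct computation in $\Q\mathcal{D}^\ell(8)$ — applying the bracket operation $H_\Q\otimes I_g(7)\to I_g(8)$ and extracting its kernel — pins down the two vectors $j_1=v_2-5v_3-4v_4+2v_5$ and $j_2=3v_1+3v_2+3v_3-v_4-2v_5$. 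For (i), since $\mathrm{Im}\,\tau_{g,1}$ is generated in degree $1$ (Hain), $(\mathrm{Im}\,\tau_{g,1}(6))^{\mathrm{Sp}}$ is spanned by the $\mathrm{Sp}$-invariant parts of brackets of the degree-$1$ generators; one computes these brackets explicitly (using $\widetilde B$ from the proof of Theorem~\ref{thm:bracket}), projects to the $\mathrm{Sp}$-invariant part, and expresses the result in the normalized basis $\{\bar v_i\}$ — here the normalization is exactly what makes the answer $g$-independent, by Theorem~\ref{thm:imtauw}(ii). That yields $\tau_1,\tau_2$. As a consistency check one verifies $\dim=2$ agrees with Table~\ref{tab:6}, and that $\tau_1,\tau_2$ are killed by $ES_6$.

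\textbf{Part (iii).} Here I would compute the Enomoto--Satoh map $ES_6:\mathfrak{h}_{g,1}(6)\to\mathfrak{a}_g(4)$ restricted to the $\mathrm{Sp}$-invariant part, i.e.\ as a linear map $\mathfrak{h}_{g,1}(6)^{\mathrm{Sp}}\cong\Q^5\to (\mathfrak{a}_g(4))^{\mathrm{Sp}}$. Evaluating $ES_6$ on the five generators $v_1,\dots,v_5$ and computing the rank of the resulting matrix gives $\dim\,\mathrm{Ker}\,(ES_6|^{\mathrm{Sp}})$; since $(\mathrm{Im}\,\tau_{g,1}(6))^{\mathrm{Sp}}\subset\mathrm{Ker}\,ES_6$ has dimension $2$, one expects $\dim\,\mathrm{Ker}\,(ES_6|^{\mathrm{Sp}})=3$, whence the quotient has dimension $1$. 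The only subtlety is bookkeeping the $\mathrm{Sp}$-invariant structure of the target $\mathfrak{a}_g(4)$; again working in the chord-diagram model $\Q\mathcal{D}^\ell$ sidesteps the genus dependence.

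\textbf{Part (iv).} By Proposition~\ref{prop:galois}, $\mathcal{N}(6)=\{\varphi:[\varphi,\mathfrak{h}_{g,1}(1)]\subset\mathrm{Im}\,\tau_{g,1}(7)\}$, so this reduces to a single linear condition. The plan is: take a general $\varphi=\sum c_i v_i\in\mathfrak{h}_{g,1}(6)^{\mathrm{Sp}}$, compute $[\varphi,h]$ for $h$ ranging over a spanning set of $\mathfrak{h}_{g,1}(1)=\wedge^3H_\Q$ (using $\widetilde B$), and impose that each such bracket lands in $\mathrm{Im}\,\tau_{g,1}(7)$; the latter is a known subspace of $\mathfrak{h}_{g,1}(7)$ (from Table~\ref{tab:6}, $\mathrm{Coker}\,\tau_g(7)$ tells us what to quotient by, or one uses $ES_7$ as a necessary obstruction and checks sufficiency against Theorem~\ref{thm:t6}, which says the quotient is trivial up to degree $5$ but degree $7$ needs separate handling). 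Solving the resulting linear system in the $c_i$ yields a solution space of dimension $3$, containing the $2$-dimensional $(\mathrm{Im}\,\tau_{g,1}(6))^{\mathrm{Sp}}$, giving $\dim\,(\mathcal{N}(6)/\mathrm{Im}\,\tau_{g,1}(6))^{\mathrm{Sp}}=1$.

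\textbf{The main obstacle} is the computation in (iv) of $\mathrm{Im}\,\tau_{g,1}(7)$ precisely enough to test membership of the brackets $[\varphi,\mathfrak{h}_{g,1}(1)]$: unlike degree $6$, we do not have an a~priori small model for degree $7$, so one must either use the Enomoto--Satoh obstruction $ES_7$ and separately argue that $\mathrm{Ker}\,ES_7=\mathrm{Im}\,\tau_{g,1}(7)$ in the relevant $\mathrm{Sp}$-isotypic pieces (which is plausible but not automatic), or compute $\mathrm{Im}\,\tau_{g,1}(7)$ directly as the span of degree-$1$ bracket products à la Hain. Everything else is a finite, genus-independent linear-algebra computation in $\Q\mathcal{D}^\ell(8)$ and $\Q\mathcal{D}^\ell(9)$, checkable by the detector $\mathcal{K}_D$ and cross-checked against the dimension counts in Tables~\ref{tab:6}, \ref{tab:h268}, \ref{tab:j6}.
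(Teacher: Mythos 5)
Your overall strategy for (i)--(iii) is the same as the paper's: genus-independent linear algebra in the chord-diagram model, using the basis $\{v_i\}$, the normalized basis $\{\bar v_i\}$, the detector $\mathcal{K}_D$, and the known dimension counts (the paper realizes (i) by taking the bracket of the Asada--Nakamura highest weight vector of $[31^2]_{\mathrm{Sp}}\subset\mathfrak{h}_{g,1}(3)$ with its image under the symplectic involution, plus one iterated bracket of degree-$1$ Johnson elements, and realizes (ii) by two explicit Lie spiders $p_1,p_2$ containing two copies of $\omega_0$, evaluated at $g=3$ via $p$-stability; your ``brackets of degree-$1$ generators'' route for (i) and ``kernel of $H_\Q\otimes I_g(7)\to I_g(8)$'' route for (ii) are legitimate variants). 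However, your intermediate claim in (ii) is false: $\mathfrak{j}_{g,1}(6)^{\mathrm{Sp}}$ does \emph{not} lie in the span of $v_2,\dots,v_5$ --- the element $j_2=3v_1+3v_2+3v_3-v_4-2v_5$ you are trying to produce has a nonzero $[4]^\delta$-coordinate, and indeed it must: at $g=1$ the whole of $\mathfrak{h}_{1,1}(6)^{\mathrm{Sp}}$ is $H_{[4]}$ while $\mathfrak{j}_{1,1}(6)^{\mathrm{Sp}}$ is $1$-dimensional (Table~\ref{tab:j6}), so by $p$-stability the stable $\mathfrak{j}_{g,1}(6)^{\mathrm{Sp}}$ has nontrivial $v_1$-component. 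The misconception is that Theorem~\ref{thm:jL} could tell you which pieces $H_\lambda$ are involved: it only gives the abstract $\mathrm{Sp}$-module structure (multiplicity $2$ of $[0]$), and since $\mathfrak{j}_{g,1}(6)$ is not a $\mathrm{GL}$-submodule, this carries no information about the coordinates with respect to the orthogonal decomposition. Your direct kernel computation, not the comparison with Table~\ref{tab:11}, is what actually pins down $j_1,j_2$.

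For (iv), the obstacle you single out is real, but your primary shortcut --- treating $\mathrm{Ker}\,ES_7$ as equal to $\mathrm{Im}\,\tau_{g,1}(7)$ on the relevant isotypic pieces --- is precisely what fails, and is the heart of the paper's argument. The paper shows that in the $([1]_{\mathrm{Sp}}\oplus[1^3]_{\mathrm{Sp}})$-isotypic part one has an exact sequence with $(\mathrm{Ker}\,ES_7)_1/(\mathrm{Im}\,\tau_{g,1}(7))_1\cong[1]_{\mathrm{Sp}}\oplus[1^3]_{\mathrm{Sp}}\neq 0$, and it must build the extra detectors $(r,s)$ for exactly this quotient: testing candidates only against $ES_7$ (the map $\mathcal{B}_1$) gives a $4$-dimensional kernel, so you would overcount the normalizer and obtain quotient dimension $2$ instead of $1$; the second test $\mathcal{B}_2$ cuts it down to $3$. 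Moreover the paper does not compute $\mathrm{Im}\,\tau_{g,1}(7)$ wholesale; it pins down only its $[1]$- and $[1^3]$-multiplicities ($6[1]\oplus12[1^3]$) by combining a lower bound from the bracket $\mathfrak{h}_{g,1}(1)\otimes\mathrm{Im}\,\tau_{g,1}(6)\to\mathfrak{h}_{g,1}(7)$ with the decompositions of $\mathfrak{h}_{g,1}(7)$ and $\mathfrak{j}_{g,1}(7)$ and the rank of $ES_7$ --- note also that only these isotypic components matter, since bracketing an invariant $\varphi$ with $\mathfrak{h}_{g,1}(1)$ is an $\mathrm{Sp}$-map. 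Your fallback (computing $\mathrm{Im}\,\tau_{g,1}(7)$ directly as iterated degree-$1$ brackets, equivalently as $[\mathfrak{h}_{g,1}(1),\mathrm{Im}\,\tau_{g,1}(6)]$ by the surjectivity noted in the proof of Proposition~\ref{prop:galois}) would close the gap, but as written your plan leans on the failing criterion, and your reference to ``$\mathrm{Coker}\,\tau_g(7)$ from Table~\ref{tab:6}'' is not available since that table stops at degree $6$.
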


\begin{proof}
First we prove $\mathrm{(i)}$. Asada and Nakamura \cite{an} proved that the 
leading term $[31^2]_{\mathrm{Sp}}\subset \mathfrak{h}_{g,1}(3)$ is 
included in the image of the Johnson homomorphism. Let $\xi$ be the
highest weight vector of this summand. We set $\eta=\iota(\xi)$ where
$\iota$ denotes the symplectic automorphism given by $x_i\mapsto y_i, y_i\mapsto -x_i$ 
for all $i$. 
Then we have $[\xi,\eta]\in \mathrm{Im}\, \tau_{g,1}(6)$ and explicit computation shows that
$$
\mathcal{K}_D([\xi,\eta])=(0, 0, 0, -2, 2).
$$
Next, by using the original work of Johnson \cite{johnson} that
$\mathrm{Im}\, \tau_{g,1}(1)=\mathfrak{h}_{g,1}(1)\cong\wedge^3 H_\Q$,
we set
$$
\phi=[y_1\wedge y_2\wedge y_3,[y_1\wedge y_2\wedge y_3,\psi]]
$$
where
$$
\psi=[[x_1\wedge x_2\wedge x_5,x_3\wedge x_4\wedge y_5],[x_1\wedge x_2\wedge x_6,x_3\wedge x_4\wedge y_6]].
$$
Then clearly $\phi\in \mathrm{Im}\, \tau_{g,1}(6)$ and explicit computation shows that
$$
\mathcal{K}_D(\phi)=(4, -1, -4, 4, -4).
$$
It follows that $\mathcal{K}_D(\mathrm{Im}\,\tau_{g,1}(6)^{\mathrm{Sp}})$ is spanned by the $2$ vectors
$$
t_1=(0, 0, 0, -2, 2),\ t_2= (4, -1, -4, 4, -4)
$$
independent of the genus $g$ because $\mathrm{Im}\,\tau_{g,1}$ is $i$-stable.
Since we already know the values of $\mathcal{K}_D$ on each member of our
normalized basis, we can determine the description of 
$(\mathrm{Im}\,\tau_{g,1}(6))^{\mathrm{Sp}}$ as follows. We have the following 
equalities
\begin{align*}
\mathcal{K}_D(9\bar{v}_1-8\bar{v}_2+4\bar{v}_3-25\bar{v}_4+50\bar{v}_5)+90\, t_1&=0,\\
\mathcal{K}_D(-27\bar{v}_1+14\bar{v}_2-52\bar{v}_3+25\bar{v}_4-80\bar{v}_5)+90\, t_2&=0.
\end{align*}
It follows that $(\mathrm{Im}\,\tau_{g,1}(6))^{\mathrm{Sp}}$ is spanned by the following $2$ elements
$$
9\bar{v}_1-8\bar{v}_2+4\bar{v}_3-25\bar{v}_4+50\bar{v}_5,\quad
-27\bar{v}_1+14\bar{v}_2-52\bar{v}_3+25\bar{v}_4-80\bar{v}_5.
$$
The claimed $2$ elements $j_1, j_2$ are obtained by a change of basis.

Next we prove $\mathrm{(ii)}$. We consider the following two elements
$$
p_1=S_8(x_1\otimes [\omega_0,[y_2,[x_2,[\omega_0,y_1]]]]),\quad
p_2=S_8(x_1\otimes [y_2,[\omega_0,[x_2,[\omega_0,y_1]]]]).
$$
It is easy to see that both of these elements are contained in $\mathfrak{j}_{g,1}(6)$ for {\it any} $g$
because there are two copies of the symplectic form $\omega_0$ in the corresponding Lie
spiders. Since we already know that $\mathfrak{j}_{g,1}(6)^\mathrm{Sp}$ is
$p$-stable, to determine the required coefficients we may 
choose the genus $g$ to be any one in the stable range $g\geq 3$.
So we set $g=3$ and begin computation. Then explicit computation shows that
$$
\mathcal{K}_D(p_1)=4\, (98,-7,-81,23,-10),\quad
\mathcal{K}_D(p_2)=24\, (0,-3,-3,0,-1).
$$
Our task is then to find two linear combinations of $v_i$'s whose
values under $\mathcal{K}_D$ generate the $2$ dimensional space
spanned by the above two $5$ dimensional vectors.
Keeping $g=3$, we obtain the following two linear relations
\begin{align*}
\mathcal{K}_D(-36 v_1-23 v_2-101 v_3-40 v_4+50 v_5)+30240\, (98,-7,-81,23,-10)&=0,\\
\mathcal{K}_D( v_2-5 v_3-4 v_4+2 v_5)+6048\, (0,-3,-3,0,-1)&=0.
\end{align*}
Since we have an identity
\begin{align*}
& -36 v_1-23 v_2-101 v_3-40 v_4+50 v_5\\
&=13(v_2-5 v_3-4 v_4+2 v_5)-12(3v_1+3v_2+3v_3-v_4-2v_5)\\
&=13\, j_1-12\, j_2,
\end{align*}
we are done.

To verify the accuracy of the above result, we made similar computations for the cases $g=4,5,6$ and checked that the answers
were the same as above.

Next we prove $\mathrm{(iii)}$. Explicit computation shows that $\mathfrak{a}_g(4)^{\mathrm{Sp}}\cong\Q^2$
and $ES_6(j_1), ES_6(j_2)\in \mathfrak{a}_g(4)^{\mathrm{Sp}}$ are linearly independent. Therefore
$\mathrm{dim}\, (\mathrm{Ker}\, ES_6)^{\mathrm{Sp}}=3$. Since we already know that 
$\mathrm{dim}\, (\mathrm{Im}\, \tau_{g,1}(6))^{\mathrm{Sp}}=2$, the claim follows.
We checked that $ES_6(\tau_1)=ES_6(\tau_2)=0$ as it should be.

Finally we prove $\mathrm{(iv)}$. 
By combining the method of \cite{mss3} and Theorem \ref{thm:jL}, we have
\begin{align*}
\mathfrak{h}_{g,1}(7)&= 10 [1]_{\mathrm{Sp}}\oplus 15 [1^3]_{\mathrm{Sp}}\oplus \text{other terms},\\
\mathfrak{j}_{g,1}(7)&=3 [1]_{\mathrm{Sp}}\oplus 3 [1^3]_{\mathrm{Sp}}\oplus \text{other terms}.
\end{align*}
Also by computing the bracket mapping
$$
\mathfrak{h}_{g,1}(1)\otimes \mathrm{Im}\,\tau_{g,1}(6)\rightarrow \mathfrak{h}_{g,1}(7)
$$
explicitly, we find $6 [1]_{\mathrm{Sp}}\oplus 12 [1^3]_{\mathrm{Sp}}$ sitting inside
$\mathrm{Im}\,\tau_{g,1}(7)$. At this stage, we can conclude that the $[1^3]_{\mathrm{Sp}}$-isotypical
component of $\mathrm{Im}\,\tau_{g,1}(7)$ is $12 [1^3]_{\mathrm{Sp}}$ and the multiplicity of the
$[1]_{\mathrm{Sp}}$-isotypical
component of $\mathrm{Im}\,\tau_{g,1}(7)$ is $6$ or $7$.
Then we compute the Enomoto-Satoh map
$$
ES_7: \mathfrak{h}_{g,1}(7)\rightarrow \mathfrak{a}_g(5)=15 [1]_{\mathrm{Sp}}\oplus 15 [1^3]_{\mathrm{Sp}}\oplus \text{other terms}
$$
and find that it hits $3 [1]_{\mathrm{Sp}}\oplus 2 [1^3]_{\mathrm{Sp}}\subset \mathfrak{a}_g(5)$ while
it detects only $2[1]_{\mathrm{Sp}}$ out of $3[1]_{\mathrm{Sp}}\subset \mathfrak{j}_{g,1}(7)$.
We can now conclude that the $[1]_{\mathrm{Sp}}$-isotypical
component of $\mathrm{Im}\,\tau_{g,1}(7)$ is $6 [1]_{\mathrm{Sp}}$ and
we obtain the following exact sequence
$$
0\rightarrow(\mathrm{Im}\,\tau_{g,1}(7))_1\cong 6 [1]_{\mathrm{Sp}}\oplus 12 [1^3]_{\mathrm{Sp}}
\rightarrow
(\mathrm{Ker}\,ES_7)_1 \overset{(r,s)}{\longrightarrow}  [1]_{\mathrm{Sp}}\oplus  [1^3]_{\mathrm{Sp}}
\rightarrow 0
$$
where 
$(\cdot)_1$ denotes 
the projection onto 
the $([1]_{\mathrm{Sp}}\oplus [1^3]_{\mathrm{Sp}})$-isotypical components and 
$r, s$ denote certain homomorphisms which we constructed explicitly.
Next we consider the homomorphism
$$
\mathcal{B}_1:\mathfrak{h}_{g,1}(6)^{\mathrm{Sp}}\cong\Q^5\rightarrow
\mathrm{Hom} \left(\mathfrak{h}_{g,1}(1), \mathfrak{a}_g(5)\right)^{\mathrm{Sp}}
$$
defined by $\mathcal{B}_1(v)(\xi)=ES_7([v,\xi])\ (v\in \mathfrak{h}_{g,1}(6)^{\mathrm{Sp}}, \xi\in \mathfrak{h}_{g,1}(1))$.
Computation shows that $\mathcal{B}_1$ hits only $1$ dimension in the target so that 
$\mathrm{dim}\, \mathrm{Ker}\, \mathcal{B}_1=4$.
Finally we consider the homomorphism
$$
\mathcal{B}_2: \mathrm{Ker}\, \mathcal{B}_1\subset \mathfrak{h}_{g,1}(6)^{\mathrm{Sp}}\rightarrow
\mathrm{Hom} \left(\mathfrak{h}_{g,1}(1),  [1]_{\mathrm{Sp}}\oplus  [1^3]_{\mathrm{Sp}}\right)^{\mathrm{Sp}}
\cong\Q^2
$$
defined by $\mathcal{B}_2(v)(\xi)=(r,s)([v,\xi])$. It turns out that $\mathcal{B}_2$ hits also
only $1$ dimension in the target so that
$\mathrm{dim}\, \mathrm{Ker}\, \mathcal{B}_2=3$. Thus besides $(\mathrm{Im}\, \tau_{g,1}(6))^{\mathrm{Sp}}$,
which is $2$ dimensional, there exists one more dimension which normalizes $\mathfrak{h}_{g,1}(1)$.
In view of Proposition \ref{prop:galois}, the claim is now proved.
\end{proof}

\begin{remark}
It would be worthwhile to investigate whether the above summands $[1]_{\mathrm{Sp}}\oplus  [1^3]_{\mathrm{Sp}}$
which the Enomoto-Satoh map does not detect, are detected by Conant's new trace map given in
\cite{conant} or not.
\end{remark}

\section{Structure of the genus $1$ case $\mathfrak{h}_{1,1}$}\label{sec:g1}

In this section, we consider the case of genus $1$ motivated by a recent work of Hain and Matsumoto
mentioned in the introduction. 

\begin{prop}
$\mathrm{(i)\ }$
The irreducible decomposition of $[k l ]_{\mathrm{GL}}$
as an $\mathrm{Sp}$-module is given by
$$
[kl]_{\mathrm{GL}}=[kl ]_{\mathrm{Sp}}+[k-1,l-1 ]_{\mathrm{Sp}}+\cdots+ [k-l+1, 1]_{\mathrm{Sp}}+[k-l ]_{\mathrm{Sp}}.
$$

$\mathrm{(ii)\ }$
The restriction of $[k\ell ]_{\mathrm{GL(2,\Q)}}$
to the subgroup $\mathrm{Sp}(2,\Q)=\mathrm{SL}(2,\Q)\subset \mathrm{GL}(2,\Q)$
is given by
$$
\mathrm{Res}^{\mathrm{GL}(2,\Q)}_{\mathrm{SL}(2,\Q)}\, [kl]_{\mathrm{GL(2,\Q)}}=
[k-l]_{\mathrm{SL}(2,\Q)}.
$$
\label{prop:kl}
\end{prop}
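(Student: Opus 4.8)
The plan is to treat the two parts independently: (i) is the two-row instance of the classical $\mathrm{GL}\downarrow\mathrm{Sp}$ branching rule, and (ii) is an elementary feature of rank-one representations. I expect the only step with real content to be the two-row Littlewood--Richardson bookkeeping behind (i).

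For (i) I would invoke Littlewood's restriction rule for $\mathrm{Sp}(2g,\Q)\subset\mathrm{GL}(2g,\Q)$: in the stable range $h(\lambda)\le g$, i.e. here $g\ge 2$, the multiplicity of $[\mu]_{\mathrm{Sp}}$ in $[\lambda]_{\mathrm{GL}}$ equals $\sum_{\delta}c^{\lambda}_{\mu\delta}$, where $\delta$ runs over the Young diagrams all of whose columns have even length and $c^{\lambda}_{\mu\delta}$ is the Littlewood--Richardson coefficient. Apply this with $\lambda=[kl]$. Any $\delta$ occurring has at most two rows, so each of its columns has length $0$ or $2$; hence $\delta=[bb]$ for some $b\ge 0$. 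It remains to count the Littlewood--Richardson fillings of the two-row skew shape $[kl]/\mu$ with content $(b,b)$. Here the combinatorics collapses: the top row of such a filling consists only of $1$'s, so it contributes exactly $k-\mu_1$ ones; since the total number of $1$'s is $b$, this forces $k-\mu_1=b$, and the lattice condition on the reading word then rules out any $1$ in the second row, so the second row is $2^{b}$ and $\mu_2=l-b$ is pinned down. Conversely, for $\mu=[k-b,l-b]$ with $0\le b\le l$ there is exactly one admissible filling (top row $1^{b}$, bottom row $2^{b}$). Hence the multiplicities are all $0$ or $1$, and summing over $b=0,1,\dots,l$ produces precisely the claimed list $[k-b,l-b]_{\mathrm{Sp}}$, $b=0,\dots,l$. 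As an independent check one may instead use the two-row Jacobi--Trudi identity $[kl]_{\mathrm{GL}}=\mathrm{Sym}^{k}H_\Q\otimes\mathrm{Sym}^{l}H_\Q-\mathrm{Sym}^{k+1}H_\Q\otimes\mathrm{Sym}^{l-1}H_\Q$ together with $\mathrm{Sym}^{m}H_\Q=[m]_{\mathrm{Sp}}$ and the stable symplectic Clebsch--Gordan rule for $[a]_{\mathrm{Sp}}\otimes[b]_{\mathrm{Sp}}$; the intermediate terms then telescope to the same answer.

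For (ii) I would first record the purely $\mathrm{GL}(2)$-theoretic identity $[kl]_{\mathrm{GL}(2,\Q)}\cong\mathrm{Sym}^{k-l}(\Q^{2})\otimes(\wedge^{2}\Q^{2})^{\otimes l}$, which is immediate from the bialternant formula in two variables, $s_{(k,l)}(x_1,x_2)=(x_1x_2)^{l}s_{(k-l)}(x_1,x_2)$, or from $\mathrm{GL}(2,\Q)$ highest-weight theory, the highest weight $(k,l)$ splitting off $l$ copies of the determinant weight $(1,1)$. Now restrict to $\mathrm{SL}(2,\Q)=\mathrm{Sp}(2,\Q)\subset\mathrm{GL}(2,\Q)$: the determinant module $\wedge^{2}\Q^{2}$ becomes trivial, so the right-hand side restricts to $\mathrm{Sym}^{k-l}(\Q^{2})=[k-l]_{\mathrm{SL}(2,\Q)}$, as asserted.

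The single step with genuine content is the two-row Littlewood--Richardson count in (i), and even that is routine. The one point requiring care is the hypothesis of Littlewood's rule, which holds without modification only when the number of rows of $\lambda$ does not exceed $g$; accordingly (i) is to be understood in the range $g\ge 2$, the genus-$1$ case being exactly the content of (ii).
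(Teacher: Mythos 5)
Your proof is correct and takes essentially the same route as the paper, which for (i) simply cites the classical restriction law for $\mathrm{Sp}(2g,\Q)\subset\mathrm{GL}(2g,\Q)$ (Littlewood's rule, via Fulton--Harris) and for (ii) the further restriction to $\mathrm{SL}(2,\Q)\subset\mathrm{GL}(2,\Q)$; you merely supply the two-row Littlewood--Richardson bookkeeping and the determinant-twist identity that the paper leaves to the reference. One small ordering issue in that bookkeeping: you deduce $k-\mu_1=b$ from the content before having excluded $1$'s from the second row, whereas the clean order is to note that $j>0$ second-row $1$'s would give reading word $1^{k-\mu_1}2^{b}1^{j}$ with $k-\mu_1=b-j<b$, violating the lattice prefix condition after the block of $2$'s, so $j=0$ and only then $k-\mu_1=b$.
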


\begin{proof}
$\mathrm{(i)\ }$ follows from the restriction law of the pair $\mathrm{Sp}(2g,\Q)\subset\mathrm{GL}(2g,\Q)$
and $\mathrm{(ii)\ }$ follows from a further restriction to the subgroup 
$\mathrm{Sp}(2,\Q)=\mathrm{SL}(2,\Q)\subset \mathrm{GL}(2,\Q)$
(cf. \cite{fh}).
\end{proof}

Let $\mathfrak{h}_{g,1}(k)_{h\leq 2}$ denote the submodule of $\mathfrak{h}_{g,1}(k)$
consisting of $\mathrm{GL}$-isotypical components of type $\lambda_{\mathrm{GL}}$
with $h(\lambda)\leq 2$. Then as is well known, $\mathfrak{h}_{g,1}(k)_{h\leq 2}$ together
with Proposition \ref{prop:kl} completely determines the $\mathrm{Sp}(2,\Q)$-irreducible
decomposition of $\mathfrak{h}_{1,1}(k)$. In this case, we can see that the converse is also
true. Namely the $\mathrm{Sp}(2,\Q)$-irreducible
decomposition of $\mathfrak{h}_{1,1}(k)$ determines $\mathfrak{h}_{g,1}(k)_{h\leq 2}$
completely.
We indicate in Table \ref{tab:hg=1} the explicit decompositions of the two modules
for $k\leq 18$.

\begin{table}[h]
\caption{$\text{Irreducible decompositions of $\mathfrak{h}_{1,1}(k)$ and $\mathfrak{h}_{g,1}(k)_{h\leq 2}$}$}
\begin{center}
\begin{tabular}{|c|l|l|}
\noalign{\hrule height0.8pt}
\hfil $k$& $\hspace{1cm}\text{irreducible components of $\mathfrak{h}_{1,1}(k)$}$ 
&  $\hspace{1cm}\text{irreducible components of $\mathfrak{h}_{g,1}(k)_{h\leq 2}$}$ \\
\hline
$1$ & $\{0\}$  & $\{0\}$ \\
\hline
$2$ & $[0]$ & $[2^2]$\\
\hline
$3$ & $\{0\}$ & $\{0\}$\\
\hline
$4$ & $[2]$ & $[42]$\\
\hline
$5$ & $\{0\}$ & $\{0\}$\\
\hline
$6$ & $[4][0]$ & $[62][44]$\\
\hline
$7$ & $[3]$ & $[63]$\\
\hline
$8$ & $[6]2[2]$ & $[82]2[64]$\\
\hline
$9$ & $[5][3][1]$ & $[83][74][65]$\\
\hline
$10$ & $[8][6]3[4][2]3[0]$ & $[10\ 2][93]3[84][75]3[66]$\\
\hline
$11$ & $[7]2[5]4[3]2[1]$  & $[10\ 3]2[94]4[85]2[76]$\\
\hline
$12$ & $[10][8]5[6]4[4]8[2]$ & $[12\ 2][11\ 3]5[10\ 4]4[95]8[86]$\\
\hline
$13$ & $2[9]3[7]8[5]9[3]6[1]$ & $2[12\ 3]3[11\ 4]8[10\ 5]9[96]6[87]$\\ 
\hline
$14$  & $[12][10]7[8]9[6]18[4]$ & $[14\ 2][13\ 3]7[12\ 4]9[11\ 5]18[10\ 6]$
\\ {}& $11[2]11[0]$ & $11[97]11[8^2]$\\ 
\hline
$15$ & $2[11]5[9]14[7]21[5]$ & $2[14\ 3]5[13\ 4]14[12\ 5]21[11\ 6]$
\\ {}& $26[3]17[1]$ & $26[10\ 7]17[98]$\\ 
\hline
$16$ & $[14]2[12]9[10]16[8]$ &$[16\ 2]2[15\ 3]9[14\ 4]16[13\ 5]38[12\ 6]$
\\ {}& $38[6]38[4]46[2]10[0]$ & $38[11\ 7]46[10\ 8]10[9^2]$\\ 
\hline
$17$ & $2[13]7[11]23[9]42[7]$  & $2[16\ 3]7[15\ 4]23[14\ 5]42[13\ 6]$
\\ {}& $68[5]72[3]48[1]$ & $68[12\ 7]72[11\ 8]48[10\ 9]$\\ 
\hline
$18$ & $[16]2[14]12[12]26[10]67[8]$ & $[18\ 2]2[17\ 3]12[16\ 4]26[15\ 5]67[14\ 6]$
\\ {}& $96[6]138[4]100[2]57[0]$ & $96[13\ 7]138[12\ 8]100[11\ 9]57[10^2]$\\   
\noalign{\hrule height0.8pt}
\end{tabular}
\end{center}
\label{tab:hg=1}
\end{table}

\begin{prop}
$\mathrm{(i)}\ \text{
The leading term in the irreducible decomposition of $\mathfrak{h}_{1,1}(k)$
is given by}$
\begin{align*}
& [k-2]\quad (\text{$k$ is even}),\\
&\lfloor k/6\rfloor [k-4]\quad (\text{$k$ is odd $\geq 7$}) , \ \{0\} \ \text{for $k=1,3,5$}.
\end{align*}

$\mathrm{(ii)}\ \text{There exists a natural isomorphism}$
$$
\mathfrak{h}_{1,1}(2k)^{\mathrm{Sp}}\cong H_{[k+1]}
$$
which is a special case of the decomposition of $\mathfrak{h}_{g,1}(2k)^{\mathrm{Sp}}$ given by Theorem $\ref{thm:ortho}$.
\label{prop:g1}
\end{prop}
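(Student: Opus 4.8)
The second statement $\mathrm{(ii)}$ is immediate: specializing Theorem $\ref{thm:ortho}$ to $g=1$, the only Young diagram $\lambda$ with $|\lambda|=k+1$ and $h(\lambda)\le 1$ is the single row $[k+1]$, so the orthogonal decomposition there collapses to $\mathfrak{h}_{1,1}(2k)^{\mathrm{Sp}}\cong H_{[k+1]}$ (the pairing being positive definite for every genus by Theorem $\ref{thm:can}$). For $\mathrm{(i)}$ the plan is a direct $\mathrm{SL}(2,\Q)=\mathrm{Sp}(2,\Q)$-character computation. Using that $\mathfrak{h}_{1,1}(k)=\Ker\bigl(H_\Q\otimes\mathcal{L}_{1,1}(k+1)\overset{[\ ,\ ]}{\longrightarrow}\mathcal{L}_{1,1}(k+2)\bigr)$ and that the bracket map is surjective (the free Lie algebra being generated in degree $1$), one gets the identity of $\mathrm{SL}(2,\Q)$-characters
\[
\mathrm{ch}\,\mathfrak{h}_{1,1}(k)=[1]\cdot\mathrm{ch}\,\mathcal{L}_{1,1}(k+1)-\mathrm{ch}\,\mathcal{L}_{1,1}(k+2),
\]
where $[1]=H_\Q$ and, here and below, $[m]$ is the $(m+1)$-dimensional irreducible.

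Next I would substitute $m=2$, $x_1=t$, $x_2=t^{-1}$ into Theorem $\ref{thm:character}$, giving $\mathrm{ch}\,\mathcal{L}_{1,1}(n)=\tfrac1n\sum_{d\mid n}\mu(d)(t^d+t^{-d})^{n/d}$, whose coefficient of $t^{\,n-2j}$ (for $0\le 2j<n$) is $\tfrac1n\sum_{d\mid\gcd(n,j)}\mu(d)\binom{n/d}{j/d}$. In particular the top coefficients are: $t^n\mapsto 0$ (for $n\ge 2$), $t^{n-2}\mapsto 1$, $t^{n-4}\mapsto\tfrac{n-1}{2}$ for $n$ odd and $\tfrac{n-2}{2}$ for $n$ even, and $t^{n-6}\mapsto\tfrac{(n-1)(n-2)}{6}$, diminished by $\tfrac13$ when $3\mid n$; the coefficients of $t^{n-1},t^{n-3},\dots$ vanish by parity. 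Feeding these in with $n=k+1$ and $n=k+2$, and using that the multiplicity of $[m]$ in a representation with character $\sum_w c_w t^w$ equals $c_m-c_{m+2}$, one reads off the top of $\mathrm{ch}\,\mathfrak{h}_{1,1}(k)$: the $t^k$-coefficient is $1+0-1=0$, the coefficients of $t^{k+1},t^{k-1},t^{k-3}$ vanish (parity, and top weight), and the $t^{k-2}$-coefficient is $1$ when $k$ is even and $0$ when $k$ is odd.

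Thus for $k$ even the leading irreducible is $[k-2]$ with multiplicity $1-0=1$. For $k$ odd one must go one more step: the $t^{k-4}$-coefficient of $\mathrm{ch}\,\mathfrak{h}_{1,1}(k)$ works out to $\tfrac{k-3}{6}+\tfrac13\bigl([3\mid k+2]-[3\mid k+1]\bigr)$ (Iverson brackets), and writing $k=6m+r$ with $r\in\{1,3,5\}$ a three-line check identifies this with $m=\lfloor k/6\rfloor$; since the $t^{k-2}$-coefficient vanishes, this is exactly the multiplicity of $[k-4]$, which is $\ge 1$ precisely for $k\ge 7$. The three residual odd cases are settled by a dimension count: $\dim\mathfrak{h}_{1,1}(k)=2\dim\mathcal{L}_{1,1}(k+1)-\dim\mathcal{L}_{1,1}(k+2)=0$ for $k=1,3,5$ (from the dimension formula in Theorem $\ref{thm:character}$), so $\mathfrak{h}_{1,1}(k)=\{0\}$ there. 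This gives $\mathrm{(i)}$.

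The argument is elementary and I expect no conceptual obstacle; the points needing care are the M\"obius--binomial bookkeeping for the top coefficients of $\mathcal{L}_{1,1}(n)$, the $\bmod\,6$ case split in the odd case, and the verification that the surviving summand is genuinely the highest one (hence the vanishing checks on the $t^k,t^{k\pm1},t^{k-3}$ coefficients, not merely non-vanishing of $t^{k-2}$ or $t^{k-4}$). As a consistency check for $k\le 18$, statement $\mathrm{(i)}$ can also be extracted from Table $\ref{tab:hg=1}$ via $\mathrm{Res}^{\mathrm{GL}(2,\Q)}_{\mathrm{SL}(2,\Q)}[\lambda_1\lambda_2]_{\mathrm{GL}}=[\lambda_1-\lambda_2]_{\mathrm{SL}(2,\Q)}$ of Proposition $\ref{prop:kl}$, noting that $[k+2]_{\mathrm{GL}}$ and $[k{+}1,1]_{\mathrm{GL}}$ do not occur in $\mathfrak{h}_{g,1}(k)$ while the leading two-row shapes there are $[k,2]$ for $k$ even and $[k{-}1,3]$ for $k$ odd; the character computation is what upgrades this to arbitrary $k$.
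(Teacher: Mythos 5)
Your proof is correct and follows essentially the same route as the paper: the $\mathrm{SL}(2,\Q)$-character identity $\mathrm{ch}\,\mathfrak{h}_{1,1}(k)=(x_1+x_1^{-1})\,\mathrm{ch}\,\mathcal{L}_{1,1}(k+1)-\mathrm{ch}\,\mathcal{L}_{1,1}(k+2)$ combined with the M\"obius--binomial bookkeeping for the top coefficients of $\mathrm{ch}\,\mathcal{L}_{1,1}(l)$ (yielding $0,1,\lfloor(l-1)/2\rfloor,\lfloor(l-1)(l-2)/6\rfloor$ exactly as in the paper), and for $\mathrm{(ii)}$ the specialization of Theorem \ref{thm:ortho} to $g=1$. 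The only cosmetic differences are that the paper first obtains the even case from the Asada--Nakamura summand $[2k,2]_{\mathrm{GL}}$ via Proposition \ref{prop:kl} and then re-confirms it by the character, and it disposes of $k=1,3,5$ by the symmetry of the character under $x_1\leftrightarrow x_1^{-1}$ rather than by your (equally valid) dimension count.
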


\begin{proof}
A proof of the former case of $\mathrm{(i)}$, which should be well known, can be given as follows.
Asada and Nakamura \cite{an} proved that the leading term of $\mathfrak{h}_{g,1}(2k)$
is $[2k, 2]_{\mathrm{GL}}$. By Proposition \ref{prop:kl}, this summand yields $[2k-2]_{\mathrm{Sp}}$
as its unique grandchild. Its further restriction to the genus $1$ case, namely the restriction to 
the subgroup $\mathrm{Sp}(2,\Q)\subset \mathrm{Sp}(2g,\Q)$ is the required summand
$[2k-2]_{\mathrm{Sp}(2,\Q)}$. Again by Proposition \ref{prop:kl}, it is easy to see that no other
summand in $\mathfrak{h}_{g,1}(2k)$ can yield this component implying that the multiplicity of this
component is $1$.

Next we prove the latter part of $\mathrm{(i)}$.
As an $\mathrm{Sp} (2,\mathbb{Q})=\mathrm{SL}(2,\mathbb{Q})$-representation, 
the character of $\mathfrak{h}_{1,1} (k)$ is 
\begin{align*}
\mathrm{ch}(\mathfrak{h}_{1,1}(k))&=\mathrm{ch}(H_\mathbb{Q})
\mathrm{ch}(\mathcal{L}_{1,1}(k+1))-\mathrm{ch}(\mathcal{L}_{1,1}(k+2))\\
&=(x_1+x_1^{-1})\mathrm{ch}(\mathcal{L}_{1,1}(k+1))-
\mathrm{ch}(\mathcal{L}_{1,1}(k+2)),
\end{align*}
\noindent
where  the character of $\mathcal{L}_{1,1}(l)$ is given by
\[\mathrm{ch}(\mathcal{L}_{1,1}(l))=\frac{1}{l} \sum_{d|l} \mu(d) (x_1^d+x_1^{-d})^{l/d}
=\frac{1}{l} \sum_{d|l} \mu(d) 
\left\{\sum_{i=0}^{l/d} \begin{pmatrix} l/d \\ i \end{pmatrix} x_1^{l-2di} \right\}\] 
(cf. Theorem \ref{thm:character}).
Then the claim follows by checking the leading term  with respect to $x_1$ of this Laurent polynomial. 

From the above formula, the degree of $\mathrm{ch}(\mathcal{L}_{1,1}(l))$ is 
at most $l$. However, the well-known formula 
$\sum_{d|l} \mu (d)=0$ for $l \ge 2$ says that 
the coefficient of $x_1^l$ is $0$. The next term appears in degree $l-2$, 
because the degree of $x_1$ in $\mathrm{ch}(\mathcal{L}_{1,1}(l))$ decreases by two each time. 
Since we have $d=i=1$ in this case, the coefficient of $x_1^{l-2}$ is $1$. 

To see the coefficient of $x_1^{l-4}$, consider the terms with $di=2$. 
If $l$ is odd, it suffices only to see the term with $d=1$ and $i=2$. 
The contribution of this term to the coefficient of 
$x_1^{l-4}$ is $\displaystyle\frac{\mu (1)}{l}\begin{pmatrix} l \\ 2 \end{pmatrix}=
\displaystyle\frac{l-1}{2}$. 
If $l$ is even, another term with $d=2$ and $i=1$ is involved. 
The contribution is given by 
$\displaystyle\frac{\mu (2)}{l}\begin{pmatrix} l/2 \\ 1 \end{pmatrix}=
-\displaystyle\frac{1}{2}$. 
Therefore the coefficient of $x_1^{l-4}$ is $\left\lfloor \displaystyle\frac{l-1}{2}\right\rfloor$ for 
$l$ of both cases. 

To see the coefficient of $x_1^{l-6}$, consider the terms with $di=3$. 
If $l \not\equiv 0$ modulo $3$, it is enough to see only the term with $d=1$ and $i=3$. 
The contribution of this term to the coefficient of 
$x_1^{l-6}$ is $\displaystyle\frac{\mu (1)}{l}\begin{pmatrix} l \\ 3 \end{pmatrix}=
\displaystyle\frac{(l-1)(l-2)}{6}$. 
Otherwise, we have another term with $d=3$ and $i=1$. 
The contribution is 
$\displaystyle\frac{\mu (3)}{l}\begin{pmatrix} l/3 \\ 1 \end{pmatrix}=
-\displaystyle\frac{1}{3}$. 
Therefore the coefficient of $x_1^{l-6}$ is 
$\left\lfloor \displaystyle\frac{(l-1)(l-2)}{6}\right\rfloor$.  

Consequently, we have 
\[\mathrm{ch}(\mathcal{L}_{1,1}(l))=x_1^{l-2}+
\left\lfloor \frac{l-1}{2}\right\rfloor x_1^{l-4}+\left\lfloor \frac{(l-1)(l-2)}{6}\right\rfloor x_1^{l-6}+
(\text{lower degree terms})\]
for $l \ge 2$. Hence 
\begin{align*}
& \ \mathrm{ch}(\mathfrak{h}_{1,1}(k))\\
=&\ (x_1+x_1^{-1})
\left(x_1^{k-1}+\left\lfloor \frac{k}{2}\right\rfloor x_1^{k-3}
+\left\lfloor \frac{k(k-1)}{6}\right\rfloor x_1^{k-5}+\cdots\right)\\
&\ -\left(x_1^k+\left\lfloor \frac{k+1}{2}\right\rfloor x_1^{k-2}
+\left\lfloor \frac{(k+1)k}{6}\right\rfloor x_1^{k-4}+\cdots\right)\\
=&\ \left(1+\left\lfloor \frac{k}{2}\right\rfloor-\left\lfloor \frac{k+1}{2}\right\rfloor\right) x_1^{k-2}
+\left(\left\lfloor \frac{k}{2}\right\rfloor+\left\lfloor \frac{k(k-1)}{6}\right\rfloor
-\left\lfloor \frac{(k+1)k}{6}\right\rfloor\right) x_1^{k-4}+\cdots.
\end{align*}
\noindent 
When $k$ is even, the coefficient of $x_1^{k-2}$ is $1$, 
which checks the former part of $(\mathrm{i})$ already proved. 
When $k$ is odd, the coefficient of $x_1^{k-2}$ is $0$. However, it is easy to check that 
the coefficient of $x_1^{k-4}$ is given by $\left\lfloor \displaystyle\frac{k}{6}\right\rfloor$. 
If  $k \ge 7$, this gives the leading term. 
In the cases where $k=1,3,5$, 
we have $\mathrm{ch}(\mathfrak{h}_{1,1}(k))=0$ since the non-negative degree part of 
the character $\mathrm{ch}(\mathfrak{h}_{1,1}(k))$ vanishes and 
$\mathrm{ch}(\mathfrak{h}_{1,1}(k))$ is invariant under 
the involution $x_1 \leftrightarrow x_1^{-1}$. 

Finally we prove $\mathrm{(ii)}$. It is easy to see that the only isotypical component 
appearing in $\mathfrak{h}_{g,1}(k)_{h\leq 2}$ which has multiple double floors is
the one corresponding to the Young diagram $[k+1,k+1]$. Then the result follows from
the definition of $H_{[k+1]}$ given in Theorem \ref{thm:ortho}. 
\end{proof}

By combining Proposition \ref{prop:kl} with the latter part of Proposition \ref{prop:g1} $\mathrm{(i)}$, 
we obtain the following.

\begin{cor}
The leading term of the $\mathrm{GL}$-irreducible decomposition of $\mathfrak{h}_{g,1}(2k+1)$
is 
$$
\lfloor k/6\rfloor\,[2k, 3]_{\mathrm{GL}}\quad (k\geq 3).
$$ 
\end{cor}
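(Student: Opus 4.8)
The plan is to deduce the Corollary directly from Proposition \ref{prop:g1} (ii) together with the restriction law of Proposition \ref{prop:kl}. The key point is that the leading term of $\mathfrak{h}_{1,1}(2k+1)$, which by Proposition \ref{prop:g1} (i) is $\lfloor k/6\rfloor [2k-4]_{\mathrm{Sp}(2,\Q)}$, must ``lift'' to a $\mathrm{GL}$-isotypical component in $\mathfrak{h}_{g,1}(2k+1)$ whose height is at most $2$. Indeed, $\mathfrak{h}_{1,1}(2k+1)$ is obtained from $\mathfrak{h}_{g,1}(2k+1)_{h\leq 2}$ by first decomposing each $[\lambda_1\lambda_2]_{\mathrm{GL}(2g,\Q)}$ into $\mathrm{Sp}(2g,\Q)$-irreducibles and then restricting to $\mathrm{Sp}(2,\Q)=\mathrm{SL}(2,\Q)$; by Proposition \ref{prop:kl}, $[\lambda_1\lambda_2]_{\mathrm{GL}}$ contributes exactly one copy of $[\lambda_1-\lambda_2]_{\mathrm{SL}(2,\Q)}$ and nothing of higher weight.

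First I would record the $\mathrm{GL}$-irreducible decomposition of $\mathfrak{h}_{g,1}(2k+1)_{h\leq 2}$ in the form $\bigoplus_{a>b} m_{a,b}\,[ab]_{\mathrm{GL}}$ with $a+b=2k+3$. Applying $\mathrm{Res}^{\mathrm{GL}(2,\Q)}_{\mathrm{SL}(2,\Q)}$ termwise via Proposition \ref{prop:kl} (ii) gives $\mathrm{ch}(\mathfrak{h}_{1,1}(2k+1)) = \sum_{a>b} m_{a,b}\,[a-b]_{\mathrm{SL}(2,\Q)}$, and since the $[ab]$ with $a+b=2k+3$ fixed are indexed bijectively by the differences $a-b\in\{1,3,5,\dots,2k+1\}$, the multiplicity of $[a-b]_{\mathrm{SL}(2,\Q)}$ in $\mathfrak{h}_{1,1}(2k+1)$ equals $m_{a,b}$ exactly. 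In particular the highest $\mathrm{SL}(2,\Q)$-weight occurring in $\mathfrak{h}_{1,1}(2k+1)$, together with its multiplicity, pins down the $\mathrm{GL}$-summand of largest $a-b$ and its multiplicity.

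Next I would invoke the latter part of Proposition \ref{prop:g1} (i): for $k\geq 3$ (so $2k+1\geq 7$) the leading term of $\mathfrak{h}_{1,1}(2k+1)$ is $\lfloor k/6\rfloor [2k-4]_{\mathrm{SL}(2,\Q)}$, i.e.\ the top $\mathrm{SL}(2,\Q)$-weight is $2k-4$ with multiplicity $\lfloor k/6\rfloor$. Hence in $\mathfrak{h}_{g,1}(2k+1)_{h\leq 2}$ the summand with $a-b=2k-4$, which forces $a=2k$, $b=3$, occurs with multiplicity $\lfloor k/6\rfloor$, and no summand with $a-b>2k-4$ appears. Since any $\mathrm{GL}$-component of $\mathfrak{h}_{g,1}(2k+1)$ with height $\geq 3$ cannot contribute to the $h\leq 2$ part and a fortiori does not affect the leading term in the dominance order among two-row diagrams, the leading term of the full $\mathrm{GL}$-decomposition of $\mathfrak{h}_{g,1}(2k+1)$ is $\lfloor k/6\rfloor [2k,3]_{\mathrm{GL}}$.

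The only real subtlety — and the step I expect to require the most care — is justifying that ``leading term'' is unambiguous: a priori $\mathfrak{h}_{g,1}(2k+1)$ could contain three-or-more-row diagrams $[\lambda_1\lambda_2\cdots]$ that are larger in dominance order than $[2k,3]$. I would dispatch this either by appealing to the known stable $\mathrm{GL}$-structure of $\mathfrak{h}_{g,1}(2k+1)$ (all of whose summands have first row $\lambda_1\leq 2k$, with $\lambda_1=2k$ achieved only by $[2k,2,1]$ and the two-row diagram $[2k,3]$, neither dominating the other in an inconvenient way), or more cleanly by simply taking ``leading term'' to mean the $\mathrm{GL}$-summand whose Young diagram has the maximal first row, and among those the maximal second row; with that reading the argument above via $\mathfrak{h}_{1,1}$ and Proposition \ref{prop:kl} is complete, since the $h\leq 2$ part already detects first rows up to $2k$ and fixes the multiplicity of $[2k,3]_{\mathrm{GL}}$ as $\lfloor k/6\rfloor$.
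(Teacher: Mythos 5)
Your route is the same as the paper's: the Corollary is obtained precisely by combining Proposition \ref{prop:kl} with the latter part of Proposition \ref{prop:g1}\,(i), using the fact that the $\mathrm{SL}(2,\Q)$-decomposition of $\mathfrak{h}_{1,1}$ determines $\mathfrak{h}_{g,1}(\cdot)_{h\leq 2}$ (two-row diagrams with a fixed number of boxes correspond bijectively to their $\mathrm{SL}(2,\Q)$-restrictions), so there is no methodological divergence, and your closing discussion of height-$\geq 3$ summands is a harmless refinement. However, there is a concrete slip in how you quote Proposition \ref{prop:g1}\,(i). For the odd degree $2k+1\geq 7$ it gives the leading term $\lfloor (2k+1)/6\rfloor\,[2k-3]_{\mathrm{SL}(2,\Q)}$, not $\lfloor k/6\rfloor\,[2k-4]$: the top weight is $(2k+1)-4=2k-3$, which is odd, as every $\mathrm{SL}(2,\Q)$-weight occurring must be, since it is of the form $a-b$ with $a+b=2k+3$. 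As written, your identification step is inconsistent: the system $a+b=2k+3$, $a-b=2k-4$ has no integer solution, so ``forces $a=2k$, $b=3$'' does not follow from what you stated; with the corrected weight $2k-3$ it does, and the rest of the argument goes through as in the paper.

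A second point you should be aware of: the multiplicity you assert is not what Proposition \ref{prop:g1}\,(i) yields. Substituting the degree $2k+1$ gives multiplicity $\lfloor(2k+1)/6\rfloor=\lfloor k/3\rfloor$, which is what Table \ref{tab:hg=1} exhibits (for instance $2[12\ 3]$ in degree $13$, i.e. $k=6$, where $\lfloor k/6\rfloor$ would predict $1$, and $[6\,3]$ in degree $7$, where $\lfloor k/6\rfloor$ would predict $0$). So the coefficient $\lfloor k/6\rfloor$ cannot literally be derived from Proposition \ref{prop:g1}\,(i) in the way your write-up claims; this indexing discrepancy is present in the Corollary as printed, but your proof silently reproduces it as though it followed from the proposition, which it does not. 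A correct version of your (and the paper's) argument proves: the leading term of $\mathfrak{h}_{g,1}(2k+1)$ is $\lfloor (2k+1)/6\rfloor\,[2k,3]_{\mathrm{GL}}$ for $k\geq 3$.
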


The summands $[2k-2]_{\mathrm{Sp}}\subset \mathfrak{h}_{1,1}(2k)\ (k=1,2,\ldots)$ play a
central role in the theory of universal mixed elliptic motives due to Hain and Matsumoto
(cf. \cite{hain14} and \cite{pollack}).
More precisely, they consider the element
$$
\text{$\epsilon_{2k} \in [2k-2]_{\mathrm{Sp}}\subset \mathfrak{h}_{1,1}(2k)\ $: the highest weight vector } .
$$
An explicit description of this element was first given by Tsunogai \cite{tsunogai} 
(see also \cite{nakamura99}) and
twice of it is represented by the following Lie spider
\bigskip
\begin{center}
\includegraphics[width=.45\textwidth]{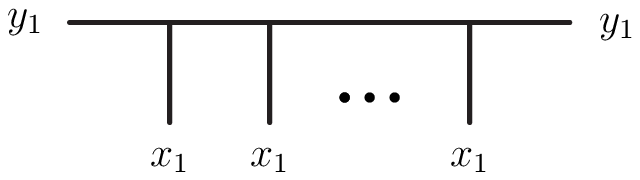}
\end{center}
On the other hand, the highest weight vector of $[2k, 2]_{\mathrm{GL}}\subset \mathfrak{h}_{g,1}(2k)$
is represented by the same Lie spider but we replace both $y_1$ by $x_2$.

They define
$$
\mathfrak{u}=\text{Lie subalgebra of $\mathfrak{h}_{1,1}$ generated by $\epsilon_{2k}$
for all $k=0,1,2,\ldots$}
$$
where $\epsilon_0\in\mathfrak{h}_{1,1}(0)\cong [2]_{\mathrm{Sp}}$ is the highest weight vector,
and they relate the structure of it with the theory of elliptic modular forms. Furthermore, they
consider the action of the absolute Galois group and, in particular, show the
existence of a certain homomorphism
$$
\mathfrak{f}\rightarrow \mathfrak{h}_{1,1}^{\mathrm{Sp}}
$$
from the ``fundamental Lie algebra"
$$
\mathfrak{f}=\text{free graded Lie algebra generated by $\sigma_3,\sigma_5,\ldots$}
$$
to the $\mathrm{Sp}$-invariant part of $\mathfrak{h}_{1,1}$. It has the property that
the image of $\sigma_{2k+1}$, denoted by $\tilde{\sigma}_{2k+1}$, lies in $\mathfrak{h}_{1,1}(4k+2)^{\mathrm{Sp}}$ and
moreover it should normalize $\mathfrak{u}$.


\begin{prop}
The kernel of the Enomoto-Satoh map $ES: \mathfrak{h}_{g,1}\rightarrow \mathfrak{a}_g$
is an $\mathrm{Sp}$-Lie subalgebra of $\mathfrak{h}_{g,1}$. 
\label{prop:sub}
\end{prop}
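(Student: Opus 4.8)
The plan is to show two things: first that $\operatorname{Ker} ES$ is a Lie subalgebra of $\mathfrak{h}_{g,1}$, and second that it is closed under the $\mathrm{Sp}$-action. The second point is immediate: the Enomoto--Satoh map $ES_k\colon \mathfrak{h}_{g,1}(k)\to\mathfrak{a}_g(k-2)$ is a morphism of $\mathrm{Sp}$-modules (it is built from iterated contractions using the symplectic form $\omega_0$, which are $\mathrm{Sp}$-equivariant), so its kernel is an $\mathrm{Sp}$-submodule of each graded piece. Hence once we know $\operatorname{Ker} ES$ is a subalgebra, it is automatically an $\mathrm{Sp}$-Lie subalgebra. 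So the real content is the subalgebra property.

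For the subalgebra property, the key input is that $\operatorname{Im}\tau_{g,1}$ is contained in $\operatorname{Ker} ES$ (this is the theorem of Enomoto and Satoh quoted in Section~7) \emph{together with} the fact, due to Hain, that $\operatorname{Im}\tau_{g,1}$ is generated by its degree $1$ part $\mathfrak{h}_{g,1}(1)=\wedge^3 H_\Q$. The idea is to argue that for $\varphi,\psi\in\operatorname{Ker} ES$ one has $ES([\varphi,\psi])=0$ by reducing, via the Jacobi identity, to brackets with degree $1$ elements. More precisely, I would first establish the auxiliary claim: if $\varphi\in\operatorname{Ker} ES$ then $[\varphi,\xi]\in\operatorname{Ker} ES$ for every $\xi\in\mathfrak{h}_{g,1}(1)$. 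Granting this claim, one proves $[\varphi,\psi]\in\operatorname{Ker} ES$ for all $\psi\in\operatorname{Im}\tau_{g,1}$ by induction on the degree of $\psi$: writing $\psi$ (up to linear combinations) as $[\psi',\xi]$ with $\psi'\in\operatorname{Im}\tau_{g,1}$ of lower degree and $\xi\in\mathfrak{h}_{g,1}(1)$, the Jacobi identity gives
$$
[\varphi,[\psi',\xi]]=[[\varphi,\psi'],\xi]+[\psi',[\varphi,\xi]],
$$
and both terms on the right lie in $\operatorname{Ker} ES$ by the inductive hypothesis and the auxiliary claim (using that $[\varphi,\xi]\in\operatorname{Ker} ES$ and $\operatorname{Im}\tau_{g,1}\subset\operatorname{Ker} ES$). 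This handles $\psi\in\operatorname{Im}\tau_{g,1}$, but for general $\psi\in\operatorname{Ker} ES$ a symmetric induction is needed, and here the asymmetry between the two arguments (only one of them known to be generated in degree $1$) is the main subtlety.

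The hard part will be proving the auxiliary claim that $ES$ is ``compatible with bracketing by degree $1$ elements'' in the sense that $\varphi\in\operatorname{Ker} ES_k$ implies $[\varphi,\xi]\in\operatorname{Ker} ES_{k+1}$ for $\xi\in\mathfrak{h}_{g,1}(1)$. This is not a formal consequence of $ES$ being a linear map: $ES$ is \emph{not} a Lie algebra homomorphism into $\mathfrak{a}_g$ with the bracket used above, and the derivation-like behaviour of $ES$ with respect to the bracket must be made explicit. The natural route is to use the description of $ES$ as a sum of contraction operators (Enomoto--Satoh's trace map) together with the explicit formula for the bracket on $\mathfrak{h}_{g,1}$ recalled in the proof of Theorem~\ref{thm:bracket}: one computes $ES([\varphi,\xi])$ by expanding the bracket as $\sum K_i\circ b_i - \sum K_i\circ f_i$ and commuting these with the contractions defining $ES$; the degree $1$ hypothesis on $\xi$ (so that $\xi\in\wedge^3 H_\Q$ is totally antisymmetric and killed by every internal self-contraction) should force all the ``error terms'' to involve $ES_k(\varphi)$, which vanishes. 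Alternatively, and perhaps more cleanly, one can invoke the known relation (from \cite{es} or \cite{satoh}) expressing $ES$ on a bracket in terms of an induced operation on $\mathfrak{a}_g$ — if such a statement is available it reduces the auxiliary claim to the observation that the degree $0$ part of the relevant Lie action is trivial. Either way, once the auxiliary claim is in hand, the rest is the Jacobi-identity bookkeeping sketched above.
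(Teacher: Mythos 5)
Your reduction of the $\mathrm{Sp}$-part to equivariance of $ES$ is fine and agrees with the paper, but the subalgebra part of your argument has a genuine gap that cannot be repaired along the route you sketch. Even granting your auxiliary claim, the induction you describe only proves $[\mathrm{Ker}\, ES,\, \mathrm{Im}\,\tau_{g,1}]\subset \mathrm{Ker}\, ES$: by Hain's theorem the Lie subalgebra generated by $\mathfrak{h}_{g,1}(1)=\wedge^3 H_\Q$ is exactly $\mathrm{Im}\,\tau_{g,1}$, and $\mathrm{Ker}\, ES$ is strictly larger than $\mathrm{Im}\,\tau_{g,1}$ (for instance $\dim\,(\mathrm{Ker}\, ES_6/\mathrm{Im}\,\tau_{g,1}(6))^{\mathrm{Sp}}=1$, see Theorem \ref{thm:h6sp}). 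So for a general bracket $[\varphi,\psi]$ with both entries in $\mathrm{Ker}\, ES$, neither entry can be written in terms of degree $1$ elements, and the ``symmetric induction'' you flag as a subtlety is in fact a dead end, not a bookkeeping issue. In addition, the auxiliary claim itself is left unproved: your heuristic that an element of $\wedge^3 H_\Q$ is killed by every internal contraction is false (the contraction $\wedge^3 H_\Q\rightarrow H_\Q$ induced by the symplectic pairing is nonzero), so the computational route you outline would have to be redone carefully, and at that point you would essentially be reproving the external input the paper relies on.

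The paper's proof is much shorter and rests on two different ingredients: a theorem of Satoh \cite{satoh} which gives that $\mathrm{Ker}\, ES$ is a Lie subalgebra of $\mathfrak{h}_{g,1}$ in a stable range of $g$, and the observation that $ES$ is $i$-stable, i.e.\ commutes with the (injective) stabilization maps $i:\mathfrak{h}_{g,1}(k)\rightarrow\mathfrak{h}_{g+1,1}(k)$, which are Lie algebra maps; given $\varphi,\psi\in\mathrm{Ker}\, ES$ at genus $g$, one stabilizes, applies Satoh's theorem there, and uses $ES(i([\varphi,\psi]))=i(ES([\varphi,\psi]))$ together with injectivity of $i$ to conclude $ES([\varphi,\psi])=0$ for the original $g$. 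If you want a self-contained argument you must either prove Satoh-type closure of $\mathrm{Ker}\, ES$ directly (a real computation with the contraction and the cyclic projection defining $ES$, not a consequence of Hain's generation statement), or cite it; your current proposal does neither.
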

\begin{proof}
Since $ES$ is an $\mathrm{Sp}$-equivariant mapping, 
$\mathrm{Ker} ES$ is an $\mathrm{Sp}$-submodule of $\mathfrak{h}_{g,1}$.
On the other hand, a theorem of Satoh \cite{satoh} implies that $\mathrm{Ker}\, ES$ is a Lie subalgebra
of $\mathfrak{h}_{g,1}$ in a certain stable range, namely for a sufficiently large $g$.
Furthermore, ES is an i-stable homomorphism in the sense that the following
diagram is commutative: 
$$
\begin{CD}
\mathfrak{h}_{g,1}(k)  @>{ES}>> \mathfrak{a}_g(k-2)\\
@V{i}VV @VV{i}V\\
\mathfrak{h}_{g+1,1}(k)  @>{ES}>> \mathfrak{a}_{g+1}(k-2).
\end{CD}
$$
It follows that $\mathrm{Ker} ES$ is an $\mathrm{Sp}$-Lie subalgebra of $\mathfrak{h}_{g,1}$
without any condition on $g$. 
\end{proof}

\begin{thm}
$\mathrm{(i)}$ The summand $[2k-2]_{\mathrm{Sp}}\subset \mathfrak{h}_{1,1}(2k)$
lies in the kernel of the Enomoto-Satoh map
for any $k$. It follows that $ES(\mathfrak{u})=0.$

$\mathrm{(ii)}\ \text{The image of $\sigma_5$ in $\mathfrak{h}_{1,1}(10)^{\mathrm{Sp}}$ is characterized by the
condition that $ES([\epsilon_4,\tilde{\sigma}_5])=0$}$. More precisely, we have
$$
\mathrm{dim}\, \mathrm{Ker}\left(\mathfrak{h}_{1,1}(10)^{\mathrm{Sp}}
\xrightarrow{[\epsilon_4,\ ]}  \mathfrak{h}_{1,1}(14)
\xrightarrow{ES} \mathfrak{a}_1(12)\right)=1
$$
whereas $\mathrm{dim}\, \mathfrak{h}_{1,1}(10)^{\mathrm{Sp}}=3$.
\label{thm:z5}
\end{thm}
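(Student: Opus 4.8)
For part $\mathrm{(i)}$, the natural approach is to use the explicit Lie spider description of $\epsilon_{2k}$ quoted above together with the combinatorial formula for the Enomoto-Satoh map $ES_k$. Recall that $ES_k$ is obtained by contracting a chosen pair of ``legs'' of a Lie spider and then reading off the resulting element of the associative algebra $\mathfrak{a}_g(k-2)$. First I would write down $2\epsilon_{2k}$ as the symmetrized tensor $S_{2k+2}(x_1\otimes[y_1,[\cdots]])$ with the specific bracketing displayed in the picture, all of whose entries are $x_1$ or $y_1$ (this is exactly the genus $1$ content). Then one computes $ES_{2k}$ on this element. The key observation is that every contraction $x_1\cdot y_1 = 1$ produced inside the spider reconnects the remaining legs into a cyclic word, and by the very rigid structure of the Tsunogai/Nakamura spider — a single long ``comb'' with all legs labelled $x_1$ except the two labelled $y_1$ at the extremes — the various terms produced by the symmetrization $S_{2k+2}$ cancel in pairs under $ES_{2k}$. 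An efficient way to organize this cancellation is to note that $\epsilon_{2k}$ generates the $\mathrm{Sp}(2,\Q)$-irreducible $[2k-2]_{\mathrm{Sp}}$, hence the whole $\mathrm{GL}$-isotypical type in $\mathfrak{h}_{g,1}(2k)$ producing it has height $\le 2$; Proposition \ref{prop:kl} then pins it down as the unique grandchild of $[2k,2]_{\mathrm{GL}}$. So it suffices to show $ES_{2k}$ annihilates the image of $[2k,2]_{\mathrm{GL}}$ under the relevant double contraction, which is a finite check on the highest weight vector of $[2k,2]_{\mathrm{GL}}$ — and the Enomoto-Satoh paper \cite{es} already records that the ``$[2k,2]$-type'' summands lie in $\mathrm{Ker}\,ES$ for the closely related antisymmetric components. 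The conclusion $ES(\mathfrak{u})=0$ is then immediate: $\mathfrak{u}$ is generated by the $\epsilon_{2k}$, and by Proposition \ref{prop:sub} $\mathrm{Ker}\,ES$ is a Lie subalgebra of $\mathfrak{h}_{g,1}$ (hence of $\mathfrak{h}_{1,1}$), so it contains the Lie subalgebra generated by the $\epsilon_{2k}$.

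For part $\mathrm{(ii)}$, the strategy is a direct computation, genus $1$, inside the $3$-dimensional space $\mathfrak{h}_{1,1}(10)^{\mathrm{Sp}}$. By Proposition \ref{prop:g1}$\mathrm{(ii)}$ this space is $H_{[6]}$, and by Table \ref{tab:h1012} it corresponds to the eigenspace $3[6]^\delta$; one fixes an explicit basis $w_1,w_2,w_3$ of it using the $p$-stable construction of Proposition \ref{prop:LCDH} (exactly as was done for $\mathfrak{h}_{g,1}(6)^{\mathrm{Sp}}$ in Section \ref{sec:w6}). Next one computes, for a generic $v=a_1w_1+a_2w_2+a_3w_3$, the bracket $[\epsilon_4,v]\in\mathfrak{h}_{1,1}(14)$ — here $\epsilon_4$ is the explicit highest weight vector, a concrete tensor in $H_\Q^{\otimes 6}$ with $H=H_1$ — and then applies $ES_{14}$ to land in $\mathfrak{a}_1(12)$. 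This yields a linear map $\Q^3\to\mathfrak{a}_1(12)$ whose matrix can be evaluated by a computer calculation of contractions. The claim is that this map has rank $2$, i.e. a one-dimensional kernel, and that the kernel line is precisely $\tilde\sigma_5$ (the image of $\sigma_5$ under Hain–Matsumoto's homomorphism $\mathfrak{f}\to\mathfrak{h}_{1,1}^{\mathrm{Sp}}$). That $\tilde\sigma_5$ lies in the kernel follows on general grounds: $\tilde\sigma_5$ normalizes $\mathfrak{u}$, so $[\epsilon_4,\tilde\sigma_5]\in\mathfrak{u}$, and $ES(\mathfrak{u})=0$ by part $\mathrm{(i)}$; one then only needs the rank computation to show the kernel is no larger than one-dimensional, which simultaneously characterizes $\tilde\sigma_5$ among elements of $\mathfrak{h}_{1,1}(10)^{\mathrm{Sp}}$ up to scalar and modulo nothing (since $\mathrm{Im}\,\tau_{1,1}$ is trivial in this context, there is no ambiguity to quotient out, in contrast with the degree $6$, higher-genus situation of Theorem \ref{thm:h6sp}).

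I expect the main obstacle to be organizing the combinatorics of part $\mathrm{(i)}$ cleanly: the symmetrizer $S_{2k+2}$ produces $2k+2$ terms, and showing that $ES_{2k}$ kills their sum for all $k$ at once — rather than checking small cases — requires identifying the right cancellation pattern (for instance, pairing a term contracting legs $i,i+1$ with the term contracting legs $i+1,i+2$, using the ``comb'' symmetry of the spider and the sign in the definition of $ES_k$). An alternative that sidesteps this is to quote from \cite{es} or \cite{satoh} the statement that the relevant $\mathrm{GL}$-type lies in $\mathrm{Ker}\,ES$ in all degrees and then reduce to it via Proposition \ref{prop:kl}, which is probably the cleaner route for the writeup. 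Part $\mathrm{(ii)}$ is essentially a bookkeeping-heavy but conceptually routine computation, with the only subtlety being a correct normalization of $\epsilon_4$ and of the basis of $\mathfrak{h}_{1,1}(10)^{\mathrm{Sp}}$ so that the computer output is reproducible; I would verify the rank for a couple of independent choices of basis as a sanity check, mirroring the verification style of Section \ref{sec:w6}.
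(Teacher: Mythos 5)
Your overall plan for both parts coincides with the paper's proof. For (i) the paper writes $\epsilon_{2k}$ explicitly as $2y_1\otimes X_{2k}+2x_1\otimes[X_{2k-1},y_1]+\sum_i \pm\, x_1\otimes[X_i,X_{2k-1-i}]$ with $X_l=[x_1,[x_1,\ldots,[x_1,y_1]\cdots]$, applies $ES=proj\circ K_{12}$, and then passes from $ES(\epsilon_{2k})=0$ to $ES(\mathfrak{u})=0$ exactly via Proposition \ref{prop:sub}; for (ii) it builds an explicit basis $u_1,u_2,u_3$ of $\mathfrak{h}_{1,1}(10)^{\mathrm{Sp}}$ from linear chord diagrams, computes $r_i=ES([\epsilon_4,u_i])\in\mathfrak{a}_1(12)$ by machine, finds the unique relation $41r_1-51r_2+4r_3=0$ (so the kernel is one-dimensional), and identifies $\tilde{\sigma}_5$ by the same normalizer argument you give ($[\epsilon_4,\tilde{\sigma}_5]\in\mathfrak{u}$ and $ES(\mathfrak{u})=0$).

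The one step you should not take is the route you say you would prefer for the writeup of (i): quoting from \cite{es} or \cite{satoh} that ``the relevant $\mathrm{GL}$-type lies in $\mathrm{Ker}\,ES$'' and reducing to it via Proposition \ref{prop:kl}. The map $ES$ is $\mathrm{Sp}$-equivariant but not $\mathrm{GL}$-equivariant (it is built from a contraction with the symplectic pairing), so one cannot argue by $\mathrm{GL}$-isotype; the issue is precisely whether $ES$ kills the particular $\mathrm{Sp}$-descendant $[2k-2]_{\mathrm{Sp}}$ of $[2k,2]_{\mathrm{GL}}$, and since $\mathfrak{a}_g(2k-2)$ does contain $[2k-2]_{\mathrm{Sp}}$-components, Schur's lemma gives no vanishing. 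No such statement is recorded in \cite{es} or \cite{satoh} --- the point of \cite{es} is rather that $ES$ is non-trivial on much of the Johnson cokernel --- so the direct computation in your primary plan is unavoidable. Also, the cancellation is organized differently from your guess of pairing adjacent contractions: in the paper's computation almost all terms $K_{12}(x_1\otimes[X_i,\pm X_{2k-1-i}])$ vanish individually after projecting to the cyclic coinvariants $(H_\mathbb{Q}^{\otimes 2k})_{\mathbb{Z}/2k}$, because words of the form $x_1^{\otimes j}\otimes X_1\otimes x_1^{\otimes(2k-2-j)}$ die under cyclic symmetry, and only the two surviving terms $y_1\otimes x_1^{\otimes(2k-1)}$ and $x_1^{\otimes(2k-1)}\otimes y_1$ cancel against each other cyclically.
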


\begin{proof}
First we prove $\mathrm{(i)}$.
In view of Proposition \ref{prop:sub}, it suffices to show that $ES(\epsilon_{2k})=0$
for any $k\geq 1$ (the case $k=0$ is trivial), where we regard $\epsilon_{2k}$ as 
an element of $\mathfrak{h}_{1,1} (2k)\subset \mathfrak{h}_{g,1}(2k)$ after stabilizations. 
The Enomoto-Satoh map $ES$ is defined as the composition of group homomorphisms 
\begin{align*}
\mathfrak{h}_{g,1} (2k) &\subset H_\mathbb{Q} \otimes \mathcal{L}_{g,1} (2k+1) 
\subset H_\mathbb{Q}^{\otimes (2k+2)}\\
&\xrightarrow{K_{12}}  H_\mathbb{Q}^{\otimes 2k} 
\xrightarrow{proj} (H_\mathbb{Q}^{\otimes 2k})_{\mathbb{Z}/2k} \cong 
(H_\mathbb{Q}^{\otimes 2k})^{\mathbb{Z}/2k} = \mathfrak{a}_{g} (2k),
\end{align*}
\noindent
where $proj$ is the natural projection and 
the isomorphism $(H_\mathbb{Q}^{\otimes 2k})_{\mathbb{Z}/2k} \xrightarrow{\cong} 
(H_\mathbb{Q}^{\otimes 2k})^{\mathbb{Z}/2k}$ is given by 
\[u_1 \otimes u_2 \otimes \cdots \otimes u_{2k} \longmapsto
\sum_{i=1}^{2k} \sigma_{2k}^i (u_1 \otimes u_2 \otimes \cdots \otimes u_{2k})\]
using the cyclic permutation $\sigma_{2k}$ of entries of tensors. 
For $l \ge 1$, put 
$$
X_l=[x_1, [x_1, \ldots, [x_1,y_1]\cdots] =
(-1)^l [\cdots [y_1,x_1],x_1],\ldots,x_1] \in \mathcal{L}_{g,1} (l+1) \subset 
H_\mathbb{Q}^{\otimes (l+1)}.
$$ 

As an element of $H_\mathbb{Q} \otimes \mathcal{L}_{g,1} (2k+1) \subset 
H_\mathbb{Q}^{\otimes (2k+2)}$ , 
we have 
\begin{align*}
\epsilon_{2k}&=y_1 \otimes X_{2k} + x_1 \otimes [X_{2k-1},y_1] + 
x_1 \otimes [X_{2k-2}, -X_1] + x_1 \otimes [X_{2k-3}, (-1)^2 X_2]\\ &\quad 
+ \cdots +
x_1 \otimes [X_1, (-1)^{2k-2} X_{2k-2}] +x_1 \otimes [y_1, (-1)^{2k-1} X_{2k-1}] +y_1 \otimes ((-1)^{2k} X_{2k})\\
&= 2y_1 \otimes X_{2k} + 2x_1 \otimes [X_{2k-1},y_1] +
\sum_{i=1}^{2k-2} x_1 \otimes [X_i, (-1)^{i+1} X_{2k-1-i}]\\
&=2y_1 \otimes X_{2k} + 2(x_1 \otimes X_{2k-1} \otimes y_1-x_1 \otimes y_1 \otimes X_{2k-1}) \\
& \quad +
(-1)^{i+1}\sum_{i=1}^{2k-2} x_1 \otimes X_i \otimes X_{2k-1-i}
+(-1)^i\sum_{i=1}^{2k-2} x_1 \otimes  X_{2k-1-i} \otimes X_i.
\end{align*} 

Note that the contraction $K_{12}$ satisfies $K_{12} (X \otimes Y)=K_{12} (X) \otimes Y$ for 
any $X \in H_\mathbb{Q}^{\otimes i}$ and $Y \in H_\mathbb{Q}^{\otimes j}$ with $i \ge 2$.  
Hence we have 
\begin{align*}
K_{12}(\epsilon_{2k})&=
2K_{12}(y_1 \otimes X_{2k}) + 2\left\{K_{12}(x_1 \otimes X_{2k-1}) \otimes y_1
-K_{12}(x_1 \otimes y_1) \otimes X_{2k-1}\right\} \\
& \quad +
(-1)^{i+1}\sum_{i=1}^{2k-2} K_{12}(x_1 \otimes X_i) \otimes X_{2k-1-i}
+(-1)^i\sum_{i=1}^{2k-2} K_{12}(x_1 \otimes  X_{2k-1-i}) \otimes X_i.
\end{align*}

By induction, we can show that the equalities 
\begin{align*}
K_{12} (y_1 \otimes X_l) &= \sum_{i=1}^{l-1} (-1)^i X_{l-i} \otimes x_1^{\otimes (i-1)} +
(-1)^l y_1 \otimes   x_1^{\otimes (l-1)},\\
K_{12} (x_1 \otimes X_l) &= (-1)^l x_1^{\otimes l}
\end{align*}
\noindent
hold. Since 
\begin{align*}
x_1^{\otimes i} \otimes X_1 \otimes x_1^{\otimes (2k-2-i)}&=
x_1^{\otimes i} \otimes x_1 \otimes y_1 \otimes x_1^{\otimes (2k-2-i)}
-x_1^{\otimes i} \otimes y_1 \otimes x_1 \otimes x_1^{\otimes (2k-2-i)}\\
&=0\in (H_\mathbb{Q}^{\otimes 2k})_{\mathbb{Z}/2k},
\end{align*}
\noindent
we have 
$$
x_1^{\otimes j} \otimes X_{2k-1-j} =X_{2k-1-j} \otimes x_1^{\otimes j}= 0 
\in (H_\mathbb{Q}^{\otimes 2k})_{\mathbb{Z}/2k}.
$$
Therefore 
\[proj \circ {K}_{12} (\epsilon_{2k}) =
2 (-1)^{2k}y_1 \otimes x_1^{\otimes (2k-1)} 
+ 2(-1)^{2k-1} x_1^{\otimes (2k-1)}\otimes y_1=0.\]
This shows that $ES(\epsilon_{2k})=0$.

Next we prove $\mathrm{(ii)}$. 
We consider
the following $(3+3)$ linear chord diagrams 
\begin{align*}
D_1&=(12)(34)(56)(78)(9\ 10)(11\ 12),\ U_1=(12)(35)(46)(79)(8\ 10)(11\ 12),\\
D_2&=(12)(34)(56)(79)(8\ 11)(10\ 12),\ U_2=(12)(35)(47)(69)(8\ 10)(11\ 12),\\
D_3&=(12)(34)(56)(7\ 10)(8\ 11)(9\ 12),\ U_3=(16)(29)(38)(4\ 11)(5\ 10)(7\ 12)
\end{align*}
in $\mathcal{D}^\ell(12)$. We define
$$
u_i=\Phi(S'_{12} \circ \sigma_{12} \circ p'_{11} \circ \sigma_{12}^{-1}
(U_i))\in \mathfrak{h}_{1,1}(10)^{\mathrm{Sp}}\quad (i=1,2,3).
$$
It turns out that the set $\{D_1,D_2,D_3\}$ can serve as a detector of 
$\mathfrak{h}_{1,1}(10)^\mathrm{Sp}\cong\Q^3$. In fact, the intersection matrix
$(\alpha_{D_i}(u_j))$ is given by
$$
\begin{pmatrix}
46656 & 23328 & 3888\\
3456 & 192 & -576 \\
-27648 & -14304 & -4824
\end{pmatrix}
$$
which is non-singular. It follows that $\{u_1,u_2,u_3\}$ is a basis of $\mathfrak{h}_{1,1}(10)^\mathrm{Sp}$.
Next we compute the bracket $[\epsilon_4, u_i]\in\mathfrak{h}_{1,1}(14)$ and apply the Enomoto-Satoh
mapping to it. 
Then we obtain $3$ large tensors
$$
r_i=ES([\epsilon_4, u_i])\in\mathfrak{a}_1(12) \quad (i=1,2,3).
$$
Finally, we seek for a linear relation between $r_1,r_2,r_3$. It turns out that
there exists a unique relation
$$
41 r_1-51 r_2+4 r_3=0.
$$
We can now conclude that the element $41 u_1-51 u_2+4 u_3\in \mathfrak{h}_{1,1}(10)^{\mathrm{Sp}}$
is the unique element (up to scalars) such that its bracket with $\epsilon_4$
is contained in $\mathfrak{u}$,
completing the proof.






\end{proof}

\begin{remark}
Pollack \cite{pollack} determined the element $\tilde{\sigma}_5\in \mathfrak{h}_{1,1}(10)^{\mathrm{Sp}}\cong\Q^3$
explicitly. We have checked that the unique element in $\mathfrak{h}_{1,1}(10)^{\mathrm{Sp}}$ (up to scalars) given above,
coincides with his element. We are trying to extend our result to identify the element 
$\tilde{\sigma}_7\in \mathfrak{h}_{1,1}(14)^{\mathrm{Sp}}\cong H_{[8]}\cong\Q^{11}$.
\end{remark}
\section{Tables of orthogonal decompositions of $\mathfrak{h}_{g,1}(2k)^{\mathrm{Sp}}$}\label{sec:tables}

In this section, we give Tables for the orthogonal decompositions of $\mathfrak{h}_{g,1}(2k)^{\mathrm{Sp}}$
for the cases $2k=14,16,18, 20$.

\begin{table}[h]
\caption{$\text{Orthogonal decomposition of $\mathfrak{h}_{g,1}(14)^{\mathrm{Sp}}$}$}
\begin{center}
\begin{tabular}{|r|r|}
\noalign{\hrule height0.8pt}
\hfil $\mathrm{dim}\hspace{7mm}$& $\text{eigenspaces}\hspace{3.3cm}$   \\
\hline
$11\ (g=1)$ & $11[8]^\delta$  \\
\hline
$1691\ (g=2)$ & $147[71]^\delta 665[62]^\delta 752[53]^\delta 116[4^2]^\delta$ \\
\hline
$6471\ (g=3)$ & $403[61^2]^\delta 2044[521]^\delta 1436[431]^\delta 665[42^2]^\delta 232[3^22]^\delta$  \\
\hline
$8505\ (g=4)$ & $337[51^3]^\delta 1120[421^2]^\delta 266[3^21^2]^\delta 300[32^21]^\delta 11[2^4]^\delta$  \\
\hline
$8795\ (g=5)$ & $104[41^4]^\delta 168[321^3]^\delta 18[2^31^2]^\delta$  \\      
\hline
$8816\ (g=6)$ & $14[31^5]^\delta 7[2^21^4]^\delta $  \\ 
\hline
$8817\ (g\geq 7)$ & $[21^6]^\delta$  \\                                             
\noalign{\hrule height0.8pt}
\end{tabular}
\end{center}
\label{tab:h14}
\end{table}

\begin{table}[h]
\caption{$\text{Orthogonal decomposition of $\mathfrak{h}_{g,1}(16)^{\mathrm{Sp}}$}$}
\begin{center}
\begin{tabular}{|r|r|}
\noalign{\hrule height0.8pt}
\hfil $\mathrm{dim}\hspace{1cm}$& $\text{eigenspaces}\hspace{4.5cm}$   \\
\hline
$10\ (g=1)$ & $10[9]^\delta$  \\
\hline
$11842\ (g=2)$ & $440[81]^\delta 3028[72]^\delta 5860[63]^\delta 2504[54]^\delta$ \\
\hline
$69544\ (g=3)$ & $ 1776[71^2]^\delta 14616[621]^\delta  21204[531]^\delta 7664[52^2]^\delta $  \\
{} & $3270[4^21]^\delta  8904[432]^\delta 268[3^3]^\delta$  \\
\hline
$104190\ (g=4)$ & $ 2112[61^3]^\delta  12904[521^2]^\delta 9744[431^2]^\delta 6936[42^21]^\delta $  \\
{} & $2532[3^221]^\delta 418[32^3]^\delta$  \\
\hline
$110610\ (g=5)$ & $960[51^4]^\delta 3546[421^3]^\delta 823[3^21^3]^\delta 1059[32^21^2]^\delta 32[2^41]^\delta$  \\      
 \hline
$111131\ (g=6)$ & $180[41^5]^\delta 312[321^4]^\delta 29[2^31^3]^\delta$  \\ 
\hline
$111148\ (g\geq 7)$ & $12[31^6]^\delta 5[2^22^5]^\delta$  \\                                       
\noalign{\hrule height0.8pt}
\end{tabular}
\end{center}
\label{tab:h16}
\end{table}

\vspace{5cm}

\begin{table}[h]
\caption{$\text{Orthogonal decomposition of $\mathfrak{h}_{g,1}(18)^{\mathrm{Sp}}$}$}
\begin{center}
\begin{tabular}{|r|r|}
\noalign{\hrule height0.8pt}
\hfil $\mathrm{dim}\hspace{1cm}$& $\text{eigenspaces}\hspace{5cm}$   \\
\hline
$57\ (g=1)$ & $57[10]^\delta$  \\
\hline
$100908\ (g=2)$ & $1710[91]^\delta 15053[82]^\delta 42826[73]^\delta 36780[64]^\delta 4482[5^2]^\delta $ \\
\hline
$888099\ (g=3)$ & $ 8520[81^2]^\delta 97776[721]^\delta  239184[631]^\delta 78422[62^2]^\delta $  \\
{} & $ 117024[541]^\delta 191095[532]^\delta 36780 [4^22]^\delta
18390 [43^2]^\delta$   \\
\hline
$1548984\ (g=4)$ & $ 13584[71^3]^\delta  126540[621^2]^\delta  199320[531^2]^\delta 116340[52^21]^\delta 32676[4^21^2]^\delta$  \\
{}  & $145281[4321]^\delta 15053[42^3]^\delta 5919[3^31]^\delta 6172[3^22^2]^\delta$  \\      
\hline
$1710798\ (g=5)$ & $8842[61^5]^\delta 56280[521^3]^\delta 44151[431^3]^\delta 35220[42^21^2]^\delta$  \\ 
{}  & $13344[3^221^2]^\delta  3920[32^31]^\delta 57[2^5]^\delta$  \\   
\hline
$1728591\ (g=6)$ & $2600[51^5]^\delta 9619[421^4]^\delta 2340[3^21^4]  3096[32^21^3]^\delta 138[2^41^2]^\delta$  \\   
\hline
$1729620\ (g=7)$ & $357[41^6]^\delta 605[321^5]^\delta 67[2^31^4] $  \\  
\hline
$1729656\ (g=8)$ & $24[31^7]^\delta 12[2^21^6]^\delta $  \\  
\hline
$1729657\ (g\geq 9)$ & $[21^8]^\delta $  \\
\noalign{\hrule height0.8pt}
\end{tabular}
\end{center}
\label{tab:h18}
\end{table}

\begin{table}[h]
\caption{$\text{Orthogonal decomposition of $\mathfrak{h}_{g,1}(20)^{\mathrm{Sp}}$}$}
\begin{center}
\begin{tabular}{|r|r|}
\noalign{\hrule height0.8pt}
\hfil $\mathrm{dim}\hspace{1cm}$& $\text{eigenspaces}\hspace{5cm}$   \\
\hline
$108\ (g=1)$ & $108[11]^\delta$  \\
\hline
$869798\ (g=2)$ & $5815[10\, 1]^\delta 6829[92]^\delta  273592[83]^\delta 383950[74]^\delta 138042[65]^\delta $ \\
\hline
$12057806\ (g=3)$ & $ 37843[91^2]^\delta 596064[821]^\delta  2202900[731]^\delta 672107[72^2]^\delta 2157806[641]^\delta$  \\
{} & $2819712[632]^\delta 276504 [5^21]^\delta
1722706 [542]^\delta 537940[53^2]^\delta 164426[4^23]^\delta$   \\ 
\hline
$25062360\ (g=4)$ & $ 78887[81^3]^\delta  1065000[721^2]^\delta  2853514[631^2]^\delta 1491000[62^21]^\delta $  \\
{}  & $1468605[541^2]^\delta 3966480[5321]^\delta 319826[52^3]^\delta 792666[4^221]^\delta $  \\  
{}  & $485254[43^21]^\delta 452736[432^2]^\delta 30586[3^32]^\delta $  \\ 
\hline
$29129790\ (g=5)$ & $69015[71^4]^\delta 692160[621^3]^\delta 1136010[531^3]^\delta 739520[52^21^2]^\delta 189489[4^21^3]^\delta$  \\ 
{}  & $958680[4321^2]^\delta  165385[42^31]^\delta 41775[3^31^2]^\delta 69775[3^22^21]^\delta 5621[32^4]$  \\   
\hline
$29688027\ (g=6)$ & $28371[61^5]^\delta 188640[521^4]^\delta 150864[431^4]  125700[42^21^3]^\delta $  \\  
{} & $ 48300[3^221^3]^\delta 16060[32^31^2]^\delta 302[2^51]^\delta$\\
\hline
$29728348\ (g=7)$ & $5695[51^6]^\delta 21840[421^5]^\delta 5262[3^21^5]^\delta  7215[32^21^4]^\delta 309[2^41^3]^\delta$  \\  
\hline
$29729957\ (g=8)$ & $545[41^7]^\delta 968[321^6]^\delta 96[2^31^5]^\delta$  \\  
\hline
$29729988\ (g\geq9)$ & $21[31^8]^\delta 10[2^21^7]^\delta$  \\
\noalign{\hrule height0.8pt}
\end{tabular}
\end{center}
\label{tab:h20}
\end{table}

\vspace{5cm}

\newpage
\bibliographystyle{amsplain}

\end{document}